\documentclass[12pt]{article}

\usepackage[colorlinks, linkcolor=blue, citecolor=blue]{hyperref}           
\usepackage{color}
\usepackage{graphicx,subfigure,amsmath,amssymb,amsfonts,bm,epsfig,epsf,url,dsfont}
\usepackage{amsthm}

\newtheorem{thm}{Theorem}[section]
\newtheorem{lem}[thm]{Lemma}
\newtheorem{assum}[thm]{Assumption}
\newtheorem{proposition}[thm]{Proposition}
\newtheorem{remark}[thm]{Remark}
\newtheorem{definition}[thm]{Definition}

\newtheorem{model}[thm]{Model}
\newtheorem{example}[thm]{Example}

\usepackage{tikz}
\usepackage{bbm}
\usepackage[small,bf]{caption}
\usepackage{natbib}
\usepackage[top=1in,bottom=1in,left=1in,right=1in]{geometry}
\usepackage{fancybox}
\usepackage{hyperref}
\usepackage{algorithm}
\usepackage{verbatim}

\usepackage{threeparttable}
\usepackage[titletoc,toc]{appendix}

\usepackage{mathtools}

\usepackage{scalerel,stackengine}
\stackMath
\newcommand\reallywidehat[1]{%
\savestack{\tmpbox}{\stretchto{%
  \scaleto{%
    \scalereL_[\widthof{\ensuremath{#1}}]{\kern-.6pt\bigwedge\kern-.6pt}%
    {\rule[-\textheight/2]{1ex}{\textheight}}
  }{\textheight}%
}{0.5ex}}%
\stackon[1pt]{#1}{\tmpbox}%
}

\newcommand*{\rom}[1]{\expandafter\@slowromancap\romannumeral #1@}

\DeclareMathOperator*{\argmax}{arg max}
\DeclareMathOperator*{\argmin}{arg min}

\newcommand{\norm}[1]{\left\|#1\right\|}

\usepackage{xspace}

\numberwithin{equation}{section}

\allowdisplaybreaks
\usepackage[utf8]{inputenc}
\usepackage{authblk}
\usepackage{url}
\usepackage[left=1in,right=1in,top=1in,bottom=1in]{geometry}

\usepackage{titlesec}
\titleformat{\part}[display]
  {\centering\bfseries\LARGE}
  {}{0pt}{}
\titlespacing*{\part}{0pt}{1.0em}{1.0em}

\usepackage{booktabs}
\usepackage{array}
\usepackage{enumitem}
\newcolumntype{L}[1]{>{\raggedright\arraybackslash}p{#1}}
\setlist[itemize]{leftmargin=*,nosep,topsep=0pt,parsep=0pt,partopsep=0pt}

\makeatletter
\renewcommand\paragraph{\@startsection{paragraph}{4}{\z@}%
  {1ex}
  {-1em}
  {\normalfont\normalsize\bfseries}}
\makeatother

\begin{document}

\title{Sample Complexity of Weak-Signal Latent-Variable Models}

\author[1]{Amnon Balanov\thanks{Corresponding author: \url{amnonba15@gmail.com}}}
\author[1]{Tamir Bendory}
\author[2]{Dan Edidin}

\affil[1]{\normalsize Department of Electrical and Computer Engineering,
Technion---Israel Institute of Technology, Haifa 3200003, Israel}

\affil[2]{Department of Mathematics, University of Missouri, Columbia, MO 65211, USA}

\maketitle

\begin{abstract}
We study parametric estimation in latent-variable models in the weak-signal regime, where a small signal-strength parameter $\beta$ controls the dependence of the observations on the unknown parameter. At $\beta=0$, this dependence disappears, so all parameter values induce the same observed distribution, which we call the null law. We develop a unified framework for analyzing sample complexity in the weak-signal regime by expanding the likelihood ratio around this shared null law in powers of $\beta$. We prove that if the unknown parameter is first globally identifiable at order $r$ in this expansion, then estimation to fixed accuracy requires at least on the order of $\beta^{-2r}$ samples. We further identify a likelihood-separation stability condition that is necessary for any estimator to attain this rate and, under mild regularity conditions, sufficient for both maximum likelihood and method-of-moments estimators to attain it. We also study the regime in which the target accuracy vanishes together with the signal strength, and show that global localization and local refinement may be governed by different likelihood orders, leading to distinct statistical scales. Finally, we apply our framework to broad classes of algebraically structured latent-variable models, including additive-noise channels and exponential-family models.
\end{abstract}

\newpage
\setcounter{tocdepth}{2}
\tableofcontents

\newpage

\section{Introduction}

Latent-variable models provide a common framework for statistical inference from noisy observations whose distribution depends on the unknown signal through an unobserved label, transformation, or nuisance variable. Examples include finite mixture models, where the latent variable selects an unobserved component label~\cite{mclachlan2019finite,lindsay1989moment}; factor analysis models, where continuous latent factors explain dependencies among observed variables~\cite{bartholomew2011latent}; orbit recovery problems, where it encodes an unknown shift, rotation, or more general group action~\cite{bendory2017bispectrum,abbe2018multireference,bandeira2023estimation,bandeira2020optimal}; and imaging problems such as cryo-electron microscopy, where it represents an unknown viewing direction and possibly additional nuisance transformations~\cite{bendory2020single}.

A central difficulty arises in weak-signal regimes. As the signal strength tends to zero, the observed law approaches a null law that no longer depends on the unknown parameter. In this regime, the latent variable is typically unrecoverable from an individual observation, even though the underlying parameter may still be estimated from sufficiently many samples. This phenomenon is familiar in high-noise alignment and orbit-recovery problems, where single-sample recovery of the latent transformation may be impossible while aggregate information still permits recovery of the underlying signal~\cite{aguerrebere2016fundamental,bandeira2023estimation,serov2026interplay}. This raises a fundamental question that guides this study:
\begin{quote}
How many samples are  required for reliable recovery in weak-signal latent-variable models?
\end{quote}

In several Gaussian weak-signal models, it is now well understood that sample complexity is governed by the lowest moment order that identifies, up to intrinsic symmetries, the unknown parameter. This principle has been established in settings such as Gaussian mixture models and more general orbit-recovery problems in the presence of Gaussian noise~\cite{abbe2018multireference,bandeira2023estimation,katsevich2023likelihood}. A central contribution of this work is to extend this analysis beyond Gaussian observations, developing a general likelihood-based framework applicable to broad classes of latent-variable models, including non-Gaussian additive-noise channels and exponential-family models. 

The key to this generalization is that the underlying sample-complexity mechanism can be characterized directly through a likelihood-ratio expansion around a common null law, without relying on Gaussian-specific properties. To describe this mechanism, let $\beta>0$ denote a signal-strength parameter tending to zero in the weak-signal regime, with $\beta=0$ corresponding to a null model in which the observed distribution no longer depends on the unknown parameter. If two parameter values first differ at order $r$ in the likelihood expansion, then their likelihood-ratio difference is of order $\beta^r$, while the corresponding statistical divergences are of order $\beta^{2r}$. Distinguishing such a pair therefore requires on the order of $\beta^{-2r}$ samples. When order $r$ is the lowest order required for global identification, this mechanism suggests the sample-complexity scale $\beta^{-2r}$ for fixed-accuracy recovery.

However, two distinct issues arise when translating this principle into a general sample-complexity characterization. First, the fact that the parameter becomes identifiable from the likelihood expansion through a given order does not by itself determine how well the corresponding observed distributions are separated at finite $\beta$. Different orders of the expansion are combined in the observed law and may partially cancel when comparing different parameter values. Consequently, two parameters that are distinguishable from the expansion through order $r$ may nevertheless induce distributions whose separation is smaller than order $\beta^r$. Thus, the identifying order provides an information-theoretic lower bound on the sample complexity, but this bound need not be tight without additional conditions.

Second, fixed-accuracy recovery does not distinguish global localization from fine local resolution. The order of the likelihood expansion required to localize the parameter over the full parameter space may be strictly larger than the order required to distinguish infinitesimal perturbations once the parameter has been localized to a fixed neighborhood. When the target accuracy is allowed to shrink together with the signal strength, these two notions can lead to different statistical scales.

This work develops a likelihood-based framework that separates these phenomena. We expand the observed likelihood ratio around the common zero-signal law and use its Taylor coefficients, which we call the \emph{likelihood moments}, as intrinsic statistical coordinates. For fixed target accuracy $\varepsilon$, we define the \emph{global identifiability order} $r_{\mathrm{glob}}$ as the lowest order of the expansion that globally distinguishes observationally distinct parameters. We show that this order determines the information-theoretic scale $\beta^{-2r_{\mathrm{glob}}}$. Attaining this scale by any estimator additionally requires stable separation of the finite-$\beta$ likelihoods: well-separated parameters must remain statistically distinguishable at the scale predicted by $r_{\mathrm{glob}}$, without cancellations between different orders of the expansion. This stability condition is information-theoretically necessary and, under suitable regularity conditions, sufficient for attaining the predicted sample-complexity rate using two classical estimation paradigms for latent-variable models: maximum likelihood and the method of moments.
We develop these results in Sections~\ref{sec:general_framework}-\ref{sec:rate-optimal-recovery}.

We then study the joint regime in which both the signal strength $\beta$ and the target accuracy $\varepsilon$ tend to zero. For algebraically structured likelihood-moment maps, we define the \emph{local identifiability order} $r_{\mathrm{loc}}$ as the smallest order for which the differential of the likelihood moments up to that order is injective. Thus, $r_{\mathrm{loc}}$ is the first order that detects every infinitesimal perturbation of the parameter. Generically, $r_{\mathrm{loc}}\leq r_{\mathrm{glob}}$, with strict inequality possible. The resulting sample complexity has two components: global localization costs order $\beta^{-2r_{\mathrm{glob}}}$ samples, whereas refinement to a small accuracy $\varepsilon$ costs order $\beta^{-2r_{\mathrm{loc}}}\varepsilon^{-2}$ samples. Hence, the dominant statistical difficulty changes with the target resolution, producing a global-to-local transition. We develop this notion and its statistical consequences in Section~\ref{sec:global_to_local_algebraic}.

Finally, in Section~\ref{sec:applications}, we give verifiable conditions for this likelihood geometry in a broad class of algebraic linear perturbation models, including additive-noise channels and exponential-family models.
This extends weak-signal moment-order sample-complexity laws beyond the Gaussian observation models in which they have primarily been studied.

\subsection{The weak-signal latent-variable model}
\label{subsec:weak_signal_model_intro}

We now formalize the weak-signal setting used throughout the paper. Let $\theta^\star\in\Theta$ be the unknown deterministic parameter, where $\Theta\subseteq\mathbb{R}^m$ is a compact set. We observe $n$ i.i.d.\ samples $Y_1,\ldots,Y_n$, with $Y_i\in\mathcal{Y}\subseteq\mathbb{R}^{d}$. Each observation also depends on an unobserved latent nuisance variable $Z_i\in\mathcal{Z}$. The latent space $\mathcal{Z}$ may be discrete, continuous, or hybrid.

For a candidate parameter $\theta\in\Theta$ and signal-strength parameter $\beta\ge 0$, the conditional observation model is
\begin{align}
    \label{eq:conditional_observation_density_intro}
    Y_i\vert Z_i=z \sim p_{\theta,\beta}(\cdot\vert z),
\end{align}
where $p_{\theta,\beta}(\cdot\vert z)$ is the conditional density. Since $Z_i$ is unobserved, inference is based on the marginal observed density
\begin{align}
    \label{eq:observed_density_intro}
    p_{\theta,\beta}(y) = \int_{\mathcal{Z}} p_{\theta,\beta}(y\vert z)\,\mu(dz),
\end{align}
where $\mu$ is the law of $Z$. We denote by $P_{\theta,\beta}(Y)$ the induced observed law of $Y$.

Our goal is to estimate $\theta^\star$ from $Y_1,\ldots,Y_n$. In general, however, $\theta^\star$ is identifiable only up to a model-intrinsic equivalence symmetry, so the proper inferential target is the equivalence class $[\theta^\star]$. For example, in a $K$-component Gaussian mixture model, the distribution is invariant under permutation of the component labels, while in orbit-recovery models it is invariant under the corresponding group action \cite{bandeira2023estimation,fan2023likelihood}. Throughout, we therefore formulate estimation on the quotient space induced by this equivalence relation and measure error by the quotient distance $d_{\mathrm{eq}}(\theta,\theta')$, defined as the minimum distance between representatives of the two equivalence classes.

\paragraph{Weak-signal limit and null law.}
The weak-signal assumption is that, at zero signal strength, the conditional observation law loses all dependence on both the unknown parameter and the latent variable:
\begin{align}
    \label{eq:null_conditional_model}
    p_{\theta,0}(y\vert z)=p_0(y),
    \qquad
    \theta\in\Theta,\ z\in\mathcal{Z}.
\end{align}
Consequently, after marginalizing over $Z$, $p_{\theta,0}(y)=p_0(y)$ for every $\theta\in\Theta$. We denote by $P_0$ the null law of this common observed law. 
At $\beta=0$, recovery is impossible because the observed law is independent of $\theta$. For small positive $\beta$, the law $P_{\theta,\beta}$ may depend on $\theta$, but only through weak deviations from $P_0$. The statistical problem studied in this paper is to determine how many samples are required, as $\beta\to 0$, to recover the identifiable information about $\theta^\star$ from samples drawn from $P_{\theta^\star,\beta}$.

\paragraph{The likelihood-ratio expansion.}
The central object in our analysis is the likelihood ratio relative to the zero-signal law $P_0$. For the weak-signal models considered in this paper, the likelihood ratio admits an expansion of the form
\begin{align}
    L_{\theta, \beta}(Y) \triangleq \frac{dP_{\theta,\beta}}{dP_0}(Y) = 1+\beta\phi_1(Y;\theta)+\beta^2\phi_2(Y;\theta)+\cdots .    \label{eq:informal_likelihood_ratio_expansion}
\end{align}
Equivalently, when the map $\beta \mapsto L_{\theta,\beta}(Y)$ is real analytic in a neighborhood of $\beta=0$, it admits a Taylor expansion whose coefficients are given by
\begin{align}
    \phi_\ell(\cdot;\theta) = \frac{1}{\ell!} \left. \frac{\partial^\ell}{\partial\beta^\ell} L_{\theta, \beta}(\cdot) \right|_{\beta=0}.    \label{eqn:likelihood_moments_def}
\end{align}
We call these coefficients the \emph{likelihood moments}, and define the associated maps $\Phi_\ell:\Theta\to L^2(P_0)$,
\begin{align}
    \int \left| \phi_\ell(y;\theta) \right|^2 \; dP_0(y) < \infty, \qquad \Phi_\ell(\theta) \triangleq \phi_\ell(\cdot;\theta) \in L^2(P_0).    \label{eq:likelihood_moment_definition}
\end{align}
These coefficients describe the successive parameter-dependent deviations of the observed law from the null law.

\subsection{Main results}

We measure statistical difficulty through the number of samples required to recover the unknown parameter, up to observational equivalence, to a prescribed accuracy. Given $n$ i.i.d. observations $Y_1,\ldots,Y_n\sim P_{\theta,\beta}$, let $\widehat{\theta}=\widehat{\theta}(Y_1,\ldots,Y_n)\in\Theta$ denote an estimator of $\theta$. For an accuracy level $\varepsilon>0$ and confidence level $\delta\in(0,1)$, let
\begin{align}
    n^\star(\varepsilon,\delta;\beta) \triangleq \inf\left\{ n: \inf_{\widehat{\theta}} \sup_{\theta} \, P_{\theta,\beta}^{\otimes n} \left( d_{\mathrm{eq}}(\widehat{\theta},\theta)>\varepsilon \right) \leq\delta \right\},    \label{eq:intro_sample_complexity}
\end{align}
where the infimum is over all measurable estimators based on $Y_1,\ldots,Y_n$, and the supremum is taken over the parameter class under consideration. Thus, $n^\star(\varepsilon,\delta;\beta)$ is the minimax sample complexity for estimation to quotient error at most $\varepsilon$ with probability at least $1-\delta$.

We study two related weak-signal regimes. First, we fix $\varepsilon>0$ and $\delta\in(0,1)$ and ask how $n^\star(\varepsilon,\delta;\beta)$ grows as $\beta\to 0$. Second, we consider the finer joint regime in which both $\beta\to 0$ and $\varepsilon\to 0$, and ask how the difficulty of global identification interacts with the difficulty of increasingly accurate local estimation.

The likelihood-ratio expansion provides a natural ordering of the information about the parameter in the weak-signal limit. For $r\geq 1$, define the \emph{truncated likelihood-moment map}
\begin{align}
    \Phi_{\leq r}(\theta) \triangleq \bigl(\Phi_1(\theta),\ldots,\Phi_r(\theta)\bigr).
    \label{eq:truncated_likelihood_moment_map}
\end{align}
Two identifiability orders play distinct roles. The \emph{global identifiability order} $r_{\mathrm{glob}}$ is the smallest order for which $\Phi_{\leq r}$ identifies a generic parameter up to observational equivalence over the full parameter space $\Theta$. The \emph{local identifiability order} $r_{\mathrm{loc}}$ is the smallest order for which the differential of $\Phi_{\leq r}$ detects every identifiable infinitesimal perturbation around the ground-truth target. Thus, $r_{\mathrm{glob}}$ governs global localization, whereas $r_{\mathrm{loc}}$ governs fine resolution once the target has been localized. In the algebraic setting considered below, generically $r_{\mathrm{loc}}\leq r_{\mathrm{glob}}$, and the inequality can be strict.

\begin{thm}[Informal main theorem: fixed accuracy]
\label{thm:informal_main}
Fix $\varepsilon>0$ and $\delta\in(0,1)$, and suppose that $r_{\mathrm{glob}}<\infty$. Then:

\begin{enumerate}

    \item \emph{(Information-theoretic lower bound; see Theorem~\ref{thm:lecam_lower_bound}.)}
    If the parameter class contains two values separated by more than $2\varepsilon$ whose likelihood moments agree through order $r_{\mathrm{glob}}-1$, then estimation to accuracy $\varepsilon$ with confidence $1-\delta$ requires
    \begin{align}
        n^\star(\varepsilon,\delta;\beta) \gtrsim \beta^{-2r_{\mathrm{glob}}}\log(1/\delta),
    \end{align}
    up to constants depending on the model and $\varepsilon$.
    
    \item \emph{(Necessary condition for minimax sharpness; see
    Theorem~\ref{thm:cross_order_stability_necessity}.)}
    If the rate $\beta^{-2r_{\mathrm{glob}}}$ is attainable by any estimator, then there exists a constant $c_\varepsilon>0$, independent of $\beta$, such that, for all sufficiently small $\beta$,
    \begin{align}
        \inf_{\substack{\theta,\theta'\in\Theta:\\
        d_{\mathrm{eq}}(\theta,\theta')>2\varepsilon}} \left\|L_{\theta,\beta}-L_{\theta',\beta}\right\|_{L^2(P_0)} \geq c_\varepsilon\beta^{r_{\mathrm{glob}}}.
        \label{eqn:necessary_sharpness}
    \end{align}
    
    \item \emph{(Matching upper bounds under stable likelihood geometry.)}
    Conversely, if~\eqref{eqn:necessary_sharpness} holds, and under suitable regularity conditions, both maximum likelihood (Theorem~\ref{thm:mle_upper_bound}) and the method-of-moments based on geometric median-of-means estimation (Theorem~\ref{thm:mom_upper_bound}) attain
    \begin{align}
        n^\star(\varepsilon,\delta;\beta) \lesssim \beta^{-2r_{\mathrm{glob}}}\log(1/\delta).
    \end{align}
\end{enumerate}
\end{thm}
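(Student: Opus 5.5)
The plan is to prove the three parts separately. The common tool is the relation between the $\chi^2$, Hellinger and $L^2(P_0)$ distances of likelihood ratios, together with the expansion \eqref{eq:informal_likelihood_ratio_expansion}.

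\emph{Part 1 (lower bound).} Take $\theta_0,\theta_1$ with $d_{\mathrm{eq}}(\theta_0,\theta_1)>2\varepsilon$ and $\Phi_\ell(\theta_0)=\Phi_\ell(\theta_1)$ for $\ell<r_{\mathrm{glob}}$. Then $L_{\theta_1,\beta}-L_{\theta_0,\beta}=\beta^{r}(\phi_r(\cdot;\theta_1)-\phi_r(\cdot;\theta_0))+O(\beta^{r+1})$ in $L^2(P_0)$, with $r=r_{\mathrm{glob}}$. I will assume a uniform remainder bound, e.g.\ analyticity in $\beta$ with $L^2(P_0)$-summable coefficients. Writing $P_0$-densities, the $\chi^2$ divergence satisfies
\[
\chi^2(P_{\theta_1,\beta}\,\|\,P_{\theta_0,\beta})=\int \frac{(L_{\theta_1,\beta}-L_{\theta_0,\beta})^2}{L_{\theta_0,\beta}}\,dP_0 .
\]
To control it by $C\beta^{2r}$ I need $L_{\theta_0,\beta}$ bounded below. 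If that is unavailable, I will instead use Hellinger distance, which is bounded by $\|L_{\theta_1,\beta}-L_{\theta_0,\beta}\|_{L^1}$-type quantities; the cleanest route is $H^2\le\chi^2$ after a truncation argument. Tensorization then gives $H^2(P^{\otimes n}_{\theta_0},P^{\otimes n}_{\theta_1})\le nC\beta^{2r}$. Le Cam's two-point method, in the form $\max_i P_i(\text{error})\ge \tfrac14 e^{-nKL}$ or the Bretagnolle--Huber inequality with $KL\le\chi^2$, yields $\delta\ge \tfrac14\exp(-nC\beta^{2r})$. Hence $n\ge c\beta^{-2r}\log(1/(4\delta))$. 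The separation $>2\varepsilon$ makes the two $\varepsilon$-balls disjoint, so any estimator induces a test between $\theta_0$ and $\theta_1$.

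\emph{Part 2 (necessity of \eqref{eqn:necessary_sharpness}).} I will argue by contraposition. Suppose that along some sequence $\beta_k\to0$ there are pairs $\theta_k,\theta_k'$ with $d_{\mathrm{eq}}>2\varepsilon$ and $\|L_{\theta_k,\beta_k}-L_{\theta'_k,\beta_k}\|=o(\beta_k^{r})$. The same two-point argument, applied to these pairs, shows that $n\ge c\,o(\beta_k^{r})^{-2}\log(1/\delta)$, which is $\gg\beta_k^{-2r}$. This contradicts attainability at rate $C\beta^{-2r}$. The argument needs the $\chi^2$ bound in terms of the $L^2(P_0)$ norm to be uniform over $\Theta$. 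I will secure this with a uniform lower bound on $L_{\theta,\beta}$ for small $\beta$, or with a uniform-integrability regularity assumption, citing the regularity conditions of the lower-bound theorem.

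\emph{Part 3 (upper bounds).} For the MLE I will use a sieve/covering argument. Cover the compact $\Theta$ by an $\eta$-net. For each net point $\theta'$ with $d_{\mathrm{eq}}(\theta',\theta^\star)>2\varepsilon$, \eqref{eqn:necessary_sharpness} together with a comparison $H^2\gtrsim\|L_\theta-L_{\theta'}\|_{L^2(P_0)}^2$ (valid when the likelihood ratios are bounded, which is a regularity condition) gives Hellinger separation $\gtrsim\beta^{2r}$. A Chernoff bound on the log-likelihood ratio then gives error probability $\exp(-cn\beta^{2r})$ per net point. A union bound over the finite net, plus Lipschitz continuity in $\theta$ at scale $\beta^{r}$ to pass from the net to all of $\Theta$, yields $n\lesssim\beta^{-2r}\log(N/\delta)$.

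For the method of moments, estimate the projected moments $\Phi_{\le r}$ from the empirical versions of finitely many test functions, using the geometric median of means. This gives accuracy $\sqrt{\mathrm{Var}\cdot\log(1/\delta)/n}$ with variance $O(1)$ under $P_{\theta^\star,\beta}\approx P_0$. The estimator is then the minimum-distance fit of $\theta\mapsto \mathbb{E}_{P_0}[g\,L_{\theta,\beta}]$. Stability \eqref{eqn:necessary_sharpness}, transferred to these finite-dimensional projections, makes the fit error $\beta^r$-accurate enough once $n\gtrsim\beta^{-2r}\log(1/\delta)$.

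The main obstacle is the comparison between the $L^2(P_0)$ norm of likelihood-ratio differences and the divergences that actually control testing. This comparison must hold uniformly in $\theta$ and in small $\beta$, in both directions: an upper bound by $\chi^2$ for Parts 1--2, and a lower bound by Hellinger for Part 3. I will first try to obtain it from a uniform bound $\tfrac12\le L_{\theta,\beta}\le 2$ for $\beta$ small. Failing that, I will use a truncation of $Y$ at a high-probability set, absorbing the tail mass into $\delta$.
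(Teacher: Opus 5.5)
Parts 1 and 2 and your method-of-moments argument are essentially sound. The maximum-likelihood upper bound, however, has a genuine gap.

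\textbf{The gap: the union bound over a fixed-resolution net loses $\log(1/\beta)$.} You use a single-scale $\eta$-net of $\Theta$, a per-point Chernoff bound $e^{-cn\beta^{2r}}$, and a union bound, which gives $n\gtrsim\beta^{-2r}\log(N/\delta)$. But $N$ cannot be independent of $\beta$ when $r=r_{\mathrm{glob}}\ge 2$.
\begin{itemize}
\item Whenever $\Phi_1$ is nonconstant, the map $\theta\mapsto L_{\theta,\beta}$ has Lipschitz constant of order $\beta$, not $\beta^r$.
\item To pass from a net point to its whole cell, you must resolve likelihood differences of size $\beta^{r}$. This forces $\eta\lesssim\beta^{r-1}$, so $N$ grows polynomially in $1/\beta$.
\end{itemize}
What your argument actually proves is therefore $n\gtrsim\beta^{-2r}\bigl(\log(1/\delta)+\log(1/\beta)\bigr)$, not the claimed $\beta^{-2r}\log(1/\delta)$.

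The missing idea is localization. Each alternative must be charged at its own Hellinger distance. Split the alternatives into shells $\{2^j\tau\le h(P_{\theta,\beta},P_{\theta^\star,\beta})<2^{j+1}\tau\}$ with $\tau\asymp\beta^r$, and cover each shell at a resolution proportional to its radius.
\begin{itemize}
\item The covering numbers are then scale-free, $H_{[]}\le D\log(1+C\tau/u)$. This holds because $\sqrt{L_{\theta,\beta}}$ lies in a fixed finite-dimensional space up to a remainder whose $\sup_\theta$-envelope is $O(\beta^{2r+1})=o(\tau^2)$ (Assumption~\ref{ass:uniform_likelihood_remainder_envelope}, Lemma~\ref{lem:finite_dimensional_local_hellinger_complexity}).
\item The Wong--Shen peeling bound (Lemma~\ref{lem:likelihood_ratio_maximal_inequality}) then sums the shells to $Ce^{-cn\beta^{2r}}$, with no $\log(1/\beta)$.
\end{itemize}

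\textbf{Smaller points.}
\begin{itemize}
\item Your comparison $h^2\gtrsim\|L_{\theta,\beta}-L_{\theta',\beta}\|_{L^2(P_0)}^2$ ``for bounded likelihood ratios'' excludes the Gaussian and Poisson models, whose likelihood ratios are unbounded. The paper obtains it from $h\ge\mathrm{TV}=\tfrac12\|L_{\theta,\beta}-L_{\theta',\beta}\|_{L^1(P_0)}$ together with $L^1$--$L^2$ norm equivalence on the finite-dimensional span (Lemma~\ref{lem:finite_dimensional_likelihood_hellinger_comparison}).
\item A pointwise lower bound on $L_{\theta,\beta}$ likewise fails for Gaussian channels. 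Part 1 only needs the reciprocal moment bound of Assumption~\ref{ass:reciprocal_likelihood_regularity} combined with H\"older.
\item Part 2 is exactly your truncation fallback. Splitting on $\{L_{\theta,\beta}\ge 1/2\}$ gives $h^2\le 2\|L_{\theta,\beta}-L_{\theta',\beta}\|_{L^2(P_0)}^2+C_M\beta^M$ (Lemma~\ref{lem:hellinger_likelihood_l2_comparison}).
\item If you fall back on Hellinger in Part 1, note that $\mathrm{TV}(P^{\otimes n},Q^{\otimes n})\le\sqrt n\,h$ alone gives no $\log(1/\delta)$ factor. You need either KL or the affinity bound $\tfrac12(1-h^2/2)^{2n}$.
\end{itemize}

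\textbf{Your method-of-moments route differs mildly from the paper's.} Take the test functions to be a basis of $\mathcal{H}_1+\cdots+\mathcal{H}_r$. Projecting $L_{\theta,\beta}-L_{\theta^\star,\beta}$ onto that span loses only $O(\beta^{r+1})$, so \eqref{eqn:necessary_sharpness} transfers directly to moment separation. The paper instead derives this separation from Gram nondegeneracy, generic identifiability and compactness.
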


When the target accuracy $\varepsilon$ also tends to zero, the local order $r_{\mathrm{loc}}$ becomes relevant and can lead to a different statistical scale.

\begin{thm}[Informal main theorem: shrinking accuracy; see Theorems~\ref{thm:global_to_local_upper_bound} and~\ref{thm:global_to_local_lower_bound}]
\label{thm:informal_joint_main}
Assume that the likelihood-moment maps have the regular polynomial structure developed in Section~\ref{sec:global_to_local_algebraic}, so that order $r_{\mathrm{glob}}$ provides global separation while order $r_{\mathrm{loc}}$ provides stable local inversion along the identifiable directions. Then, as $\beta,\varepsilon\to 0$, 
\begin{align}
    n^\star(\varepsilon,\delta;\beta) \asymp \max\left\{\beta^{-2r_{\mathrm{glob}}}, \beta^{-2r_{\mathrm{loc}}}\varepsilon^{-2} \right\} \log(1/\delta),    \label{eq:intro_global_local_sample_complexity}
\end{align}
up to model-dependent constants.
\end{thm}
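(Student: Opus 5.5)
The two-sided bound \eqref{eq:intro_global_local_sample_complexity} splits into four inequalities, two lower and two upper, and I will prove each separately. The lower bound $\beta^{-2r_{\mathrm{glob}}}\log(1/\delta)$ is inherited from Theorem~\ref{thm:informal_main}(1). Since $n^\star(\varepsilon,\delta;\beta)$ is nonincreasing in $\varepsilon$, it suffices to apply Le Cam's two-point bound (Theorem~\ref{thm:lecam_lower_bound}) at a fixed $\varepsilon_0$. There we use a pair of parameters, separated by more than $2\varepsilon_0$, whose likelihood moments agree through order $r_{\mathrm{glob}}-1$.

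The second lower bound $\beta^{-2r_{\mathrm{loc}}}\varepsilon^{-2}\log(1/\delta)$ comes from a local two-point construction.
\begin{itemize}
\item Fix a regular point $\theta_0$ and an identifiable direction $v$, and set $\theta_\pm=\theta_0\pm 2\varepsilon v$, so that $d_{\mathrm{eq}}(\theta_+,\theta_-)>2\varepsilon$ for small $\varepsilon$.
\item Assume $d\Phi_{\le r_{\mathrm{loc}}-1}(\theta_0)v=0$; this is the defining property of $r_{\mathrm{loc}}$ being minimal along some direction. By the polynomial structure, we may further assume $\Phi_{\le r_{\mathrm{loc}}-1}$ is constant along a curve through $\theta_0$ tangent to $v$, and take $\theta_\pm$ on that curve.
\item Expanding $L_{\theta_+,\beta}-L_{\theta_-,\beta}=\sum_\ell \beta^\ell(\Phi_\ell(\theta_+)-\Phi_\ell(\theta_-))$, the terms with $\ell<r_{\mathrm{loc}}$ vanish. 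Each remaining term is Lipschitz in $\theta$ with constant at most $C^\ell$, by polynomiality and compactness.
\item Hence $\|L_{\theta_+,\beta}-L_{\theta_-,\beta}\|_{L^2(P_0)}\lesssim \beta^{r_{\mathrm{loc}}}\varepsilon$.
\item Chi-square tensorization bounds the divergence between $n$-fold products by $\lesssim n\beta^{2r_{\mathrm{loc}}}\varepsilon^2$. The standard Le Cam/Bretagnolle--Huber argument then gives the $\log(1/\delta)$ factor.
\end{itemize}

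For the upper bound I use a two-stage estimator, with the confidence budget split as $\delta/2$ per stage.
\begin{itemize}
\item \textbf{Stage 1 (localization).} Run the fixed-accuracy method-of-moments estimator of Theorem~\ref{thm:mom_upper_bound} at a fixed radius $\varepsilon_0$. Here $\varepsilon_0$ is chosen so that within $\varepsilon_0$ of the truth the map $\Phi_{\le r_{\mathrm{loc}}}$ is injective modulo the symmetry, with a uniformly lower-bounded differential on identifiable directions; this uses compactness and regularity. This stage costs $\beta^{-2r_{\mathrm{glob}}}\log(1/\delta)$ samples.
\item \textbf{Stage 2 (refinement).} On fresh samples, form geometric median-of-means estimates of the rescaled moments $\beta^{-\ell}\widehat\Phi_\ell$ for $\ell\le r_{\mathrm{loc}}$. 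These are obtained by inverting the triangular linear system relating the polynomial moments of $Y$ to $\beta^\ell\Phi_\ell$, as in the method-of-moments construction.
\item The variance of $\beta^{-\ell}\widehat\Phi_\ell$ is of order $\beta^{-2\ell}/n$. Its worst term is $\ell=r_{\mathrm{loc}}$, so $n\gtrsim\beta^{-2r_{\mathrm{loc}}}\varepsilon^{-2}\log(1/\delta)$ samples give accuracy $\varepsilon$ in all coordinates up to order $r_{\mathrm{loc}}$.
\item Solve the local least-squares problem $\min_{\theta\in B(\widehat\theta_1,\varepsilon_0)}\|\Phi_{\le r_{\mathrm{loc}}}(\theta)-\widehat\Phi\|$. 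The inverse function theorem, applied with the uniform injectivity lower bound, converts moment error $\varepsilon$ into $d_{\mathrm{eq}}$ error $\lesssim\varepsilon$.
\end{itemize}
Summing the two stages gives the maximum in \eqref{eq:intro_global_local_sample_complexity}.

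The main obstacle is Stage 2's uniform local inversion. The constants must not blow up as the truth ranges over $\Theta$, and higher-order terms $\beta^{\ell}\Phi_\ell$ with $\ell>r_{\mathrm{loc}}$ must not bias the estimates. I would handle the bias by estimating the exact finite-$\beta$ polynomial moments and absorbing the $O(\beta)$ corrections into the inversion. For uniformity I would restrict to a compact set of regular points on which the rank lower bound holds, which is the setting the regularity hypotheses of Section~\ref{sec:global_to_local_algebraic} are designed to guarantee.
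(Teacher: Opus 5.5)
Your architecture matches the paper's. The lower bound combines the global hard pair of Theorem~\ref{thm:lecam_lower_bound} with a local pair on a level curve of $\Phi_{\le r_{\mathrm{loc}}-1}$ obtained from the constant-rank theorem (Lemma~\ref{lem:local_hard_alternatives}). The upper bound is the two-stage estimator of Theorem~\ref{thm:global_to_local_upper_bound}. However, two steps do not go through as written.

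\textbf{Stage 2 bias.} As stated, you fit the truncated map $\Phi_{\le s}$, $s=r_{\mathrm{loc}}$, to rescaled moment estimates. In the general framework the population mean of $q_{\le s}(Y)$ is $\mathbf{G}_sD_{\beta,s}\alpha_{\le s}(\theta^\star)$ plus a tail of size $\beta^{s+1}$. This differs from settings with exact dual coordinates such as the additive-noise channel, where your triangular polynomial-moment system is exact. Dividing the order-$s$ block by $\beta^s$ therefore leaves a deterministic bias of order $\beta$. That gives an $O(\beta)$ error floor, which defeats the bound whenever $\varepsilon\ll\beta$. If $r_{\mathrm{glob}}>r_{\mathrm{loc}}$, this is the entire regime $\varepsilon\ll\beta^{r_{\mathrm{glob}}-r_{\mathrm{loc}}}$ in which the local term matters.

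Your remedy, fitting the exact finite-$\beta$ map, is what the paper does: minimum distance on $\Gamma_{s,\beta}(\theta)$ over $\Theta_{\mathrm{loc}}(\widetilde\theta)$. Your rescaling only changes the weighting; cf.\ Remark~\ref{rem:general_mom_weighting}. But the remedy works only because the tail is small in the Lipschitz sense, not merely in size:
\[
\Big\| [L_{\theta,\beta}-L_{\theta^\star,\beta}] - \sum_{\ell=1}^{s}\beta^\ell[\Phi_\ell(\theta)-\Phi_\ell(\theta^\star)] \Big\|_{L^p(P_0)} \le C\beta^{s+1} d_{\mathrm{eq}}(\theta,\theta^\star),
\]
which is Lemma~\ref{lem:locally_lipschitz_analytic_tail}. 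This yields $\|\Gamma_{s,\beta}(\theta)-\Gamma_{s,\beta}(\theta^\star)\|_2\ge b\beta^s d_{\mathrm{eq}}(\theta,\theta^\star)$ at every scale down to $\varepsilon$. Lemma~\ref{lem:minimum_distance_reduction} then finishes the argument.

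This Lipschitz tail bound comes from the $\theta$-differentiated expansion in Assumption~\ref{ass:local_likelihood_regularity}. It does not follow from polynomiality and compactness of the individual $\alpha_\ell$. The same lemma is what actually justifies your ``Lipschitz constant $C^\ell$'' step in the lower bound.

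Also localize in Stage~1 to a fraction of the inversion radius; the paper uses $\eta/4$ with search radius $\eta/2$. With your choices, $B(\widehat\theta_1,\varepsilon_0)$ reaches quotient distance $2\varepsilon_0$ from $\theta^\star$, where invertibility is not guaranteed.

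\textbf{Local lower bound.} The bound $\|L_{\theta_+,\beta}-L_{\theta_-,\beta}\|_{L^2(P_0)}\lesssim\beta^{s}\varepsilon$ does not by itself control
\[
\chi^2(P_{\theta_+,\beta},P_{\theta_-,\beta})=\mathbb{E}_{P_0}\bigl[(L_{\theta_+,\beta}-L_{\theta_-,\beta})^2/L_{\theta_-,\beta}\bigr].
\]
You need the reciprocal regularity of Assumption~\ref{ass:reciprocal_likelihood_regularity}, an $L^p$ bound on the difference (available from the same lemma), and H\"older, as in the paper.

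Moreover, $\chi^2$ tensorizes multiplicatively: $1+\chi^2(P^{\otimes n},Q^{\otimes n})=(1+\chi^2(P,Q))^n$. So the bound ``$\lesssim n\beta^{2s}\varepsilon^2$'' holds only when $n\chi^2=O(1)$, not in the regime $n\chi^2\asymp\log(1/\delta)$ needed for the $\log(1/\delta)$ factor. Pass instead to $D_{\mathrm{KL}}\le\chi^2$, which tensorizes additively, and apply Bretagnolle--Huber (Proposition~\ref{prop:lecam_two_point_general}).
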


The two terms in~\eqref{eq:intro_global_local_sample_complexity} correspond to different statistical tasks. The first is the cost of \emph{global localization}: identifying the correct equivalence class up to a fixed neighborhood, which requires order $\beta^{-2r_{\mathrm{glob}}}$ samples. The second is the cost of \emph{local refinement}: nearby parameters at quotient distance of order $\varepsilon$ are separated in the first locally informative likelihood moments by order $\varepsilon$, giving likelihood separation of order $\beta^{r_{\mathrm{loc}}}\varepsilon$ and hence the scale $\beta^{-2r_{\mathrm{loc}}}\varepsilon^{-2}$.

The two scales in~\eqref{eq:intro_global_local_sample_complexity} coincide when $\varepsilon\asymp\beta^{\,r_{\mathrm{glob}}-r_{\mathrm{loc}}}$. Above this scale, the dominant difficulty is global localization; below it, the dominant difficulty is fine local resolution. In particular, when $r_{\mathrm{loc}}=r_{\mathrm{glob}}=r$, the shrinking-accuracy law reduces to
\begin{align}
    n^\star(\varepsilon,\delta;\beta) \asymp \beta^{-2r}\varepsilon^{-2}\log(1/\delta).
\end{align}

\subsection{Motivating examples}
\label{subsec:algebraic_linear_perturbation_intro}

\paragraph{Existing theory: Gaussian latent-variable models.}
Much of the existing weak-signal theory has been developed for Gaussian latent-variable models of the form~\cite{balanov2026generalized}
\begin{align}
    Y=\beta A(Z)\theta^\star+\sigma\xi,
    \qquad
    \xi\sim\mathcal{N}(0,I),
    \label{eq:gaussian_latent_variable_example_intro}
\end{align}
which provide a canonical class of examples. Conditional on $Z=z$, the observation is Gaussian with mean $\beta A(z)\theta^\star$ and covariance $\sigma^2 I$, while at $\beta=0$ the model reduces to the parameter-independent null law $\mathcal{N}(0,\sigma^2 I)$. A key feature of this setting is that the weak-signal likelihood expansion separates into known observation-side coordinates, represented by Hermite polynomials associated with the Gaussian null law, and signal-side coefficients determined by the corresponding moments of the latent signal $A(Z)\theta$~\cite{balanov2026generalized,bandeira2023estimation}. This separation makes the order-by-order statistical information in the observed distribution directly comparable to the order at which the signal moments identify the unknown parameter. The random operator $A(Z)$ may represent, for example, a component selector, shift, group action, projection, mask, or latent structural state. This framework includes Gaussian mixtures, multi-reference alignment, orbit recovery, heterogeneous and projected latent-variable models, and random masking or subsampling models~\cite{hsu2013learning,anandkumar2014tensor,chaganty2013spectral,sedghi2016provable,bendory2017bispectrum,abbe2018multireference,abbe2018estimation,bandeira2023estimation,katsevich2023likelihood,bendory2022super}.

\paragraph{Algebraic weak-signal perturbation models.}
A broader class with the same basic structure is obtained from a known parametric observation family $\{P_m:m\in\mathcal{M}\}$, where $m$ is the parameter indexing the observation distribution. Conditional on $Z=z$, we assume that this parameter follows the weak-signal perturbation
\begin{align}
    m_{\theta,\beta}(z) =  m_0+\beta s(z;\theta),    \label{eq:linear_perturbation_intro}
\end{align}
where $m_0$ is a parameter-independent null value and $s(z;\theta)$ is the latent signal perturbation. Accordingly, $Y\vert Z=z\sim P_{m_{\theta,\beta}(z)}$. Thus, the dependence of the conditional observation law on $(\theta,\beta,z)$ is mediated entirely through the observation-family parameter $m_{\theta,\beta}(z)$. The perturbation in~\eqref{eq:linear_perturbation_intro} is linear in the signal-strength parameter $\beta$. When $\theta\mapsto s(z;\theta)$ is polynomial, we call the resulting model an \emph{algebraic weak-signal perturbation model}; the important special case $s(z;\theta)=A(z)\theta$ is also linear in $\theta$.

We develop this class formally in Section~\ref{subsec:linear_weak_signal_perturbations}.  In particular, two broad classes of observation models fit naturally into this framework, which we describe next.

\paragraph{Additive-noise models.}
A particularly important specialization of the preceding framework is obtained when the observation family is generated by additive noise. In this case,
\begin{align}
    Y=\beta s(Z;\theta^\star)+\xi,
    \label{eq:additive_noise_example_intro}
\end{align}
where $\xi$ has a known distribution and is independent of $Z$. Equivalently, conditional on $Z=z$, the observation law is a translation of the noise distribution by the weak signal $\beta s(z;\theta^\star)$. The Gaussian model~\eqref{eq:gaussian_latent_variable_example_intro} is the canonical example. More generally, for a broad class of additive-noise distributions, suitable Appell polynomials play the role of the Hermite polynomials and provide observation-side coordinates that separate the likelihood expansion from the latent signal moments~\cite{ta2015probabilistic}. We study this construction in Section~\ref{subsec:additive_noise_channels}.

\paragraph{Exponential-family models.}
The algebraic weak-signal perturbation framework also includes exponential-family observation models, in which the parameter of the conditional observation law is perturbed around a fixed null value according to~\eqref{eq:linear_perturbation_intro}. A canonical example is provided by count-valued observations,
\begin{align}
    Y\vert Z=z \sim \operatorname{Poisson}\!\left( \lambda_0+\beta s(z;\theta^\star) \right),
    \qquad
    \lambda_0>0, \label{eq:poisson_latent_variable_example_intro}
\end{align}
where $\lambda_0$ is a known background intensity. At $\beta=0$, the model reduces to the parameter-independent null law $\operatorname{Poisson}(\lambda_0)$. Such models arise naturally in Poisson inverse problems and photon-limited imaging, where the observed counts constitute weak perturbations of a background intensity~\cite{antoniadis2006poisson,nowak2000statistical,timmermann2002multiscale}, and they also include finite Poisson mixtures and related latent-variable models for count data~\cite{karlis2007finite}.
More generally, the intensity may be vector-valued, with $s(Z;\theta^\star)$ describing, for example, a latent shift, a component-dependent intensity profile, a projection, or another structured perturbation of the background intensity.
Section~\ref{subsec:mean_scaling_exponential_family} develops the corresponding construction for exponential-family models.

\subsection{Related work}
\label{subsec:related_work}

This work connects two lines of research: moment methods for latent-variable models and likelihood-based analyses of weak-signal asymptotics. Moment and tensor methods are classical tools for finite mixtures, hidden-variable models, hidden Markov models, and mixtures of regressions~\cite{lindsay1989moment,hsu2013learning,anandkumar2014tensor,chaganty2013spectral,sedghi2016provable}. In much of this literature, moments are introduced as model-specific statistics that yield identifiability results and computationally tractable algorithms. In our setting, the relevant moments instead arise canonically from the likelihood-ratio expansion around the common zero-signal law: its Taylor coefficients define the likelihood moments that organize the weak-signal statistical information. In structured models, these likelihood moments reduce to familiar null-adapted features, such as Hermite moments in the Gaussian case~\cite{balanov2026generalized}.

Orbit-recovery problems, including those motivated by structural biology, provide an important example of this connection. At low signal-to-noise ratio (SNR), individual latent transformations are typically unrecoverable, while the underlying signal may still be identified from aggregate invariant information. In Gaussian orbit-recovery models, the relevant likelihood moments correspond to familiar invariant moments of the group action, including the power spectrum and bispectrum, and more generally to invariant polynomials~\cite{bendory2017bispectrum,perry2019sample,bandeira2023estimation}. This viewpoint separates two aspects of the problem: estimating the likelihood moments from noisy observations and determining whether those moments identify the underlying parameter. Our framework formulates this separation directly through the likelihood expansion and extends it beyond Gaussian observation models.

A parallel line of work studies the likelihood geometry of Gaussian latent-variable models in the low-SNR regime. These analyses show that, for Gaussian mixtures and orbit-recovery models, the likelihood and Fisher information are organized by the lowest moment order at which identifiability occurs~\cite{katsevich2023likelihood,fan2023likelihood,fan2024maximum,balanov2026generalized}. The present paper builds on this viewpoint by developing a general weak-signal likelihood framework, distinguishing identifiability of the likelihood moments from finite-$\beta$ likelihood separation, and separating the orders governing global localization and fine local resolution. 

\subsection{Notation}
Throughout, $\xrightarrow[]{\mathcal{D}}$, $\xrightarrow[]{\mathbb{P}}$, $\xrightarrow[]{\mathrm{a.s.}}$, and $\xrightarrow[]{L^p}$ denote convergence in distribution, in probability, almost surely, and in $L^p$, respectively.
We write $\|\cdot\|$ for the Euclidean norm on vectors and $\|\cdot\|_F$ for the Frobenius norm on matrices. 

We denote by $P_{\theta,\beta}$  the observed law of $Y$. At $\beta=0$, all parameters induce the same null law $P_0$. We write $L^2(P_0)$ for the Hilbert space of square-integrable functions of the observed variable $Y$ under the null law, equipped with the norm
\begin{align}
    \|f\|_{L^2(P_0)}^2 = \mathbb{E}_{Y\sim P_0}\left[f(Y)^2\right].
\end{align}

Recall the definition of the likelihood moments $\Phi_\ell(\theta)$ from~\eqref{eq:likelihood_moment_definition}. Throughout, equality of likelihood moments is understood componentwise in $L^2(P_0)$.
Thus, $\Phi_{\le k}(\theta)=\Phi_{\le k}(\theta')$ means that  $\left\|\Phi_\ell(\theta)-\Phi_\ell(\theta')\right\|_{L^2(P_0)}=0$ for $\ell=1,\ldots,k.$

For $v\in\mathbb{R}^d$, its order-$k$ tensor power $v^{\otimes k}\in(\mathbb{R}^d)^{\otimes k}$ has entries $(v^{\otimes k})_{i_1,\ldots,i_k} = v_{i_1}\cdots v_{i_k}$. For a random vector $Y$ with finite moments of order $k$, its order-$k$ tensor moment is $\mathbb{E}\!\left[Y^{\otimes k}\right]$, whose $(i_1,\ldots,i_k)$ entry is $\mathbb{E}[Y_{i_1}\cdots Y_{i_k}]$. For $n\geq 1$, we denote by $P^{\otimes n}$ the law of $n$ i.i.d.\ samples from $P$. 

For nonnegative sequences ${a_n}$ and ${b_n}$, we write $a_n=O(b_n)$ if there exists a constant $C<\infty$ and $n_0$ such that $a_n\le C b_n$ for all $n\ge n_0$, and $a_n=o(b_n)$ if $a_n/b_n\to 0$. We write $a_n=\Theta(b_n)$ if both $a_n=O(b_n)$ and $b_n=O(a_n)$ hold.  We also use the notation $a_n \gtrsim b_n$ to denote $b_n = O(a_n)$. Finally, we write $a_n \asymp b_n$ as a synonym for $a_n = \Theta(b_n)$.

\section{Information-theoretic characterization of weak-signal sample complexity}
\label{sec:general_framework}

\subsection{Observational equivalence and quotient geometry}
\label{subsec:general_equivalence_classes}

Throughout, we assume that the parameter space is a compact set $\Theta \subseteq \mathbb{R}^m$. To quantify estimation error and define the sample complexity, we equip $\Theta$ with a metric $d_\Theta$ that is compatible with its topology. The most natural choice is the metric induced by the  Euclidean norm, $d_\Theta(\theta_0,\theta_1) = \| \theta_0-\theta_1 \|_2$. However, other choices may be appropriate. For example, if $\Theta$ is a smooth manifold, one may use the geodesic distance induced by a Riemannian metric on $\Theta$. Throughout, we assume that $d_\Theta$ is locally equivalent to the metric induced by the Euclidean norm near the ground-truth parameter $\theta^\star$, i.e., there exists a neighborhood $U_\star \subseteq \Theta$ of $\theta^\star$ and constants $0<c_0\le C_0<\infty$ such that
\begin{align}
    c_0 \| \theta_0-\theta_1 \|_2 \le d_\Theta(\theta_0,\theta_1) \le C_0 \| \theta_0-\theta_1 \|_2,
\end{align}
for every $\theta_0,\theta_1 \in U_\star$.
Under this assumption, the specific choice of metric may affect the constants in the resulting bounds, but not the weak-signal sample-complexity exponent.

\paragraph{Observational equivalence.}
Since the parameter is identifiable only up to intrinsic symmetries of the observed distribution, we define equivalence directly at the level of the observed law~\cite{balanov2026generalized}.
This formulation covers the usual  group-orbit ambiguities that arise in latent-variable models; representative examples are summarized in Table~\ref{tab:linear_mean_scaling_models}.

\begin{definition}[Observational equivalence and equivalence class]
\label{def:general_observational_equivalence}
For $\theta,\theta' \in \Theta$, we say that $\theta$ and $\theta'$ are \emph{observationally equivalent}, and write $\theta \sim \theta'$, if $P_{\theta,\beta} = P_{\theta',\beta}$ for all sufficiently small $\beta > 0$. The \emph{equivalence class} of $\theta$ is $[\theta] \triangleq \left\{ \theta' \in \Theta : \theta' \sim \theta \right\}$.
The collection of all equivalence classes defines the \emph{quotient space} $\Theta/{\sim} \triangleq \left\{ [\theta] : \theta \in \Theta \right\}$. The identifiable target is the equivalence class $[\theta^\star]$.
\end{definition}
For the exposition in the main text, and to cover the principal examples in a concrete way, we specialize to the case where the observational equivalence relation is induced by a compact Lie group action. More general observational equivalence relations, equipped with an appropriate smooth quotient structure, are discussed in Appendix~\ref{sec:smooth_quotient_models}.

\begin{assum}[Group-induced equivalence]
\label{ass:group_induced_equivalence}
There exists a compact Lie group $G$ acting  on $\Theta$ such that
\begin{align}
    \theta \sim \theta' \quad\Longleftrightarrow\quad \theta' = g \cdot \theta
    \quad \text{for some } g \in G. \label{eq:group_induced_equivalence}
\end{align}
In addition, the metric $d_\Theta$ is invariant under this action, namely, for all $\theta_0,\theta_1 \in \Theta$ and $g \in G$, $d_\Theta(g\cdot\theta_0,g\cdot\theta_1) = d_\Theta(\theta_0,\theta_1)$.
\end{assum}

Assumption~\ref{ass:group_induced_equivalence} holds in the main examples considered in this paper; see Appendix~\ref{subsec:linear_mean_scaling_latent_variable_models}. In a $K$-component mixture model, $G$ is the permutation group acting on the component labels. In multi-reference alignment, $G$ is the cyclic-shift group. In orbit-recovery models, $G$ is the corresponding transformation group acting on the signal. The assumption also covers reflections, rotations, and other compact group-orbit models. 

\paragraph{Quotient loss.}
Since the identifiable object is an equivalence class rather than a particular representative, the distance metric should be defined on the quotient space $\Theta/{\sim}$. Under Assumption~\ref{ass:group_induced_equivalence}, we define the quotient loss by 
\begin{align} 
    d_{\mathrm{eq}}(\theta_0,\theta_1) = d_{\Theta/{\sim}}\bigl([\theta_0],[\theta_1]\bigr) = \min_{g\in G} d_\Theta(\theta_0,g\cdot\theta_1), \label{eq:orbit_distance_special_case} 
\end{align}
where the minimum is attained because $G$ is compact.
Since $\Theta$ is also compact, the orbit space $\Theta/G$ is a compact metric space, 
and the canonical projection $\pi:\Theta\to\Theta/G$ is continuous.

\subsection{Likelihood-ratio expansion}
\label{subsec:likelihood_expansion}

Recall that $P_{\theta,\beta}$ denotes the observed law of $Y$, obtained after marginalizing over the latent variable $Z$, and let $P_0$ denote the parameter-independent null law at $\beta=0$.
We write
\begin{align}
    L_{\theta,\beta}(Y) \triangleq \frac{dP_{\theta,\beta}}{dP_0}(Y) = \frac{p_{\theta,\beta}(Y)}{p_0(Y)}, 
\end{align}
for the observed likelihood ratio relative to the null law. We impose the following regularity conditions on its expansion around $\beta=0$.

\begin{assum}[Regular observed likelihood-ratio expansion]
\label{ass:regular_observed_likelihood_ratio_expansion}
For every finite $p\geq 1$, the map $\beta \mapsto L_{\theta,\beta}$ is real analytic as an $L^p(P_0)$-valued map in a neighborhood of $\beta=0$, uniformly over $\theta\in\Theta$.
\end{assum}

Assumption~\ref{ass:regular_observed_likelihood_ratio_expansion} implies that for every fixed $s\ge 1$, the likelihood ratio admits the Taylor expansion
\begin{align}
    L_{\theta,\beta}(Y) = 1+\sum_{\ell=1}^{s}\beta^\ell\phi_\ell(Y;\theta) + R_{s+1}(Y;\theta,\beta), \label{eq:observed_likelihood_ratio_expansion}
\end{align}
where $\phi_\ell(\cdot;\theta)\in L^p(P_0)$ are the likelihood moments as defined in~\eqref{eqn:likelihood_moments_def} and the remainder satisfies $\sup_{\theta\in\Theta} \left\|R_{s+1}(\cdot;\theta,\beta)\right\|_{L^p(P_0)} = O(\beta^{s+1})$, as $\beta\to 0$.  Since $L_{\theta,\beta}=dP_{\theta,\beta}/dP_0$, we have $\mathbb{E}_{Y \sim P_0} [L_{\theta,\beta}(Y)]=1$ for every $\theta$ and $\beta$. Taking $P_0$-expectations in the Taylor expansion therefore gives
\begin{align}
    \mathbb{E}_{Y\sim P_0}\left[\phi_\ell(Y;\theta)\right]=0,
    \qquad \ell\ge 1. \label{eq:observed_likelihood_ratio_centering}
\end{align}

Throughout the remainder of the paper, unless stated otherwise, we assume that Assumptions~\ref{ass:group_induced_equivalence} and~\ref{ass:regular_observed_likelihood_ratio_expansion} hold. We therefore omit them from subsequent theorem and proposition statements, and state only the additional assumptions required for each result.

\subsection{Statistical distinguishability}

We next relate the likelihood-moment expansion to the statistical distinguishability of two parameter values. For $\theta,\theta'\in\Theta$, define their \emph{first mismatching 
likelihood-moment order} by
\begin{align}
    r(\theta,\theta') \triangleq \min\left\{ \ell\geq 1: \|\Phi_\ell(\theta)-\Phi_\ell(\theta')\|_{L^2(P_0)}>0 \right\},   \label{eq:first_mismatching_likelihood_moment_order}
\end{align}
with the convention $r(\theta,\theta')=\infty$ if all likelihood moments agree.

If $r(\theta,\theta')=r<\infty$, the likelihood-ratio expansion implies that the first nonzero difference between the two observed laws, relative to the common null law, appears at amplitude $\beta^r$. To translate this $P_0$-based expansion into divergences between $P_{\theta,\beta}$ and $P_{\theta',\beta}$, we also need mild control of the reciprocal likelihood ratio, since quantities such as the $\chi^2$ divergence involve division by one of the two densities.

\begin{assum}[Reciprocal likelihood regularity]
\label{ass:reciprocal_likelihood_regularity}
There exists a fixed $p_0>2$ such that, for all sufficiently small $\beta$,
\begin{align}
    \sup_{\theta\in\Theta} \norm{ L_{\theta,\beta}^{-1}-1 }_{L^{p_0}(P_0)} = O(\beta). \label{eq:observed_likelihood_ratio_reciprocal_assumption}
\end{align}
\end{assum}

Assumption~\ref{ass:reciprocal_likelihood_regularity} ensures that passing from likelihood ratios relative to the null law $P_0$ to likelihood ratios between two nearby weak-signal laws does not change their leading order in $\beta$. It is used only in the information-theoretic lower-bound analysis. The condition holds in the weak-signal models of primary interest here; see, for example, the exponential-family models in Section~\ref{subsec:mean_scaling_exponential_family}. It mainly excludes irregular situations in which an arbitrarily small perturbation can drive the likelihood ratio close to zero on rare events. The following proposition makes the resulting divergence scale precise, where $\chi^2(P,Q)$ denotes the chi-square divergence between probability distributions $P$ and $Q$, and $D_{\mathrm{KL}}(P \| Q)$ denotes the Kullback-Leibler divergence between $P$ and $Q$. Its proof is given in Appendix~\ref{sec:proof_prop_divergence_first_likelihood_moment}.

\begin{proposition}[Weak-signal divergence from the first mismatching likelihood moment]
\label{prop:divergence_first_likelihood_moment}
Suppose Assumption~\ref{ass:reciprocal_likelihood_regularity} holds. Let $\theta_0,\theta_1\in\Theta$, and suppose that
$r(\theta_0,\theta_1)=r<\infty$.
Then, as $\beta\to 0$,
\begin{align}
    \chi^2(P_{\theta_0,\beta},P_{\theta_1,\beta}) &= \beta^{2r}\|\Phi_r(\theta_0)-\Phi_r(\theta_1)\|_{L^2(P_0)}^2 + O(\beta^{2r+1}),    \label{eq:chi_square_expansion_general}
\end{align}
and
\begin{align}
    D_{\mathrm{KL}}(P_{\theta_0,\beta}\|P_{\theta_1,\beta})
    &= \frac{\beta^{2r}}{2} \|\Phi_r(\theta_0)-\Phi_r(\theta_1)\|_{L^2(P_0)}^2 + O(\beta^{2r+1}).
    \label{eq:kl_expansion_general}
\end{align}
\end{proposition}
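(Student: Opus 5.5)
We write $L_j \triangleq L_{\theta_j,\beta}$ and $\Delta \triangleq L_0 - L_1$, so that $dP_{\theta_j,\beta}=L_j\,dP_0$. All quantities are then expressed as $P_0$-integrals. By the Taylor expansion~\eqref{eq:observed_likelihood_ratio_expansion} with $s=r$, and since $\Phi_\ell(\theta_0)=\Phi_\ell(\theta_1)$ for $\ell<r$,
\begin{align}
\Delta=\beta^r D_r+R,\qquad D_r\triangleq\phi_r(\cdot;\theta_0)-\phi_r(\cdot;\theta_1),\qquad \|R\|_{L^p(P_0)}=O(\beta^{r+1})
\end{align}
for every finite $p$, uniformly in $\theta$. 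In particular $\|\Delta\|_{L^p(P_0)}=O(\beta^r)$ for every finite $p$, and $\|\Delta\|_{L^2(P_0)}^2=\beta^{2r}\|D_r\|^2_{L^2(P_0)}+O(\beta^{2r+1})$ by expanding the square and applying Cauchy--Schwarz to the cross term.

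\emph{Chi-square.} With the convention $\chi^2(P,Q)=\int (dP/dQ-1)^2\,dQ$, we have
\begin{align}
\chi^2(P_{\theta_0,\beta},P_{\theta_1,\beta})=\mathbb{E}_{P_0}\!\left[\Delta^2/L_1\right]=\mathbb{E}_{P_0}[\Delta^2]+\mathbb{E}_{P_0}\!\left[\Delta^2(L_1^{-1}-1)\right].
\end{align}
The first term is the leading term computed above. For the second term we apply H\"older's inequality with exponents $q$ and $p_0$, where $1/q+1/p_0=1$. This requires $\|\Delta^2\|_{L^q}=\|\Delta\|_{L^{2q}}^2=O(\beta^{2r})$, which holds because $2q<\infty$; note that $p_0>2$ is not even strictly needed here, only $p_0>1$. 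Assumption~\ref{ass:reciprocal_likelihood_regularity} gives $\|L_1^{-1}-1\|_{L^{p_0}}=O(\beta)$, so the correction is $O(\beta^{2r+1})$. This proves~\eqref{eq:chi_square_expansion_general}.

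\emph{KL.} Write $u\triangleq\Delta/L_1$, so that $L_0/L_1=1+u$ and
\begin{align}
D_{\mathrm{KL}}=\mathbb{E}_{P_0}\!\left[L_1\,(1+u)\log(1+u)\right].
\end{align}
We use the elementary bound $\bigl|(1+u)\log(1+u)-u-u^2/2\bigr|\le C|u|^3/(1+u)$ for $u>-1$ (or a comparable bound). Since $\mathbb{E}_{P_0}[L_1u]=\mathbb{E}_{P_0}[\Delta]=0$, the linear term vanishes. The quadratic term is $\tfrac12\mathbb{E}_{P_0}[\Delta^2/L_1]=\tfrac12\chi^2$, which by the previous step equals $\tfrac{\beta^{2r}}2\|D_r\|^2+O(\beta^{2r+1})$.

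The main obstacle is bounding the cubic remainder $\mathbb{E}_{P_0}[L_1|u|^3/(1+u)]=\mathbb{E}_{P_0}[|\Delta|^3/(L_1L_0)]$, which involves negative powers of both likelihoods. We handle it by writing $L_j^{-1}=1+(L_j^{-1}-1)$ and splitting into at most four terms. Each term is controlled by generalized H\"older: $|\Delta|^3$ is in every $L^p$ at size $O(\beta^{3r})$, and each factor $L_j^{-1}$ is bounded in $L^{p_0}$ (using $\|L_j^{-1}\|_{L^{p_0}}\le 1+O(\beta)$). Two reciprocal factors require $2/p_0<1$, which is exactly where $p_0>2$ enters. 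The remainder is therefore $O(\beta^{3r})\le O(\beta^{2r+1})$ since $r\ge1$. If the clean elementary inequality above is awkward near $u\to-1$ or for large $u$, the fallback is the bound $D_{\mathrm{KL}}\le\chi^2$ together with the integral form $(1+u)\log(1+u)-u-u^2/2=-\int_0^u \frac{(u-t)^2}{2(1+t)^2}\,dt$. This integral is bounded in absolute value by $|u|^3\max(1,(1+u)^{-2})/6$, and the resulting term $L_1^2/L_0^2$ is handled in the same H\"older fashion (with uniform $L^p$ bounds of $L_1$ from Assumption~\ref{ass:regular_observed_likelihood_ratio_expansion}), at the cost of an extra reciprocal factor. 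This in turn requires the reciprocal moments available under $p_0>2$.
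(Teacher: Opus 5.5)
Your proposal is correct and follows essentially the same route as the paper. Both arguments use the same split $\Delta^2/L_1=\Delta^2+\Delta^2(L_1^{-1}-1)$ with H\"older for $\chi^2$, and for KL the same second-order expansion of $(1+u)\log(1+u)$, the centering $\mathbb{E}_{P_0}[\Delta]=0$, and a H\"older bound on the cubic remainder that uses $p_0>2$.

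The only difference is cosmetic. The paper uses the global bound $|(1+x)\log(1+x)-x-x^2/2|\le C|x|^3$ on $x\ge -1$, which reduces the remainder to $\mathbb{E}_{P_0}[|\Delta|^3/L_1^2]$ and needs reciprocal moments of $L_1$ only. Your bound $C|u|^3/(1+u)$ gives $\mathbb{E}_{P_0}[|\Delta|^3/(L_0L_1)]$ instead, which is equally fine because Assumption~\ref{ass:reciprocal_likelihood_regularity} is uniform in $\theta$; your fallback argument is therefore unnecessary.
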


Thus, if two parameters agree in all likelihood moments below order $r$, their first statistical separation occurs at amplitude $\beta^r$, and the corresponding $\chi^2$ and KL divergences are of order $\beta^{2r}$. This pairwise divergence scale is the basis of the sample-complexity lower bound developed next.

\subsection{Global identifiability order}
\label{sec:global_informative_order}

We next formalize the likelihood-moment order required for global identification. This is a population identifiability notion: it asks at which likelihood-moment order the truncated map determines the parameter up to observational equivalence. In many latent variable models, such identifiability holds generically rather than uniformly over the entire parameter space. We say that a property holds for a \emph{generic signal} in $\Theta$ if it holds outside an exceptional set $\Theta_{\mathrm{exc}}\subseteq\Theta$ of measure zero.

\begin{definition}[Generic global identifiability likelihood-moment order]
\label{def:generic_rglob_likelihood_main}
We say that the model is \emph{globally likelihood-moment identifiable of order $k$ for a generic signal} if there exists an exceptional set $\Theta_{\mathrm{exc}}\subseteq\Theta$ of measure zero such that, for every $\theta^\star\in\Theta\setminus\Theta_{\mathrm{exc}}$,
\begin{align}
    \label{eq:generic_global_ident_likelihood_main}
    \Phi_{\le k}(\theta)=\Phi_{\le k}(\theta^\star) \quad\Longrightarrow\quad \theta\sim\theta^\star,
    \qquad    \forall\,\theta\in\Theta .
\end{align}
The corresponding generic global identifiability likelihood-moment order is
\begin{align}
    \label{eq:generic_rglob_likelihood}
    r_{\mathrm{glob}} \triangleq \min\left\{ k\ge 1: \eqref{eq:generic_global_ident_likelihood_main} \text{ holds for a generic signal} \right\},
\end{align}
with the convention $r_{\mathrm{glob}}=\infty$ if no finite order has this property.
\end{definition}


When $r_{\mathrm{glob}}<\infty$, we define the corresponding generic parameter set
\begin{align}
    \Theta_{\mathrm{gen}} \triangleq \left\{ \theta^\star\in\Theta: \Phi_{\leq r_{\mathrm{glob}}}(\theta) = \Phi_{\leq r_{\mathrm{glob}}}(\theta^\star) \Longrightarrow \theta\sim\theta^\star \ \text{for every }\theta\in\Theta \right\}.
    \label{eq:generic_parameter_set}
\end{align}
By definition, $\Theta\setminus\Theta_{\mathrm{gen}}$ has measure zero. Since the constants appearing in uniform recovery guarantees may deteriorate as the target approaches this exceptional set, we formulate the minimax sample complexity results on arbitrary compact subsets $K\subseteq\Theta_{\mathrm{gen}}$.

\subsection{Information-theoretic sample-complexity lower bound}
\label{subsec:minimax_general}

First, we study the weak-signal limit $\beta\to 0$ with the accuracy level $\varepsilon>0$ and confidence level $\delta\in(0,1)$ held fixed. Thus, our principal interest is the dependence of the sample complexity on the signal strength $\beta$, while constants may depend on the prescribed accuracy and confidence levels.

\begin{definition}[Minimax sample complexity]
\label{def:sample_complexity_minimax}
Fix a compact set $K\subseteq\Theta_{\mathrm{gen}}$, an accuracy level $\varepsilon>0$, and a confidence level $1-\delta$, with $\delta\in(0,1)$.
For $n$ i.i.d.\ observations drawn from $P_{\theta,\beta}$, define the minimax error probability over $K$ by
\begin{align}
    \mathfrak{R}_{n}(\varepsilon;\beta) \triangleq \inf_{\widehat\theta:\mathcal{Y}^n\to\Theta} \sup_{\theta\in K}   \, P_{\theta,\beta}^{\otimes n} \left( d_{\mathrm{eq}} \bigl( \widehat\theta(Y_1,\ldots,Y_n),\theta \bigr) > \varepsilon \right),
    \label{eq:minimax_error_prob}
\end{align}
where $d_{\mathrm{eq}}$ is the quotient loss induced by observational
equivalence, as defined in~\eqref{eq:orbit_distance_special_case}.
The corresponding information-theoretic sample complexity is
\begin{align}
    n^\star(\varepsilon,\delta;\beta) \triangleq \inf \left\{ n\in\mathbb N: \mathfrak{R}_{n}(\varepsilon;\beta)\leq\delta \right\}.
    \label{eq:sample_complexity_def}
\end{align}
\end{definition}

The next result, proved in Appendix~\ref{app:proof_lecam_lower_bound}, gives the basic information-theoretic lower bound. The argument is a two-point testing reduction. If two parameter values in $K$ are separated by more than the target accuracy and agree in all likelihood moments below order $r_{\mathrm{glob}}$, then their divergence is of order $\beta^{2r_{\mathrm{glob}}}$. Consequently, distinguishing their product laws with high probability requires on the order of $\beta^{-2r_{\mathrm{glob}}}$ observations. 

\begin{thm}[Minimax lower bound from a hard pair]
\label{thm:lecam_lower_bound}
Suppose
Assumption~\ref{ass:reciprocal_likelihood_regularity} holds. Fix a compact set $K\subseteq\Theta_{\mathrm{gen}}$, $\varepsilon>0$, and $\delta\in(0,1/4)$. Assume that there exist $\theta_0,\theta_1\in K$ such that $d_{\mathrm{eq}}(\theta_0,\theta_1) > 2\varepsilon$, and $r(\theta_0,\theta_1)=r=r_{\mathrm{glob}}<\infty$, where $r(\theta_0,\theta_1)$ is the first mismatching likelihood-moment order defined in \eqref{eq:first_mismatching_likelihood_moment_order}. Then, there exists a constant $c_{\theta_0,\theta_1}>0$, independent of $\beta$ and $\delta$, such that for all sufficiently small $\beta$,
\begin{align}
    n^\star(\varepsilon,\delta;\beta) \geq c_{\theta_0,\theta_1} \beta^{-2r} \log\left(1/\delta\right). \label{eq:minimax_hard_pair_lower_bound}
\end{align}
\end{thm}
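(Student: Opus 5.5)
The plan is a standard two-point (Le Cam) testing reduction combined with Proposition~\ref{prop:divergence_first_likelihood_moment}. First, reduce estimation to testing. Let $\widehat\theta$ be any estimator with $\sup_{\theta\in K}P^{\otimes n}_{\theta,\beta}(d_{\mathrm{eq}}(\widehat\theta,\theta)>\varepsilon)\le\delta$. Define the test $\psi=\mathbbm{1}\{d_{\mathrm{eq}}(\widehat\theta,\theta_1)<d_{\mathrm{eq}}(\widehat\theta,\theta_0)\}$. The quotient loss is a genuine metric on $\Theta/G$: it is invariant under the group action because $d_\Theta$ is $G$-invariant, and the triangle inequality passes to orbits. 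Hence $d_{\mathrm{eq}}(\theta_0,\theta_1)>2\varepsilon$ implies that $d_{\mathrm{eq}}(\widehat\theta,\theta_j)\le\varepsilon$ forces $\psi=j$. Both type-I and type-II errors of $\psi$ are therefore at most $\delta$, so
\[
\delta\ \ge\ \tfrac12\inf_{\psi}\bigl[P_{\theta_0,\beta}^{\otimes n}(\psi=1)+P_{\theta_1,\beta}^{\otimes n}(\psi=0)\bigr].
\]

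Second, lower bound the testing error in a form that yields the $\log(1/\delta)$ factor. I would use the Bretagnolle--Huber inequality: the sum of the two error probabilities is at least $\tfrac12\exp(-D_{\mathrm{KL}}(P_{\theta_0,\beta}^{\otimes n}\|P_{\theta_1,\beta}^{\otimes n}))$. Tensorization gives $D_{\mathrm{KL}}(P^{\otimes n}_{\theta_0,\beta}\|P^{\otimes n}_{\theta_1,\beta})=n\,D_{\mathrm{KL}}(P_{\theta_0,\beta}\|P_{\theta_1,\beta})$. Combining, $2\delta\ge\tfrac12 e^{-nD}$, so $n\ge D^{-1}\log(1/(4\delta))$. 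Since $\delta<1/4$, this is positive. To convert $\log(1/(4\delta))$ into $\log(1/\delta)$, I would note that $\log(1/(4\delta))\ge c\log(1/\delta)$ only when $\delta$ is bounded away from $1/4$. To handle $\delta$ near $1/4$, I would also use the Le Cam/Pinsker bound: the testing error is at least $\tfrac12(1-\sqrt{nD/2})$, which gives $n\gtrsim D^{-1}$ whenever $\delta<1/4$. Combining the two regimes gives $n\ge c\,D^{-1}\log(1/\delta)$ with an absolute constant $c$.

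Third, plug in the divergence scale. By Proposition~\ref{prop:divergence_first_likelihood_moment}, which uses Assumption~\ref{ass:reciprocal_likelihood_regularity}, with $r=r(\theta_0,\theta_1)$ we have
\[
D_{\mathrm{KL}}(P_{\theta_0,\beta}\|P_{\theta_1,\beta})=\tfrac{\beta^{2r}}{2}\Delta^2+O(\beta^{2r+1}),\qquad \Delta=\|\Phi_r(\theta_0)-\Phi_r(\theta_1)\|_{L^2(P_0)}.
\]
Here $\Delta>0$ by the definition of $r$. For $\beta$ small enough that the remainder is at most $\tfrac{\beta^{2r}}{2}\Delta^2$, we get $D\le\beta^{2r}\Delta^2$. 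Therefore $n^\star\ge (c/\Delta^2)\beta^{-2r}\log(1/\delta)$, and we set $c_{\theta_0,\theta_1}=c/\Delta^2$. This constant does not depend on $\beta$ or $\delta$. The threshold on $\beta$ depends only on the pair $(\theta_0,\theta_1)$ through the $O(\cdot)$ constant.

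The main obstacle is the bookkeeping in the second step, that is, producing a clean $\log(1/\delta)$ uniformly over all $\delta\in(0,1/4)$. The $\beta$-dependence is already supplied by the proposition. If the two-regime argument becomes awkward, I would fall back on the $\chi^2$ expansion in the same proposition together with $1+\chi^2$ tensorization, which gives an identical $\beta^{2r}$ scaling.
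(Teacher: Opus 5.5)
Your proposal is correct and follows essentially the same route as the paper's proof. Both reduce estimation to a two-point test and combine Bretagnolle--Huber with Pinsker to obtain $nD_{\mathrm{KL}}\geq c\log(1/\delta)$ uniformly over $\delta\in(0,1/4)$, then use KL tensorization and the $\beta^{2r}$ expansion from Proposition~\ref{prop:divergence_first_likelihood_moment} to conclude.
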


\subsection{Conditions for minimax sharpness}
\label{sec:likelihood_moment_achievability}

The preceding lower bound identifies $\beta^{-2r_{\mathrm{glob}}}$ as the weak-signal sample-complexity scale suggested by likelihood-moment identifiability. However, identifiability of the individual likelihood coefficients does not by itself guarantee that this rate is attainable uniformly over the parameter space. At finite $\beta$, differences arising at distinct likelihood orders are recombined in the observed likelihood and may cancel. We next show that, for any fixed accuracy, ruling out excessive cancellation between parameter values that must be distinguished is an information-theoretic requirement for attaining the predicted minimax rate. The proof is provided in Appendix~\ref{subsec:proof_thm_cross_order_stability_necessity}.

\begin{thm}[Necessary likelihood separation for minimax sharpness]
\label{thm:cross_order_stability_necessity}
Let $r=r_{\mathrm{glob}}<\infty$. Fix a compact set $K\subseteq\Theta_{\mathrm{gen}}$, an accuracy level $\varepsilon>0$, and $0<\delta_0<1/2$. Suppose that there exists $C_{\varepsilon}<\infty$ such that, for all sufficiently small $\beta$,
\begin{align}
    n^\star(\varepsilon,\delta_0;\beta) \leq C_{\varepsilon}\beta^{-2r}. \label{eq:minimax_nominal_upper_bound}
\end{align}
Then there exist constants
$c_{\varepsilon}>0$ and $\beta_0>0$ such that, for every
$0<\beta\leq\beta_0$,
\begin{align}
    \inf_{\substack{\theta,\theta'\in K:\\ d_{\mathrm{eq}}(\theta,\theta')>2\varepsilon}} \left\| L_{\theta,\beta}-L_{\theta',\beta} \right\|_{L^2(P_0)} \geq c_{\varepsilon}\beta^r. \label{eq:necessary_likelihood_separation}
\end{align}
\end{thm}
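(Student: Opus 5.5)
We argue by contraposition, reducing the statement to a two-point testing bound. The two key ingredients are that an estimator achieving accuracy $\varepsilon$ with error probability at most $\delta_0<1/2$ yields a test between any two parameters at quotient distance greater than $2\varepsilon$, and that the total variation between the $n$-fold product laws is controlled by $\sqrt{n}$ times a single-sample Hellinger (or $\chi^2$) distance.

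\textbf{Step 1: from an estimator to a test.} Fix $\beta$ small and set $n=n^\star(\varepsilon,\delta_0;\beta)\le C_\varepsilon\beta^{-2r}$. Since $\mathfrak R_n(\varepsilon;\beta)\le\delta_0$, for every $\eta>0$ there is an estimator $\widehat\theta$ with worst-case error probability at most $\delta_0+\eta<1/2$. Take $\theta,\theta'\in K$ with $d_{\mathrm{eq}}(\theta,\theta')>2\varepsilon$. By the triangle inequality for $d_{\mathrm{eq}}$ on the quotient, the balls of radius $\varepsilon$ around $[\theta]$ and $[\theta']$ are disjoint. The test that declares $\theta$ whenever $d_{\mathrm{eq}}(\widehat\theta,\theta)\le\varepsilon$ therefore has both error probabilities at most $\delta_0+\eta$. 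Hence
\begin{align}
\mathrm{TV}\bigl(P_{\theta,\beta}^{\otimes n},P_{\theta',\beta}^{\otimes n}\bigr)\ge 1-2\delta_0-2\eta=:\gamma>0,
\end{align}
uniformly over all such pairs.

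\textbf{Step 2: tensorize through Hellinger.} We use $\mathrm{TV}(P^{\otimes n},Q^{\otimes n})\le\sqrt{2}\,H(P^{\otimes n},Q^{\otimes n})$ together with $1-H^2(P^{\otimes n},Q^{\otimes n})/2=(1-H^2(P,Q)/2)^n$, which gives $H^2(P^{\otimes n},Q^{\otimes n})\le nH^2(P,Q)$. Thus $nH^2(P_{\theta,\beta},P_{\theta',\beta})\ge\gamma^2/2$.

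Next we bound the single-sample Hellinger distance by the $L^2(P_0)$ distance of the likelihood ratios, with no reciprocal assumption. Since $(\sqrt a-\sqrt b)^2\le|a-b|$, Cauchy--Schwarz gives
\begin{align}
H^2(P_{\theta,\beta},P_{\theta',\beta})=\mathbb E_{P_0}\Bigl[\bigl(\sqrt{L_{\theta,\beta}}-\sqrt{L_{\theta',\beta}}\bigr)^2\Bigr]\le\|L_{\theta,\beta}-L_{\theta',\beta}\|_{L^1(P_0)}\le\|L_{\theta,\beta}-L_{\theta',\beta}\|_{L^2(P_0)}.
\end{align}
Combining the two displays yields only $\|L_{\theta,\beta}-L_{\theta',\beta}\|_{L^2(P_0)}\gtrsim\beta^{2r}$, which is the wrong power. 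This is the main obstacle: we need a quadratic comparison. We use instead
\begin{align}
H^2\le\int\frac{(p-q)^2}{(\sqrt p+\sqrt q)^2}\le\mathbb E_{P_0}\Bigl[\frac{(L_{\theta,\beta}-L_{\theta',\beta})^2}{L_{\theta,\beta}+L_{\theta',\beta}}\Bigr].
\end{align}
The obstruction now sits in the denominator: where $L_{\theta,\beta}+L_{\theta',\beta}$ is small, the weight $1/(L_{\theta,\beta}+L_{\theta',\beta})$ is not bounded by a constant.

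\textbf{Step 3: handle the denominator.} We split on the event $E=\{L_{\theta,\beta}\ge 1/2\}$.

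On $E$ the integrand is at most $2(L_{\theta,\beta}-L_{\theta',\beta})^2$, contributing at most $2\|L_{\theta,\beta}-L_{\theta',\beta}\|_{L^2(P_0)}^2$.

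On $E^c$ the integrand is at most $|L_{\theta,\beta}-L_{\theta',\beta}|$, since $|a-b|\le a+b$ for $a,b\ge0$. Applying H\"older with exponents $2$ and $2$ gives a bound of $\|L_{\theta,\beta}-L_{\theta',\beta}\|_{L^2(P_0)}\,P_0(E^c)^{1/2}$. The Taylor expansion under Assumption~\ref{ass:regular_observed_likelihood_ratio_expansion} gives $\|L_{\theta,\beta}-1\|_{L^p(P_0)}=O(\beta)$ uniformly in $\theta$, so Markov's inequality yields $P_0(E^c)\le 2^p\,O(\beta^p)$ for any finite $p$. Hence the $E^c$ term is $\le C_p\beta^{p/2}\|L_{\theta,\beta}-L_{\theta',\beta}\|_{L^2(P_0)}$.

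Writing $D=\|L_{\theta,\beta}-L_{\theta',\beta}\|_{L^2(P_0)}$, we obtain
\begin{align}
\gamma^2/2\le n\bigl(2D^2+C_p\beta^{p/2}D\bigr),\qquad n\le C_\varepsilon\beta^{-2r}.
\end{align}
If $D<c\beta^r$ with $c$ small, the right side is at most $C_\varepsilon\bigl(2c^2+C_pc\,\beta^{p/2-r}\bigr)$. Choosing $p>2r$ makes the second term vanish as $\beta\to0$, and choosing $c$ small then makes the whole bound fall below $\gamma^2/2$ for all $\beta\le\beta_0$, a contradiction. Since $c$, $\beta_0$ and all constants are uniform over pairs in $K$, this bounds the infimum and proves the theorem.
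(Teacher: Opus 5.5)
Your proposal is correct and follows essentially the paper's route. The paper likewise turns the minimax guarantee into $h(P_{\theta,\beta},P_{\theta',\beta})\geq(1-2\delta_0)/\sqrt{n}$ through the estimator-induced test and Hellinger tensorization. It then bounds $h^2$ by $2\|L_{\theta,\beta}-L_{\theta',\beta}\|_{L^2(P_0)}^2$ plus a negligible term, by splitting on $\{L_{\theta,\beta}\geq 1/2\}$ and using $P_0(L_{\theta,\beta}<1/2)=O(\beta^p)$.

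The differences are only cosmetic. You bound the bad-event contribution multiplicatively by $\|L_{\theta,\beta}-L_{\theta',\beta}\|_{L^2(P_0)}P_0(E^c)^{1/2}$, whereas the paper bounds it additively by $C_M\beta^M$. You also handle the near-attainment of the infimum over estimators explicitly via the slack $\eta$, a detail the paper passes over.
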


Thus, if the rate $\beta^{-2r_{\mathrm{glob}}}$ is attainable uniformly over a compact parameter class $K$, then every pair of parameters in $K$ that must be distinguished at accuracy $\varepsilon$ must induce likelihood ratios separated at order $\beta^{r_{\mathrm{glob}}}$. The cancellations that can violate this condition require a special alignment between different orders of the expansion and are typically nongeneric in algebraically structured models. They are nevertheless important conceptually, because they show that coefficient-wise identifiability alone does not determine the statistical order. The following simple example illustrates such cancellation.

\begin{example}[Why $r_{\mathrm{glob}}$ alone is insufficient]
\label{ex:rglob_insufficient}
Let $P_0$ be the uniform distribution on $\{-1,1\}^2$, write $Y=(Y_1,Y_2)$, and let $\theta\in[-1,1]$. For sufficiently small $\beta$, consider the likelihood ratio
\begin{align}
    L_{\theta,\beta}(Y) = 1+\left(\beta\theta^2+\beta^2\theta\right)Y_1 + \beta^3\theta Y_2.    \label{eq:cancellation_counterexample_likelihood}
\end{align}
The first two likelihood coefficients are $\Phi_1(\theta)=\theta^2Y_1$ and $\Phi_2(\theta)=\theta Y_1$.
The first coefficient does not identify $\theta$, since $\Phi_1(\theta)=\Phi_1(-\theta)$, whereas the first two coefficients together do. Hence, $r_{\mathrm{glob}}=2$.

Fix any $\theta_0\in[-1,1)\setminus \{0\}$ and, for sufficiently small $\beta>0$, define $\theta_\beta=-\theta_0-\beta$. Then $\theta_\beta\in[-1,1]$ for all sufficiently small $\beta$, and $|\theta_\beta-\theta_0| = |2\theta_0+\beta| \to 2|\theta_0|>0$.
Thus, the pair remains separated by a fixed positive distance as $\beta\to 0$. On the other hand,
\begin{align}
    \nonumber L_{\theta_\beta,\beta}-L_{\theta_0,\beta}
    &=
    \left[ \beta(\theta_\beta^2-\theta_0^2) +\beta^2(\theta_\beta-\theta_0) \right]Y_1 + \beta^3(\theta_\beta-\theta_0)Y_2
    \\ \nonumber 
    &=
    \beta(\theta_\beta-\theta_0) \bigl(\theta_\beta+\theta_0+\beta\bigr)Y_1 +\beta^3(\theta_\beta-\theta_0)Y_2
    \\
    &= -\beta^3(2\theta_0+\beta)Y_2.
\end{align}
Hence, the order-one and order-two contributions cancel exactly, and
\begin{align}
    \norm{L_{\theta_\beta,\beta}-L_{\theta_0,\beta}}_{L^2(P_0)} = \beta^3|2\theta_0+\beta| \asymp \beta^3.
\end{align}
The corresponding divergence is therefore of order $\beta^6$, so distinguishing this fixed-accuracy pair requires order $\beta^{-6}$ samples, rather than the order $\beta^{-4}$ suggested by $r_{\mathrm{glob}}=2$. The failure is not confined to an exceptional set of signals: for every fixed $\theta_0\in[-1,1)\setminus{0}$, one can construct a $\beta$-dependent, fixed-distance alternative whose lower-order likelihood contributions cancel.
\end{example}

\section{Rate-optimal recovery via maximum likelihood and method-of-moments}
\label{sec:rate-optimal-recovery}

The preceding section showed that attaining the weak-signal scale $\beta^{-2r_{\mathrm{glob}}}$ requires the finite-$\beta$ likelihoods of well-separated parameters to remain separated at order $\beta^{r_{\mathrm{glob}}}$. We now give verifiable conditions ensuring this separation and establish matching upper bounds for maximum likelihood and the method-of-moments.

\subsection{Finite-dimensional likelihood geometry}
\label{subsec:finite_dimensional_likelihood_geometry}

We first impose an additional condition that each likelihood-moment order lies in a fixed finite-dimensional subspace of $L^2(P_0)$. This structure is standard in many of the applications considered in the literature and yields a decomposition into parameter-dependent coefficients and fixed observation-side likelihood coordinates. Although finite-dimensionality is not intrinsically required for the maximum likelihood estimator upper bound, it provides a convenient common framework for the present analysis and is essential for the method-of-moments estimator developed below. A more general maximum likelihood formulation, which does not require finite-dimensional likelihood moments, is given in Appendix~\ref{app:general_mle}.

\begin{assum}[Finite-dimensional likelihood geometry]
\label{ass:finite_dimensional_likelihood_geometry}
For every $\ell\geq 1$, the linear subspace $\mathcal{H}_\ell \triangleq \operatorname{span} \left\{ \Phi_\ell(\theta): \theta\in\Theta \right\} \subset L^2(P_0)$ is finite dimensional, and $\theta\mapsto\Phi_\ell(\theta)$ is continuous as a map from $\Theta$ into $L^2(P_0)$.
\end{assum}

Under Assumption~\ref{ass:finite_dimensional_likelihood_geometry}, let $D_\ell \triangleq \dim(\mathcal{H}_\ell) <\infty$, and choose a basis $q_{\ell,1},\ldots,q_{\ell,D_\ell}$ of $\mathcal{H}_\ell$. Then, for every $\theta\in\Theta$, there exists a unique coordinate vector $\alpha_\ell(\theta) \triangleq \bigl(\alpha_{\ell,1}(\theta), \ldots,  \alpha_{\ell,D_\ell}(\theta) \bigr) \in\mathbb{R}^{D_\ell}$ such that
\begin{align}
    \phi_\ell(y;\theta) = \sum_{j=1}^{D_\ell} \alpha_{\ell,j}(\theta)q_{\ell,j}(y).    \label{eqn:finite_dimentional_repr}
\end{align}
We define the corresponding  likelihood-coordinate vector $q_\ell(y) \triangleq \bigl( q_{\ell,1}(y), \ldots, q_{\ell,D_\ell}(y) \bigr) \in\mathbb{R}^{D_\ell}$. Then~\eqref{eqn:finite_dimentional_repr} becomes $\phi_\ell(y;\theta) = \left\langle \alpha_\ell(\theta), q_\ell(y) \right\rangle$, and the likelihood-ratio expansion can be written as
\begin{align}
    L_{\theta,\beta}(y) = 1+ \sum_{\ell\geq 1} \beta^\ell \left\langle \alpha_\ell(\theta), q_\ell(y) \right\rangle. \label{eqn:finite_dimensional_likelihood_ratio_expansion}
\end{align}

This representation separates two roles that will be important below.
The coordinate vector $\alpha_\ell(\theta)$ carries the dependence on the unknown parameter, whereas the basis functions $q_\ell$ describe the observation-side geometry of the order-$\ell$ likelihood coefficient.
The choice of basis for $\mathcal{H}_\ell$ is not intrinsic, but once a basis is fixed the coordinate vector $\alpha_\ell(\theta)$ is uniquely determined.

\begin{lem}[Coordinate faithfulness]
\label{lem:coordinate_faithfulness}
Suppose
Assumption~\ref{ass:finite_dimensional_likelihood_geometry} holds.
For every fixed $k\geq 1$, write $\alpha_{\leq k}(\theta) \triangleq \bigl(\alpha_1(\theta),\ldots,\alpha_k(\theta) \bigr) \in\mathbb{R}^{D_1+\cdots+D_k}$. Then, for every $\theta,\theta'\in\Theta$,
\begin{align}
    \Phi_{\leq k}(\theta)=\Phi_{\leq k}(\theta')
    \quad\Longleftrightarrow\quad
    \alpha_{\leq k}(\theta)=\alpha_{\leq k}(\theta').
    \label{eq:coordinate_faithfulness}
\end{align}
\end{lem}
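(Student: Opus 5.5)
The lemma is a linear-algebra statement about coordinates in a fixed finite-dimensional subspace, so the plan is to reduce it to the order-by-order claim and then use uniqueness of basis coordinates. By the convention fixed in the Notation section, $\Phi_{\le k}(\theta)=\Phi_{\le k}(\theta')$ means $\|\Phi_\ell(\theta)-\Phi_\ell(\theta')\|_{L^2(P_0)}=0$ for each $\ell=1,\ldots,k$. Likewise, $\alpha_{\le k}(\theta)=\alpha_{\le k}(\theta')$ means $\alpha_\ell(\theta)=\alpha_\ell(\theta')$ for each such $\ell$. It therefore suffices to fix a single order $\ell$ and prove
\begin{align*}
\Phi_\ell(\theta)=\Phi_\ell(\theta') \text{ in } L^2(P_0) \quad\Longleftrightarrow\quad \alpha_\ell(\theta)=\alpha_\ell(\theta') \text{ in } \mathbb{R}^{D_\ell}.
\end{align*}
The conjunction over $\ell\le k$ then gives~\eqref{eq:coordinate_faithfulness}.

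For a fixed $\ell$, the key step is to subtract the two representations~\eqref{eqn:finite_dimentional_repr}. Since $\Phi_\ell(\theta)$ and $\Phi_\ell(\theta')$ both lie in $\mathcal{H}_\ell$, linearity gives
\begin{align*}
\Phi_\ell(\theta)-\Phi_\ell(\theta')=\sum_{j=1}^{D_\ell}\bigl(\alpha_{\ell,j}(\theta)-\alpha_{\ell,j}(\theta')\bigr)q_{\ell,j}
\end{align*}
as elements of $L^2(P_0)$.

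\emph{($\Leftarrow$)} If the coordinate vectors agree, every coefficient on the right-hand side vanishes, so the difference is the zero element of $L^2(P_0)$ and its norm is zero.

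\emph{($\Rightarrow$)} If the difference is zero in $L^2(P_0)$, then the displayed identity is a vanishing linear combination of $q_{\ell,1},\ldots,q_{\ell,D_\ell}$. These functions form a basis of $\mathcal{H}_\ell$ and are therefore linearly independent, so every coefficient $\alpha_{\ell,j}(\theta)-\alpha_{\ell,j}(\theta')$ is zero. Equivalently, this is the uniqueness of the coordinate vector that was already noted after~\eqref{eqn:finite_dimentional_repr}.

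The only point needing care is that all identities are taken as equalities of $P_0$-equivalence classes, not pointwise in $y$. The basis, and hence linear independence, must therefore be understood in $L^2(P_0)$; this is exactly how $\mathcal{H}_\ell$ and its basis were defined in Assumption~\ref{ass:finite_dimensional_likelihood_geometry}. With that convention fixed, there is no real obstacle: continuity of $\Phi_\ell$ is not needed, and neither is anything beyond finite-dimensionality and the choice of a basis.
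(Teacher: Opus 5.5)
Your proof is correct and takes essentially the paper's approach. The paper gives no separate proof of this lemma; it treats it as immediate from the uniqueness of the coordinate vector $\alpha_\ell(\theta)$ with respect to the fixed basis of each $\mathcal{H}_\ell$, which is exactly the order-by-order linear-independence argument you give.
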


Thus, under
Assumption~\ref{ass:finite_dimensional_likelihood_geometry}, identifiability through the likelihood moments $\Phi_1,\ldots,\Phi_k$ is equivalent to identifiability through the finite-dimensional coordinate map $\theta\mapsto\alpha_{\leq k}(\theta)$.
The regularity and stability of this parameter-side map are logically separate from the observation-side geometry of the coordinates $q_1,\ldots,q_k$, which will be studied below through their Gram structure. 

\subsection{Cross-order Gram geometry}
\label{subsec:cross_order_likelihood_geometry}

Assumption~\ref{ass:finite_dimensional_likelihood_geometry} provides a finite-dimensional coordinate representation for each likelihood order separately. However, the weak-signal likelihood combines these coordinates across different orders, and linear dependence between the corresponding coordinate spaces may allow contributions from different powers of $\beta$ to cancel. We therefore record a condition on the joint geometry of the likelihood coordinates.

Fix a truncation order $r\geq 1$, and stack the likelihood-coordinate vectors as
\begin{align}
    q_{\leq r}(Y) \triangleq \bigl( q_1(Y),\ldots,q_r(Y) \bigr) \in\mathbb{R}^{D_{\leq r}},
    \qquad
    D_{\leq r} \triangleq \sum_{\ell=1}^r D_\ell .
    \label{eq:stacked_likelihood_coordinates}
\end{align}
We define the corresponding Gram matrix under the null law $P_0$ by
\begin{align}
    \mathbf{G}_r \triangleq \mathbb{E}_{Y \sim P_0} \left[ q_{\leq r}(Y) q_{\leq r}(Y)^\top \right].    \label{eq:cross_order_likelihood_Gram}
\end{align}
Equivalently, $\mathbf{G}_r$ is the block matrix with $(\ell,k)$ block $\mathbf{G}_{\ell k} \triangleq \mathbb{E}_{ Y \sim P_0} \left[ q_\ell(Y)q_k(Y)^\top \right]$.

\begin{assum}[Nondegenerate cross-order likelihood geometry]
\label{ass:nondegenerate_cross_order_likelihood_geometry}
For the truncation order $r = r_{\mathrm{glob}}$, $\lambda_{\min}(\mathbf{G}_r)>0$.
\end{assum}

Assumption~\ref{ass:nondegenerate_cross_order_likelihood_geometry} requires the likelihood coordinates through order $r_{\mathrm{glob}}$ to be jointly linearly independent in $L^2(P_0)$. Importantly, this is an observation-side condition: it depends only on the null law $P_0$ and the likelihood-coordinate functions, and is separate from the algebraic identifiability of the parameter through the maps $\alpha_1,\ldots,\alpha_{r_\mathrm{glob}}$.

The next lemma, which is proved in Appendix~\ref{subsec:lem_fixed_accuracy_likelihood_separation}, shows that finite-dimensional likelihood geometry together with the cross-order nondegeneracy condition provides a concrete sufficient mechanism for the separation established in Theorem~\ref{thm:cross_order_stability_necessity}.

\begin{lem}[Uniform fixed-accuracy likelihood separation on generic compact sets]
\label{lem:fixed_accuracy_likelihood_separation}
Let $r=r_{\mathrm{glob}}<\infty$, and let $K\subseteq \Theta_{\mathrm{gen}}$ be compact.
Suppose Assumptions~\ref{ass:finite_dimensional_likelihood_geometry} and \ref{ass:nondegenerate_cross_order_likelihood_geometry} hold. Then, \eqref{eq:necessary_likelihood_separation} holds, that is, for every fixed $\varepsilon>0$, there exist constants $c_{\varepsilon}>0$ and $\beta_{\varepsilon}>0$ such that, for every $0<\beta\leq\beta_{\varepsilon}$,
\begin{align}
    \inf_{\substack{ \theta^\star\in K,\;\theta\in\Theta:\\ d_{\mathrm{eq}}(\theta,\theta^\star)\geq\varepsilon }} \left\| L_{\theta,\beta} - L_{\theta^\star,\beta} \right\|_{L^2(P_0)} \geq c_{\varepsilon}\beta^r.    \label{eq:fixed_accuracy_likelihood_separation}
\end{align}
\end{lem}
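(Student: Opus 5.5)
Write $\Delta_\ell(\theta,\theta^\star)\triangleq \alpha_\ell(\theta)-\alpha_\ell(\theta^\star)$ and $\Delta_{\le r}=(\Delta_1,\ldots,\Delta_r)\in\mathbb{R}^{D_{\le r}}$. The plan is to compare the finite-$\beta$ likelihood difference with the Gram quadratic form of a $\beta$-rescaled coefficient vector, and then use compactness together with generic identifiability to bound that vector from below.

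\textbf{Step 1: reduce to the Gram quadratic form.} Using \eqref{eqn:finite_dimensional_likelihood_ratio_expansion} truncated at order $r$ and the uniform remainder bound from Assumption~\ref{ass:regular_observed_likelihood_ratio_expansion}, write
\begin{align}
    L_{\theta,\beta}-L_{\theta^\star,\beta} = \bigl\langle D_\beta\Delta_{\le r}(\theta,\theta^\star),\, q_{\le r}(Y)\bigr\rangle + \bigl(R_{r+1}(\cdot;\theta,\beta)-R_{r+1}(\cdot;\theta^\star,\beta)\bigr),
\end{align}
where $D_\beta$ is the block-diagonal matrix that multiplies block $\ell$ by $\beta^\ell$. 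The remainder has $L^2(P_0)$ norm at most $C\beta^{r+1}$, uniformly in $\theta,\theta^\star\in\Theta$. For the main term, Assumption~\ref{ass:nondegenerate_cross_order_likelihood_geometry} gives
\begin{align}
    \bigl\|\langle D_\beta\Delta_{\le r},q_{\le r}\rangle\bigr\|_{L^2(P_0)}^2 = \Delta_{\le r}^\top D_\beta\mathbf{G}_rD_\beta\Delta_{\le r}\ \ge\ \lambda_{\min}(\mathbf{G}_r)\,\|D_\beta\Delta_{\le r}\|^2 .
\end{align}
Since $\beta\le 1$, every block is scaled by at least $\beta^r$, so $\|D_\beta\Delta_{\le r}\|\ge\beta^r\|\Delta_{\le r}\|$. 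This is the step where cross-order cancellation, as in Example~\ref{ex:rglob_insufficient}, is ruled out. Because $\mathbf{G}_r$ is positive definite, the separate orders cannot cancel one another.

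\textbf{Step 2: uniform lower bound on $\|\Delta_{\le r}\|$.} Define the set
\begin{align}
    S_\varepsilon=\{(\theta^\star,\theta)\in K\times\Theta: d_{\mathrm{eq}}(\theta,\theta^\star)\ge\varepsilon\}.
\end{align}
It is compact, because $K$ and $\Theta$ are compact and $d_{\mathrm{eq}}$ is continuous (it is a minimum over the compact group $G$ of a continuous function). The map $(\theta^\star,\theta)\mapsto\|\alpha_{\le r}(\theta)-\alpha_{\le r}(\theta^\star)\|$ is continuous. This holds because $\theta\mapsto\Phi_\ell(\theta)$ is continuous into $L^2(P_0)$ by Assumption~\ref{ass:finite_dimensional_likelihood_geometry}, and the coordinate map from $\mathcal{H}_\ell$ to $\mathbb{R}^{D_\ell}$ is a linear isomorphism of finite-dimensional spaces, hence continuous. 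Since $\theta^\star\in K\subseteq\Theta_{\mathrm{gen}}$, the definition \eqref{eq:generic_parameter_set} together with Lemma~\ref{lem:coordinate_faithfulness} shows that $\alpha_{\le r}(\theta)=\alpha_{\le r}(\theta^\star)$ would force $\theta\sim\theta^\star$, i.e.\ $d_{\mathrm{eq}}=0<\varepsilon$. So the function never vanishes on $S_\varepsilon$, and it attains a positive minimum $m_\varepsilon>0$ there. (If $S_\varepsilon$ is empty the statement is vacuous.)

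\textbf{Step 3: combine.} Steps 1 and 2 give
\begin{align}
    \|L_{\theta,\beta}-L_{\theta^\star,\beta}\|_{L^2(P_0)}\ge \lambda_{\min}(\mathbf{G}_r)^{1/2}m_\varepsilon\beta^r-C\beta^{r+1}.
\end{align}
Take $\beta_\varepsilon$ small enough that $C\beta_\varepsilon\le\tfrac12\lambda_{\min}(\mathbf{G}_r)^{1/2}m_\varepsilon$. Then the claim holds with $c_\varepsilon=\tfrac12\lambda_{\min}(\mathbf{G}_r)^{1/2}m_\varepsilon$.

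The main delicate point is Step 2. It requires the continuity of the coordinate map and the compactness of $S_\varepsilon$; the latter uses the fact that $\theta$ ranges over all of $\Theta$ while $\theta^\star$ is confined to $K\subseteq\Theta_{\mathrm{gen}}$. The genericity of $\theta^\star$ alone suffices for the identifiability implication, so no genericity of $\theta$ is needed. The uniformity of the remainder in Step 1 comes directly from Assumption~\ref{ass:regular_observed_likelihood_ratio_expansion}.
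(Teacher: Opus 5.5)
Your proposal is correct and follows the paper's proof essentially step for step: generic identifiability through Lemma~\ref{lem:coordinate_faithfulness} and compactness give a uniform lower bound on $\|\alpha_{\le r}(\theta)-\alpha_{\le r}(\theta^\star)\|_2$. The positive-definite cross-order Gram matrix turns this into a $\beta^r$ lower bound on the truncated likelihood difference, and the uniform $O(\beta^{r+1})$ remainder is then absorbed for small $\beta$. The only difference is that you explicitly justify the continuity of $d_{\mathrm{eq}}$ and of the coordinate map $\alpha_{\le r}$, which the paper takes for granted when it invokes Lemma~\ref{lem:compact_uniform_separation}.
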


\subsection{Rate-optimality of maximum likelihood}

Maximum likelihood provides a natural benchmark for statistical efficiency~\cite{dempster1977maximum,cramer1999mathematical}. Given independent observations $Y_1,\ldots,Y_n\sim P_{\theta^\star,\beta}$, the \emph{maximum-likelihood estimator} (MLE) is any measurable maximizer
\begin{align}
    \widehat{\theta}_{\mathrm{MLE}} \in \argmax_{\theta\in\Theta} \sum_{i=1}^{n} \log p_{\theta,\beta}(Y_i).
    \label{eq:mle_noncancellation_estimator}
\end{align}

Lemma~\ref{lem:fixed_accuracy_likelihood_separation} shows that the finite-dimensional likelihood geometry provides the deterministic separation needed for fixed-accuracy recovery given in Theorem~\ref{thm:cross_order_stability_necessity}. To turn this separation into a uniform probability bound for the MLE, we must also control the stochastic fluctuations of the empirical likelihood simultaneously over $\theta\in\Theta$. We therefore impose the following uniform envelope condition on the Taylor remainder.

\begin{assum}[Uniform likelihood-remainder envelope]
\label{ass:uniform_likelihood_remainder_envelope}
Let $R_{2r+1}(\cdot;\theta,\beta)$ denote the remainder in the order-$2r$ likelihood-ratio expansion.  For all sufficiently small $\beta$,
\begin{align}
    \left\| \sup_{\theta\in\Theta} \left| R_{2r+1}(\cdot;\theta,\beta) \right| \right\|_{L^{4r+2}(P_0)} = O(\beta^{2r+1}).    \label{eq:uniform_likelihood_remainder_envelope_noncancellation}
\end{align}
\end{assum}

Combining the fixed-accuracy likelihood separation of Lemma~\ref{lem:fixed_accuracy_likelihood_separation} with the uniform likelihood-process control provided by Assumption~\ref{ass:uniform_likelihood_remainder_envelope} yields an exponential error bound for maximum likelihood at the scale $n\beta^{2r_{\mathrm{glob}}}$. The result is stated in the next theorem and proved in Appendix~\ref{subsec:thm_mle_upper_bound}.

\begin{thm}[Rate-optimality of maximum likelihood]
\label{thm:mle_upper_bound}
Let $r=r_{\mathrm{glob}}<\infty$, and let $K\subseteq\Theta_{\mathrm{gen}}$ be compact. Suppose Assumptions~\ref{ass:finite_dimensional_likelihood_geometry}, \ref{ass:nondegenerate_cross_order_likelihood_geometry}, and~\ref{ass:uniform_likelihood_remainder_envelope} hold. Then, for every fixed $\varepsilon>0$, there exist constants $c_\varepsilon,C_\varepsilon,\beta_0(\varepsilon)>0$ such that, for all $0<\beta\leq\beta_0(\varepsilon)$ and all $n$ satisfying $n\beta^{2r}\geq C_\varepsilon$, the MLE $\widehat\theta_{\mathrm{MLE}}$ defined in~\eqref{eq:mle_noncancellation_estimator}
satisfies
\begin{align}
    \sup_{\theta^\star\in K}  P_{\theta^\star,\beta}^{\otimes n} \left( d_{\mathrm{eq}} (\widehat\theta_{\mathrm{MLE}},\theta^\star) > \varepsilon \right) \leq C_\varepsilon \exp\left\{ -c_\varepsilon n\beta^{2r} \right\}.    \label{eq:mle_noncancellation_exponential_bound}
\end{align}
Consequently, $n \gtrsim  \beta^{-2r} \log\left(1/\delta\right)$ observations are sufficient for quotient error at most $\varepsilon$ with probability at least $1-\delta$.
\end{thm}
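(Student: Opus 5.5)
The plan is a uniform separation argument for the log-likelihood process, localized at the $\beta^{r}$ scale. Fix $\theta^\star\in K$ and write $\Delta_\theta\triangleq L_{\theta,\beta}-L_{\theta^\star,\beta}$. The MLE can leave the $\varepsilon$-neighbourhood of $[\theta^\star]$ only if
\begin{align}
\sup_{\theta\in B_\varepsilon^c} \frac1n\sum_{i=1}^n \log\frac{p_{\theta,\beta}(Y_i)}{p_{\theta^\star,\beta}(Y_i)}\ \ge 0,
\qquad
B_\varepsilon^c\triangleq\{\theta: d_{\mathrm{eq}}(\theta,\theta^\star)>\varepsilon\}.
\end{align}
The population version of this sum is $-D_{\mathrm{KL}}(P_{\theta^\star,\beta}\|P_{\theta,\beta})$. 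I would first lower bound it uniformly on $B_\varepsilon^c$ by $c\,\beta^{2r}$. To do so, expand $\log(1+u)$ with $u=\Delta_\theta/L_{\theta^\star,\beta}$, using the finite-dimensional expansion \eqref{eqn:finite_dimensional_likelihood_ratio_expansion} through order $2r$ and Assumption~\ref{ass:uniform_likelihood_remainder_envelope} for the remainder. This gives $\mathrm{KL}=\tfrac12\|\Delta_\theta\|_{L^2(P_0)}^2(1+O(\beta))+O(\beta^{2r+1})$ uniformly. Lemma~\ref{lem:fixed_accuracy_likelihood_separation} then supplies $\|\Delta_\theta\|\ge c_\varepsilon\beta^r$, which yields a uniform KL gap of order $\beta^{2r}$ on $B_\varepsilon^c$.

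For the stochastic part, I would avoid log-likelihood chaining and use a truncation or Bernstein device. Since $\log(1+u)\le u-u^2/2+u^3/3$, I would work with $\frac1n\sum_i \tilde u_\theta(Y_i)$, where $\tilde u_\theta$ denotes $u_\theta$ truncated at a small constant level $\eta$. On the event $|u_\theta|\le\eta$, the quadratic term dominates the cubic term. The contribution from $|u|>\eta$ is controlled by Markov's inequality using the $L^{4r+2}$ envelope, since $u=O(\beta)$ in $L^{p}$.

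The key structural point is that, after expansion, $u_\theta(y)$ is a finite linear combination $\sum_{\ell\le 2r}\beta^\ell\langle a_\ell(\theta),q_\ell(y)\rangle$ plus a remainder. The coefficients are continuous in $\theta$ and the $q_\ell$ are fixed. The empirical process therefore reduces to finitely many empirical means $\frac1n\sum_i q_{\ell}(Y_i)$, $\frac1n\sum_i q_\ell q_k^\top(Y_i)$ (for the quadratic term), together with an envelope term, and no chaining over $\Theta$ is needed. Each such mean concentrates around its $P_{\theta^\star,\beta}$-expectation. Deviations are measured in the metric induced by $\mathbf{G}_r$ (Assumption~\ref{ass:nondegenerate_cross_order_likelihood_geometry}), and are of relative size $\beta^{r}\cdot t$ with probability $1-e^{-cnt^2}$ after rescaling. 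Taking $t$ a small constant and using $n\beta^{2r}\ge C_\varepsilon$ then gives the exponential bound $e^{-c n\beta^{2r}}$. Heavy tails of the $q_\ell$ are handled by the moment bounds at order $4r+2$, combined with truncation at level $\beta^{-1}$.

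The main obstacle is making the fluctuation of the linear term $\beta^\ell\langle a_\ell(\theta), \bar q_\ell-\mathbb{E}q_\ell\rangle$ small relative to $\beta^{2r}$ uniformly. The lower-order coordinates $\ell<r$ have fluctuations $\beta^\ell n^{-1/2}\gg\beta^{2r}$ in general. I would handle this by Cauchy–Schwarz in the $\mathbf{G}_r$ geometry. The linear fluctuation is bounded by $\|\Delta_\theta\|_{L^2(P_0)}\cdot\|\bar q_{\le r}-\mathbb{E}q_{\le r}\|_{\mathbf{G}_r^{-1}}$. Against the quadratic drift $\tfrac12\|\Delta_\theta\|^2$, this gives a bound $\|\Delta\|\,O_P(n^{-1/2})\le\tfrac14\|\Delta\|^2$ whenever $\|\Delta\|\ge c_\varepsilon\beta^r\gg n^{-1/2}$. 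Here the self-normalization by $\|\Delta_\theta\|$ is exactly what makes the argument uniform.

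Finally, the sample-size corollary follows by setting $C_\varepsilon e^{-c_\varepsilon n\beta^{2r}}\le\delta$.
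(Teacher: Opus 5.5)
Your proposal has a genuine gap in the stochastic step. Going through empirical means of the fixed features cannot produce the exponential tail $C_\varepsilon e^{-c_\varepsilon n\beta^{2r}}$ under the stated hypotheses.

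\textbf{Why it fails.} The coordinates $q_\ell$ have only polynomial moments, and the envelope $\sup_\theta|R_{2r+1}(\cdot;\theta,\beta)|$ is controlled only in $L^{4r+2}(P_0)$. Their empirical means are therefore not sub-Gaussian. A single observation with $\|q_{\le r}(Y_i)\|_2\gtrsim n\beta^r$ already pushes $\bar q_{\le r}$ off by $c\beta^r$, and this happens with only polynomially small probability.
\begin{itemize}
\item Markov's inequality, applied to the contribution of the samples with $|u_\theta|>\eta$, bounds the probability that this contribution exceeds $c\beta^{2r}$ by roughly $\beta^{-2r}\cdot O(\beta^{2r+1})$. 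That quantity does not decrease with $n$ at all.
\item Truncating $q_\ell$ at level $\beta^{-1}$ does not help, because the discarded part is exactly this heavy tail.
\end{itemize}
At best you get polynomial tails, i.e.\ $n\gtrsim\beta^{-2r}\delta^{-c}$ rather than $\beta^{-2r}\log(1/\delta)$.

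The loss comes from the linearization $\log(1+u)\le u-u^2/2+u^3/3$. It discards two facts: an atypical observation moves $\log(L_{\theta,\beta}/L_{\theta^\star,\beta})$ only logarithmically, and likelihood ratios have exponential moments under the truth. These are exactly what the paper's proof exploits. Hellinger separation at scale $\beta^r$ (Lemma~\ref{lem:fixed_accuracy_likelihood_separation} with Lemma~\ref{lem:finite_dimensional_likelihood_hellinger_comparison}) is fed into the Wong--Shen inequality (Lemma~\ref{lem:likelihood_ratio_maximal_inequality}, Theorem~\ref{thm:mle_general}). Its bracketing condition is checked from the finite-dimensional expansion in Lemma~\ref{lem:finite_dimensional_local_hellinger_complexity}. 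No tail assumption on the features enters the probability bound.

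\textbf{How to repair your route.} Your chaining-free idea can be rescued, but it needs this missing ingredient.
\begin{itemize}
\item \emph{Make the truncation $\theta$-independent.} For example, use $E_i=\{\sup_{\theta}|L_{\theta,\beta}(Y_i)-1|\le\eta,\ \|q_{\le 2r}(Y_i)\|_2\le\beta^{-1}\}$. Truncating $u_\theta$ itself at level $\eta$, as you propose, destroys the linearity in fixed features on which your ``no chaining'' claim rests.
\item \emph{On $E_i$.} All summands are bounded. Bernstein's inequality then gives $e^{-cn\beta^{2r}}$, combined with your $\mathbf{G}_r$ self-normalization for orders $\le r$. Orders $r+1,\dots,2r$ (where $\mathbf{G}_{2r}$ may be singular) and the remainder need crude separate bounds.
\item \emph{On $E_i^c$.} Bound the summand by $W_i=\log\bigl(1+\sup_\theta L_{\theta,\beta}(Y_i)/L_{\theta^\star,\beta}(Y_i)\bigr)$ and use a Chernoff bound with small $\lambda$. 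By H\"older, $\mathbb{E}_{\theta^\star}[e^{\lambda W}\mathbf{1}_{E^c}]\le\bigl(1+\mathbb{E}_{P_0}\sup_\theta L_{\theta,\beta}\bigr)^{\lambda}P_{\theta^\star,\beta}(E^c)^{1-\lambda}$. The envelope gives $P_{\theta^\star,\beta}(E^c)=O(\beta^{2r+1})$. So for $\lambda<1/(2r+1)$, the bad samples contribute more than $c\beta^{2r}$ only with probability $e^{-c'n\beta^{2r}}$.
\item \emph{Population step.} Your KL expansion divides by $L_{\theta^\star,\beta}$ and so invokes Assumption~\ref{ass:reciprocal_likelihood_regularity}, which is not a hypothesis of this theorem. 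Lower-bound the drift on $E_i$ instead, where $L_{\theta^\star,\beta}\ge1-\eta$, or use $h^2\le D_{\mathrm{KL}}$ together with Lemma~\ref{lem:finite_dimensional_likelihood_hellinger_comparison}.
\end{itemize}
With these changes your argument becomes a legitimate, more hands-on alternative to the paper's bracketing-entropy proof.
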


\subsection{Rate-optimality of the method-of-moments}
\label{subsec:likelihood_coordinate_moment_estimator}

We next construct a method-of-moments estimator using the finite-dimensional likelihood coordinates from Assumption~\ref{ass:finite_dimensional_likelihood_geometry}. For $r=r_{\mathrm{glob}}$, the observable vector $q_{\leq r}(Y)$ spans the likelihood expansion through the globally identifying order, so it is enough to estimate its expectation from the data. To obtain the required concentration, we employ a geometric median-of-means procedure to estimate these population moments~\cite{minsker2015geometric,lugosi2019mean}. Under the same cross-order nondegeneracy that guarantees likelihood separation, these coordinates retain the relevant finite-order likelihood geometry and yield the fixed-accuracy weak-signal exponent $\beta^{-2r_{\mathrm{glob}}}$.

Recall the stacked coordinate vector $q_{\leq r}$ defined in~\eqref{eq:stacked_likelihood_coordinates}, and define the corresponding population moment map by
\begin{align}
    \Gamma_{r,\beta}(\theta) \triangleq \mathbb{E}_{Y\sim P_{\theta,\beta}} \left[q_{\leq r}(Y)\right].    \label{eq:likelihood_coordinate_population_map}
\end{align}
Given a confidence level $\delta\in(0,1/2)$, let $\widehat{\Gamma}_{r,\delta}$ be a \emph{geometric median-of-means estimator} of $\Gamma_{r,\beta}(\theta^\star)$ constructed from $q_{\leq r}(Y_1),\ldots,q_{\leq r}(Y_n)$ using $O(\log(1/\delta))$ blocks~\cite{minsker2015geometric,lugosi2019mean}. We define
\begin{align}
    \widehat{\theta}_{\mathrm{MoM}} \in \argmin_{\theta\in\Theta} \, \left\| \widehat{\Gamma}_{r,\delta} - \Gamma_{r,\beta}(\theta)
    \right\|_2^2.    \label{eq:likelihood_coordinate_moment_estimator}
\end{align}
Thus, unlike the MLE, the method-of-moments estimator uses the likelihood coordinates explicitly as observable summary statistics.

\begin{remark}[General weighting matrices]
\label{rem:general_mom_weighting}
The estimator in~\eqref{eq:likelihood_coordinate_moment_estimator} uses the identity weighting matrix.
More generally, one may consider
\begin{align}
    \widehat{\theta}_{W} \in \argmin_{\theta\in\Theta} \left( \widehat{\Gamma}_{r,\delta} - \Gamma_{r,\beta}(\theta) \right)^\top W \left( \widehat{\Gamma}_{r,\delta} - \Gamma_{r,\beta}(\theta) \right),
\end{align}
where $W$ is any positive definite weighting matrix, including inverse-covariance choices~\cite{hansen1982large}. We use identity weighting because the choice of weighting matrix does not affect the weak-signal sample-complexity exponent considered here.
\end{remark}

Since the likelihood-coordinate dimension is fixed, robust mean estimation requires only uniformly bounded second moments.

\begin{lem}[Uniform second moments of likelihood coordinates]
\label{lem:bounded_second_moment_likelihood_coordinates}
Suppose Assumptions~\ref{ass:regular_observed_likelihood_ratio_expansion} and~\ref{ass:finite_dimensional_likelihood_geometry} hold.
Then, for every fixed truncation order $k\geq1$, there exist constants $V_k<\infty$ and $\beta_0>0$ such that
\begin{align}
    \sup_{\theta\in\Theta} \sup_{0<\beta\leq\beta_0} \mathbb{E}_{Y\sim P_{\theta,\beta}} \left[ \left\| q_{\leq k}(Y) - \Gamma_{k,\beta}(\theta) \right\|_2^2 \right] \leq V_k.    \label{eq:bounded_second_moment_likelihood_coordinates}
\end{align}
\end{lem}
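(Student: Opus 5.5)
The plan is to bound the centered second moment by the uncentered one and then pass from $P_{\theta,\beta}$ to the null law $P_0$ through the likelihood ratio. Since the variance is dominated by the raw second moment,
\begin{align}
    \mathbb{E}_{Y\sim P_{\theta,\beta}}\left[\left\|q_{\leq k}(Y)-\Gamma_{k,\beta}(\theta)\right\|_2^2\right]
    \leq \mathbb{E}_{Y\sim P_{\theta,\beta}}\left[\|q_{\leq k}(Y)\|_2^2\right]
    = \mathbb{E}_{Y\sim P_0}\left[\|q_{\leq k}(Y)\|_2^2\,L_{\theta,\beta}(Y)\right],
\end{align}
it suffices to bound the right-hand side uniformly in $\theta$ and in small $\beta$. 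The quantity $\|q_{\leq k}\|_2^2=\sum_{\ell\le k}\sum_{j\le D_\ell} q_{\ell,j}^2$ is a finite sum (finite-dimensionality, Assumption~\ref{ass:finite_dimensional_likelihood_geometry}), so it is enough to treat each basis function $q_{\ell,j}$ separately.

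\textbf{Step 1: integrability of the coordinates.} I will show that each $q_{\ell,j}$ lies in $L^4(P_0)$; in fact in every $L^p(P_0)$ with $p<\infty$. By Assumption~\ref{ass:regular_observed_likelihood_ratio_expansion}, $\phi_\ell(\cdot;\theta)\in L^p(P_0)$ for every finite $p$ and every $\theta$, because it is a Taylor coefficient of an $L^p$-valued analytic map. The space $\mathcal{H}_\ell$ is the span of $\{\Phi_\ell(\theta)\}$, so a basis can be extracted from it: choose $\theta_1,\ldots,\theta_{D_\ell}$ with $\Phi_\ell(\theta_i)$ linearly independent. Each $q_{\ell,j}$ is then a finite linear combination of the $\phi_\ell(\cdot;\theta_i)$ and hence belongs to $L^p(P_0)$. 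Changing to any other basis is an invertible linear map and does not affect this conclusion.

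\textbf{Step 2: Cauchy--Schwarz with uniform control of $L_{\theta,\beta}$.} I apply Cauchy--Schwarz under $P_0$:
\begin{align}
    \mathbb{E}_{P_0}\left[q_{\ell,j}^2 L_{\theta,\beta}\right]\leq \|q_{\ell,j}\|_{L^4(P_0)}^2\,\|L_{\theta,\beta}\|_{L^2(P_0)}.
\end{align}
The expansion~\eqref{eq:observed_likelihood_ratio_expansion} with $s=1$ gives $L_{\theta,\beta}=1+\beta\phi_1(\cdot;\theta)+R_2$, where $\sup_\theta\|R_2\|_{L^2(P_0)}=O(\beta^2)$. To make this uniform, I also need $\sup_\theta\|\phi_1(\cdot;\theta)\|_{L^2(P_0)}<\infty$. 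This follows from continuity of $\theta\mapsto\Phi_1(\theta)$ on the compact set $\Theta$, which is part of Assumption~\ref{ass:finite_dimensional_likelihood_geometry}. Alternatively, it follows from the uniform analyticity in Assumption~\ref{ass:regular_observed_likelihood_ratio_expansion}, via Cauchy estimates for the Taylor coefficients. Together these give $\beta_0>0$ such that $\sup_{\theta}\sup_{0<\beta\le\beta_0}\|L_{\theta,\beta}\|_{L^2(P_0)}\le 2$.

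\textbf{Step 3: conclusion.} Summing over $(\ell,j)$ gives $V_k=2\sum_{\ell\le k}\sum_{j}\|q_{\ell,j}\|_{L^4(P_0)}^2<\infty$, which is the desired bound.

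The main subtlety is Step 1, namely making sure the basis functions inherit higher integrability. This requires choosing the basis inside the span of the $\phi_\ell(\cdot;\theta)$ themselves, rather than relying on an $L^2$ statement alone. A secondary point is the uniformity in $\theta$ of $\|\phi_1\|$, which compactness together with continuity settles.
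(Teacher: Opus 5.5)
Your proposal is correct and follows essentially the same route as the paper's proof: dominate the centered second moment by the uncentered one, change measure to $P_0$, apply Cauchy--Schwarz/H\"older to get $\|L_{\theta,\beta}\|_{L^2(P_0)}\|q_{\leq k}\|_{L^4(P_0)}^2$, and use that the basis functions lie in the span of the $\Phi_\ell(\theta)\in L^p(P_0)$. Your extra justification of $\sup_{\theta}\sup_{0<\beta\leq\beta_0}\|L_{\theta,\beta}\|_{L^2(P_0)}<\infty$, via continuity of $\Phi_1$ on compact $\Theta$ or Cauchy estimates, fills in a step the paper simply asserts.
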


By Lemma~\ref{lem:bounded_second_moment_likelihood_coordinates} (proved in Appendix~\ref{app:proof_of_bounded_second_moment_likelihood_coordinates}), standard geometric median-of-means concentration gives a constant $C_k<\infty$, depending only on the fixed coordinate dimension, such that
\begin{align}
    \sup_{\theta\in\Theta} P_{\theta,\beta}^{\otimes n} \left( \left\| \widehat{\Gamma}_{k,\delta} - \Gamma_{k,\beta}(\theta) \right\|_2 >  C_k \sqrt{\frac{V_k\log(1/\delta)}{n} } \right) \leq \delta,    \label{eq:robust_likelihood_coordinate_concentration}
\end{align}
for every $\delta\in(0,1/2)$ and all sufficiently small $\beta$.

The next theorem, proved in Appendix~\ref{subsec:proof_thm_mom_upper_bound}, shows that this second-moment condition is sufficient for the likelihood-coordinate method-of-moments estimator to attain the same fixed-accuracy weak-signal sample-complexity rate as maximum likelihood.

\begin{thm}[Rate-optimality of the method-of-moments estimator]
\label{thm:mom_upper_bound}
Let $r=r_{\mathrm{glob}}<\infty$, and let $K\subseteq\Theta_{\mathrm{gen}}$ be compact.
Suppose Assumptions~\ref{ass:finite_dimensional_likelihood_geometry}, \ref{ass:nondegenerate_cross_order_likelihood_geometry}, and the conditions of Lemma~\ref{lem:bounded_second_moment_likelihood_coordinates} hold. Then, for every fixed $\varepsilon>0$, there exist constants $C_{\varepsilon},\beta_{\varepsilon}>0$ such that, for every $0<\beta\leq\beta_{\varepsilon}$ and every $\delta\in(0,1/2)$, $n \geq C_{\varepsilon} \beta^{-2r} \log (1/\delta)$ implies
\begin{align}
    \sup_{\theta^\star\in K} P_{\theta^\star,\beta}^{\otimes n} \left( d_{\mathrm{eq}} \bigl( \widehat{\theta}_{\mathrm{MoM}}, \theta^\star \bigr) > \varepsilon \right) \leq \delta.    \label{eq:likelihood_coordinate_moment_exponential_bound}
\end{align}
Consequently, the method-of-moments estimator attains the fixed-accuracy sample-complexity scale $n \lesssim \beta^{-2r_{\mathrm{glob}}} \log(1/\delta)$.
\end{thm}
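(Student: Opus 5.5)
The argument reduces estimation to a deterministic statement about the population moment map $\Gamma_{r,\beta}$ plus the concentration bound \eqref{eq:robust_likelihood_coordinate_concentration}. First, expand $\Gamma_{r,\beta}(\theta)$ using the likelihood ratio. Since $\mathbb{E}_{P_{\theta,\beta}}[q_{\le r}(Y)]=\mathbb{E}_{P_0}[q_{\le r}(Y)L_{\theta,\beta}(Y)]$, we get for any $\theta,\theta'$
\begin{align}
    \Gamma_{r,\beta}(\theta)-\Gamma_{r,\beta}(\theta') = \mathbb{E}_{P_0}\left[q_{\le r}(Y)\bigl(L_{\theta,\beta}(Y)-L_{\theta',\beta}(Y)\bigr)\right].
\end{align}
Truncating the expansion \eqref{eq:observed_likelihood_ratio_expansion} at order $r$, the main part is $\mathbf{G}_r\, v_\beta(\theta,\theta')$, where $v_\beta$ stacks the blocks $\beta^\ell(\alpha_\ell(\theta)-\alpha_\ell(\theta'))$ for $\ell=1,\dots,r$. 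The remainder is bounded by $O(\beta^{r+1})$ uniformly in $\theta,\theta'$, using Cauchy–Schwarz, the uniform $L^2$ remainder bound from Assumption~\ref{ass:regular_observed_likelihood_ratio_expansion}, and $q_{\le r}\in L^2(P_0)$.

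Next, lower bound $\|\mathbf{G}_r v_\beta\|_2\ge\lambda_{\min}(\mathbf{G}_r)\|v_\beta\|_2$ using Assumption~\ref{ass:nondegenerate_cross_order_likelihood_geometry}. It then remains to show that $\|v_\beta(\theta,\theta^\star)\|_2\ge c\,\beta^r$ uniformly over $\theta^\star\in K$ and $\theta\in\Theta$ with $d_{\mathrm{eq}}(\theta,\theta^\star)\ge\varepsilon$. Because $\beta\le1$, we have $\|v_\beta\|_2\ge\beta^r\|\alpha_{\le r}(\theta)-\alpha_{\le r}(\theta^\star)\|_2$. So it suffices to prove the parameter-side bound
\begin{align}
    \eta_\varepsilon\triangleq\inf_{\theta^\star\in K,\ \theta\in\Theta,\ d_{\mathrm{eq}}(\theta,\theta^\star)\ge\varepsilon}\|\alpha_{\le r}(\theta)-\alpha_{\le r}(\theta^\star)\|_2>0 .
\end{align}
This follows by compactness. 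The set of such pairs is compact, since $d_{\mathrm{eq}}$ is continuous. The map $\alpha_{\le r}$ is continuous: $\Phi_\ell$ is continuous by Assumption~\ref{ass:finite_dimensional_likelihood_geometry}, and coordinates in a fixed finite basis are continuous linear functionals. If the infimum were zero it would be attained at some pair with equal $\alpha_{\le r}$. By Lemma~\ref{lem:coordinate_faithfulness} and $\theta^\star\in\Theta_{\mathrm{gen}}$, that pair would satisfy $\theta\sim\theta^\star$, contradicting $d_{\mathrm{eq}}\ge\varepsilon$. This is essentially the argument of Lemma~\ref{lem:fixed_accuracy_likelihood_separation}, transported to the moment map. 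Choosing $\beta_\varepsilon$ so that the $O(\beta^{r+1})$ remainder is at most half of $\lambda_{\min}\eta_\varepsilon\beta^r$ yields
\begin{align}
    \|\Gamma_{r,\beta}(\theta)-\Gamma_{r,\beta}(\theta^\star)\|_2\ge \tfrac12\lambda_{\min}(\mathbf{G}_r)\eta_\varepsilon\beta^r \triangleq 2\kappa_\varepsilon\beta^r
\end{align}
whenever $d_{\mathrm{eq}}(\theta,\theta^\star)\ge\varepsilon$.

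Finally, the stochastic step. On the event $\|\widehat\Gamma_{r,\delta}-\Gamma_{r,\beta}(\theta^\star)\|_2\le C_r\sqrt{V_r\log(1/\delta)/n}$, which has probability at least $1-\delta$ uniformly in $\theta^\star$ by Lemma~\ref{lem:bounded_second_moment_likelihood_coordinates} and \eqref{eq:robust_likelihood_coordinate_concentration}, the minimizer satisfies a triangle inequality. Since it minimizes the criterion, $\|\widehat\Gamma-\Gamma(\widehat\theta_{\mathrm{MoM}})\|_2\le\|\widehat\Gamma-\Gamma(\theta^\star)\|_2$, and hence $\|\Gamma(\widehat\theta_{\mathrm{MoM}})-\Gamma(\theta^\star)\|_2\le 2C_r\sqrt{V_r\log(1/\delta)/n}$. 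Taking $n\ge C_\varepsilon\beta^{-2r}\log(1/\delta)$ with $C_\varepsilon=C_r^2V_r/\kappa_\varepsilon^2$ makes this strictly less than $2\kappa_\varepsilon\beta^r$. The separation bound then forces $d_{\mathrm{eq}}(\widehat\theta_{\mathrm{MoM}},\theta^\star)<\varepsilon$.

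Existence of a minimizer follows from continuity of $\Gamma_{r,\beta}$ on the compact set $\Theta$; if needed one can take a near-minimizer instead. The only delicate step is the uniform remainder control in the expansion of $\Gamma_{r,\beta}$, which relies on the uniformity over $\theta$ in Assumption~\ref{ass:regular_observed_likelihood_ratio_expansion}. Everything else is compactness.
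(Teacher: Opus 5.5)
Your proposal is correct and follows essentially the same route as the paper's proof: write $\Gamma_{r,\beta}(\theta)-\Gamma_{r,\beta}(\theta^\star)=\mathbf{G}_r D_{\beta,r}[\alpha_{\le r}(\theta)-\alpha_{\le r}(\theta^\star)]+O(\beta^{r+1})$, lower-bound this using $\lambda_{\min}(\mathbf{G}_r)$ and the compactness separation of $\alpha_{\le r}$, then combine the minimum-distance triangle inequality with the geometric median-of-means concentration. One cosmetic fix: with $C_\varepsilon=C_r^2V_r/\kappa_\varepsilon^2$ you only get $2C_r\sqrt{V_r\log(1/\delta)/n}\le 2\kappa_\varepsilon\beta^r$ rather than the strict inequality you need, so enlarge $C_\varepsilon$ by any factor greater than one (e.g., $2$).
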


\section{Global-to-local sample complexity under algebraic likelihood geometry}
\label{sec:global_to_local_algebraic}

Sections~\ref{sec:general_framework}-\ref{sec:rate-optimal-recovery} characterize weak-signal recovery when the target accuracy $\varepsilon>0$ is fixed. We now study the joint regime $\beta\to 0$ and $\varepsilon\to 0$. This refinement exposes a distinction that is invisible at fixed accuracy: the order required to \emph{locate the correct equivalence class globally} need not equal the order required to \emph{resolve increasingly small perturbations once the target has been localized}. The former is the global identifiability order $r_{\mathrm{glob}}$, whereas the latter is the local identifiability order $r_{\mathrm{loc}}$.

The purpose of this section is to make this distinction precise and to show how it leads to two statistical scales. Global localization is governed by $\beta^{-2r_{\mathrm{glob}}}$, while local refinement to accuracy $\varepsilon$ is governed by $\beta^{-2r_{\mathrm{loc}}}\varepsilon^{-2}$. We first introduce the additional algebraic and local regularity needed for the shrinking-accuracy analysis, then establish the corresponding population stability, and finally state matching upper and lower bounds.


\subsection{Algebraic structure}
\label{subsec:algebraic_regular_parameter_set}

For the global-to-local analysis in this section, we impose the additional assumption that the parameter space is a \emph{compact, semi-algebraic\footnote{A set is semi-algebraic if it can be described by finitely many polynomial equalities and inequalities.}} set $\Theta\subseteq\mathbb{R}^m$. This additional algebraic structure is natural because the finite-dimensional likelihood-moment maps considered below are assumed to depend polynomially on the parameter. It places the parameter space and the likelihood-moment maps within a common finite-dimensional algebraic framework, allowing generic identifiability, local regularity, and stability to be analyzed using standard tools from semi-algebraic geometry.

The semi-algebraic setting still covers a broad range of structured parameter spaces arising in applications. Examples include compact subsets of linear or affine subspaces, including subspaces obtained by principal component analysis; sparse and structured-sparse models, which are naturally described by finite unions of low-dimensional subspaces~\cite{donoho2006compressed,elad2010sparse}; compact algebraic manifolds and more general stratified algebraic sets; and parameter sets represented by piecewise-polynomial or piecewise-linear generative maps. In particular, ranges of ReLU networks are semi-algebraic and provide an important class of such structured models~\cite{bendory2025phase,bendory2025transversality}.

\begin{assum}[Algebraic parameter structure]
\label{ass:algebraic_parameter_structure}
Assume that $\Theta\subseteq\mathbb{R}^m$ is compact and semi-algebraic and that, for every $\ell\geq 1$, the coordinate map $\alpha_\ell:\Theta\to\mathbb{R}^{D_\ell}$ defined in~\eqref{eqn:finite_dimentional_repr} is the restriction to $\Theta$ of a polynomial map in the ambient parameter coordinates.
\end{assum}

The preceding assumption concerns only the finite-dimensional parameter-side geometry. For the shrinking-accuracy analysis, we also require a mild smoothness condition ensuring that the weak-signal likelihood expansion remains uniform after one derivative with respect to the parameter.

\begin{assum}[Local likelihood regularity]
\label{ass:local_likelihood_regularity}
For every finite $p\geq 1$, there exists an open neighborhood $U\supset\Theta$ such that, for all sufficiently small $\beta$, the map $\theta \mapsto L_{\theta,\beta}$ admits a continuously differentiable extension from $\Theta$ to $U$ as an $L^p(P_0)$-valued map. Moreover, the likelihood-ratio expansion in Assumption~\ref{ass:regular_observed_likelihood_ratio_expansion} holds uniformly over $\Theta$ together with its first derivative with respect to $\theta$.
\end{assum}

In particular, for every fixed $k\geq 1$,
\begin{align}
    D_\theta L_{\theta,\beta} = \sum_{\ell=1}^{k}\beta^\ell D\Phi_\ell(\theta) + R^{(1)}_{k+1}(\theta,\beta), \label{eq:differentiated_likelihood_expansion}
\end{align}
where, uniformly over $\theta\in\Theta$ as $\beta\to 0$,
\begin{align}
    \sup_{\theta\in\Theta} \norm{R^{(1)}_{k+1}(\theta,\beta)}_{L^p(P_0)} = O(\beta^{k+1}).
\end{align}

\subsection{Local identifiability likelihood-moment order}
\label{subsec:local_informative_order}

We next introduce the local analogue of the global informative likelihood-moment order. Let $W_{\theta^\star}$ denote the identifiable tangent space at the target, namely, the tangent directions transverse to the observational-equivalence class; see Definition~\ref{def:tangent_normal_quotient} for a formal definition. Whereas the global order is determined by the values of the likelihood-moment maps over the entire parameter space, the local order records the first order at which their differentials detect every identifiable infinitesimal perturbation.

\begin{definition}[Local identifiability likelihood-moment order]
\label{def:rloc_likelihood}
Let $\theta^\star\in\Theta$ be the ground-truth parameter. For each
$h\in W_{\theta^\star} \setminus\{0\}$, define
\begin{align}
    r_{\mathrm{loc}}(h;\theta^\star) \triangleq \inf \left\{ j\ge 1: D\Phi_j(\theta^\star)[h] \neq 0 \text{ in }L^2(P_0) \right\},   \label{eq:def_directional_local_likelihood_order}
\end{align}
with the convention that $r_{\mathrm{loc}}(h;\theta^\star) = \infty$ if no such order exists. The \emph{local identifiability likelihood-moment order} at $\theta^\star$ is
\begin{align}
    r_{\mathrm{loc}}(\theta^\star) \triangleq \sup_{ h\in W\setminus\{0\} } r_{\mathrm{loc}}(h;\theta^\star) \in \{1,2,\ldots\}\cup\{\infty\}.
    \label{eq:rloc_likelihood_def}
\end{align}
\end{definition}

Equivalently, $r_{\mathrm{loc}}(\theta^\star)$ is the smallest order $k$
such that the differential of the truncated likelihood-moment map $D\Phi_{\le k}(\theta^\star) : W_{\theta^\star} \to L^2(P_0)^k$ is injective. Indeed,
\begin{align}
    r_{\mathrm{loc}}(\theta^\star) = \inf \left\{k\ge 1: \ker D\Phi_{\le k}(\theta^\star) \cap W_{\theta^\star} = \{0\} \right\}.    \label{eq:rloc_equivalent_differential_injectivity}
\end{align}
Thus, $r_{\mathrm{loc}}(\theta^\star)$ is the first likelihood-moment order at which every nonzero identifiable perturbation is detected to first order.

\begin{proposition}[Generic relation between the local and global identifiability orders]
\label{prop:generic_rloc_rglob}
Suppose that Assumption~\ref{ass:algebraic_parameter_structure} holds, and let $r_{\mathrm{glob}}<\infty$. Then, there exists an exceptional set of measure zero such that, for every target $\theta^\star$ outside this exceptional set,
\begin{align}
    r_{\mathrm{loc}}(\theta^\star) \leq r_{\mathrm{glob}}. \label{eq:generic_rloc_le_rglob}
\end{align}
\end{proposition}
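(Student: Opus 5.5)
We will argue by comparing generic fibers of the polynomial map $F\triangleq\alpha_{\le r}$, with $r=r_{\mathrm{glob}}$, with the differential rank of $F$ along identifiable directions. By Lemma~\ref{lem:coordinate_faithfulness}, $\Phi_{\le r}(\theta)=\Phi_{\le r}(\theta')$ if and only if $F(\theta)=F(\theta')$. By linearity of the coordinate representation~\eqref{eqn:finite_dimentional_repr}, $D\Phi_{\le r}(\theta)[h]=0$ if and only if $DF(\theta)[h]=0$. It therefore suffices to show that, for generic $\theta^\star$, $\ker DF(\theta^\star)\cap W_{\theta^\star}=\{0\}$. The conclusion then follows from~\eqref{eq:rloc_equivalent_differential_injectivity}.

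\textbf{Step 1: fiber dimension.} For $\theta^\star\in\Theta_{\mathrm{gen}}$, the definition of $r_{\mathrm{glob}}$ gives $F^{-1}(F(\theta^\star))=G\cdot\theta^\star$. This is a compact orbit of dimension $\dim G-\dim G_{\theta^\star}$.

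\textbf{Step 2: generic rank.} Since $\Theta$ is semi-algebraic and $F$ is polynomial, we may use the semi-algebraic constant-rank and trivialization theorem (Hardt triviality together with a stratification of $\Theta$ into finitely many connected Nash manifolds on which $F$ has constant rank). This yields an open dense, full-measure subset of the top-dimensional strata on which $\Theta$ is locally a smooth manifold of dimension $m'$, $F$ has locally constant rank $\rho$, and the generic fiber of $F$ has local dimension $m'-\rho$ by the constant rank theorem. Similarly, on a generic stratum the orbit type is constant, so orbits have a fixed dimension $d_G$ and $W_{\theta}$ has dimension $m'-d_G$. Removing a further measure-zero semi-algebraic set, which is possible because $\Theta_{\mathrm{gen}}$ has full measure, we may assume $\theta^\star\in\Theta_{\mathrm{gen}}$. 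Step~1 then forces $m'-\rho=d_G$.

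\textbf{Step 3: kernel equals orbit tangent.} Invariance $F(g\cdot\theta)=F(\theta)$ gives $T_{\theta^\star}(G\cdot\theta^\star)\subseteq\ker DF(\theta^\star)$. Since $\dim\ker DF(\theta^\star)=m'-\rho=d_G=\dim T_{\theta^\star}(G\cdot\theta^\star)$, the two spaces are equal. Because $W_{\theta^\star}$ is transverse to the orbit tangent by Definition~\ref{def:tangent_normal_quotient}, we conclude $\ker DF(\theta^\star)\cap W_{\theta^\star}=\{0\}$, and hence $r_{\mathrm{loc}}(\theta^\star)\le r$. The exceptional set is the union of the non-top strata, the rank-dropping locus, the non-principal orbit-type locus, and $\Theta\setminus\Theta_{\mathrm{gen}}$. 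Each of these has measure zero.

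\textbf{Main obstacle.} The delicate point is Step~2: the argument that generic fiber dimension equals $m'$ minus the generic rank. It requires the constant rank theorem at a point where the rank is locally maximal and constant. I would first pass to the open set where $\operatorname{rank}DF$ attains its maximum on each stratum; the complement is a proper algebraic subset of each stratum, hence null. There the level set through $\theta^\star$ is locally a submanifold of dimension $m'-\rho$ contained in the orbit, which forces $m'-\rho\le d_G$. Combined with the orbit inclusion into the kernel from Step~3, which gives $d_G\le m'-\rho$, equality follows. A secondary technicality is the case where $\Theta$ has lower-dimensional pieces of positive $m'$-dimensional measure. Here I would interpret "measure zero" stratum-wise with respect to each stratum's own Hausdorff measure and apply the argument on every stratum separately.
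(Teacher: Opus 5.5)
Your proposal is correct and follows essentially the same route as the paper: generic injectivity of $\alpha_{\le r_{\mathrm{glob}}}$ modulo $G$, combined with a semi-algebraic constant-rank stratification, forces $D\alpha_{\le r_{\mathrm{glob}}}(\theta^\star)$ to be injective on $W_{\theta^\star}$ for generic $\theta^\star$. The one difference is where the dimension count happens. You work upstairs on $\Theta$: the constant-rank level set lies inside the orbit, and the orbit tangent lies inside the kernel, so the kernel equals the orbit tangent. The paper works downstairs, using the dimension of the image of the induced quotient map. Your version makes explicit the step the paper states only tersely, namely why ``image of dimension $t$'' yields rank $t$ at generic points rather than merely on some stratum.
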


Proposition~\ref{prop:generic_rloc_rglob}, which is proved in Appendix~\ref{subsec:proof_prop_generic_rloc_rglob}, does not assert equality between the two orders. The local order may be strictly smaller than the global order: lower-order likelihood moments may distinguish all infinitesimal directions near a generic target while still failing to separate distant equivalence classes; see~\cite[Example 3.10]{balanov2026generalized}. Conversely, the inequality may fail at exceptional targets, such as signals with additional symmetries or other differential degeneracies.

We denote by $\Theta_{\mathrm{reg}}\subseteq\Theta_{\mathrm{gen}}$ a generic subset on which the local quotient structure is regular, $r_{\mathrm{loc}}(\theta^\star) \leq r_{\mathrm{glob}}$, and the ranks of the truncated coordinate maps $\alpha_{\leq k}$, $1\leq k\leq r_{\mathrm{glob}}$, are locally constant. By Proposition~\ref{prop:generic_rloc_rglob} and the generic quotient regularity established in Appendix~\ref{sec:smooth_quotient_models}, the complement of $\Theta_{\mathrm{reg}}$ is contained in a lower-dimensional exceptional set. Henceforth, we fix a compact set $K\subseteq\Theta_{\mathrm{reg}}$ on which $r_{\mathrm{loc}}(\theta^\star)$ is constant, and denote this common value by $r_{\mathrm{loc}}$.

\subsection{Population stability across local and global scales}
\label{subsec:population_stability_local_global}

The next proposition shows that at order $r_{\mathrm{loc}}$, the likelihood coordinates locally control the quotient distance to the target, whereas order $r_{\mathrm{glob}}$ is sufficient to separate the target from all parameters outside a fixed neighborhood. The proof is deferred to Appendix~\ref{subsec:proof_prop_population_stability_local_global}.

\begin{proposition}[Population stability across local and global scales]
\label{prop:population_stability_local_global}
Suppose Assumption~\ref{ass:algebraic_parameter_structure} holds.
Let $K \subseteq \Theta_{\mathrm{reg}}$ be the compact regular parameter class fixed above, on which $r_{\mathrm{loc}}$ is constant. Then, there exist constants $\eta>0$, $C_{\mathrm{inv}}<\infty$, and $\gamma_\eta>0$ such that, uniformly over $\theta^\star\in K$, the following hold.
\begin{enumerate}
    \item \emph{(Local stable invertibility.)}
    For every $\theta\in\Theta$ satisfying
    $d_{\mathrm{eq}}(\theta,\theta^\star)\leq\eta$,
    \begin{align}
        d_{\mathrm{eq}}(\theta,\theta^\star) \leq C_{\mathrm{inv}} \left\|\alpha_{\leq r_{\mathrm{loc}}}(\theta) - \alpha_{\leq r_{\mathrm{loc}}}(\theta^\star) \right\|_2.        \label{eq:population_local_stability_alpha}
    \end{align}

    \item \emph{(Global separation.)}
    For every $\theta\in\Theta$ satisfying
    $d_{\mathrm{eq}}(\theta,\theta^\star)\geq\eta$,
    \begin{align}
        \left\| \alpha_{\leq r_{\mathrm{glob}}}(\theta) -\alpha_{\leq r_{\mathrm{glob}}}(\theta^\star) \right\|_2 \geq \gamma_\eta.        \label{eq:population_global_separation_alpha}
    \end{align}
\end{enumerate}
\end{proposition}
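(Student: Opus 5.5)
I will prove the two parts separately and then reconcile the neighborhood sizes by taking $\eta$ as the minimum of the radius produced in Part 1 and any admissible positive value for Part 2. The uniformity over $\theta^\star\in K$ will come from compactness of $K$, continuity of the polynomial maps $\alpha_\ell$, and $G$-invariance. By Lemma~\ref{lem:coordinate_faithfulness}, $\alpha_{\le k}$ is constant on $G$-orbits, since observationally equivalent parameters have equal likelihood ratios for all small $\beta$ and hence equal Taylor coefficients. So both inequalities are statements on the quotient $\Theta/G$.

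\emph{Global separation.} I argue by contradiction. Suppose there are $\theta^\star_n\in K$ and $\theta_n\in\Theta$ with $d_{\mathrm{eq}}(\theta_n,\theta^\star_n)\ge\eta$ and $\|\alpha_{\le r_{\mathrm{glob}}}(\theta_n)-\alpha_{\le r_{\mathrm{glob}}}(\theta^\star_n)\|_2\to0$. By compactness of $K$ and $\Theta$, pass to limits $\theta^\star\in K$ and $\theta\in\Theta$. Continuity of $\alpha$ and of $d_{\mathrm{eq}}$ gives $\alpha_{\le r_{\mathrm{glob}}}(\theta)=\alpha_{\le r_{\mathrm{glob}}}(\theta^\star)$ and $d_{\mathrm{eq}}(\theta,\theta^\star)\ge\eta$. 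By Lemma~\ref{lem:coordinate_faithfulness}, $\Phi_{\le r_{\mathrm{glob}}}(\theta)=\Phi_{\le r_{\mathrm{glob}}}(\theta^\star)$. Since $K\subseteq\Theta_{\mathrm{gen}}$, this forces $\theta\sim\theta^\star$, contradicting $d_{\mathrm{eq}}(\theta,\theta^\star)\ge\eta>0$. This yields $\gamma_\eta>0$ for every fixed $\eta>0$. It is essentially the argument behind Lemma~\ref{lem:fixed_accuracy_likelihood_separation}.

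\emph{Local stable invertibility.} For fixed $\theta^\star\in K\subseteq\Theta_{\mathrm{reg}}$, I use the regular local quotient structure from Appendix~\ref{sec:smooth_quotient_models}. Near the orbit, every $\theta$ can be moved by some $g\in G$ into a slice $\theta^\star+h$ with $h$ in the normal space $W_{\theta^\star}$. On this slice, $d_{\mathrm{eq}}(\theta,\theta^\star)\asymp\|h\|$, using local equivalence with the Euclidean norm. Write
\begin{align}
\alpha_{\le r_{\mathrm{loc}}}(\theta^\star+h)-\alpha_{\le r_{\mathrm{loc}}}(\theta^\star)=D\alpha_{\le r_{\mathrm{loc}}}(\theta^\star)[h]+O(\|h\|^2).
\end{align}
By \eqref{eq:rloc_equivalent_differential_injectivity} and Lemma~\ref{lem:coordinate_faithfulness} applied to differentials (the $q_{\ell,j}$ form a basis, so $D\Phi_\ell[h]=0$ iff $D\alpha_\ell[h]=0$), $D\alpha_{\le r_{\mathrm{loc}}}(\theta^\star)$ is injective on $W_{\theta^\star}$. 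It therefore has smallest singular value $\sigma(\theta^\star)>0$ there, and for $\|h\|\le\sigma/(2C)$ the difference is at least $\tfrac{\sigma}{2}\|h\|$.

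\emph{Uniformity and the main obstacle.} The main difficulty is making $\sigma$, the slice radius, and the Taylor constant uniform over $K$. The $O(\|h\|^2)$ constant is uniform because the $\alpha_\ell$ are polynomials on a compact set. For $\sigma$ I would use that, on $\Theta_{\mathrm{reg}}$, the orbit dimension and the ranks are locally constant. Then $W_{\theta^\star}$ varies continuously, and $\theta^\star\mapsto\sigma(\theta^\star)$ is lower semicontinuous and positive on compact $K$, so $\inf_K\sigma>0$. The slice radius would be made uniform by the tubular neighborhood theorem applied to the compact family of orbits, i.e., a uniform local slice theorem on the regular stratum, possibly after covering $K$ by finitely many charts. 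With uniform radius $\eta$ I set $C_{\mathrm{inv}}$ to be $2/\inf_K\sigma$ times the metric-equivalence constant.
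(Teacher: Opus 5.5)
Your proposal is correct and follows the same route as the paper. The global part is the compactness argument of Lemma~\ref{lem:compact_uniform_separation}, combined with generic identifiability and coordinate faithfulness. The local part uses injectivity of $D\alpha_{\le r_{\mathrm{loc}}}(\theta^\star)$ on $W_{\theta^\star}$ along a transverse slice, with constants made uniform by compactness of $K$; you spell this out via a Taylor and smallest-singular-value bound where the paper just cites the constant-rank theorem.

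One minor tidy-up: if $\Theta$ is curved near $\theta^\star$, the slice should be a $C^2$ chart tangent to $W_{\theta^\star}$ rather than the affine set $\theta^\star+W_{\theta^\star}$. Your uniform $O(\|h\|^2)$ constant must then also control the chart's second derivatives, which the same compactness argument handles.
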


For the statistical analysis, Proposition~\ref{prop:population_stability_local_global} must be transferred from the parameter-side coordinates $\alpha_\ell$ to the observed likelihood. The cross-order nondegeneracy condition in Assumption~\ref{ass:nondegenerate_cross_order_likelihood_geometry} prevents cancellation among the likelihood coordinates, while Assumption~\ref{ass:local_likelihood_regularity} makes the higher-order remainder proportional to the local displacement. Together they yield the following two-scale likelihood separation, which is proved in Appendix~\ref{subsec:proof_prop_likelihood_separation_local_global}.

\begin{proposition}[Likelihood separation across local and global scales]
\label{prop:likelihood_separation_local_global}
Suppose Assumptions~\ref{ass:nondegenerate_cross_order_likelihood_geometry}, \ref{ass:algebraic_parameter_structure}, and \ref{ass:local_likelihood_regularity} hold. Then, there exist constants $c_{\mathrm{loc}},c_{\mathrm{glob}}>0$ and $\beta_0>0$ such that, uniformly over $\theta^\star\in K$ and $0<\beta\leq\beta_0$,
\begin{align}
    \norm{L_{\theta,\beta}-L_{\theta^\star,\beta}}_{L^2(P_0)}
    &\geq
    c_{\mathrm{loc}}\, \beta^{r_{\mathrm{loc}}} d_{\mathrm{eq}}(\theta,\theta^\star), && d_{\mathrm{eq}}(\theta,\theta^\star)\leq\eta,   \label{eq:local_likelihood_separation_r_loc}
    \\
    \norm{L_{\theta,\beta}-L_{\theta^\star,\beta}}_{L^2(P_0)}
    &\geq
    c_{\mathrm{glob}}\, \beta^{r_{\mathrm{glob}}},
    && d_{\mathrm{eq}}(\theta,\theta^\star)\geq\eta.   \label{eq:global_likelihood_separation_r_glob}
\end{align}
\end{proposition}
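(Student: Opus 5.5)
The plan is to reduce both inequalities to the parameter-side bounds of Proposition~\ref{prop:population_stability_local_global}. The reduction uses the finite-dimensional expansion~\eqref{eqn:finite_dimensional_likelihood_ratio_expansion} together with the Gram nondegeneracy of Assumption~\ref{ass:nondegenerate_cross_order_likelihood_geometry}. For $k\le r:=r_{\mathrm{glob}}$, set $\Delta_\ell \triangleq \alpha_\ell(\theta)-\alpha_\ell(\theta^\star)$. Then
\begin{align}
    L_{\theta,\beta}-L_{\theta^\star,\beta} = \sum_{\ell=1}^{r}\beta^\ell\langle \Delta_\ell,q_\ell\rangle + \bigl(R_{r+1}(\cdot;\theta,\beta)-R_{r+1}(\cdot;\theta^\star,\beta)\bigr).
\end{align}
The principal part is $\langle v_\beta, q_{\le r}\rangle$ with $v_\beta=(\beta\Delta_1,\ldots,\beta^r\Delta_r)$. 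By Assumption~\ref{ass:nondegenerate_cross_order_likelihood_geometry}, its $L^2(P_0)$ norm is at least $\lambda_{\min}(\mathbf{G}_r)^{1/2}\|v_\beta\|_2$. The Gram condition at order $r_{\mathrm{glob}}$ also covers the sub-block through order $r_{\mathrm{loc}}\le r_{\mathrm{glob}}$. This means no cross-order cancellation can occur, which is the key difference from Example~\ref{ex:rglob_insufficient}.

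\emph{Global bound.} Suppose $d_{\mathrm{eq}}(\theta,\theta^\star)\ge\eta$. For $\beta\le 1$ we have $\|v_\beta\|_2\ge\beta^r\|\Delta_{\le r}\|_2\ge \beta^r\gamma_\eta$ by~\eqref{eq:population_global_separation_alpha}. The remainder difference is $O(\beta^{r+1})$ uniformly over $\Theta$ by Assumption~\ref{ass:regular_observed_likelihood_ratio_expansion}. For $\beta$ small this is at most half of $\lambda_{\min}^{1/2}\gamma_\eta\beta^r$, which gives~\eqref{eq:global_likelihood_separation_r_glob}. Here we must use the uniform remainder bound rather than a displacement-proportional one, since the displacement is of order one anyway.

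\emph{Local bound.} Suppose $d_{\mathrm{eq}}(\theta,\theta^\star)\le\eta$. Both sides of~\eqref{eq:local_likelihood_separation_r_loc} are invariant under the group action, so we replace $\theta$ by a representative $g\cdot\theta$ attaining the quotient distance. This makes $\|\theta-\theta^\star\|_2\lesssim d_{\mathrm{eq}}(\theta,\theta^\star)$ by the local equivalence of metrics, possibly after shrinking $\eta$. Write $s=r_{\mathrm{loc}}$ and expand only to order $s$. The principal part has norm at least $\lambda_{\min}(\mathbf{G}_s)^{1/2}\beta^{s}\|\Delta_{\le s}\|_2$, and by~\eqref{eq:population_local_stability_alpha} this is at least $c\,\beta^s d_{\mathrm{eq}}(\theta,\theta^\star)$.

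The main obstacle is the remainder $R_{s+1}(\cdot;\theta,\beta)-R_{s+1}(\cdot;\theta^\star,\beta)$. A crude $O(\beta^{s+1})$ bound is useless when $d_{\mathrm{eq}}\ll\beta$. I would handle it with Assumption~\ref{ass:local_likelihood_regularity}, applying the mean value theorem along the segment from $\theta^\star$ to $\theta$. If the segment leaves $\Theta$, it still lies in the open extension $U$ once $\eta$ is small, using compactness of $K$. This gives
\begin{align}
    \bigl\|R_{s+1}(\cdot;\theta,\beta)-R_{s+1}(\cdot;\theta^\star,\beta)\bigr\|_{L^2(P_0)}\le \sup_{\vartheta}\bigl\|R^{(1)}_{s+1}(\vartheta,\beta)\bigr\|_{L^2(P_0)}\,\|\theta-\theta^\star\|_2 \le C\beta^{s+1} d_{\mathrm{eq}}(\theta,\theta^\star).
\end{align}
For this step I need the differentiated remainder bound~\eqref{eq:differentiated_likelihood_expansion} to hold uniformly on a neighborhood of $\Theta$, which is how I read Assumption~\ref{ass:local_likelihood_regularity}. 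Choosing $\beta_0$ with $C\beta_0\le c/2$ then yields~\eqref{eq:local_likelihood_separation_r_loc} with $c_{\mathrm{loc}}=c/2$. All constants are uniform over $\theta^\star\in K$, because Proposition~\ref{prop:population_stability_local_global} is uniform there and the remainder bounds are uniform over $\Theta$.
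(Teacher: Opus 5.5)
Your proposal is correct and follows the paper's proof essentially step for step. Both arguments use Gram nondegeneracy (with $\mathbf{G}_s$ a principal submatrix of $\mathbf{G}_r$) together with Proposition~\ref{prop:population_stability_local_global}, a uniform $O(\beta^{r+1})$ remainder for the global bound, and a displacement-proportional $O(\beta^{s+1}d_{\mathrm{eq}}(\theta,\theta^\star))$ remainder for the local bound.

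The only difference is cosmetic. You derive that local remainder directly, via a straight segment in $U$ from the distance-minimizing representative. The paper instead packages it as Lemma~\ref{lem:locally_lipschitz_analytic_tail}, using a path in a local slice. Both rely on the same reading of Assumption~\ref{ass:local_likelihood_regularity} as holding uniformly on a neighborhood of $\Theta$.
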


The first inequality is the key new ingredient for shrinking accuracy: two nearby parameters at quotient distance $\varepsilon$ are statistically separated at scale $\beta^{r_{\mathrm{loc}}}\varepsilon$. The second is a fixed-radius statement, so the corresponding separation is of order $\beta^{r_{\mathrm{glob}}}$ with no additional factor of $\varepsilon$.

\subsection{Upper bound: global localization followed by local refinement}
\label{subsec:global_to_local_upper_bound}

Because the likelihood moments through order $r_{\mathrm{loc}}$ need not identify the target globally, an estimator based only on these moments cannot in general be applied over the full parameter space. This motivates a two-stage procedure. The first stage uses the fixed-accuracy theory of Section~\ref{sec:rate-optimal-recovery} to localize the target to a fixed neighborhood at the global order $r_{\mathrm{glob}}$. The second stage restricts estimation to this neighborhood and refines the estimate using likelihood coordinates only through order $r_{\mathrm{loc}}$.

Concretely, we split the $n$ observations into two independent subsamples of sizes $n_1$ and $n_2$, with $n_1+n_2=n$. Using the first subsample, we construct a preliminary estimator $\widetilde{\theta}$ satisfying $d_{\mathrm{eq}}(\widetilde{\theta},\theta^\star)\leq\eta/4$ with probability at least $1-\delta/2$, using the fixed-accuracy robust method-of-moments estimator from Theorem~\ref{thm:mom_upper_bound}. 
Using the second subsample, $Y_1^{(2)},\ldots,Y_{n_2}^{(2)}$, let $\widehat{\Gamma}_{r_{\mathrm{loc}},\delta/2}$ denote the geometric median-of-means estimator of $\Gamma_{r_{\mathrm{loc}},\beta}(\theta^\star)$ (as defined in~\eqref{eq:likelihood_coordinate_population_map}), constructed from $q_{\leq r_{\mathrm{loc}}}(Y_i^{(2)})$, for $1\leq i\leq n_2$, following the procedure introduced in Section~\ref{subsec:likelihood_coordinate_moment_estimator}. We then restrict the search to $\Theta_{\mathrm{loc}}(\widetilde{\theta}) \triangleq \{\theta\in\Theta: d_{\mathrm{eq}}(\theta,\widetilde{\theta}) \leq \eta/2 \}$, and define
\begin{align}
    \widehat{\theta}_{\mathrm{loc}} \in \argmin_{\theta\in\Theta_{\mathrm{loc}}(\widetilde{\theta})} \left\| \widehat{\Gamma}_{r_{\mathrm{loc}},\delta/2} - \Gamma_{r_{\mathrm{loc}},\beta}(\theta) \right\|_2^2.
\label{eq:local_likelihood_coordinate_moment_estimator}
\end{align}
The resulting two-stage sample-complexity bound is stated below, with the proof deferred to Appendix~\ref{subsec:two_stage_upper_bound}.

\begin{thm}[Global-to-local upper bound]
\label{thm:global_to_local_upper_bound}
Let $K\subseteq\Theta_{\mathrm{reg}}$ be the compact regular generic parameter class fixed above.
Suppose the assumptions of the fixed-accuracy method-of-moments result in Theorem~\ref{thm:mom_upper_bound} hold, together with Assumptions~\ref{ass:algebraic_parameter_structure} and~\ref{ass:local_likelihood_regularity}.
Then there exist constants $C_0<\infty$, $\varepsilon_0>0$, and $\beta_0>0$ such that, for every $0<\varepsilon\leq\varepsilon_0$, $0<\beta\leq\beta_0$, and $0<\delta<1/2$, the two-stage robust method-of-moments estimator satisfies
\begin{align}
    \sup_{\theta^\star\in K}
    P_{\theta^\star,\beta}^{\otimes n} \left( d_{\mathrm{eq}} \bigl( \widehat{\theta}_{\mathrm{loc}}, \theta^\star \bigr) > \varepsilon \right) \leq \delta
\end{align}
whenever
\begin{align}
    n \geq C_0 \max\left\{\beta^{-2r_{\mathrm{glob}}}, \beta^{-2r_{\mathrm{loc}}}\varepsilon^{-2} \right\} \log \! \left(1/{\delta}\right).    \label{eq:global_to_local_upper_sample_complexity}
\end{align}
\end{thm}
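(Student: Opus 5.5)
The plan is to analyse the two stages separately and then combine them with a union bound. Allocate $n_1 \asymp \beta^{-2r_{\mathrm{glob}}}\log(1/\delta)$ and $n_2 \asymp \beta^{-2r_{\mathrm{loc}}}\varepsilon^{-2}\log(1/\delta)$, and take $n_1=n_2=n/2$, which is admissible whenever $n$ satisfies~\eqref{eq:global_to_local_upper_sample_complexity}.

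\textbf{Stage 1.} Apply Theorem~\ref{thm:mom_upper_bound} with accuracy $\eta/4$ and confidence $\delta/2$. Since $\eta$ from Proposition~\ref{prop:population_stability_local_global} is fixed, $n_1 \ge C_{\eta/4}\beta^{-2r_{\mathrm{glob}}}\log(2/\delta)$ gives $d_{\mathrm{eq}}(\widetilde\theta,\theta^\star)\le \eta/4$ with probability at least $1-\delta/2$, uniformly over $K$.

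Work on this event. By $G$-invariance of $d_\Theta$ and of $\Gamma_{r,\beta}$ (the latter because $P_{g\theta,\beta}=P_{\theta,\beta}$), we may replace $\theta^\star$ by an orbit representative. Then $\theta^\star\in\Theta_{\mathrm{loc}}(\widetilde\theta)$, so the minimiser $\widehat\theta_{\mathrm{loc}}$ is well defined and competes against $\theta^\star$. Every $\theta\in\Theta_{\mathrm{loc}}(\widetilde\theta)$ satisfies $d_{\mathrm{eq}}(\theta,\theta^\star)\le 3\eta/4\le\eta$ by the triangle inequality for the quotient metric. Hence only the local regime is relevant in stage 2.

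\textbf{Stage 2, deterministic step.} The key step is a local lower bound on the moment map:
\begin{align}
\bigl\|\Gamma_{r_{\mathrm{loc}},\beta}(\theta)-\Gamma_{r_{\mathrm{loc}},\beta}(\theta^\star)\bigr\|_2 \ge c\,\beta^{r_{\mathrm{loc}}}\, d_{\mathrm{eq}}(\theta,\theta^\star), \qquad d_{\mathrm{eq}}(\theta,\theta^\star)\le\eta.
\end{align}
To prove it, write $\Gamma_{r_{\mathrm{loc}},\beta}(\theta)=\mathbb{E}_{P_0}[q_{\le r_{\mathrm{loc}}}L_{\theta,\beta}]$ and expand $L_{\theta,\beta}$. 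Using $\mathbb{E}_{P_0}[q]$ constant in $\theta$, the difference becomes
\begin{align}
\sum_{\ell\le r_{\mathrm{loc}}}\beta^\ell\mathbf{G}_{\cdot,\ell}\bigl(\alpha_\ell(\theta)-\alpha_\ell(\theta^\star)\bigr)+\text{(higher orders)}.
\end{align}
The first sum equals $\mathbf{G}_{r_{\mathrm{loc}}}\Lambda_\beta\Delta\alpha$, where $\Lambda_\beta=\mathrm{diag}(\beta^\ell I_{D_\ell})$ and $\Delta\alpha=\alpha_{\le r_{\mathrm{loc}}}(\theta)-\alpha_{\le r_{\mathrm{loc}}}(\theta^\star)$. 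Since $\mathbf{G}_{r_{\mathrm{loc}}}$ is a principal submatrix of $\mathbf{G}_{r_{\mathrm{glob}}}$, it has $\lambda_{\min}>0$ by Assumption~\ref{ass:nondegenerate_cross_order_likelihood_geometry}. This gives a lower bound of order $\beta^{r_{\mathrm{loc}}}\|\Delta\alpha\|$, and hence, via~\eqref{eq:population_local_stability_alpha}, of order $\beta^{r_{\mathrm{loc}}}d_{\mathrm{eq}}/C_{\mathrm{inv}}$.

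\textbf{Main obstacle.} The hard part is the higher-order terms, which must be $O(\beta^{r_{\mathrm{loc}}+1}d_{\mathrm{eq}}(\theta,\theta^\star))$ rather than merely $O(\beta^{r_{\mathrm{loc}}+1})$. I will handle this with Assumption~\ref{ass:local_likelihood_regularity}. Apply the mean value theorem along a segment from a representative $g\theta^\star$ to $\theta$ inside the neighbourhood $U$, using the differentiated expansion~\eqref{eq:differentiated_likelihood_expansion}. This bounds the remainder by $\sup\|R^{(1)}_{r_{\mathrm{loc}}+1}\|\,\|\theta-g\theta^\star\|=O(\beta^{r_{\mathrm{loc}}+1})\,d_{\mathrm{eq}}$, after using local equivalence of $d_\Theta$ with the Euclidean metric. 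This is essentially the argument behind~\eqref{eq:local_likelihood_separation_r_loc} in Proposition~\ref{prop:likelihood_separation_local_global}, now applied to the moment map instead of $L_{\theta,\beta}$. For $\beta\le\beta_0$ the remainder is then absorbed, leaving constant $c/2$. If the segment leaves $U$, I would first shrink $\eta$ so that all segments stay in $U$.

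\textbf{Stage 2, stochastic step and conclusion.} By~\eqref{eq:robust_likelihood_coordinate_concentration}, with probability at least $1-\delta/2$ we have $\|\widehat\Gamma-\Gamma(\theta^\star)\|\le \tau\triangleq C\sqrt{V\log(2/\delta)/n_2}$. Minimality gives $\|\widehat\Gamma-\Gamma(\widehat\theta_{\mathrm{loc}})\|\le\tau$, so $\|\Gamma(\widehat\theta_{\mathrm{loc}})-\Gamma(\theta^\star)\|\le2\tau$. The local lower bound then yields $d_{\mathrm{eq}}(\widehat\theta_{\mathrm{loc}},\theta^\star)\le 4\tau/(c\beta^{r_{\mathrm{loc}}})$. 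This is at most $\varepsilon$ once $n_2\ge C'\beta^{-2r_{\mathrm{loc}}}\varepsilon^{-2}\log(1/\delta)$. A union bound over the two stage events finishes the proof. Choosing $\varepsilon_0\le\eta$ makes the two regimes consistent, and all constants are uniform over $K$ because those in Theorem~\ref{thm:mom_upper_bound} and Proposition~\ref{prop:population_stability_local_global} are.
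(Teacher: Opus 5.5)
Your proposal is correct and follows essentially the same route as the paper: localize to quotient accuracy $\eta/4$ with Theorem~\ref{thm:mom_upper_bound}, prove the local moment-map bound $\|\Gamma_{r_{\mathrm{loc}},\beta}(\theta)-\Gamma_{r_{\mathrm{loc}},\beta}(\theta^\star)\|_2\gtrsim\beta^{r_{\mathrm{loc}}}d_{\mathrm{eq}}(\theta,\theta^\star)$ from the principal-submatrix Gram bound, Proposition~\ref{prop:population_stability_local_global}, and a remainder of order $\beta^{r_{\mathrm{loc}}+1}d_{\mathrm{eq}}(\theta,\theta^\star)$, then finish with the minimum-distance argument and a union bound. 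The only cosmetic difference is that you get the Lipschitz remainder from a straight segment in the ambient neighbourhood $U$ of Assumption~\ref{ass:local_likelihood_regularity}, whereas the paper uses a slice path in Lemma~\ref{lem:locally_lipschitz_analytic_tail}; both give the same bound.
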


\subsection{Lower bound: global and local hard alternatives}
\label{subsec:global_to_local_lower_bound}

The lower bound reflects the same two scales as the upper bound: globally separated alternatives yield the term $\beta^{-2r_{\mathrm{glob}}}$, whereas nearby alternatives at a regular target yield the local term $\beta^{-2r_{\mathrm{loc}}}\varepsilon^{-2}$. Here, by hard alternatives we mean pairs of parameter values that are separated in the parameter space but statistically difficult to distinguish, requiring a large number of observations. The next lemma constructs such local hard alternatives, whose likelihood moments agree through order $r_{\mathrm{loc}}-1$; its proof is given in Appendix~\ref{subsec:proof_lem_local_hard_alternatives}.

\begin{lem}[Local hard alternatives at order $r_{\mathrm{loc}}$]
\label{lem:local_hard_alternatives}
Suppose Assumption~\ref{ass:algebraic_parameter_structure} holds. Let $K\subseteq \Theta_{\mathrm{reg}}$ be the compact parameter class fixed above, and suppose it contains a relative neighborhood of some $\theta_0\in\Theta_{\mathrm{reg}}$.
Then, there exist constants $c_0,C_0,t_0>0$ and a continuously differentiable curve $t\mapsto\theta(t)\in K$, $|t|\leq t_0$, with $\theta(0)=\theta_0$, such that
\begin{align}
    c_0|t| \leq  d_{\mathrm{eq}}(\theta(t),\theta_0) \leq C_0|t|,  
    \qquad 
    \Phi_{\leq r_{\mathrm{loc}}-1}(\theta(t)) = \Phi_{\leq r_{\mathrm{loc}}-1}(\theta_0), \label{eq:local_hard_curve_distance}
\end{align}
and
\begin{align}
    c_0|t| \leq \norm{ \Phi_{r_{\mathrm{loc}}}(\theta(t)) - \Phi_{r_{\mathrm{loc}}}(\theta_0)}_{L^2(P_0)} \leq C_0|t|.    \label{eq:local_hard_curve_informative_order}
\end{align}
\end{lem}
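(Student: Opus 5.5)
Write $r=r_{\mathrm{loc}}$ and work in the finite-dimensional coordinates $\alpha_{\le k}$. By Lemma~\ref{lem:coordinate_faithfulness}, equality of $\Phi_{\le r-1}$ is the same as equality of $\alpha_{\le r-1}$. Since each $\Phi_r(\theta)=\langle\alpha_r(\theta),q_r\rangle$ with $q_r$ a basis of $\mathcal{H}_r$, the norm $\|\Phi_r(\theta)-\Phi_r(\theta_0)\|_{L^2(P_0)}$ is equivalent to $\|\alpha_r(\theta)-\alpha_r(\theta_0)\|_2$, with constants given by the extreme eigenvalues of the order-$r$ Gram block $\mathbf{G}_{rr}$. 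This block is positive definite because the $q_{r,j}$ are linearly independent. The lemma therefore reduces to a statement about the polynomial map $\alpha_{\le r}$ near $\theta_0$. We seek a $C^1$ curve $\theta(t)$ in $K$ with the following three properties:
\begin{enumerate}
\item $\alpha_{\le r-1}(\theta(t))$ is constant in $t$;
\item $\theta'(0)$ is a nonzero direction $h_0\in W_{\theta_0}$;
\item $\|\alpha_r(\theta(t))-\alpha_r(\theta_0)\|_2\asymp |t|$.
\end{enumerate}

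\textbf{Step 1 (a level-set curve).} On $\Theta_{\mathrm{reg}}$ the rank of $\alpha_{\le r-1}$ is locally constant, and the local quotient structure is regular. So near $\theta_0$, working in a chart of the smooth stratum of the semi-algebraic set $\Theta$ that contains $\theta_0$ (open in $\Theta$, since $K$ contains a relative neighbourhood), the constant rank theorem applies. It shows that the fibre $F=\{\theta:\alpha_{\le r-1}(\theta)=\alpha_{\le r-1}(\theta_0)\}$ is locally a $C^1$ (indeed real-analytic) submanifold, with tangent space $\ker D\alpha_{\le r-1}(\theta_0)$. Because $r=r_{\mathrm{loc}}$ is the smallest order at which the differential is injective on $W_{\theta_0}$, equation~\eqref{eq:rloc_equivalent_differential_injectivity} gives $\ker D\alpha_{\le r-1}(\theta_0)\cap W_{\theta_0}\neq\{0\}$ (when $r=1$ the constraint is vacuous). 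This kernel contains the orbit tangent directions, since $\alpha$ is constant on $G$-orbits. Choose $h_0\neq 0$ in the intersection with $W_{\theta_0}$, i.e.\ transverse to the orbit. Let $\theta(t)$ be a $C^1$ curve in $F$ with $\theta(0)=\theta_0$ and $\theta'(0)=h_0$, for instance by exponentiating in a chart of $F$. For small $t_0$ it stays in $K$. This gives $\Phi_{\le r-1}(\theta(t))=\Phi_{\le r-1}(\theta_0)$.

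\textbf{Step 2 (distance bounds).} The upper bound $d_{\mathrm{eq}}(\theta(t),\theta_0)\le d_\Theta(\theta(t),\theta_0)\le C_0|t|$ follows from the $C^1$ property and local Euclidean equivalence of $d_\Theta$. For the lower bound I use the regular local quotient structure: near $\theta_0$, $d_{\mathrm{eq}}(\theta,\theta_0)$ is comparable to the norm of the component of $\theta-\theta_0$ in $W_{\theta_0}$, via a slice at a point with locally constant orbit type. Since $\theta(t)-\theta_0=th_0+o(t)$ and $h_0\in W_{\theta_0}\setminus\{0\}$, this yields $d_{\mathrm{eq}}\ge c_0|t|$.

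\textbf{Step 3 (order-$r$ separation).} Since $\alpha_r$ is polynomial, $\alpha_r(\theta(t))-\alpha_r(\theta_0)=t\,D\alpha_r(\theta_0)[h_0]+o(t)$, which gives the upper bound immediately. For the lower bound, injectivity of $D\alpha_{\le r}(\theta_0)$ on $W_{\theta_0}$, together with $D\alpha_{\le r-1}(\theta_0)h_0=0$, forces $D\alpha_r(\theta_0)h_0\neq0$. Hence the norm is at least $\tfrac12|t|\,\|D\alpha_r(\theta_0)h_0\|$ for small $|t|$. Transferring back through $\mathbf{G}_{rr}$ gives~\eqref{eq:local_hard_curve_informative_order}.

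\textbf{Main obstacle.} The delicate point is Step 1 combined with the lower bound in Step 2. Two things must hold at the same time. First, the fibre of $\alpha_{\le r-1}$ through $\theta_0$ must be a genuine manifold containing a direction transverse to the orbit; this requires the constant-rank property on $\Theta_{\mathrm{reg}}$ and working within a smooth semi-algebraic stratum. Second, the quotient distance must be locally comparable to the transverse displacement. I would invoke the generic quotient regularity of Appendix~\ref{sec:smooth_quotient_models} (a slice theorem at principal orbit type) for the second. If the fibre were singular, the fallback would be the curve selection lemma from semi-algebraic geometry applied to $F$ minus the orbit, reparametrised to be $C^1$ with nonzero velocity.
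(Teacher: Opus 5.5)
Your proposal is correct and follows essentially the same route as the paper's proof. The constant-rank theorem on $\Theta_{\mathrm{reg}}$ makes the level set of $\Phi_{\le r_{\mathrm{loc}}-1}$ through $\theta_0$ a submanifold; you pick $h\in W_{\theta_0}\setminus\{0\}$ with $D\Phi_{\le r_{\mathrm{loc}}-1}(\theta_0)[h]=0\neq D\Phi_{r_{\mathrm{loc}}}(\theta_0)[h]$, take a $C^1$ curve in that level set with velocity $h$, and conclude by a first-order Taylor expansion together with the local slice structure of the quotient. The differences are cosmetic: you pass to the coordinates $\alpha_{\le r}$ and transfer norms through the within-order Gram block, whereas the paper works with $D\Phi$ directly; also, your claim that $d_{\mathrm{eq}}$ is comparable to the $W_{\theta_0}$-component of $\theta-\theta_0$ holds only up to an $O(|\theta-\theta_0|^2)$ error, which is harmless along a curve with $\theta(t)-\theta_0=th+o(t)$.
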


Choosing $|t|\asymp\varepsilon$ yields a pair of parameters at quotient distance of order $\varepsilon$ whose likelihood moments agree through order $r_{\mathrm{loc}}-1$ and first differ by order $\varepsilon$ at order $r_{\mathrm{loc}}$. Together with Assumption~\ref{ass:local_likelihood_regularity}, this gives a divergence of order at most $\beta^{2r_{\mathrm{loc}}}\varepsilon^2$, and hence the local lower-bound scale $\beta^{-2r_{\mathrm{loc}}}\varepsilon^{-2}$. Combining this with the fixed-accuracy global lower bound yields the following theorem, proved in Appendix~\ref{subsec:proof_thm_global_to_local_lower_bound}.

\begin{thm}[Global-to-local minimax lower bound]
\label{thm:global_to_local_lower_bound}
Let $K\subseteq \Theta_{\mathrm{reg}}$ be the compact parameter class fixed above. Suppose that $K$ contains a relative neighborhood of at least one point of $\Theta_{\mathrm{reg}}$, so that Lemma~\ref{lem:local_hard_alternatives} applies.
Suppose further that $K$ contains a fixed pair $\theta_0,\theta_1$ satisfying $d_{\mathrm{eq}}(\theta_0,\theta_1)>0$, and $r(\theta_0,\theta_1)=r_{\mathrm{glob}}$. Assume, in addition, that Assumptions~\ref{ass:reciprocal_likelihood_regularity} and~\ref{ass:local_likelihood_regularity} hold. Then there exist constants $c_0,\varepsilon_0,\beta_0>0$ such that, for every $0<\varepsilon\leq\varepsilon_0$, $0<\beta\leq\beta_0$, and $0<\delta<1/4$,
\begin{align}
    n^\star(\varepsilon,\delta;\beta) \geq c_0 \max\left\{\beta^{-2r_{\mathrm{glob}}}, \beta^{-2r_{\mathrm{loc}}}\varepsilon^{-2}\right\} \log\left(1/\delta\right).    \label{eq:global_to_local_lower_rate}
\end{align}
\end{thm}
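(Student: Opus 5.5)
The lower bound is the maximum of two separate bounds, so I will prove each term on its own. The global term $\beta^{-2r_{\mathrm{glob}}}\log(1/\delta)$ follows from Theorem~\ref{thm:lecam_lower_bound}. The pair $\theta_0,\theta_1\in K$ has fixed separation $d_{\mathrm{eq}}(\theta_0,\theta_1)>0$, so once $\varepsilon_0<d_{\mathrm{eq}}(\theta_0,\theta_1)/2$ we have $d_{\mathrm{eq}}(\theta_0,\theta_1)>2\varepsilon$ for every $\varepsilon\le\varepsilon_0$. Together with $r(\theta_0,\theta_1)=r_{\mathrm{glob}}$ and Assumption~\ref{ass:reciprocal_likelihood_regularity}, that theorem gives $n^\star\ge c_{\theta_0,\theta_1}\beta^{-2r_{\mathrm{glob}}}\log(1/\delta)$. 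The constant does not depend on $\varepsilon$, because the hard pair is the same for all $\varepsilon\le\varepsilon_0$.

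For the local term I will use the curve $\theta(t)$ from Lemma~\ref{lem:local_hard_alternatives} and set $t=3\varepsilon/c_0$. This makes $d_{\mathrm{eq}}(\theta(t),\theta_0)\ge 3\varepsilon>2\varepsilon$, and the condition $|t|\le t_0$ holds once $\varepsilon_0\le c_0t_0/3$. The two-point argument is the standard Le Cam reduction with Bretagnolle--Huber. Any estimator with error probability at most $\delta$ at both points gives a test, so $2\delta\ge \tfrac12\exp\{-n D_{\mathrm{KL}}(P_{\theta(t),\beta}\|P_{\theta_0,\beta})\}$, which yields $n\ge \log(1/(4\delta))/D_{\mathrm{KL}}$. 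I will bound the KL divergence by the $\chi^2$ divergence, $D_{\mathrm{KL}}\le\log(1+\chi^2)\le\chi^2$. The step $\log(1/(4\delta))\gtrsim\log(1/\delta)$ needs $\delta$ bounded away from $1/4$. I will handle this either by restricting to $\delta\le\delta_1<1/4$, or by using the $\Omega(1)$ lower bound from Le Cam's total-variation inequality, exactly as in the proof of Theorem~\ref{thm:lecam_lower_bound}.

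The remaining task is to show $\chi^2(P_{\theta(t),\beta},P_{\theta_0,\beta})\le C\beta^{2r_{\mathrm{loc}}}t^2$ uniformly in small $\beta$ and $t$. Proposition~\ref{prop:divergence_first_likelihood_moment} cannot be applied directly, because its remainder $O(\beta^{2r+1})$ is not uniform in the pair and does not scale with $t^2$. The bound is also genuinely needed in the joint regime where $t$ may be much smaller than $\beta$. I will therefore write
\begin{align}
\chi^2(P_{\theta(t),\beta},P_{\theta_0,\beta})=\mathbb{E}_{P_0}\!\left[(L_{\theta(t),\beta}-L_{\theta_0,\beta})^2L_{\theta_0,\beta}^{-1}\right]\le \|L_{\theta(t),\beta}-L_{\theta_0,\beta}\|_{L^{2q}(P_0)}^2\,\|L_{\theta_0,\beta}^{-1}\|_{L^{p_0}(P_0)},
\end{align}
using H\"older's inequality with $1/q+1/p_0=1$. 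The last factor is $1+O(\beta)$ by Assumption~\ref{ass:reciprocal_likelihood_regularity}. To bound the first factor, I will expand to order $k=r_{\mathrm{loc}}$ and write
\begin{align}
L_{\theta(t),\beta}-L_{\theta_0,\beta}=\sum_{\ell=1}^{r_{\mathrm{loc}}}\beta^\ell\bigl(\Phi_\ell(\theta(t))-\Phi_\ell(\theta_0)\bigr)+\bigl(R_{r_{\mathrm{loc}}+1}(\theta(t),\beta)-R_{r_{\mathrm{loc}}+1}(\theta_0,\beta)\bigr).
\end{align}
The terms with $\ell<r_{\mathrm{loc}}$ vanish by \eqref{eq:local_hard_curve_distance}. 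The order-$r_{\mathrm{loc}}$ term is $O(\beta^{r_{\mathrm{loc}}}|t|)$ in $L^{2q}$. In $L^2$ this is exactly \eqref{eq:local_hard_curve_informative_order}. For $L^{2q}$ I will use either the finite-dimensional structure (all norms on $\mathcal{H}_{r_{\mathrm{loc}}}$ are equivalent) or the mean-value theorem together with the uniform bound on $D\Phi_{r_{\mathrm{loc}}}$ from Assumption~\ref{ass:local_likelihood_regularity}.

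The main obstacle is the remainder difference, which has to be $O(\beta^{r_{\mathrm{loc}}+1}|t|)$ and not merely $O(\beta^{r_{\mathrm{loc}}+1})$. Here Assumption~\ref{ass:local_likelihood_regularity} is essential. By \eqref{eq:differentiated_likelihood_expansion}, $D_\theta R_{k+1}=R^{(1)}_{k+1}$ has $L^p$ norm $O(\beta^{k+1})$ uniformly in $\theta$. The extension of $\theta\mapsto L_{\theta,\beta}$ to the open set $U$ is $C^1$, and the curve $\theta(\cdot)$ is $C^1$. So the fundamental theorem of calculus along the curve, in the Banach space $L^{2q}(P_0)$, gives
\begin{align}
\|R_{k+1}(\theta(t),\beta)-R_{k+1}(\theta_0,\beta)\|_{L^{2q}}\le |t|\sup_s\|\theta'(s)\|\sup_\theta\|R^{(1)}_{k+1}(\theta,\beta)\|_{L^{2q}}=O(\beta^{k+1}|t|).
\end{align}
Combining the pieces gives $\chi^2\le C\beta^{2r_{\mathrm{loc}}}t^2\asymp\beta^{2r_{\mathrm{loc}}}\varepsilon^2$, and therefore $n^\star\gtrsim\beta^{-2r_{\mathrm{loc}}}\varepsilon^{-2}\log(1/\delta)$. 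Taking the maximum of the global and local bounds, with the smaller constant and the smaller $\varepsilon_0$ and $\beta_0$, proves \eqref{eq:global_to_local_lower_rate}.
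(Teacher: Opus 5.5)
Your proposal is correct and follows essentially the same route as the paper. The global term comes from Theorem~\ref{thm:lecam_lower_bound} applied to the fixed pair. The local term uses the curve of Lemma~\ref{lem:local_hard_alternatives} with $|t|\asymp\varepsilon$, an $L^p$ bound $\|L_{\theta(t),\beta}-L_{\theta_0,\beta}\|_{L^p(P_0)}\lesssim\beta^{r_{\mathrm{loc}}}\varepsilon$, H\"older plus reciprocal-likelihood control of $\chi^2$, and Le Cam's two-point bound.

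The only difference is minor. You bound the remainder difference by integrating $R^{(1)}_{r_{\mathrm{loc}}+1}$ directly along the hard curve. The paper instead cites Lemma~\ref{lem:locally_lipschitz_analytic_tail}, which performs the same fundamental-theorem-of-calculus estimate along a slice path of length $O(d_{\mathrm{eq}})$. Since $|t|\asymp d_{\mathrm{eq}}(\theta(t),\theta_0)$ on the curve, the two estimates are equivalent, and yours is slightly more self-contained.
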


The two terms in~\eqref{eq:global_to_local_lower_rate} require different conditions. The global hard-pair assumption is needed only for the $\beta^{-2r_{\mathrm{glob}}}$ term, whereas the local term follows from the regular local geometry around a single target. Consequently, restricting the parameter class may reduce the global localization difficulty if the restricted class no longer contains a pair realizing $r_{\mathrm{glob}}$, while leaving the local refinement scale $\beta^{-2r_{\mathrm{loc}}}\varepsilon^{-2}$ unchanged.
    
\paragraph{The global-to-local transition.}
Combining Theorems~\ref{thm:global_to_local_upper_bound} and~\ref{thm:global_to_local_lower_bound} yields the two-scale sample-complexity law~\eqref{eq:intro_global_local_sample_complexity}. The two terms are equal when $\varepsilon \asymp \beta^{r_{\mathrm{glob}}-r_{\mathrm{loc}}}$. This crossover separates two regimes. When $\varepsilon\gg\beta^{r_{\mathrm{glob}}-r_{\mathrm{loc}}}$, the dominant difficulty is identifying the correct equivalence class globally, and the sample complexity is governed by $r_{\mathrm{glob}}$. When $\varepsilon\ll\beta^{r_{\mathrm{glob}}-r_{\mathrm{loc}}}$, the dominant difficulty is fine local estimation, and the sample complexity is governed by $r_{\mathrm{loc}}$ together with the usual $\varepsilon^{-2}$ refinement factor.

In the important special case $r_{\mathrm{loc}}=r_{\mathrm{glob}}=r$, the global and local identifiability orders coincide and the shrinking-accuracy law reduces to $n^\star(\varepsilon,\delta;\beta) \asymp \beta^{-2r}\varepsilon^{-2} \log\left(1/\delta\right)$. Thus, the direct $\beta^{-2r}\varepsilon^{-2}$ law is a special case of the more general global-to-local theory, rather than an assumption imposed a priori.

\section{Applications to algebraic weak-signal perturbation models}
\label{sec:applications}

The preceding sections characterize weak-signal sample complexity in terms of finite-dimensional likelihood geometry and the global and local identifiability likelihood-moment orders. We now specialize this framework to a broad class of algebraic linear perturbation models, for which the abstract likelihood coordinates admit an explicit representation in terms of moments of the latent signal and derivatives of the observation model.

We proceed in three steps.
First, we introduce a general class of \emph{algebraic weak-signal perturbation models}, in which the parameter of a known observation family is perturbed linearly in the weak-signal parameter $\beta$, while the latent signal depends polynomially on $\theta$. We then specialize this framework to two important families of observation models: additive-noise channels and exponential-family models. These specializations provide concrete likelihood coordinates and verifiable conditions under which the general sample-complexity results of Sections~\ref{sec:rate-optimal-recovery}-\ref{sec:global_to_local_algebraic} apply.

\subsection{Algebraic weak-signal perturbations}
\label{subsec:linear_weak_signal_perturbations}

We first consider a parametric observation family whose parameter is perturbed \emph{linearly} in the weak-signal parameter $\beta$, with a perturbation determined by a latent signal depending polynomially on $\theta$. 

\begin{model}[Linear weak-signal perturbation model]
\label{model:linear_weak_input}
Let $\theta\in\Theta$, let $Z\sim\mu$, and let $\{P_m:m\in\mathcal{M}\}$ be a known parametric family on $\mathcal{Y}$ with densities $p_m$, where $m$ ranges over a neighborhood of $m_0\in\mathcal{M}\subset\mathbb{R}^d$. For a known map $s:\mathcal Z\times\Theta\to\mathbb{R}^d$ that is polynomial in $\theta$, assume that $p_{\theta,\beta}(y\vert z) = p_{m_0+\beta s(z;\theta)}(y)$; that is, conditional on $Z=z$, the parameter of the observation distribution is $m_{\theta,\beta}(z)=m_0+\beta s(z;\theta)$, namely, a linear perturbation in the signal strength $\beta$ around the null value $m_0$.
\end{model}

We next impose standard regularity on the observation family and the latent signal map. The first condition ensures a uniform likelihood expansion around the null model, while the second controls the weak-signal perturbation and its variation with respect to the parameter.

\begin{assum}[Regular linear perturbation model]
\label{ass:regular_linear_perturbation_model}
For every finite $p\geq 1$, the map $m\mapsto p_m(y)/p_{m_0}(y)$ is real analytic as an $L^p(P_0)$-valued map in a neighborhood of $m_0$. Moreover, $s(z;\cdot)$ is continuously differentiable on $\Theta$ for every $z\in\mathcal Z$, and
\begin{align}
    \sup_{\theta\in\Theta}\sup_{z\in\mathcal Z} \left( \norm{s(z;\theta)} + \norm{D_\theta s(z;\theta)} \right) <\infty. \label{eq:uniform_signal_and_derivative_bound}
\end{align}
\end{assum}

A particularly important special case is $s(z;\theta)=A(z)\theta$, where $A(z)$ is a linear operator, so that the latent signal depends linearly on $\theta$. This is a canonical and extensively studied setting in the weak-signal latent-variable literature; see Appendix~\ref{subsec:linear_mean_scaling_latent_variable_models} for representative models and applications captured by this framework.

For $k\geq 1$, define the order-$k$ signal moment
\begin{align}
    M_k(\theta) \triangleq \mathbb{E}_{Z\sim\mu} \left[ s(Z;\theta)^{\otimes k} \right], \label{eq:linear_weak_input_signal_moment}
\end{align}
and the order-$k$ observation side coordinates,
\begin{align}
    Q_k(y) \triangleq \frac{1}{k!} \left. \nabla_m^{\otimes k} \left( \frac{p_m(y)}{p_{m_0}(y)} \right) \right|_{m=m_0}.    \label{eq:linear_weak_input_Qk}
\end{align}

\begin{lem}[Likelihood moments for the linear weak-signal perturbation model]
\label{lem:linear_weak_input_likelihood_moments}
Under Model~\ref{model:linear_weak_input} and Assumption~\ref{ass:regular_linear_perturbation_model},
\begin{align}
    \frac{p_{\theta,\beta}(y\vert z)}{p_0(y)} = 1+ \sum_{k\geq 1} \beta^k \left\langle  Q_k(y), s(z;\theta)^{\otimes k} \right\rangle,    \label{eq:linear_weak_input_conditional_expansion}
\end{align}
and consequently
\begin{align}
    \phi_k(y;\theta) = \left\langle Q_k(y), M_k(\theta) \right\rangle, \qquad k\geq 1.    \label{eq:linear_weak_input_likelihood_moment}
\end{align}
\end{lem}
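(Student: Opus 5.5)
The plan is to expand the conditional likelihood ratio in the observation-family parameter $m$, substitute $m=m_0+\beta s(z;\theta)$, and then marginalize over $Z$.

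\textbf{Step 1: expansion in $m$.} Fix $z$ and set $h=s(z;\theta)\in\mathbb{R}^d$. Assumption~\ref{ass:regular_linear_perturbation_model} says that $m\mapsto p_m/p_{m_0}$ is real analytic as an $L^p(P_0)$-valued map near $m_0$. Note that $p_{m_0}=p_0$, since at $\beta=0$ the conditional law is $P_{m_0}$ for every $z$. Hence on some ball $\|m-m_0\|<\rho$ we have the convergent multivariate Taylor series
\begin{align}
\frac{p_m(y)}{p_{m_0}(y)}=\sum_{k\ge0}\frac{1}{k!}\Bigl\langle \nabla_m^{\otimes k}\Bigl(\frac{p_m}{p_{m_0}}\Bigr)\Big|_{m=m_0}(y),(m-m_0)^{\otimes k}\Bigr\rangle .
\end{align}
This uses the standard identification of the $k$-th Fréchet differential with the contraction of the derivative tensor against $h^{\otimes k}$. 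The $k=0$ term is $1$.

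\textbf{Step 2: substitute the perturbation.} Put $m-m_0=\beta h$. Homogeneity gives $(\beta h)^{\otimes k}=\beta^k h^{\otimes k}$, and the $k$-th term becomes $\beta^k\langle Q_k(y),s(z;\theta)^{\otimes k}\rangle$ by definition~\eqref{eq:linear_weak_input_Qk}. This is valid for $\beta<\rho/S$, where $S=\sup_{\theta,z}\|s(z;\theta)\|<\infty$ by~\eqref{eq:uniform_signal_and_derivative_bound}. Thus~\eqref{eq:linear_weak_input_conditional_expansion} holds in $L^p(P_0)$ for each $z$.

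\textbf{Step 3: integrate over $Z$ (the main obstacle).} We must justify exchanging $\int\mu(dz)$ with the infinite sum. The plan is to obtain a $z$-uniform summable bound. By analyticity on a slightly larger ball, Cauchy-type estimates for Banach-valued analytic maps give
\begin{align}
\|Q_k\|_{L^p(P_0)}\le C R^{-k}
\end{align}
for some $R>0$, where $\|Q_k\|$ is measured as a tensor of $L^p$ functions. Consequently
\begin{align}
\bigl\|\langle Q_k,s(z;\theta)^{\otimes k}\rangle\bigr\|_{L^p(P_0)}\le C\,d^{k/2}S^kR^{-k},
\end{align}
where the factor $d^{k/2}$ comes from bounding the contraction by a finite-dimensional norm. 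This bound is independent of $z$ and $\theta$. For $\beta$ small enough, $\sum_k\beta^k C(d^{1/2}S/R)^k<\infty$.

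With this bound in hand, Bochner integration in $L^p(P_0)$ (or Fubini together with Minkowski's integral inequality) lets us integrate term by term. Since $Q_k$ does not depend on $z$, linearity of the contraction gives
\begin{align}
\int\langle Q_k(y),s(z;\theta)^{\otimes k}\rangle\mu(dz)=\langle Q_k(y),M_k(\theta)\rangle .
\end{align}
The measurability of $z\mapsto s(z;\theta)$ needed here is implicit in Model~\ref{model:linear_weak_input}.

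\textbf{Step 4: identify the coefficients.} We obtain
\begin{align}
L_{\theta,\beta}=\frac{p_{\theta,\beta}}{p_0}=\int \frac{p_{\theta,\beta}(y\vert z)}{p_0(y)}\,\mu(dz)=1+\sum_k\beta^k\langle Q_k,M_k(\theta)\rangle,
\end{align}
which is a convergent power series in $\beta$ in $L^p(P_0)$. By uniqueness of the coefficients of a convergent power series, which are the Taylor coefficients~\eqref{eqn:likelihood_moments_def}, we conclude $\phi_k(\cdot;\theta)=\langle Q_k,M_k(\theta)\rangle$. This gives~\eqref{eq:linear_weak_input_likelihood_moment}.
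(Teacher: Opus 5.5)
Your proposal is correct and takes the same route as the paper: Taylor-expand $p_m/p_{m_0}$ at $m_0$ along $m=m_0+\beta s(z;\theta)$, then average over $Z$. You additionally supply the justifications the paper leaves implicit, namely the $z$-uniform Cauchy-type bound that permits term-by-term integration in $L^p(P_0)$, and uniqueness of power-series coefficients to identify $\phi_k$ with $\langle Q_k, M_k(\theta)\rangle$.
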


\begin{proof}
Taylor expansion of $p_m(y)/p_{m_0}(y)$ at $m_0$ along the perturbation $m=m_0+\beta s(z;\theta)$ gives \eqref{eq:linear_weak_input_conditional_expansion}.
Averaging over $Z$ and using $M_k(\theta)=\mathbb{E}_{Z \sim \mu}[s(Z;\theta)^{\otimes k}]$ yields~\eqref{eq:linear_weak_input_likelihood_moment}.
\end{proof}

The preceding likelihood-moment representation separates the model into two components: the signal moments $M_k(\theta)$, which carry the parameter dependence, and the observation-side coordinates $Q_k$, which are determined entirely by the channel. Consequently, the general recovery theory of Sections~\ref{sec:rate-optimal-recovery}-\ref{sec:global_to_local_algebraic} applies once the corresponding parameter-side algebraic structure, observation-side nondegeneracy, and concentration conditions are verified.

\paragraph{The method-of-moments for algebraic weak-signal perturbations.}
For $r\geq1$, let $Q_{\leq r}(y) \triangleq (Q_1(y),\ldots,Q_r(y))$, where each tensor is represented in any fixed finite-dimensional coordinate system. Define the corresponding population moment map by
\begin{align}
    \Gamma_{r,\beta}(\theta) \triangleq \mathbb{E}_{Y\sim P_{\theta,\beta}} \left[ Q_{\leq r}(Y)\right].    \label{eq:channel_coordinate_population_map}
\end{align}
Given a confidence level $\delta\in(0,1/2)$, let $\widehat{\Gamma}_{r,\delta}$ denote a geometric median-of-means estimator of $\Gamma_{r,\beta}(\theta^\star)$ constructed from $Q_{\leq r}(Y_1),\ldots,Q_{\leq r}(Y_n)$, following the procedure introduced in Section~\ref{subsec:likelihood_coordinate_moment_estimator}. We define the method-of-moments estimator by
\begin{align}
    \widehat{\theta}_{\mathrm{MoM}} \in \argmin_{\theta\in\Theta} \| \widehat{\Gamma}_{r,\delta} - \Gamma_{r,\beta}(\theta) \|_2^2.
\label{eq:channel_coordinate_mom_estimator}
\end{align}

\begin{proposition}[Recovery for algebraic linear perturbations]
\label{prop:linear_perturbation_verification}
Consider Model~\ref{model:linear_weak_input}, let $r=r_{\mathrm{glob}}<\infty$, and suppose that $\Theta$ is compact and semi-algebraic.
Assume that:
\begin{enumerate}[label=(\roman*)]
    \item the observational equivalence is induced by the compact group action in Assumption~\ref{ass:group_induced_equivalence};
    
    \item Assumption~\ref{ass:regular_linear_perturbation_model} holds;
    
    \item the likelihood coordinates induced by $Q_{\leq r}(Y)$ satisfy the analogue of the cross-order nondegeneracy condition in Assumption~\ref{ass:nondegenerate_cross_order_likelihood_geometry}.

\end{enumerate}
Then the method-of-moments estimator $\widehat{\theta}_{\mathrm{MoM}}$ in~\eqref{eq:channel_coordinate_mom_estimator} attains the fixed-accuracy upper bound of Theorem~\ref{thm:mom_upper_bound}. Moreover, the corresponding two-stage method-of-moments estimator, based on geometric median-of-means estimation in both stages, attains the global-to-local upper bound of Theorem~\ref{thm:global_to_local_upper_bound}.
\end{proposition}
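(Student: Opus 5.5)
The plan is to reduce Proposition~\ref{prop:linear_perturbation_verification} to Theorems~\ref{thm:mom_upper_bound} and~\ref{thm:global_to_local_upper_bound}. Concretely, we check that Model~\ref{model:linear_weak_input}, under (i)--(iii), satisfies Assumptions~\ref{ass:regular_observed_likelihood_ratio_expansion}, \ref{ass:finite_dimensional_likelihood_geometry}, \ref{ass:nondegenerate_cross_order_likelihood_geometry}, \ref{ass:algebraic_parameter_structure}, and~\ref{ass:local_likelihood_regularity}, together with the second-moment bound of Lemma~\ref{lem:bounded_second_moment_likelihood_coordinates}. We also check that the estimator~\eqref{eq:channel_coordinate_mom_estimator} built from $Q_{\le r}$ is the estimator~\eqref{eq:likelihood_coordinate_moment_estimator} up to an invertible linear change of coordinates.

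\textbf{Step 1 (regularity).} Lemma~\ref{lem:linear_weak_input_likelihood_moments} gives the conditional expansion~\eqref{eq:linear_weak_input_conditional_expansion}. By Assumption~\ref{ass:regular_linear_perturbation_model}, $m\mapsto p_m/p_{m_0}$ is analytic in $L^p(P_0)$ on a ball of radius $\rho$ around $m_0$. Since $\sup_{z,\theta}\|s(z;\theta)\|\le S<\infty$, the composed map $\beta\mapsto p_{m_0+\beta s(z;\theta)}/p_{m_0}$ is analytic for $|\beta|<\rho/S$, with Taylor coefficients bounded uniformly in $(z,\theta)$ by Cauchy estimates. Integrating over $\mu$ (Minkowski's inequality in $L^p$) transfers analyticity and the uniform remainder bound to $L_{\theta,\beta}$, which gives Assumption~\ref{ass:regular_observed_likelihood_ratio_expansion}. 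For Assumption~\ref{ass:local_likelihood_regularity}, we differentiate under the integral in $\theta$. The chain rule gives $D_\theta$ of the conditional ratio as $\beta\,\nabla_m(p_m/p_{m_0})|_{m_0+\beta s}\cdot D_\theta s$. This is bounded in $L^p$ using the derivative bound in~\eqref{eq:uniform_signal_and_derivative_bound} and analyticity of $\nabla_m(p_m/p_{m_0})$, and the differentiated expansion~\eqref{eq:differentiated_likelihood_expansion} follows termwise with the same radius. The extension to a neighborhood $U\supset\Theta$ comes from $s$ being polynomial in $\theta$, so it extends to all of $\mathbb{R}^m$, with bounds kept on a slightly larger compact set.

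\textbf{Step 2 (finite-dimensional geometry).} By~\eqref{eq:linear_weak_input_likelihood_moment}, $\phi_k(\cdot;\theta)=\langle Q_k,M_k(\theta)\rangle$, so $\mathcal{H}_k\subseteq\operatorname{span}\{\text{entries of }Q_k\}$ has dimension at most $d^k$. We choose a basis $q_k$ of the span of the entries of $Q_k$, for instance by removing linear dependencies among symmetric-tensor entries. The coordinates are then $\alpha_k(\theta)=B_kM_k(\theta)$ for a fixed linear map $B_k$. Since $s$ is polynomial in $\theta$, $M_k$ is polynomial, and hence so is $\alpha_k$; this gives Assumption~\ref{ass:algebraic_parameter_structure} on the compact semi-algebraic $\Theta$. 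Continuity also holds, which gives Assumption~\ref{ass:finite_dimensional_likelihood_geometry}.

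One subtlety needs care here. $\mathcal{H}_k$ is defined as the span over $\theta$ of $\Phi_k(\theta)$, which may be strictly smaller than the span of the entries of $Q_k$. I would handle this by running the arguments of Theorem~\ref{thm:mom_upper_bound} with the larger coordinate system. Lemma~\ref{lem:fixed_accuracy_likelihood_separation} only uses that $\phi_k=\langle\alpha_k,q_k\rangle$ with a positive-definite Gram matrix, and hypothesis (iii) is precisely this Gram condition for $Q_{\le r}$ after removing redundant entries. Under this reading, $\Gamma_{r,\beta}$ in~\eqref{eq:channel_coordinate_population_map} is a fixed linear image of the coordinate moment map, so the argmin, the geometric median-of-means concentration, and the separation bound all transfer with changed constants.

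\textbf{Step 3 (second moments and conclusion).} For Lemma~\ref{lem:bounded_second_moment_likelihood_coordinates}, we write $\mathbb{E}_{P_{\theta,\beta}}\|Q_{\le r}\|^2=\mathbb{E}_{P_0}[\|Q_{\le r}\|^2L_{\theta,\beta}]$ and apply H\"older's inequality. Here $Q_k\in L^p(P_0)$ for all $p$, as a Taylor coefficient of an $L^p$-analytic map, and $\sup_\theta\|L_{\theta,\beta}\|_{L^2(P_0)}$ is bounded by Step 1. With all hypotheses verified and Assumption~\ref{ass:group_induced_equivalence} given by (i), Theorem~\ref{thm:mom_upper_bound} yields the fixed-accuracy claim, and Theorem~\ref{thm:global_to_local_upper_bound} yields the two-stage claim.

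I expect the main obstacle to be Step 2's basis issue together with the uniformity in Step 1. Both require making sure that the redundancy of tensor coordinates, such as symmetric entries, does not break the Gram nondegeneracy, and that the constants remain uniform over $\theta\in\Theta$ and $z\in\mathcal Z$.
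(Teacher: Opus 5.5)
Your proposal is correct and has the same structure as the paper's proof. You use $\phi_k=\langle Q_k,M_k(\theta)\rangle$ for finite-dimensional geometry. A fixed linear map between likelihood coordinates and $Q_{\le r}$ carries the population separation of Theorem~\ref{thm:mom_upper_bound} over to the $Q$-based moment map. H\"older with $Q_{\le k}\in L^4(P_0)$ and $\sup_\theta\|L_{\theta,\beta}\|_{L^2(P_0)}<\infty$ controls second moments. Polynomiality of $M_k$ plus the chain rule gives Assumptions~\ref{ass:algebraic_parameter_structure} and~\ref{ass:local_likelihood_regularity}. You also check Assumption~\ref{ass:regular_observed_likelihood_ratio_expansion}, which the paper treats as standing.

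The one real difference is the direction of the coordinate change, and the paper's direction is more economical. The paper keeps $q_{\le r}$ as stacked bases of $\mathcal{H}_1,\dots,\mathcal{H}_r$ and notes $q_{\le r}=B_rQ_{\le r}$ for a fixed $B_r$. This is automatic, since $\mathcal{H}_k$ lies in the span of the entries of $Q_k$. Hence $\widetilde\Gamma_{r,\beta}=B_r\Gamma_{r,\beta}$ and $\|\Gamma_{r,\beta}(\theta)-\Gamma_{r,\beta}(\theta^\star)\|_2\ge\|\widetilde\Gamma_{r,\beta}(\theta)-\widetilde\Gamma_{r,\beta}(\theta^\star)\|_2/\|B_r\|_{\mathrm{op}}$. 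This needs no invertibility, and it uses the Gram condition only for bases of the $\mathcal{H}_\ell$. That is the form Lemma~\ref{lem:dual_coordinates_signal_moments} supplies: joint independence on the signal-moment spaces $\mathcal{V}_k$.

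You instead take a basis of the full span of the entries of each $Q_k$. Your argument as written therefore reads (iii) as joint linear independence of these larger spans across orders, which is stronger in general. Under that reading your transfer is valid: $Q_{\le r}=Aq_{\le r}$ with $A$ of full column rank, so separation passes through $\sigma_{\min}(A)>0$. Reversing the direction removes the extra requirement.

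Relatedly, your opening claim that the two estimators differ by an ``invertible linear change of coordinates'' is inaccurate. $Q_{\le r}$ has linearly dependent entries (repeated symmetric-tensor components), and the two argmins minimize different norms. This is harmless, because Lemma~\ref{lem:minimum_distance_reduction} applies directly to the $Q$-based map once separation is transferred.

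On the Step 1 uniformity you flag: \eqref{eq:uniform_signal_and_derivative_bound} is assumed only on $\Theta$, so polynomiality alone does not keep it on a neighborhood, for example when $\Theta$ is lower-dimensional. The paper's one-line chain-rule appeal tacitly makes the same assumption, so this is a shared caveat rather than a defect of your route.
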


Among the conditions in Proposition~\ref{prop:linear_perturbation_verification} (proved in Appendix~\ref{sec:proof_linear_perturbation_verification}), the main structural requirement is the cross-order nondegeneracy condition in Assumption~\ref{ass:nondegenerate_cross_order_likelihood_geometry}, which rules out cancellations between distinct likelihood orders. The second-moment condition of Lemma~\ref{lem:bounded_second_moment_likelihood_coordinates} is used only to control estimation of the finite-dimensional population moment map. In fact, Assumption~\ref{ass:regular_linear_perturbation_model} already implies that, for every fixed $r$, the coordinates $Q_{\leq r}(Y)$ have uniformly bounded second moments under $P_{\theta,\beta}$ for all sufficiently small $\beta$.

\paragraph{Direct extraction of signal moments.}
The representation in~\eqref{eq:linear_weak_input_likelihood_moment} separates the likelihood moments into the observation-side coordinates $Q_k$ and the signal-only moments $M_k(\theta)$.
For each $k\geq 1$, let $\mathcal{V}_k \triangleq \operatorname{span} \{ M_k(\theta):\theta\in\Theta\}$ denote the finite-dimensional signal-moment space of order $k$. In many important observation models, there exist observable tensor-valued features $\psi_k:\mathcal{Y}\to\mathcal{V}_k$ that are dual to the likelihood coordinates in the sense of biorthogonality under the null law $P_0$: their pairing with the likelihood coordinates reproduces the corresponding signal tensor at the same order and vanishes across different orders. Specifically, for every $j,k\geq 1$ and every $M\in\mathcal{V}_j$, consider the relations
\begin{align}
    \mathbb{E}_{Y\sim P_0}[\psi_k(Y)]
    &=0,
    \label{eq:dual_coordinate_centering}
    \\
    \mathbb{E}_{Y\sim P_0} \left[ \psi_k(Y) \left\langle Q_j(Y),M\right\rangle \right]
    &=
    \begin{cases}
        M, & j=k,\\
        0, & j\neq k.
    \end{cases}
\label{eq:dual_coordinate_biorthogonality}
\end{align}
The following lemma shows that these biorthogonality relations are equivalent to exact extraction of the individual signal moments from population expectations.

\begin{lem}[Dual coordinates and exact signal-moment recovery]
\label{lem:dual_coordinates_signal_moments}
The following statements are equivalent:
\begin{enumerate}
    \item The observable features $\{\psi_k\}_{k\geq 1}$ satisfy     \eqref{eq:dual_coordinate_centering}--\eqref{eq:dual_coordinate_biorthogonality}.
    
    \item For every $k\geq 1$, every $\theta\in\Theta$, and all sufficiently small $\beta$,
    \begin{align}
        \mathbb{E}_{Y\sim P_{\theta,\beta}}[\psi_k(Y)] = \beta^k M_k(\theta).   \label{eq:direct_signal_moment_extraction}
    \end{align}    
\end{enumerate}
Whenever these equivalent conditions hold, for every finite $r\geq 1$, the likelihood coordinates $Q_1,\ldots,Q_r$ are jointly linearly independent on the corresponding signal-moment spaces. Consequently, the corresponding cross-order Gram matrix is positive definite, and Assumption~\ref{ass:nondegenerate_cross_order_likelihood_geometry} holds.
\end{lem}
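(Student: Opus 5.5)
The plan is to prove the equivalence by computing $\mathbb{E}_{P_{\theta,\beta}}[\psi_k]$ through the likelihood-ratio expansion and then matching coefficients in $\beta$. By Lemma~\ref{lem:linear_weak_input_likelihood_moments} and Assumption~\ref{ass:regular_linear_perturbation_model}, the expansion
\[
\frac{dP_{\theta,\beta}}{dP_0}=1+\sum_{j\ge1}\beta^j\langle Q_j,M_j(\theta)\rangle
\]
converges in every $L^p(P_0)$ for small $\beta$. I will assume $\psi_k\in L^2(P_0)$, which is implicit in \eqref{eq:dual_coordinate_biorthogonality}. Hölder's inequality then lets me exchange expectation and sum, giving
\[
\mathbb{E}_{P_{\theta,\beta}}[\psi_k]=\mathbb{E}_{P_0}[\psi_k]+\sum_{j\ge1}\beta^j\,\mathbb{E}_{P_0}\bigl[\psi_k\langle Q_j,M_j(\theta)\rangle\bigr],
\]
an analytic $\mathcal{V}_k$-valued function of $\beta$.

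For (1)$\Rightarrow$(2), I substitute \eqref{eq:dual_coordinate_centering}--\eqref{eq:dual_coordinate_biorthogonality} with $M=M_j(\theta)\in\mathcal{V}_j$. Only the $j=k$ term survives, and it yields $\beta^kM_k(\theta)$.

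For (2)$\Rightarrow$(1), both sides are power series in $\beta$ agreeing on an interval, so their coefficients agree for every $\theta$. The $\beta^0$ coefficient gives the centering condition \eqref{eq:dual_coordinate_centering}. The $\beta^j$ coefficient gives $\mathbb{E}_{P_0}[\psi_k\langle Q_j,M_j(\theta)\rangle]=\delta_{jk}M_k(\theta)$ for all $\theta$. The map $M\mapsto\mathbb{E}_{P_0}[\psi_k\langle Q_j,M\rangle]$ is linear, and $\mathcal{V}_j$ is spanned by $\{M_j(\theta):\theta\in\Theta\}$. Extending by linearity therefore gives \eqref{eq:dual_coordinate_biorthogonality} for all $M\in\mathcal{V}_j$.

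For the consequence, suppose $\sum_{j\le r}\langle Q_j,M^{(j)}\rangle=0$ in $L^2(P_0)$ with $M^{(j)}\in\mathcal{V}_j$. Pairing with $\psi_k$ and taking $P_0$-expectation, biorthogonality gives $M^{(k)}=0$ for every $k\le r$. So the restricted coordinates are jointly linearly independent.

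To reach Assumption~\ref{ass:nondegenerate_cross_order_likelihood_geometry}, I take the likelihood coordinates $q_{\ell}$ to be a basis of $\mathcal{H}_\ell=\{\langle Q_\ell,M\rangle:M\in\mathcal{V}_\ell\}$. This holds because $\phi_\ell=\langle Q_\ell,M_\ell(\theta)\rangle$, and spans are preserved under linear maps. The independence result implies that $M\mapsto\langle Q_\ell,M\rangle$ is injective on $\mathcal{V}_\ell$ and that the spaces $\mathcal{H}_1,\dots,\mathcal{H}_r$ form a direct sum. The stacked basis $q_{\le r}$ is then linearly independent in $L^2(P_0)$. Its Gram matrix $\mathbf{G}_r$ satisfies $v^\top\mathbf{G}_rv=\|\langle v,q_{\le r}\rangle\|_{L^2(P_0)}^2>0$ for $v\ne0$, so $\lambda_{\min}(\mathbf{G}_r)>0$.

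The main obstacle is technical rather than conceptual. One must justify exchanging the expectation of $\psi_k$ with the infinite likelihood series, which requires integrability of $\psi_k$ and uniform $L^p$ convergence of the remainder. One must also justify coefficient identification, which follows from analyticity in $\beta$. I would handle the first by Hölder's inequality with the $L^p$ remainder bound $O(\beta^{s+1})$ from Assumption~\ref{ass:regular_observed_likelihood_ratio_expansion}, truncating at order $s>k$ and letting the remainder vanish order by order.
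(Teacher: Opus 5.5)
Your proposal is correct and follows the paper's proof essentially step for step. Both arguments change measure to $P_0$ and expand the likelihood series; they then match $\beta$-coefficients, with $\beta^0$ giving centering, extend biorthogonality from $\{M_j(\theta)\}$ to $\mathcal{V}_j$ by linearity, and pair a vanishing combination $\sum_{j\le r}\langle Q_j,M^{(j)}\rangle$ with $\psi_k$ to get joint independence. Your extra care fills in details the paper leaves implicit: making $\psi_k\in L^2(P_0)$ explicit to justify exchanging expectation and series, and passing from independence on the spaces $\mathcal{V}_\ell$ to a basis of $\mathcal{H}_\ell$ to conclude $\lambda_{\min}(\mathbf{G}_r)>0$.
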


Lemma~\ref{lem:dual_coordinates_signal_moments}, which is proved in Appendix~\ref{app:proof_dual_coordinates_signal_moments}, gives an equivalent characterization of exact signal-moment extraction: observable features recover each $M_k(\theta)$ exactly after the natural rescaling by $\beta^k$ if and only if they are biorthogonal, order by order, to the likelihood coordinates.
Either characterization also implies cross-order nondegeneracy for every finite truncation.
We next show how this structure arises in two important classes of observation models: additive-noise channels, through Appell polynomials, and exponential families, through null-adapted polynomial coordinates.

\subsection{Additive-noise observation channels}
\label{subsec:additive_noise_channels}

We now specialize the algebraic linear perturbation framework to additive-noise channels. This is obtained from Model~\ref{model:linear_weak_input} by taking the observation family to be the translation family $p_m(y) = f_0(y-m)$, with $m_0=0$. Accordingly, the parameter-side quantities remain the signal moments $M_k(\theta)$ from~\eqref{eq:linear_weak_input_signal_moment}, whereas the observation-side likelihood coordinates are determined entirely by the translation geometry of the noise law.
Our main goal in this subsection is to show that these coordinates satisfy the cross-order nondegeneracy condition in Assumption~\ref{ass:nondegenerate_cross_order_likelihood_geometry}. We also identify explicit observable statistics that recover the signal moments exactly.

\begin{model}[Additive-noise latent-variable model]
\label{model:additive_noise_channel}
Let $Z\sim\mu$, let $\theta\in\Theta$ be the unknown parameter, and let $s(z;\theta)\in\mathbb{R}^d$ denote the latent signal contribution. We observe
\begin{align}
    Y = \beta s(Z;\theta)+\xi,
    \label{eq:additive_noise_model}
\end{align}
where $\xi\in\mathbb{R}^d$ is independent of $Z$ and has a known density $f_0$. At $\beta=0$, the observed law is the parameter-independent null law $P_0$ with density $f_0$.
\end{model}

Throughout, we use the signal moments $M_k$ and observation-side likelihood coordinates $Q_k$ defined in~\eqref{eq:linear_weak_input_signal_moment} and~\eqref{eq:linear_weak_input_Qk}, respectively.

\paragraph{Exact debiasing by Appell polynomials.}
To construct observable statistics that recover the signal moments directly, we seek functions whose expectation over the noise, evaluated at $x+\xi$, equals the corresponding tensor monomial $x^{\otimes k}$, where $x\triangleq\beta s(Z;\theta)$. Appell polynomials associated with the noise distribution provide precisely such a construction: their generating function normalizes the exponential generating function of the observation by that of the noise, thereby canceling the noise contribution at the population level \cite{ta2015probabilistic}.

Assume that the moment-generating function $\mathcal{M}_\xi(t)\triangleq \mathbb{E}_{\xi \sim f_0}\left[e^{\langle t,\xi\rangle}\right]$ is finite in a neighborhood of the origin.
Define the tensor-valued Appell polynomials $A_k$ through
\begin{align}
    \frac{e^{\langle t,y\rangle}}{\mathcal{M}_\xi(t)} = \sum_{k=0}^\infty \frac{1}{k!} \left\langle A_k(y),t^{\otimes k}\right\rangle . \label{eq:additive_appell_generating_function}
\end{align}

\begin{lem}[Exact debiasing for additive noise]
\label{lem:additive_appell_debiasing}
Suppose that $\mathcal{M}_\xi(t)$ is finite in a neighborhood of the origin. Then, for every $k\geq 1$ and every deterministic $x\in\mathbb{R}^d$,
\begin{align}
    \mathbb{E}_{\xi\sim f_0}
    \left[A_k(x+\xi)\right]
    =
    x^{\otimes k}.
    \label{eq:additive_appell_translation_identity}
\end{align}
Consequently, under Model~\ref{model:additive_noise_channel},
\begin{align}
    \mathbb{E}_{Y\sim P_{\theta,\beta}} \left[A_k(Y)\right] = \beta^kM_k(\theta).
    \label{eq:additive_exact_debiasing}
\end{align}
\end{lem}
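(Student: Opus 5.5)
The plan is to compare generating functions. Fix a deterministic $x\in\mathbb{R}^d$ and take $t$ in a neighborhood of the origin where $\mathcal{M}_\xi(t)<\infty$. Substituting $y=x+\xi$ into the generating function \eqref{eq:additive_appell_generating_function} and taking the expectation over $\xi\sim f_0$ gives
\begin{align}
    \mathbb{E}_{\xi\sim f_0}\left[\frac{e^{\langle t,x+\xi\rangle}}{\mathcal{M}_\xi(t)}\right]
    = \frac{e^{\langle t,x\rangle}\,\mathcal{M}_\xi(t)}{\mathcal{M}_\xi(t)}
    = e^{\langle t,x\rangle}
    = \sum_{k=0}^\infty \frac{1}{k!}\left\langle x^{\otimes k},t^{\otimes k}\right\rangle .
\end{align}
If expectation and summation can be interchanged on the left, we obtain
\begin{align}
    \sum_{k}\frac{1}{k!}\left\langle \mathbb{E}[A_k(x+\xi)],t^{\otimes k}\right\rangle
    = \sum_{k}\frac{1}{k!}\left\langle x^{\otimes k},t^{\otimes k}\right\rangle
\end{align}
for all small $t$. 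Both sides are power series in $t$, so we can match homogeneous components of degree $k$. Because each $A_k$ can be taken symmetric (it arises as a coefficient of $t^{\otimes k}$, so only its symmetrization matters), this yields $\mathbb{E}[A_k(x+\xi)]=x^{\otimes k}$.

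\textbf{Main obstacle: justifying the interchange.} I would argue this via analyticity. Finiteness of $\mathcal{M}_\xi$ near $0$ gives $\mathbb{E}[e^{\langle t,\xi\rangle}]<\infty$ on a ball $B$. It follows that $t\mapsto \mathbb{E}[e^{\langle t,x+\xi\rangle}]/\mathcal{M}_\xi(t)$ is real analytic on $B$. To access the individual coefficients, I would avoid the series-level interchange altogether and work with derivatives directly. From the generating function,
\begin{align}
    A_k(y) = \nabla_t^{\otimes k}\Bigl[e^{\langle t,y\rangle}/\mathcal{M}_\xi(t)\Bigr]_{t=0},
\end{align}
which is a polynomial in $y$ whose coefficients are derivatives of $1/\mathcal{M}_\xi$ at $0$. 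All moments of $\xi$ are finite, since $\mathcal{M}_\xi$ is finite near $0$. Moreover, derivatives in $t$ can be passed inside the expectation by dominated convergence: $|\xi|^j e^{\langle t,\xi\rangle}$ is dominated on a smaller ball by $e^{\langle t',\xi\rangle}$ summed over finitely many corners $t'$. Hence
\begin{align}
    \mathbb{E}[A_k(x+\xi)] = \nabla_t^{\otimes k}\Bigl[\mathbb{E}\,e^{\langle t,x+\xi\rangle}/\mathcal{M}_\xi(t)\Bigr]_{t=0} = \nabla_t^{\otimes k} e^{\langle t,x\rangle}\big|_{t=0} = x^{\otimes k}.
\end{align}

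For the consequence \eqref{eq:additive_exact_debiasing}, I would condition on $Z$. Under Model~\ref{model:additive_noise_channel}, $\xi$ is independent of $Z$, so applying \eqref{eq:additive_appell_translation_identity} with $x=\beta s(Z;\theta)$ gives $\mathbb{E}[A_k(Y)\mid Z]=\beta^k s(Z;\theta)^{\otimes k}$. Taking the expectation over $Z\sim\mu$ then yields $\beta^k M_k(\theta)$ by \eqref{eq:linear_weak_input_signal_moment}. Integrability here follows because $A_k$ is a polynomial, $\xi$ has all moments, and $s$ is bounded by Assumption~\ref{ass:regular_linear_perturbation_model} (or, if that assumption is not in force, by the boundedness needed for $M_k$ to exist). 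Fubini then justifies the tower step.
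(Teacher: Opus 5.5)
Your proposal is correct and follows the paper's proof: substitute $y=x+\xi$ into the Appell generating function, use $\mathbb{E}[e^{\langle t,x+\xi\rangle}]/\mathcal{M}_\xi(t)=e^{\langle t,x\rangle}$, match the degree-$k$ coefficients, and then set $x=\beta s(Z;\theta)$ and average over $Z$. Your additional step, reading off $A_k$ as the $k$-th derivative at $t=0$ and differentiating under the expectation by dominated convergence, makes rigorous the coefficient matching that the paper leaves implicit. For the $Z$-average, all you need is a finite $k$-th moment of $s(Z;\theta)$ (exactly what is required for $M_k$ to exist), not boundedness of $s$.
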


\begin{proof}
Taking expectation over $\xi$ in
\eqref{eq:additive_appell_generating_function} with $y=x+\xi$ gives
\begin{align}
    \mathbb{E}_{\xi \sim f_0} \left[ \frac{e^{\langle t,x+\xi\rangle}} {\mathcal{M}_\xi(t)} \right] = e^{\langle t,x\rangle}.
\end{align}
Expanding both sides around $t=0$ and identifying the order-$k$ homogeneous tensor coefficients gives \eqref{eq:additive_appell_translation_identity}. Substituting $x=\beta s(Z;\theta)$ and averaging over $Z$ yields \eqref{eq:additive_exact_debiasing}.
\end{proof}

The exact debiasing identity also verifies the cross-order Gram condition. Indeed, applying the reverse implication of Lemma~\ref{lem:dual_coordinates_signal_moments} to the translation family $x+\xi$ shows that the Appell polynomials are biorthogonal to the likelihood coordinates:
\begin{align}
    \mathbb{E}_{Y\sim P_0} \left[ A_k(Y)\left\langle Q_j(Y),M\right\rangle \right] =
    \begin{cases}
        M, & j=k,\\
        0, & j\neq k,
    \end{cases}    \label{eq:additive_appell_likelihood_biorthogonality}
\end{align}
for every $j,k\geq1$ and every symmetric order-$j$ tensor $M$, since the pure tensors $x^{\otimes j}$ span the symmetric tensor space. Thus, the Appell polynomials provide the dual observable coordinates $\psi_k=A_k$ of Lemma~\ref{lem:dual_coordinates_signal_moments}. Consequently, the cross-order nondegeneracy condition in Assumption~\ref{ass:nondegenerate_cross_order_likelihood_geometry} holds, as does the corresponding condition in Proposition~\ref{prop:linear_perturbation_verification}.

\subsection{Exponential-family latent-variable models}
\label{subsec:mean_scaling_exponential_family}

Next, we consider weak-signal exponential-family models in which the latent signal perturbs the mean parameter.
These models fall directly within the algebraic weak-signal perturbation framework of Section~\ref{subsec:linear_weak_signal_perturbations}; the additional exponential-family structure provides natural null-adapted polynomial coordinates that isolate the signal-only moments and verify the required observation-side nondegeneracy.

\begin{model}[Mean-scaling exponential-family latent-variable model]
\label{model:mean_scaling_exponential_family}
Let $Z\sim\mu$ be a latent variable, let $\theta\in\Theta\subseteq\mathbb{R}^m$ be the unknown parameter, and let $s(z;\theta)\in\mathbb{R}^d$ denote the latent signal contribution.
Conditional on $Z=z$, the observation has density
\begin{align}
    p_{\theta,\beta}(y\vert z)
    =
    h(y)\exp\left\{
        \left\langle \eta\bigl(m_{\theta,\beta}(z)\bigr),T(y)\right\rangle
        -A\bigl(\eta(m_{\theta,\beta}(z))\bigr)
    \right\},
    \label{eq:mean_scaling_exp_family_density}
\end{align}
where $T(y)\in\mathbb{R}^d$ is the sufficient statistic, $A$ is the log-partition function, and $m_{\theta,\beta}(z)=\mathbb{E}[T(Y)\vert Z=z]$ is the conditional mean parameter, with corresponding natural parameter $\eta(m_{\theta,\beta}(z))$.
We assume that
\begin{align}
    m_{\theta,\beta}(z)=m_0+\beta s(z;\theta),
    \label{eq:mean_scaling_conditional_mean}
\end{align}
where $m_0$ is a parameter-independent null mean.
At $\beta=0$, the conditional law is independent of both $\theta$ and $z$, with null density
\begin{align}
    p_0(y) = h(y)\exp\left\{ \left\langle \eta(m_0),T(y)\right\rangle -A\bigl(\eta(m_0)\bigr) \right\}.
    \label{eq:mean_scaling_null_density}
\end{align}
\end{model}

This framework includes many standard exponential-family observation models. Examples include Gaussian observations with perturbed mean, Poisson observations with perturbed intensity, binomial observations with perturbed success probability, and Gamma and exponential observations with perturbed mean. It also covers families such as the log-normal and Weibull distributions after choosing the appropriate sufficient statistic $T(Y)$. Representative examples, together with their natural and mean parameterizations and the corresponding null-adapted polynomial coordinates, are summarized in Table~\ref{tab:observation_families}. 

We next impose a natural condition ensuring that the weak-signal perturbation stays inside the regular part of the exponential family.

\begin{table*}[t]
\centering
\begingroup
\setlength{\tabcolsep}{2.2pt}
\renewcommand{\arraystretch}{1.12}
\scriptsize

\caption{\textbf{Exponential-family observation models.}
Each row specifies the sufficient statistic $T(y)$, the mean parameter $m=\mathbb{E}[T(Y)]$, the natural parameter $\eta(m)$, the log-partition function $A(\eta)$, and the corresponding likelihood coordinate $Q_k(T(y))$ defined in~\eqref{eq:linear_weak_input_Qk}. The likelihood coordinates belong to the classical orthogonal polynomial family associated with the null law. Here, $H_k$ denotes the probabilists' Hermite polynomial, $C_k$ a Charlier polynomial~\cite{zacharovas2010charlier}, $L_k^{(\alpha-1)}$ a generalized Laguerre polynomial~\cite{szego1939orthogonal}, and $K_k$ a Krawtchouk polynomial~\cite{nikiforov1991classical}. For vector-valued observations with i.i.d.  coordinates, the corresponding multivariate likelihood coordinates are obtained by tensorizing these scalar polynomial systems.}
\label{tab:observation_families}

\begin{tabular}{
|L{2.55cm}||
L{1.70cm}|
L{2.25cm}|
L{2.25cm}|
L{2.35cm}|
L{4.30cm}|
}
\hline
\textbf{Observation family}
&
\textbf{Sufficient statistic $T(y)$}
&
\textbf{Mean parameter $m=\mathbb{E}[T(Y)]$}
&
\textbf{Natural parameter $\eta(m)$}
&
\textbf{Log-partition $A(\eta)$}
&
\textbf{Likelihood coordinate $Q_k(T(y))$}

\\
\hline\hline

\textbf{Gaussian distribution}
&
$y$
&
$\mathbb{E}[Y]=m$
&
$\displaystyle \frac{m}{\sigma^2}$
&
$\displaystyle \frac{\sigma^2\eta^2}{2}$
&
$\displaystyle
\frac{1}{k!\sigma^k}
H_k\!\left(\frac{y-m_0}{\sigma}\right)$
\\
\hline

\textbf{Poisson distribution}
&
$y$
&
$\mathbb{E}[Y]=m$
&
$\log m$
&
$e^\eta$
&
$\displaystyle
\frac{1}{k!m_0^k}
C_k(y;m_0)$
\\
\hline

\textbf{Gamma distribution}
&
$y$
&
$\mathbb{E}[Y]=m$
&
$\displaystyle -\frac{\alpha}{m}$
&
$\displaystyle -\alpha\log(-\eta)$
&
$\displaystyle
\frac{(-1)^k}{m_0^k}
L_k^{(\alpha-1)}
\!\left(\frac{\alpha y}{m_0}\right)$
\\
\hline

\textbf{Exponential distribution}
&
$y$
&
$\mathbb{E}[Y]=m$
&
$\displaystyle -\frac{1}{m}$
&
$\displaystyle -\log(-\eta)$
&
$\displaystyle
\frac{(-1)^k}{m_0^k}
L_k\!\left(\frac{y}{m_0}\right)$
\\
\hline

\textbf{Binomial distribution}
&
$y$
&
$\mathbb{E}[Y]=m=Np$
&
$\displaystyle
\log\!\left(\frac{m}{N-m}\right)$
&
$\displaystyle
N\log(1+e^\eta)$
&
$\displaystyle
\frac{(-1)^k}{N^k(1-p_0)^k}
\binom{N}{k}
K_k(y;p_0,N)$
\\
\hline

\textbf{Log-normal distribution}
&
$\log y$
&
$\mathbb{E}[\log Y]=m$
&
$\displaystyle \frac{m}{\sigma^2}$
&
$\displaystyle \frac{\sigma^2\eta^2}{2}$
&
$\displaystyle
\frac{1}{k!\sigma^k}
H_k\!\left(\frac{\log y-m_0}{\sigma}\right)$
\\
\hline

\textbf{Weibull distribution}
&
$y^\kappa$
&
$\mathbb{E}[Y^\kappa]=m=\lambda^\kappa$
&
$\displaystyle -\frac{1}{m}$
&
$\displaystyle -\log(-\eta)$
&
$\displaystyle
\frac{(-1)^k}{m_0^k}
L_k\!\left(\frac{y^\kappa}{m_0}\right)$
\\
\hline

\end{tabular}

\endgroup
\end{table*}

\begin{assum}[Regularity around the null]
\label{assum:regular_exponential_family}
Let $\Omega \triangleq \{\eta:A(\eta)<\infty\}$ denote the natural-parameter space of the exponential family, and let
\begin{align}
    \eta_0 \triangleq \eta(m_0),
    \qquad
    \eta_{\theta,\beta}(z) \triangleq \eta\bigl(m_0+\beta s(z;\theta)\bigr).
\end{align}
We say that the mean-scaling exponential-family model is \emph{regular around the null} if the following conditions hold.

\begin{enumerate}
\item \emph{(Interior null parameter).}
The null natural parameter lies in the interior of the natural-parameter space: $\eta_0\in\operatorname{int}(\Omega)$.

\item \emph{(Uniform interior perturbations).}
There exist $\beta_0>0$ and a compact set $\mathcal{K}\subset\operatorname{int}(\Omega)$ such that $\eta_{\theta,\beta}(z)\in \mathcal{K}$ for all $\theta\in\Theta$, all $z\in\mathcal{Z}$, and all $0\le\beta\le\beta_0$.

\end{enumerate}
\end{assum}

The next lemma, which is proved in Appendix~\ref{app:proof_mean_scaling_regular_likelihood_ratio_expansion}, verifies that this regularity condition implies the  likelihood-ratio regularity assumed in the general theory (Assumption~\ref{ass:regular_observed_likelihood_ratio_expansion}).
Indeed, compactness inside $\operatorname{int}(\Omega)$ gives uniform control of exponential-family likelihood ratios, while the mean-scaling assumption gives analyticity in $\beta$ around the null.

\begin{lem}[Regular observed likelihood-ratio expansion for exponential families]
\label{lem:mean_scaling_regular_likelihood_ratio_expansion}
Consider Model~\ref{model:mean_scaling_exponential_family}. If Assumption~\ref{assum:regular_exponential_family} holds, then Assumptions~\ref{ass:regular_observed_likelihood_ratio_expansion} and~\ref{ass:reciprocal_likelihood_regularity} hold.
\end{lem}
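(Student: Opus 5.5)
The plan is to write the conditional likelihood ratio explicitly and then pass everything to the observed law by averaging over $Z$. For $\eta\in\operatorname{int}(\Omega)$, set
\begin{align}
    \frac{p_\eta(y)}{p_0(y)} = \exp\left\{ \langle \eta-\eta_0,T(y)\rangle - A(\eta)+A(\eta_0) \right\}.
\end{align}
Then $L_{\theta,\beta}(y)=\mathbb{E}_{Z\sim\mu}\bigl[\,p_{\eta_{\theta,\beta}(Z)}(y)/p_0(y)\,\bigr]$. The first key fact is a uniform moment bound. For any compact $\mathcal{K}'\subset\operatorname{int}(\Omega)$ and finite $p$, we have
\begin{align}
    \mathbb{E}_{P_0}\bigl[(p_\eta/p_0)^p\bigr] = \exp\{A(\eta_0+p(\eta-\eta_0))-pA(\eta)+(p-1)A(\eta_0)\}.
\end{align}
This is finite and uniformly bounded provided $\eta_0+p(\eta-\eta_0)\in\operatorname{int}(\Omega)$. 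That condition holds whenever $\eta$ lies in a small ball around $\eta_0$ of radius $\rho/p$, where $\rho=\operatorname{dist}(\eta_0,\partial\Omega)$. The same identity with $p$ replaced by $-p$ controls negative moments.

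\textbf{Step 1 (analyticity, Assumption~\ref{ass:regular_observed_likelihood_ratio_expansion}).} Fix $p$. The map $m\mapsto\eta(m)=(\nabla A)^{-1}(m)$ is real analytic near $m_0$, because $\nabla^2A(\eta_0)$ is positive definite in a minimal family. Hence, for complex $\beta$ in a small disc, $\eta_{\theta,\beta}(z)$ is a holomorphic function of $\beta$, uniformly bounded in $(\theta,z)$ by~\eqref{eq:uniform_signal_and_derivative_bound}. I would shrink the disc, depending on $p$, so that $\eta_{\theta,\beta}(z)$ stays within $\rho/(2p)$ of $\eta_0$.

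The exponent $\langle\eta-\eta_0,T\rangle$ with complex $\eta$ has modulus $\exp\langle\operatorname{Re}\eta-\eta_0,T\rangle$. Combined with the moment bound, this shows that $\beta\mapsto p_{\eta_{\theta,\beta}(z)}/p_0$ is a bounded holomorphic $L^p(P_0)$-valued map on the disc. The bound is uniform in $(\theta,z)$, and $A$ extends holomorphically near $\eta_0$. Averaging over $Z$ (a Bochner integral) preserves holomorphy and the uniform bound. Cauchy estimates then give the Taylor coefficients together with the remainder bound $\sup_\theta\|R_{s+1}\|_{L^p}=O(\beta^{s+1})$.

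\textbf{Step 2 (reciprocal regularity, Assumption~\ref{ass:reciprocal_likelihood_regularity}).} Fix $p_0=4$, say. I would use Jensen in the form $L^{-1}\le \mathbb{E}_Z[(p_{\eta}/p_0)^{-1}]$, which holds by convexity of $x\mapsto 1/x$. The conditional reciprocal $(p_\eta/p_0)^{-1}$ is itself an exponential-family ratio with direction $-(\eta-\eta_0)$. Applying the same holomorphic argument to it, the map $\beta\mapsto(p_{\eta_{\theta,\beta}(z)}/p_0)^{-1}$ is analytic in $L^{2p_0}$ with value $1$ at $\beta=0$. This gives $\|(p_\eta/p_0)^{-1}-1\|_{L^{2p_0}}=O(\beta)$ uniformly, and therefore $\|L^{-1}\|_{L^{2p_0}}$ is uniformly bounded.

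Then I would write $L^{-1}-1=(1-L)L^{-1}$ and apply Hölder:
\begin{align}
    \|L^{-1}-1\|_{L^{p_0}}\le\|1-L\|_{L^{2p_0}}\|L^{-1}\|_{L^{2p_0}}=O(\beta).
\end{align}
The factor $\|1-L\|_{L^{2p_0}}=O(\beta)$ comes from Step 1.

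The main obstacle is the complexification in Step 1. One has to check that $\eta(m)$ and $A$ extend holomorphically, and that the required radius depends on $p$. That dependence is harmless, since the assumption only requires a neighborhood for each finite $p$. If uniformity over complex $\beta$ becomes messy, a fallback is a real-variable argument. There I would bound the $\beta$-derivatives of $\exp\{\langle\eta-\eta_0,T\rangle-\cdots\}$ by polynomials in $|T|$ times the ratio itself, using that $T$ has all exponential moments near $\eta_0$. This yields Taylor remainders with explicit factorial growth, which gives analyticity. Note that part~2 of Assumption~\ref{assum:regular_exponential_family} is only needed to guarantee that the real perturbations stay in $\operatorname{int}(\Omega)$; the local estimates near $\eta_0$ are what drive the $p$-dependent radii.
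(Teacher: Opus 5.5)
Your proposal is correct and follows essentially the same route as the paper. Both write the conditional exponential-family ratio explicitly and average over $Z$ to get analyticity. Both then use Jensen's inequality $L_{\theta,\beta}^{-1}\le\mathbb{E}_{Z}[(p_{\theta,\beta}(\cdot\vert Z)/p_0)^{-1}]$ and H\"older on $L_{\theta,\beta}^{-1}-1=L_{\theta,\beta}^{-1}(1-L_{\theta,\beta})$ for the reciprocal bound.

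Your complexification with the $p$-dependent radius $\rho/p$ is actually a more careful version of the paper's analyticity step. The paper simply asserts that membership in a compact $\mathcal{K}\subset\operatorname{int}(\Omega)$ gives finite $L^p(P_0)$ norms for every finite $p$, which holds only once $\beta$ is small enough depending on $p$.

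One small fix: take the uniform bound on $s(z;\theta)$ from part~2 of Assumption~\ref{assum:regular_exponential_family}, since $m_0+\beta_0 s(z;\theta)=\nabla A(\eta_{\theta,\beta_0}(z))$ lies in the compact set $\nabla A(\mathcal{K})$. Do not cite \eqref{eq:uniform_signal_and_derivative_bound}, which is not among this lemma's hypotheses.
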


\paragraph{Dual polynomial coordinates.}
Throughout, we use the signal moments $M_k$ and observation-side likelihood coordinates $Q_k$ defined in~\eqref{eq:linear_weak_input_signal_moment} and~\eqref{eq:linear_weak_input_Qk}, respectively. For the standard families listed in Table~\ref{tab:observation_families}, the likelihood coordinates are generated by the classical orthogonal polynomial system associated with the null law. These families are Hermite for Gaussian observations, Charlier for Poisson observations~\cite{zacharovas2010charlier}, and Laguerre-type for Gamma and exponential observations~\cite{szego1939orthogonal}. 

These polynomials form a complete orthogonal system in $L^2(P_0)$, where their tensor products yield the orthogonal decomposition $L^2(P_0) = \bigoplus_{k\geq0}\mathcal{H}_k$, where $\mathcal{H}_k$ is the subspace spanned by the order-$k$ likelihood coordinates.
In particular,
\begin{align}
    \mathbb{E}_{Y\sim P_0} \left[ \left\langle Q_j(Y),M\right\rangle \left\langle Q_k(Y),N\right\rangle \right] =0,
    \qquad j\neq k.    \label{eq:exp_family_cross_order_orthogonality}
\end{align}

To normalize the order-$k$ coordinate, we define its within-order Gram operator by
$\mathbf{G}_k M \triangleq \mathbb{E}_{Y\sim P_0} \left[ Q_k(Y)\left\langle Q_k(Y),M\right\rangle \right]$.
The corresponding dual polynomial coordinate is the Gram-normalized likelihood coordinate $P_k(T(Y)) \triangleq \mathbf{G}_k^{-1}Q_k(Y)$, which satisfies the biorthogonality relations
\begin{align}
    \mathbb{E}_{Y\sim P_0} \left[ P_k(T(Y) \left\langle Q_j(Y),M\right\rangle \right] =
    \begin{cases}
        M, & j=k,\\
        0, & j\neq k,
    \end{cases}
    \label{eq:exp_family_dual_coordinates}
\end{align}
so that $P_k(T(Y))$ is precisely the dual observable coordinate $\psi_k(Y)$ of Lemma~\ref{lem:dual_coordinates_signal_moments}. Consequently, $\mathbb{E}_{Y\sim P_{\theta,\beta}} \left[P_k(T(Y))\right] = \beta^k M_k(\theta)$, and Assumption~\ref{ass:nondegenerate_cross_order_likelihood_geometry} follows.

\section{Discussion and outlook}
\label{sec:discussion}

\paragraph{A general recipe for weak-signal analysis.}
While moment-order arguments have appeared previously in several important Gaussian model-specific settings, the contribution here is to formulate the underlying mechanism at the level of the likelihood expansion and to separate the ingredients that must be verified in a new model. A useful analysis can be organized as follows. First, identify the common null law $P_0$ at $\beta=0$ and expand $dP_{\theta,\beta}/dP_0$ in powers of $\beta$. Second, determine the first orders at which the likelihood moments identify the parameter globally and locally, yielding $r_{\mathrm{glob}}$ and $r_{\mathrm{loc}}$. In structured models, this step often reduces to an algebraic or invariant-theoretic problem. Third, verify that the relevant finite-dimensional likelihood coordinates have nondegenerate joint Gram geometry, so that information from different orders cannot cancel at finite signal strength. Finally, construct observable features with sufficiently controlled fluctuations. When these ingredients hold, the likelihood expansion determines both the fixed-accuracy exponent and the global-to-local sample-complexity transition.

\paragraph{Future research directions.}
Several directions remain open. First, the present theory treats the signal dimension as fixed. An important extension is the high-dimensional regime $d\to\infty$, which is of broad interest in many applications and may introduce additional statistical scales through the dimension and conditioning of the likelihood-moment spaces~\cite{romanov2021multi}. A related direction is to characterize weak-signal performance under normalized mean-squared error (NMSE), rather than fixed-accuracy minimax recovery, as studied for example in~\cite{romanov2021multi}. Such an analysis is closely connected to the shrinking-accuracy regime $\varepsilon\to 0$ developed here, since NMSE requires resolving the parameter increasingly accurately rather than only to a fixed error tolerance.

Second, the framework is primarily statistical and does not resolve the computational difficulty of the resulting inverse problems. Even when the likelihood moments required for identifiability are known accurately, their inversion may be nonconvex or poorly conditioned. Developing computationally efficient procedures and determining when computational constraints create a gap from the information-theoretic sample complexity remain important directions for future work.

Third, it would be valuable to study statistical efficiency within the weak-signal models considered here. The present results characterize the sample size required for recovery, but do not compare the asymptotic covariance of different rate-optimal estimators. In particular, one may ask whether maximum likelihood estimation and optimally weighted generalized method of moments are asymptotically equivalent in the weak-signal regime, and more generally when the likelihood moments retain all of the leading Fisher information~\cite{balanov2026generalized}. Such results would refine the present sample-complexity theory by distinguishing estimators that attain the same weak-signal exponent but differ in their asymptotic efficiency.

\bibliographystyle{plain}

\begin{appendices}

{\centering{\section*{Appendix}}}

\paragraph{Appendix organization.}
Appendix~\ref{app:information_theoretic_preliminaries} collects the information-theoretic preliminaries used throughout the paper. Appendix~\ref{sec:two_point_testing} reviews the two-point testing arguments underlying the lower bounds and necessary separation conditions. Appendices~\ref{sec:information_theoretic_sample_complexity}--\ref{sec:rate_optimal_recovery_MoM} contain the proofs of the fixed-accuracy results: the information-theoretic bounds, the maximum-likelihood upper bound, and the method-of-moments upper bound, respectively. Appendix~\ref{sec:smooth_quotient_models} develops the local quotient geometry and analytic remainder estimates needed for the shrinking-accuracy analysis, while Appendix~\ref{sec:sample_complexity_algebric_likelihood_geometry} proves the global-to-local results of Section~\ref{sec:global_to_local_algebraic}. Finally, Appendix~\ref{sec:applications_algebraic_perturbations} provides the proofs and representative model specializations associated with the applications in Section~\ref{sec:applications}.

\section{Information-theoretic preliminaries}
\label{app:information_theoretic_preliminaries}

Throughout, let $P$ and $Q$ be probability measures on a common sample space $\mathcal{Y}$, dominated by a measure $\nu$, with densities $p=dP/d\nu$ and $q=dQ/d\nu$, respectively. We first introduce the information-theoretic quantities used throughout the paper, and then collect standard relations among them that will be repeatedly invoked in the proofs. The results below are standard; see, for example, \cite{tsybakov2009introduction, le2012asymptotics}.

\paragraph{Information-theoretic quantities and basic comparisons.}
We use the standard total variation and Hellinger distances, Kullback--Leibler divergence, and chi-square divergence:
\begin{align}
    \operatorname{TV}(P,Q) &\triangleq \frac{1}{2}\int_{\mathcal{Y}}|p(y)-q(y)| \, d\nu(y),
    \label{eq:info_tv_definition}
    \\
    h^2(P,Q) &\triangleq \int_{\mathcal{Y}} \left(\sqrt{p(y)}-\sqrt{q(y)}\right)^2 \, d\nu(y) \, \label{eq:info_hellinger_definition}
    \\
    D_{\mathrm{KL}}(P \| Q) &\triangleq
    \int_{\mathcal{Y}} p(y)\log \! \left(\frac{p(y)}{q(y)}\right) \,d\nu(y),
    \label{eq:info_kl_definition}
    \\
    \chi^2(P,Q) &\triangleq \int_{\mathcal{Y}} \frac{(p(y)-q(y))^2}{q(y)} \, d\nu(y).
    \label{eq:info_chisq_definition}
\end{align}
The following standard comparisons will be used repeatedly:
\begin{align}
    \frac{1}{2} h^2(P,Q) &\leq \operatorname{TV}(P,Q) \leq h(P,Q),
    \label{eq:info_tv_hellinger}\\
    \operatorname{TV}^2(P,Q) &\leq \frac{1}{2} D_{\mathrm{KL}}(P\|Q),
    \label{eq:info_pinsker}\\
    h^2(P,Q) &\leq D_{\mathrm{KL}}(P\|Q),
    \label{eq:info_hellinger_kl}\\
    D_{\mathrm{KL}}(P\|Q) &\leq \log\!\left(1+\chi^2(P,Q)\right) \leq \chi^2(P,Q).
    \label{eq:info_kl_chisq}
\end{align}

\paragraph{Likelihood-ratio representations.}
The following identities express the information-theoretic quantities above directly in terms of likelihood ratios with respect to a common reference measure, which is the form used throughout the weak-signal likelihood analysis.

\begin{lem}[Likelihood-ratio representations]
\label{lem:info_likelihood_ratio_representations}
Define $L_0 \triangleq dP/dP_0$, $L_1 \triangleq dQ/dP_0$. Then:
\begin{align}
    \operatorname{TV}(P,Q)
    &=
    \frac{1}{2} \|L_0-L_1\|_{L^1(P_0)},
    \label{eq:info_tv_likelihood_ratio}\\
    h^2(P,Q)
    &=
    \mathbb{E}_{Y \sim P_0} \left[ \left(\sqrt{L_0(Y)}-\sqrt{L_1(Y)}\right)^2 \right], \label{eq:info_hellinger_likelihood_ratio}\\
    D_{\mathrm{KL}}(P\|Q)
    &=
    \mathbb{E}_{Y \sim P_0} \left[ L_0(Y) \log\!\left(\frac{L_0(Y)}{L_1(Y)}\right) \right],
    \label{eq:info_kl_likelihood_ratio}\\
    \chi^2(P,Q)
    &=
    \mathbb{E}_{Y \sim P_0} \left[ \frac{(L_0(Y)-L_1(Y))^2}{L_1(Y)} \right].
    \label{eq:info_chisq_likelihood_ratio}
\end{align}
In particular,
\begin{align}
    \chi^2(P,P_0) = \|L_0-1\|_{L^2(P_0)}^2.
    \label{eq:info_chisq_null_identity}
\end{align}
\end{lem}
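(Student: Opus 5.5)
The plan is to reduce each identity to a change of measure from $\nu$ to $P_0$. I assume, as is implicit in the statement, that $P$ and $Q$ are absolutely continuous with respect to the reference law $P_0$. Without loss of generality take $\nu=P_0$ as the dominating measure; alternatively, if $\nu$ is arbitrary, write $p = L_0\, p_0$ and $q = L_1\, p_0$ $\nu$-a.e., with $p_0=dP_0/d\nu$. Then every integral $\int_{\mathcal{Y}} F(p(y),q(y))\,d\nu(y)$ appearing in \eqref{eq:info_tv_definition}--\eqref{eq:info_chisq_definition} is an integral of a function that is positively homogeneous of degree one in $(p,q)$. Homogeneity gives $F(L_0p_0,L_1p_0)=p_0\,F(L_0,L_1)$, so the integral equals $\mathbb{E}_{Y\sim P_0}[F(L_0(Y),L_1(Y))]$.

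I would then check each case in turn:
\begin{itemize}
\item $F(p,q)=\tfrac12|p-q|$ gives \eqref{eq:info_tv_likelihood_ratio}.
\item $F(p,q)=(\sqrt p-\sqrt q)^2$, which is homogeneous of degree one since each square root scales by $\sqrt{p_0}$, gives \eqref{eq:info_hellinger_likelihood_ratio}.
\item $F(p,q)=p\log(p/q)$, whose logarithm is scale invariant, gives \eqref{eq:info_kl_likelihood_ratio}.
\item $F(p,q)=(p-q)^2/q$ gives \eqref{eq:info_chisq_likelihood_ratio}.
\end{itemize}

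The null identity \eqref{eq:info_chisq_null_identity} follows by taking $Q=P_0$. Then $L_1\equiv1$, and the $\chi^2$ formula reduces to $\mathbb{E}_{P_0}[(L_0-1)^2]=\|L_0-1\|^2_{L^2(P_0)}$.

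The only delicate point is the set where densities vanish, and this is the main thing to get right. On $\{p_0=0\}$ both $p$ and $q$ vanish $\nu$-a.e. by absolute continuity, so those regions contribute nothing. For KL and $\chi^2$ I would use the usual conventions $0\log(0/q)=0$ and $a/0=+\infty$ for $a>0$. Under these conventions the sets $\{L_1=0<L_0\}$ contribute $+\infty$ on both sides, so each identity holds in the extended sense. This completes the plan.
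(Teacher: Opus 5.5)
Your proposal is correct and follows the same route as the paper: a change of measure from $\nu$ to $P_0$ using $p = L_0\, p_0$ and $q = L_1\, p_0$, then specializing to $Q = P_0$ for the null identity. The degree-one homogeneity observation and your treatment of the null sets spell out what the paper calls immediate; the one convention you use without stating it is $0/0 = 0$ for the $\chi^2$ integrand where $L_0 = L_1 = 0$.
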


\begin{proof}
The first two identities follow immediately from~\eqref{eq:info_tv_definition}--\eqref{eq:info_hellinger_definition} after changing the measure to $P_0$. The KL and chi-square identities follow similarly from $dP=L_0\,dP_0$ and $dQ=L_1\,dP_0$. The final identity is the special case $Q=P_0$.
\end{proof}

\section{Two-point testing}
\label{sec:two_point_testing}

\subsection{Binary hypothesis-testing problem}
A standard approach to deriving minimax lower bounds is to exhibit two well-separated parameter values, or more generally two separated subsets of the parameter space, that nevertheless induce nearly indistinguishable distributions of the observations. A recurring device is therefore to reduce the estimation problem to a binary hypothesis-testing problem. Given two probability measures $P$ and $Q$, a test is a measurable function $\psi:\mathcal{Y}\to\{0,1\}$, where $\psi=0$ corresponds to choosing $P$ and $\psi=1$ corresponds to choosing $Q$. Its two error probabilities are therefore
\[
    P(\psi=1)
    \qquad\text{and}\qquad
    Q(\psi=0).
\]
The first is the probability of choosing $Q$ when the data are generated from $P$, while the second is the probability of choosing $P$ when the data are generated from $Q$.

\begin{lem}[Binary testing inequalities]
\label{lem:info_binary_testing}
Let $P$ and $Q$ be probability measures on $\mathcal{Y}$, and let $\psi:\mathcal{Y}\to\{0,1\}$ be any test, where $\psi=0$ corresponds to choosing $P$ and $\psi=1$ to choosing $Q$. Then
\begin{align}
    P(\psi=1)+Q(\psi=0)
    &\geq 1-\mathrm{TV}(P,Q),  \label{eq:info_testing_tv}
    \\
    P(\psi=1)+Q(\psi=0) &\geq \frac{1}{2} \exp\left\{- D_{\mathrm{KL}}(P\|Q)\right\}.  \label{eq:info_bretagnolle_huber}
\end{align}
Consequently, if both testing errors are at most $\delta$, then
\begin{align}
    \mathrm{TV}(P,Q)  & \geq 1-2\delta,    \label{eq:info_testing_tv_lower}
    \\
     D_{\mathrm{KL}}(P\|Q) &\geq \log\left(\frac{1}{4\delta}\right).    \label{eq:info_testing_kl_lower}
\end{align}
Moreover, for $0<\delta<1/4$, there exists a universal constant $c>0$ such that
\begin{align}
     D_{\mathrm{KL}}(P\|Q) \geq c\log(1/\delta).    \label{eq:info_testing_log_lower}
\end{align}
\end{lem}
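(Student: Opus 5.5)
The plan is to prove the two inequalities \eqref{eq:info_testing_tv}--\eqref{eq:info_bretagnolle_huber} directly, and then read off the consequences by plugging in $\delta$. For \eqref{eq:info_testing_tv}, set $A=\{\psi=1\}$. Then
\[
P(\psi=1)+Q(\psi=0)=1-\bigl(Q(A)-P(A)\bigr)\ge 1-\sup_{B}|Q(B)-P(B)|=1-\mathrm{TV}(P,Q).
\]
The last equality is the standard variational form of \eqref{eq:info_tv_definition}: the supremum is attained at $B=\{q>p\}$ and equals $\tfrac12\int|p-q|\,d\nu$.

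For \eqref{eq:info_bretagnolle_huber} I will follow the Bretagnolle--Huber route. First, $P(A)+Q(A^c)\ge\int\min(p,q)\,d\nu$. Next, by Cauchy--Schwarz together with $\min(p,q)\max(p,q)=pq$,
\[
\Bigl(\int\sqrt{pq}\,d\nu\Bigr)^2\le \int\min(p,q)\,d\nu\int\max(p,q)\,d\nu\le 2\int\min(p,q)\,d\nu,
\]
because $\int\max(p,q)\,d\nu=2-\int\min(p,q)\,d\nu\le 2$. Finally, Jensen applied to $\log$ under $P$, restricted to $\{p>0\}$, gives
\[
\log\int\sqrt{pq}\,d\nu\ \ge\ \mathbb{E}_P\bigl[\tfrac12\log(q/p)\bigr]=-\tfrac12 D_{\mathrm{KL}}(P\|Q).
\]
Combining the three steps yields $\int\min(p,q)\,d\nu\ge\tfrac12 e^{-D_{\mathrm{KL}}(P\|Q)}$. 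The only delicate point is the support bookkeeping. If $D_{\mathrm{KL}}(P\|Q)=\infty$ the inequality is trivial. Otherwise $P\ll Q$, and the Jensen step is valid on $\{p>0\}$, where $q>0$ holds $P$-a.s.

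Now suppose both testing errors are at most $\delta$. Then the left-hand sides are at most $2\delta$. Inequality \eqref{eq:info_testing_tv} gives $\mathrm{TV}(P,Q)\ge1-2\delta$. Inequality \eqref{eq:info_bretagnolle_huber} gives $2\delta\ge\tfrac12 e^{-D_{\mathrm{KL}}}$, i.e.\ $D_{\mathrm{KL}}(P\|Q)\ge\log(1/(4\delta))$.

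For \eqref{eq:info_testing_log_lower}, write $\log(1/(4\delta))=\log(1/\delta)-\log 4$. This is only a constant multiple of $\log(1/\delta)$ when $\delta$ stays away from $1/4$. That is the main obstacle, because as $\delta\uparrow1/4$ the Bretagnolle--Huber bound degenerates to $0$.

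I would therefore split into two cases.
\begin{itemize}
\item[(i)] If $\delta\le1/16$, then $\log 4\le\tfrac12\log(1/\delta)$, so $D_{\mathrm{KL}}\ge\tfrac12\log(1/\delta)$.
\item[(ii)] If $1/16<\delta<1/4$, use the TV route instead. Since $\mathrm{TV}\ge1-2\delta>1/2$, Pinsker \eqref{eq:info_pinsker} gives $D_{\mathrm{KL}}\ge 2\,\mathrm{TV}^2>1/2$. Because $\log(1/\delta)\le\log 16$ in this range, this is at least $\tfrac{1}{2\log 16}\log(1/\delta)$.
\end{itemize}
Taking $c=\min\{1/2,\,1/(2\log16)\}$ covers all $\delta\in(0,1/4)$ and gives the universal constant.
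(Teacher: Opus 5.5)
Your proposal is correct and follows the same route as the paper: the total-variation characterization of testing and Bretagnolle--Huber give the two inequalities, and combining the constant Pinsker bound $D_{\mathrm{KL}}(P\|Q)\ge 2(1-2\delta)^2$ with $\log(1/(4\delta))$ gives the universal constant. The paper only cites the two inequalities, whereas you prove them; your Cauchy--Schwarz/Jensen derivation of Bretagnolle--Huber (including the support bookkeeping) and your explicit split at $\delta=1/16$ are both valid.
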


\begin{proof}
The first inequality is the standard total-variation characterization of binary testing, while the second is the Bretagnolle--Huber inequality~\cite{tsybakov2009introduction}. If both testing errors are at most $\delta$, their sum is at most $2\delta$, which gives \eqref{eq:info_testing_tv_lower} and \eqref{eq:info_testing_kl_lower}. Finally, Pinsker's inequality together with \eqref{eq:info_testing_tv_lower} yields $ D_{\mathrm{KL}}(P\|Q)\geq 2(1-2\delta)^2$. Combining this constant lower bound with \eqref{eq:info_testing_kl_lower} gives \eqref{eq:info_testing_log_lower} uniformly over $0<\delta<1/4$.
\end{proof}

\paragraph{Tensorization.} 
The following tensorization identities describe how the relevant divergences scale under independent product measures and will be used to pass between single-sample and $n$-sample bounds.

\begin{lem}[Tensorization for i.i.d. product measures]
\label{lem:info_tensorization}
Assume $D_{\mathrm{KL}}(P\|Q) < \infty$. Then, for every $n\geq 1$,
\begin{align}
    D_{\mathrm{KL}} \left( P^{\otimes n}\|Q^{\otimes n} \right) = n D_{\mathrm{KL}}(P\|Q).
    \label{eq:info_kl_tensorization}
\end{align}
Moreover,
\begin{align}
    h^2 \left( P^{\otimes n},Q^{\otimes n} \right) &= 2\left[ 1- \left( 1-\frac{h^2(P,Q)}{2} \right)^n \right] \leq nh^2(P,Q).
    \label{eq:info_hellinger_tensorization}
\end{align}
\end{lem}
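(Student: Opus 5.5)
The proof consists of two direct computations, one for each identity.

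For the KL identity \eqref{eq:info_kl_tensorization}, I would use the fact that the likelihood ratio of the products factorizes. Writing $p,q$ for the densities with respect to $\nu$, the density of $P^{\otimes n}$ with respect to $\nu^{\otimes n}$ is $\prod_{i=1}^n p(y_i)$, and similarly for $Q^{\otimes n}$. Hence
\begin{align}
\log\frac{dP^{\otimes n}}{dQ^{\otimes n}}(y_1,\ldots,y_n)=\sum_{i=1}^n\log\frac{p(y_i)}{q(y_i)}.
\end{align}
Taking the expectation under $P^{\otimes n}$ and using linearity gives $n D_{\mathrm{KL}}(P\|Q)$, since each coordinate has marginal law $P$. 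Linearity needs justification: the finiteness assumption $D_{\mathrm{KL}}(P\|Q)<\infty$ implies $P\ll Q$. Moreover, the negative part of $\log(p/q)$ is integrable under $P$, because $\mathbb{E}_P[(\log(q/p))_+]\le \mathbb{E}_P[q/p]\le 1$. Each summand is therefore $P$-integrable, and the expectation of the sum equals the sum of the expectations.

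For the Hellinger identity \eqref{eq:info_hellinger_tensorization}, I would go through the Bhattacharyya affinity $\rho(P,Q)\triangleq\int\sqrt{pq}\,d\nu$. Expanding the square in \eqref{eq:info_hellinger_definition} gives
\begin{align}
h^2(P,Q)=2-2\rho(P,Q),
\end{align}
so that $\rho=1-h^2/2$. The affinity is multiplicative over products by Fubini/Tonelli, which applies because $\sqrt{pq}\ge0$:
\begin{align}
\rho(P^{\otimes n},Q^{\otimes n})=\int\prod_{i=1}^n\sqrt{p(y_i)q(y_i)}\,d\nu^{\otimes n}=\rho(P,Q)^n.
\end{align}
Substituting into $h^2(P^{\otimes n},Q^{\otimes n})=2-2\rho(P^{\otimes n},Q^{\otimes n})$ gives the stated identity $2\bigl[1-(1-h^2/2)^n\bigr]$.

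For the inequality, set $x=h^2/2\in[0,1]$. Bernoulli's inequality $(1-x)^n\ge 1-nx$ holds for $x\le1$ and $n\ge1$, and it gives $2[1-(1-x)^n]\le 2nx=nh^2$.

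There is no real obstacle here. The only delicate point is the integrability bookkeeping for the KL sum, and the finiteness assumption handles it.
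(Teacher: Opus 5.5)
Your proposal is correct and follows the same route as the paper's proof. You use additivity of the log-likelihood ratio for the KL identity, multiplicativity of the affinity $\int\sqrt{pq}\,d\nu = 1-h^2(P,Q)/2$ for the Hellinger identity, and $1-(1-x)^n\le nx$ on $[0,1]$ for the inequality, and your integrability check for the KL sum is a welcome detail that the paper leaves implicit.
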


\begin{proof}
The KL identity follows from additivity of the log-likelihood ratio under
independent products.
For Hellinger distance, the affinity
$\int\sqrt{pq}\,d\nu=1-h^2(P,Q)/2$ tensorizes, giving the equality in
\eqref{eq:info_hellinger_tensorization}; the inequality follows from
$1-(1-x)^n\leq nx$ for $x\in[0,1]$.
\end{proof}

\subsection{Le Cam's two-point method}

We now employ Lemma~\ref{lem:info_binary_testing} to derive a minimax lower bound via Le Cam's two-point method. The basic idea is to identify two well-separated parameter values that nevertheless induce nearly indistinguishable distributions of the observations. In our setting, suppose that $\theta_0,\theta_1\in\Theta$ satisfy $d_{\mathrm{eq}}(\theta_0,\theta_1)>2\varepsilon$, while the product laws $P_{\theta_0,\beta}^{\otimes n}$ and $P_{\theta_1,\beta}^{\otimes n}$ remain statistically close. Any estimator achieving quotient error at most $\varepsilon$ under both parameters would then induce a binary test that reliably distinguishes between these two product laws. Lemma~\ref{lem:info_binary_testing} shows that such a test cannot exist when the corresponding divergence is small. Consequently, no estimator can achieve uniformly small quotient error over the full parameter space. By the i.i.d.\ product structure, the relevant divergence between the $n$-sample laws reduces to the corresponding single-sample divergence between $P_{\theta_0,\beta}$ and $P_{\theta_1,\beta}$. In the weak-signal regime considered here, this reduction is particularly useful because the single-sample divergence admits a sharp expansion in $\beta$, whose leading order is determined by the first likelihood moment at which the two parameters differ.

\begin{proposition}[Two-point estimation lower bound]
\label{prop:lecam_two_point_general}
Let $d: \Theta \times \Theta \to \mathbb{R}^{+} $ be a pseudo-metric on $\Theta$, let $\varepsilon>0$, and suppose that $\theta_0,\theta_1\in\Theta$ satisfy $d(\theta_0,\theta_1)>2\varepsilon$.
Then every estimator $\widehat{\theta}$ based on $n$ i.i.d. observations satisfies
\begin{align}
    \max_{j\in\{0,1\}}
    P_{\theta_j}^{\otimes n}
    \left(
        d(\widehat{\theta},\theta_j)>\varepsilon
    \right)
    \geq
    \frac{1}{4}
    \exp\left\{
        -n D_{\mathrm{KL}}\left(P_{\theta_0}\|P_{\theta_1}\right)
    \right\}.
    \label{eq:info_two_point_lecam}
\end{align}
Consequently, if $\max_{j\in\{0,1\}} P_{\theta_j}^{\otimes n} \left(       d(\widehat{\theta},\theta_j)>\varepsilon \right) \leq \delta$, then
\begin{align}
    nD_{\mathrm{KL}}\left(P_{\theta_0}\|P_{\theta_1}\right) \geq \log\left(\frac{1}{4\delta}\right).    \label{eq:info_two_point_estimation}
\end{align}
In particular, for $0<\delta<1/4$, $nD_{\mathrm{KL}}\left(P_{\theta_0}\|P_{\theta_1}\right) \geq c\log(1/\delta)$ for a universal constant $c>0$.
\end{proposition}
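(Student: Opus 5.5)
The plan is to reduce estimation to testing between $P_{\theta_0}^{\otimes n}$ and $P_{\theta_1}^{\otimes n}$, and then apply Lemma~\ref{lem:info_binary_testing} together with the tensorization identity of Lemma~\ref{lem:info_tensorization}. Fix any estimator $\widehat\theta$. Define the minimum-distance test $\psi(Y_1,\ldots,Y_n)=0$ if $d(\widehat\theta,\theta_0)\le d(\widehat\theta,\theta_1)$ and $\psi=1$ otherwise; this is measurable because $\widehat\theta$ is measurable and $d$ is continuous in the relevant sense, or more simply because it is a measurable function of $\widehat\theta$.

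The key step is the separation argument. Suppose $\psi=1$ under $\theta_0$. Then $d(\widehat\theta,\theta_1)<d(\widehat\theta,\theta_0)$, and the triangle inequality for the pseudo-metric gives
\[
2\varepsilon<d(\theta_0,\theta_1)\le d(\widehat\theta,\theta_0)+d(\widehat\theta,\theta_1)<2d(\widehat\theta,\theta_0).
\]
Hence $d(\widehat\theta,\theta_0)>\varepsilon$, so $\{\psi=1\}\subseteq\{d(\widehat\theta,\theta_0)>\varepsilon\}$. Symmetrically, $\{\psi=0\}\subseteq\{d(\widehat\theta,\theta_1)\ge d(\widehat\theta,\theta_0)\}$, and the same computation (with non-strict inequality, still giving $2d(\widehat\theta,\theta_1)>2\varepsilon$) yields $d(\widehat\theta,\theta_1)>\varepsilon$.

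With these inclusions in hand, we bound the maximum error by the average and apply \eqref{eq:info_bretagnolle_huber} to the product measures:
\begin{align*}
\max_j P_{\theta_j}^{\otimes n}\bigl(d(\widehat\theta,\theta_j)>\varepsilon\bigr)
&\ge \tfrac12\Bigl[P_{\theta_0}^{\otimes n}(\psi=1)+P_{\theta_1}^{\otimes n}(\psi=0)\Bigr]\\
&\ge \tfrac14\exp\bigl\{-D_{\mathrm{KL}}(P_{\theta_0}^{\otimes n}\|P_{\theta_1}^{\otimes n})\bigr\}.
\end{align*}
Then \eqref{eq:info_kl_tensorization} replaces the exponent by $nD_{\mathrm{KL}}(P_{\theta_0}\|P_{\theta_1})$. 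If the KL divergence is infinite, the bound is trivial, so the finiteness hypothesis in the tensorization lemma is harmless.

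For the consequences, if the maximum error is at most $\delta$, then $\delta\ge\frac14 e^{-nD}$. Taking logarithms gives $nD\ge\log(1/(4\delta))$, which is \eqref{eq:info_two_point_estimation}. For the universal-constant form with $\delta<1/4$, we reduce again to testing, since the test $\psi$ has both errors at most $\delta$. Then \eqref{eq:info_testing_log_lower}, applied to the product laws and combined with tensorization, gives $nD\ge c\log(1/\delta)$. Alternatively, one can combine Pinsker's inequality on the product laws, which gives $nD\ge 2(1-2\delta)^2\ge 1/2$, with $\log(1/(4\delta))$ to cover small and moderate $\delta$.

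The only delicate point is handling ties and the strict versus non-strict inequalities in the separation step; everything else is bookkeeping.
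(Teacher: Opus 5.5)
Your proof is correct and follows the route the paper indicates. You reduce estimation to a binary test between the product laws, apply the Bretagnolle--Huber bound \eqref{eq:info_bretagnolle_huber} from Lemma~\ref{lem:info_binary_testing}, tensorize the KL divergence via Lemma~\ref{lem:info_tensorization}, and get the universal-constant form from \eqref{eq:info_testing_log_lower}. The only cosmetic difference is the choice of test: the paper's companion argument (Proposition~\ref{prop:recovery_implies_hellinger_separation_general}) uses the threshold test $\psi=\mathbf{1}\{d(\widehat\theta,\theta_0)>\varepsilon\}$ rather than your minimum-distance test. The threshold test makes measurability of $\psi$ immediate from that of the error event, but the two tests are interchangeable here.
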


\subsection{Hellinger separation implied by successful recovery}
\label{subsec:hellinger_separation_recovery}

The two-point argument can also be used in the reverse direction. Whereas Le Cam's method starts from two statistically close parameter values and concludes that accurate estimation is impossible, here we start from the assumption that accurate estimation is possible and infer that any two parameter values separated by more than twice the target accuracy must induce statistically separated distributions.

The following elementary consequence of the standard two-point testing argument will be useful in deriving necessary conditions for achievability; see, e.g., \cite[Chapter~2]{tsybakov2009introduction}.

\begin{proposition}[Successful recovery implies Hellinger separation]
\label{prop:recovery_implies_hellinger_separation_general}
Let $d: \Theta \times \Theta \to \mathbb{R}^{+} $ be a pseudo-metric on $\Theta$, fix $\varepsilon>0$, and let
$0<\delta_0<1/2$.
Suppose that an estimator $\widehat\theta$, based on $n$ i.i.d.\ observations, satisfies $ \max_{j\in\{0,1\}} P_{\theta_j}^{\otimes n} (d(\widehat\theta,\theta_j)>\varepsilon ) \leq \delta_0$ for two parameters $\theta_0,\theta_1\in\Theta$ satisfying $d(\theta_0,\theta_1)>2\varepsilon$.
Then
\begin{align}
    h(P_{\theta_0},P_{\theta_1}) \geq \frac{1-2\delta_0}{\sqrt n}.    \label{eq:recovery_hellinger_general}
\end{align}
\end{proposition}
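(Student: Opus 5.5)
The plan is to turn the estimator into a binary test and then apply the testing inequalities of Lemma~\ref{lem:info_binary_testing} together with Hellinger tensorization (Lemma~\ref{lem:info_tensorization}). Define the test $\psi(Y_1,\ldots,Y_n)=0$ if $d(\widehat\theta,\theta_0)\le d(\widehat\theta,\theta_1)$, and $\psi=1$ otherwise. Suppose $\psi=1$ under $P_{\theta_0}^{\otimes n}$. Then $d(\widehat\theta,\theta_1)<d(\widehat\theta,\theta_0)$. If in addition $d(\widehat\theta,\theta_0)\le\varepsilon$, the triangle inequality would give $d(\theta_0,\theta_1)\le 2\varepsilon$, contradicting $d(\theta_0,\theta_1)>2\varepsilon$. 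Hence $\{\psi=1\}\subseteq\{d(\widehat\theta,\theta_0)>\varepsilon\}$, and symmetrically $\{\psi=0\}\subseteq\{d(\widehat\theta,\theta_1)>\varepsilon\}$. Since $d$ is only a pseudo-metric, I rely only on the triangle inequality, which suffices here. Both testing errors are therefore at most $\delta_0$.

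By \eqref{eq:info_testing_tv_lower}, applied to $P=P_{\theta_0}^{\otimes n}$ and $Q=P_{\theta_1}^{\otimes n}$, we get $\mathrm{TV}(P_{\theta_0}^{\otimes n},P_{\theta_1}^{\otimes n})\ge 1-2\delta_0>0$. The right-hand inequality in \eqref{eq:info_tv_hellinger} then gives $h(P_{\theta_0}^{\otimes n},P_{\theta_1}^{\otimes n})\ge 1-2\delta_0$.

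Finally, \eqref{eq:info_hellinger_tensorization} gives $h^2(P_{\theta_0}^{\otimes n},P_{\theta_1}^{\otimes n})\le n\,h^2(P_{\theta_0},P_{\theta_1})$. Combining, $n\,h^2(P_{\theta_0},P_{\theta_1})\ge(1-2\delta_0)^2$. Taking square roots yields \eqref{eq:recovery_hellinger_general}.

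There is no real obstacle. The only points needing care are measurability of $\psi$ (immediate if $\widehat\theta$ is measurable and $d$ is continuous; otherwise one can work with the events directly) and the convention for the Hellinger distance. With the unnormalized $h^2$ of \eqref{eq:info_hellinger_definition}, the bound $\mathrm{TV}\le h$ holds as stated, so the constants come out exactly as claimed.
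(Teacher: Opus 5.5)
Your proof is correct and follows the paper's argument: reduce to a binary test, deduce $\operatorname{TV}(P_{\theta_0}^{\otimes n},P_{\theta_1}^{\otimes n})\geq 1-2\delta_0$, then apply $\operatorname{TV}\leq h$ and $h^2(P^{\otimes n},Q^{\otimes n})\leq n\,h^2(P,Q)$. The only difference is cosmetic: you use the minimum-distance test $\mathbf{1}\{d(\widehat\theta,\theta_1)<d(\widehat\theta,\theta_0)\}$, whereas the paper uses the threshold test $\mathbf{1}\{d(\widehat\theta,\theta_0)>\varepsilon\}$; in both cases the triangle inequality bounds each testing error by $\delta_0$.
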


\begin{proof}
The estimator induces a binary test between $\theta_0$ and $\theta_1$.
Define $\psi \triangleq \mathbf{1} \{ d(\widehat\theta,\theta_0)>\varepsilon\}$, where $\psi=0$ corresponds to choosing $\theta_0$ and $\psi=1$ to
choosing $\theta_1$.
Under $\theta_0$,
\begin{align}
    P_{\theta_0}^{\otimes n}(\psi=1) = P_{\theta_0}^{\otimes n} \left(d(\widehat\theta,\theta_0)>\varepsilon\right) \leq \delta_0.
\end{align}
Under $\theta_1$, the event $\{\psi=0\}$ implies $d(\widehat\theta,\theta_0)\leq\varepsilon$. Since $d(\theta_0,\theta_1)>2\varepsilon$, the triangle inequality gives $d(\widehat\theta,\theta_1) \geq d(\theta_0,\theta_1) - d(\widehat\theta,\theta_0) > \varepsilon$. Hence
\begin{align}
    P_{\theta_1}^{\otimes n}(\psi=0) \leq P_{\theta_1}^{\otimes n} \left( d(\widehat\theta,\theta_1)>\varepsilon \right) \leq \delta_0.
\end{align}
Thus the sum of the two testing errors is at most $2\delta_0$.

On the other hand, the binary testing bound~\eqref{eq:info_testing_tv} gives
\begin{align}
    P_{\theta_0}^{\otimes n}(\psi=1) + P_{\theta_1}^{\otimes n}(\psi=0) \geq 1- \operatorname{TV} \left( P_{\theta_0}^{\otimes n}, P_{\theta_1}^{\otimes n} \right).
\end{align}
Therefore, $\operatorname{TV} ( P_{\theta_0}^{\otimes n}, P_{\theta_1}^{\otimes n}) \geq 1-2\delta_0$. Finally, using $\operatorname{TV}(P,Q)\leq h(P,Q)$ and
Lemma~\ref{lem:info_tensorization},
\begin{align}
    1-2\delta_0 \leq h\left( P_{\theta_0}^{\otimes n}, P_{\theta_1}^{\otimes n} \right) \leq \sqrt n\,    h(P_{\theta_0},P_{\theta_1}).
\end{align}
Rearranging yields~\eqref{eq:recovery_hellinger_general}.
\end{proof}

\section{Information-theoretic sample complexity}
\label{sec:information_theoretic_sample_complexity}

\subsection{Proof of Proposition~\ref{prop:divergence_first_likelihood_moment}}
\label{sec:proof_prop_divergence_first_likelihood_moment}

For $i = 0,1$, we define 
\begin{align}
    L_i(Y) \triangleq L_{\theta_i,\beta}(Y) = \frac{dP_{\theta_i,\beta}}{dP_0}(Y).
\end{align}
By Assumption~\ref{ass:regular_observed_likelihood_ratio_expansion}, for any fixed expansion order and any finite $q\geq 1$, the likelihood ratios admit $L^q(P_0)$-controlled Taylor expansions. Since $r(\theta_0,\theta_1)=r$, by the definition in~\eqref{eq:first_mismatching_likelihood_moment_order}, the likelihood moments agree up to order $r-1$, while $\Phi_r(\theta_0)-\Phi_r(\theta_1)$ is nonzero in $L^2(P_0)$. Therefore, for every finite $q\geq 1$,
\begin{align}
    L_0-L_1 = \beta^r \big[ \Phi_r(\theta_0)-\Phi_r(\theta_1) \big] + O_{L^q(P_0)}(\beta^{r+1}). \label{eq:likelihood_ratio_difference_first_mismatch}
\end{align}
Let $p_0>2$ denote the fixed exponent from
Assumption~\ref{ass:reciprocal_likelihood_regularity}.
Then
\begin{align}
    \norm{L_1^{-1}-1}_{L^{p_0}(P_0)} = O(\beta),
    \qquad
    \norm{L_1^{-1}}_{L^{p_0}(P_0)} = O(1). \label{eq:proof_reciprocal_moment_bound}
\end{align}

We first prove the $\chi^2$ expansion.
Using the likelihood-ratio representation
in~\eqref{eq:info_chisq_likelihood_ratio},
\begin{align}
    \chi^2(P_{\theta_0,\beta},P_{\theta_1,\beta})  = \mathbb{E}_{Y \sim P_0} \left[ \frac{(L_0-L_1)^2}{L_1} \right], \label{eq:chisq_likelihood_ratio_identity}
\end{align}
and write
\begin{align}
    \frac{(L_0-L_1)^2}{L_1}  = (L_0-L_1)^2 + (L_0-L_1)^2(L_1^{-1}-1).
\end{align}
H\"older's inequality with exponents $p_0$ and $p_0/(p_0-1)$ gives
\begin{align}
    \mathbb{E}_{Y \sim P_0} \left[ (L_0-L_1)^2 |L_1^{-1}-1| \right]
    &\leq
    \norm{L_0-L_1}_{L^{2p_0/(p_0-1)}(P_0)}^2 \norm{L_1^{-1}-1}_{L^{p_0}(P_0)}
    \notag \\
    &=
    O(\beta^{2r+1}), \label{eq:chisq_reciprocal_remainder_bound}
\end{align}
where the required likelihood-difference moment is finite by Assumption~\ref{ass:regular_observed_likelihood_ratio_expansion}. Substituting~\eqref{eq:likelihood_ratio_difference_first_mismatch}--\eqref{eq:proof_reciprocal_moment_bound} into~\eqref{eq:chisq_reciprocal_remainder_bound} yields
\begin{align}
    \chi^2(P_{\theta_0,\beta},P_{\theta_1,\beta}) = \beta^{2r} \left\| \Phi_r(\theta_0)-\Phi_r(\theta_1) \right\|_{L^2(P_0)}^2 + O(\beta^{2r+1}).
\end{align}

We next prove the KL expansion. Let $\Delta_\beta \triangleq L_0-L_1$. By the likelihood-ratio representation in~\eqref{eq:info_kl_likelihood_ratio},
\begin{align}
    D_{\mathrm{KL}} (P_{\theta_0,\beta} \| P_{\theta_1,\beta}) = \mathbb{E}_{Y \sim P_0} \left[ (L_1+\Delta_\beta) \log \left( 1+\frac{\Delta_\beta}{L_1} \right) \right].
\end{align}
For $x>-1$, we use the Taylor expansion $(1+x)\log(1+x) = x+\frac{x^2}{2}+\mathcal{R}(x)$, with $|\mathcal{R}(x)| \leq C|x|^3$. Since $L_0=L_1+\Delta_\beta\geq 0$, $x=\Delta_\beta/L_1\geq-1$, and hence
\begin{align}
    D_{\mathrm{KL}} (P_{\theta_0,\beta} \| P_{\theta_1,\beta})
    &=
    \mathbb{E}_{Y \sim P_0}[\Delta_\beta] + \frac{1}{2} \mathbb{E}_{Y \sim P_0} \left[\frac{\Delta_\beta^2}{L_1} \right] + R_\beta,
    \label{eqn:KL_div_expansion2}
\end{align}
where $|R_\beta| \leq C \mathbb{E}_{Y \sim P_0} \left[ \frac{|\Delta_\beta|^3}{L_1^2} \right]$. Since $p_0>2$, H\"older's inequality with exponents $p_0/2$ and $p_0/(p_0-2)$ gives
\begin{align}
    \mathbb{E}_{Y \sim P_0} \left[ \frac{|\Delta_\beta|^3}{L_1^2} \right] \leq  \norm{\Delta_\beta}_{L^{3p_0/(p_0-2)}(P_0)}^3 \norm{L_1^{-1}}_{L^{p_0}(P_0)}^2 = O(\beta^{3r}),    \label{eq:kl_cubic_remainder_bound}
\end{align}
where again the required finite moment of $\Delta_\beta$ follows from Assumption~\ref{ass:regular_observed_likelihood_ratio_expansion}.

By~\eqref{eq:observed_likelihood_ratio_centering}, since $L_0$ and $L_1$ are likelihood ratios with respect to $P_0$, $\mathbb{E}_{Y \sim P_0}[\Delta_\beta] = \mathbb{E}_{Y \sim P_0}[L_0-L_1] = 0$. Substituting~\eqref{eq:chisq_likelihood_ratio_identity} into~\eqref{eqn:KL_div_expansion2} therefore gives
\begin{align}
    D_{\mathrm{KL}} (P_{\theta_0,\beta} \| P_{\theta_1,\beta}) = \frac{1}{2}    \chi^2(P_{\theta_0,\beta},P_{\theta_1,\beta}) + O(\beta^{3r}).
\end{align}
Since $r\geq 1$, $O(\beta^{3r})=O(\beta^{2r+1})$, and substituting the $\chi^2$ expansion from~\eqref{eq:chi_square_expansion_general} yields
\begin{align}
    D_{\mathrm{KL}} (P_{\theta_0,\beta} \| P_{\theta_1,\beta}) = \frac{\beta^{2r}}{2}  \left\| \Phi_r(\theta_0)-\Phi_r(\theta_1) \right\|_{L^2(P_0)}^2 + O(\beta^{2r+1}).
\end{align}
This completes the proof.

\subsection{Proof of Theorem~\ref{thm:lecam_lower_bound}}
\label{app:proof_lecam_lower_bound}

Let $\theta_0,\theta_1$ be the hard pair from the theorem and write $r=r(\theta_0,\theta_1)$. By Proposition~\ref{prop:divergence_first_likelihood_moment},
\begin{align}
    D_{\mathrm{KL}} \left( P_{\theta_0,\beta} \,\middle\|\, P_{\theta_1,\beta} \right) = \frac{\beta^{2r}}{2} \norm{ \Phi_r(\theta_0)-\Phi_r(\theta_1) }_{L^2(P_0)}^2 + O(\beta^{2r+1}).
\end{align}
Hence, for all sufficiently small $\beta$,
\begin{align}
    D_{\mathrm{KL}} \left( P_{\theta_0,\beta} \,\middle\|\, P_{\theta_1,\beta} \right) \leq C_{\theta_0,\theta_1}\beta^{2r}.
    \label{eq:hard_pair_kl_upper}
\end{align}

Since
$d_{\mathrm{eq}}(\theta_0,\theta_1)>2\varepsilon$, Proposition~\ref{prop:lecam_two_point_general}, applied with $d=d_{\mathrm{eq}}$, implies that any estimator satisfying
$\max_{j=0,1} P_{\theta_j,\beta}^{\otimes n} \left( d_{\mathrm{eq}}(\widehat\theta,\theta_j)>\varepsilon \right) \leq \delta$ must satisfy for $0<\delta<1/4$
\begin{align}
    n D_{\mathrm{KL}} \left(P_{\theta_0,\beta} \,\middle\|\, P_{\theta_1,\beta} \right) \geq c\log(1/\delta).
\end{align}
Combining this with~\eqref{eq:hard_pair_kl_upper} gives $n \geq c_{\theta_0,\theta_1} \beta^{-2r} \log(1/\delta)$.
Since the minimax risk over $K$ is at least the maximum risk over the pair $\{\theta_0,\theta_1\}$, the same lower bound holds for $n^\star(\varepsilon,\delta;\beta)$.

\subsection{Proof of Theorem~\ref{thm:cross_order_stability_necessity}}
\label{subsec:proof_thm_cross_order_stability_necessity}

We first isolate two auxiliary consequences used in the proof. The first converts a minimax recovery guarantee into pairwise Hellinger separation, while the second relates Hellinger distance to $L^2(P_0)$ separation of the corresponding likelihood ratios.

\begin{lem}[Minimax recovery implies Hellinger separation]
\label{lem:minimax_implies_hellinger_separation}
Fix $\varepsilon>0$ and $0<\delta_0<1/2$. Suppose that, for all sufficiently small $\beta$, $n^\star(\varepsilon,\delta_0;\beta) \leq C_\varepsilon\beta^{-2r}$. Then, for every $\theta,\theta'\in K$ satisfying $d_{\mathrm{eq}}(\theta,\theta')>2\varepsilon$,
\begin{align}
    h(P_{\theta,\beta},P_{\theta',\beta}) \geq c_0\beta^r,    \label{eq:necessity_hellinger_lower}
\end{align}
for $c_0 \triangleq \frac{1-2\delta_0}{\sqrt{C_\varepsilon}}$ and for all sufficiently small $\beta$.
\end{lem}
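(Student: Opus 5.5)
The plan is to combine the definition of the minimax sample complexity with Proposition~\ref{prop:recovery_implies_hellinger_separation_general}. Fix $\beta$ small enough that the hypothesis $n^\star(\varepsilon,\delta_0;\beta)\le C_\varepsilon\beta^{-2r}$ holds, and set $n\triangleq n^\star(\varepsilon,\delta_0;\beta)$. By Definition~\ref{def:sample_complexity_minimax}, $\mathfrak{R}_n(\varepsilon;\beta)\le\delta_0$. Since $\mathfrak{R}_n$ is an infimum over estimators, it need not be attained. I will therefore pick, for arbitrary $\tau>0$ with $\delta_0+\tau<1/2$, an estimator $\widehat\theta$ such that
\[
\sup_{\theta\in K}P^{\otimes n}_{\theta,\beta}\bigl(d_{\mathrm{eq}}(\widehat\theta,\theta)>\varepsilon\bigr)\le\delta_0+\tau .
\]
Alternatively, I could note that for this $n$ the infimum defining $n^\star$ is achieved with risk at most $\delta_0$ up to an arbitrarily small slack, and send $\tau\to0$ at the end.

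Next, take any $\theta,\theta'\in K$ with $d_{\mathrm{eq}}(\theta,\theta')>2\varepsilon$. The supremum over $K$ controls both error probabilities simultaneously. The quotient loss $d_{\mathrm{eq}}$ is a pseudo-metric on $\Theta$: it is the orbit-space metric under Assumption~\ref{ass:group_induced_equivalence}, with the triangle inequality coming from $G$-invariance of $d_\Theta$. Applying Proposition~\ref{prop:recovery_implies_hellinger_separation_general} with $d=d_{\mathrm{eq}}$ and confidence $\delta_0+\tau$ gives
\[
h(P_{\theta,\beta},P_{\theta',\beta})\ge\frac{1-2\delta_0-2\tau}{\sqrt n}\ge\frac{1-2\delta_0-2\tau}{\sqrt{C_\varepsilon}}\,\beta^{r},
\]
where the second inequality uses $n\le C_\varepsilon\beta^{-2r}$. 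Letting $\tau\downarrow0$, which is legitimate because the left side does not depend on $\tau$, yields \eqref{eq:necessity_hellinger_lower} with $c_0=(1-2\delta_0)/\sqrt{C_\varepsilon}$.

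The only delicate points are bookkeeping. The first is the non-attainment of the infimum over estimators, handled by the $\tau$-slack above. The second is the edge case $n\ge1$: if $n^\star$ were $0$, the bound would be vacuous, but $n^\star\ge1$ since a zero-sample estimator cannot succeed on a $2\varepsilon$-separated pair with $\delta_0<1/2$. Nothing else is needed, since uniformity over pairs follows directly from the supremum over $K$ in $\mathfrak{R}_n$.
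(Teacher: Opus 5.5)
Your proposal is correct and matches the paper's proof, which simply applies Proposition~\ref{prop:recovery_implies_hellinger_separation_general} with $d=d_{\mathrm{eq}}$ and $n\le C_\varepsilon\beta^{-2r}$. Your $\tau$-slack for the possibly non-attained infimum over estimators, and your check that $n^\star\ge 1$, are bookkeeping that the paper's one-line proof leaves implicit.
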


\begin{lem}[Hellinger comparison near the null]
\label{lem:hellinger_likelihood_l2_comparison}
Suppose Assumption~\ref{ass:regular_observed_likelihood_ratio_expansion} holds.
Then, for every $M<\infty$, there exist $C_M<\infty$ and $\beta_M>0$ such that, for every $0<\beta\leq\beta_M$ and every $\theta,\theta'\in\Theta$,
\begin{align}
    h^2(P_{\theta,\beta},P_{\theta',\beta}) \leq  2  \norm{L_{\theta,\beta}-L_{\theta',\beta} }_{L^2(P_0)}^2 +  C_M\beta^M.    \label{eq:hellinger_likelihood_l2_upper}
\end{align}
\end{lem}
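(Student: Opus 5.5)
The plan is to work directly with the likelihood-ratio form of the Hellinger distance in Lemma~\ref{lem:info_likelihood_ratio_representations}. Write $L=L_{\theta,\beta}$ and $L'=L_{\theta',\beta}$, so that $h^2(P_{\theta,\beta},P_{\theta',\beta})=\mathbb{E}_{Y\sim P_0}[(\sqrt{L}-\sqrt{L'})^2]$. The argument rests on two pointwise bounds for $a,b\geq 0$.
\begin{itemize}
    \item Since $(\sqrt a+\sqrt b)^2\geq a+b$,
    \begin{align}
        (\sqrt a-\sqrt b)^2 = \frac{(a-b)^2}{(\sqrt a+\sqrt b)^2} \leq \frac{(a-b)^2}{a+b}.
    \end{align}
    \item Trivially, $(\sqrt a-\sqrt b)^2\leq \max(a,b)\leq a+b$.
\end{itemize}

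Next I split according to the event $E\triangleq\{L+L'\geq 1/2\}$. On $E$ the first bound gives $(\sqrt L-\sqrt{L'})^2\leq 2(L-L')^2$. On $E^c$ the second bound gives $(\sqrt L-\sqrt{L'})^2\leq 1/2$. Taking $P_0$-expectations,
\begin{align}
    h^2(P_{\theta,\beta},P_{\theta',\beta}) \leq 2\norm{L-L'}_{L^2(P_0)}^2 + \tfrac12 P_0\left(L+L'<\tfrac12\right).
\end{align}
It therefore suffices to show $\sup_{\theta,\theta'}P_0(L+L'<1/2)\leq C_M\beta^M$ for small $\beta$.

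On $E^c$ we have $(1-L)+(1-L')>3/2$, so at least one of $|L-1|$, $|L'-1|$ exceeds $3/4$. Markov's inequality in $L^M(P_0)$, together with a union bound, then gives
\begin{align}
    P_0(E^c)\leq (4/3)^M\left(\norm{L-1}_{L^M(P_0)}^M+\norm{L'-1}_{L^M(P_0)}^M\right).
\end{align}
To conclude I need the uniform bound $\sup_{\theta\in\Theta}\norm{L_{\theta,\beta}-1}_{L^M(P_0)}\leq C\beta$. This comes from the expansion~\eqref{eq:observed_likelihood_ratio_expansion} with $s=1$ and $p=M$: we have $L_{\theta,\beta}-1=\beta\phi_1(\cdot;\theta)+R_2(\cdot;\theta,\beta)$, and the remainder is $O(\beta^2)$ in $L^M(P_0)$ uniformly over $\theta$. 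Substituting gives $P_0(E^c)\leq C_M\beta^M$, which completes the argument. (If $M<1$ is allowed, apply the result with $M=1$, since $\beta^1\leq\beta^M$ for $\beta\leq1$.)

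The main obstacle I expect is the uniformity of $\sup_\theta\norm{\phi_1(\cdot;\theta)}_{L^M(P_0)}$, which the argument above takes for granted. I would obtain it from the uniform analyticity in Assumption~\ref{ass:regular_observed_likelihood_ratio_expansion}. If there is a common radius $\rho$ and a common bound $B$ on $\norm{L_{\theta,\beta}}_{L^M(P_0)}$ for $|\beta|\leq\rho$, then Cauchy estimates give $\norm{\phi_\ell}_{L^M}\leq B\rho^{-\ell}$ for all $\theta$. Alternatively, one can simply read off $\norm{L_{\theta,\beta}-1}_{L^M}=O(\beta)$ uniformly from the uniform remainder statement at order zero. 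Everything else is elementary.
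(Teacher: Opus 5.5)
Your proof is correct and is essentially the paper's argument: split the integrand $(\sqrt{L_{\theta,\beta}}-\sqrt{L_{\theta',\beta}})^2$ according to whether the likelihood ratios are bounded below by $1/2$, use $(\sqrt a-\sqrt b)^2\le 2(a-b)^2$ on the good event, and control the bad event by Markov's inequality applied to a high moment of $L_{\theta,\beta}-1$. The differences are minor. The paper splits on $A=\{L_{\theta,\beta}\ge 1/2\}$ and bounds $\mathbb{E}_{Y\sim P_0}[(L_{\theta,\beta}+L_{\theta',\beta})\mathbf{1}_{A^c}]$ by Cauchy--Schwarz, which needs a uniform $L^2(P_0)$ bound and moments of order $2M$. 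Your symmetric split on $\{L+L'\ge 1/2\}$ bounds the integrand by the constant $1/2$ on the complement, which avoids that step, and you also justify the uniform bound $\sup_\theta\|L_{\theta,\beta}-1\|_{L^M(P_0)}=O(\beta)$ more explicitly than the paper does.
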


Combining~\eqref{eq:necessity_hellinger_lower} and~\eqref{eq:hellinger_likelihood_l2_upper} yields for some constant $c > 0$,
\begin{align}
    \norm{L_{\theta,\beta}-L_{\theta',\beta} }_{L^2(P_0)} \geq c\beta^r
\end{align}
uniformly over $\theta,\theta'\in K$ satisfying $d_{\mathrm{eq}}(\theta,\theta')>2\varepsilon$. Taking the infimum over such pairs proves \eqref{eq:necessary_likelihood_separation}.
It remains to prove the two lemmas.

\begin{proof}[Proof of Lemma~\ref{lem:minimax_implies_hellinger_separation}]
Apply Proposition~\ref{prop:recovery_implies_hellinger_separation_general} with $d=d_{\mathrm{eq}}$ and $n\leq C_\varepsilon\beta^{-2r}$.
\end{proof}

\begin{proof}[Proof of Lemma~\ref{lem:hellinger_likelihood_l2_comparison}]
Let $L_0=L_{\theta,\beta}, L_1=L_{\theta',\beta}$. By Lemma~\ref{lem:info_likelihood_ratio_representations},
\begin{align}
    h^2(P_{\theta,\beta},P_{\theta',\beta}) = \mathbb{E}_{Y\sim P_0} \left[(\sqrt{L_0}-\sqrt{L_1})^2 \right].
\end{align}
For all $L_0,L_1\geq 0$,
\begin{align}
    (\sqrt{L_0}-\sqrt{L_1})^2 = L_0+L_1-2\sqrt{L_0L_1} \leq L_0+L_1.
\end{align}
Let $ A\triangleq\{L_0\geq 1/2\}$. On $A$,
\begin{align}
    (\sqrt{L_0}-\sqrt{L_1})^2 = \frac{(L_0-L_1)^2} {(\sqrt{L_0}+\sqrt{L_1})^2} \leq 2(L_0-L_1)^2.
\end{align}
Therefore, splitting the expectation over $A$ and $A^c$,
\begin{align}
    h^2(P_{\theta,\beta},P_{\theta',\beta})
    &\leq 2 \, \mathbb{E}_{Y\sim P_0} \left[(L_0-L_1)^2\mathbf{1}_A \right] + \mathbb{E}_{Y\sim P_0} \left[ (L_0+L_1)\mathbf{1}_{A^c} \right]
    \\
    &\leq 2\norm{L_0-L_1}_{L^2(P_0)}^2 + \mathbb{E}_{Y\sim P_0} \left[(L_0+L_1)\mathbf{1}_{A^c} \right].    \label{eq:hellinger_l2_reduction}
\end{align}

By Assumption~\ref{ass:regular_observed_likelihood_ratio_expansion},
for every finite $p$, $\sup_{\vartheta\in\Theta} \norm{ L_{\vartheta,\beta}-1}_{L^p(P_0)} = O(\beta)$. Therefore, $P_0(A^c) = O(\beta^p)$. Since $L_0\leq 1/2$ on $A^c$ and $\sup_{\vartheta\in\Theta} \norm{L_{\vartheta,\beta}}_{L^2(P_0)}=O(1)$, Cauchy--Schwarz gives $\mathbb{E}_{Y\sim P_0} \left[ (L_0+L_1)\mathbf{1}_{A^c} \right] = O(\beta^{p/2})$. As $p$ may be chosen arbitrarily large, for every $M<\infty$,
$\mathbb{E}_{Y\sim P_0} \left[ (L_0+L_1)\mathbf{1}_{A^c} \right] \leq C_M\beta^M$ for all sufficiently small $\beta$, uniformly over $\theta,\theta'\in\Theta$. Substituting into~\eqref{eq:hellinger_l2_reduction} proves \eqref{eq:hellinger_likelihood_l2_upper}.
\end{proof}

\section{Rate-optimal recovery via maximum likelihood}
\label{app:general_mle}

The goal of this appendix is to establish an exponential-tail bound for the maximum-likelihood estimator. The basic idea is simple. If the maximum-likelihood estimator lies at quotient distance greater than $\varepsilon$ from the true parameter $\theta^\star$, then there must exist an alternative parameter whose likelihood is at least as large as that of $\theta^\star$. If, in addition, every such alternative is separated from the true distribution by a Hellinger distance of at least $\tau_\varepsilon$, then the error event is contained in
\begin{align}
    \left\{ \sup_{\substack{\theta\in\Theta:\ h(P_{\theta,\beta},P_{\theta^\star,\beta})\geq\tau_\varepsilon}} \prod_{i=1}^{n} \frac{p_{\theta,\beta}(Y_i)} {p_{\theta^\star,\beta}(Y_i)} \geq 1 \right\}.    \label{eq:mle_likelihood_ratio_event}
\end{align}
A likelihood-ratio maximal inequality of Wong and Shen~\cite{wong1995probability} shows that, under suitable control of the local Hellinger complexity of the likelihood family,
\begin{align}
    P_{\theta^\star,\beta}^{\otimes n} \left( \sup_{\substack{\theta\in\Theta:\ h(P_{\theta,\beta},P_{\theta^\star,\beta})\geq\tau_\varepsilon}} \prod_{i=1}^{n} \frac{p_{\theta,\beta}(Y_i)} {p_{\theta^\star,\beta}(Y_i)} \geq 1 \right) \leq C_\varepsilon \exp \{-c_\varepsilon n\tau_\varepsilon^2\}.
    \label{eq:wong_shen_step}
\end{align}
Thus, once parameters at quotient distance greater than $\varepsilon$ are separated in Hellinger distance by $\tau_\varepsilon$, the probability that any such parameter can outperform the truth in likelihood decays exponentially at rate $n\tau_\varepsilon^2$.

In the weak-signal regime, we show that, for every fixed $\varepsilon>0$, $d_{\mathrm{eq}}(\theta,\theta^\star)>\varepsilon$ implies $h(P_{\theta,\beta},P_{\theta^\star,\beta}) \geq a_\varepsilon\beta^{r_{\mathrm{glob}}}$ uniformly for sufficiently small $\beta$. Hence, we may take $\tau_\varepsilon \triangleq a_\varepsilon\beta^{r_{\mathrm{glob}}}$, and \eqref{eq:wong_shen_step} yields
\begin{align}
    P_{\theta^\star,\beta}^{\otimes n} \left( d_{\mathrm{eq}}(\widehat{\theta}_{\mathrm{MLE}},\theta^\star)>\varepsilon \right) \leq C_\varepsilon \exp \{-c_\varepsilon' n\beta^{2r_{\mathrm{glob}}}\}.   \label{eq:mle_exponential_tail_motivation}
\end{align}
Consequently, estimation with error probability at most $\delta$ requires $n \gtrsim \beta^{-2r_{\mathrm{glob}}} \log (1/\delta)$, which matches the information-theoretic weak-signal scale.

\subsection{Likelihood-ratio maximal inequality}

For maximum-likelihood upper bounds, pairwise separation is not enough: one must control the likelihood uniformly over an entire class of alternatives. Wong and Shen~\cite[Theorem~1]{wong1995probability} provide such a bound in terms of local Hellinger bracketing entropy. We use the following consequence of their result, obtained from a logarithmic entropy bound and a dyadic peeling argument.

\paragraph{Hellinger bracketing entropy.}
Bracketing entropy is a standard measure of the complexity of a function class in empirical-process theory; see, e.g., \cite{vanderVaartWellner1996,gao2007entropy}. For a class $\mathcal{F}$ of measurable functions, a bracket $[\underline f,\overline f]$ consists of all $f\in\mathcal{F}$ satisfying
\begin{align}
    [\underline f,\overline f] \triangleq \left\{ f\in\mathcal{F}: \underline f\leq f\leq\overline f \quad P_0\text{-a.s.} \right\}.
\end{align}
Its $L^2(P_0)$ width is $\norm{\overline f-\underline f}_{L^2(P_0)}$. The bracketing number $N_{[]}(u,\mathcal{F},L^2(P_0))$ is the smallest number of brackets of width at most $u$ needed to cover $\mathcal{F}$, and the corresponding bracketing entropy is
\begin{align}
    H_{[]}(u,\mathcal{F},L^2(P_0)) \triangleq \log N_{[]}(u,\mathcal{F},L^2(P_0)). \label{eq:bracketing_entropy_definition}
\end{align}

In likelihood problems, this complexity is naturally applied to
square-root densities, i.e., $h(P,Q) =\norm{\sqrt p-\sqrt q}_{L^2(P_0)}$. The bracketing entropy therefore quantifies the local size of the likelihood class at the statistical scale relevant for controlling its supremum.

Let $\mathcal{P}$ be a class of probability measures dominated by a common probability measure $P_0$, let $P^\star\in\mathcal{P}$, and for $t>0$, define $\mathcal{P}^\star(t) \triangleq \{ P\in\mathcal{P}: h(P,P^\star)\leq t \}$.

\begin{lem}[Likelihood-ratio maximal inequality]
\label{lem:likelihood_ratio_maximal_inequality}
Suppose $\sup_{P\in\mathcal{P}}h(P,P^\star)\leq 2t_0$ and, for every $0<t\leq t_0$,
\begin{align}
    H_{[]} \left( u, \left\{ \sqrt p: P\in\mathcal{P}^\star(2t) \right\}, L^2(P_0) \right) \leq D\log\left(1+\frac{At}{u}\right), \label{eq:lr_maximal_entropy_condition}
\end{align}
for $c_0t^2\leq u\leq C_0t$, where $c_0>0$ is sufficiently small and $C_0>0$ sufficiently large. Then there exist constants $c,C,B>0$ such that, for every $0<\tau\leq t_0/2$ satisfying $n\tau^2\geq B$,
\begin{align}
    P^{\star\otimes n} \left( \sup_{\substack{ P\in\mathcal{P}:\ h(P,P^\star)\geq\tau }} \prod_{i=1}^n \frac{p(Y_i)}{p^\star(Y_i)} \geq1 \right) \leq C\exp \{-cn\tau^2\}.
    \label{eq:lr_maximal_bound}
\end{align}
\end{lem}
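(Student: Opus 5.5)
This is a consequence of Wong and Shen~\cite[Theorem~1]{wong1995probability}, which I would invoke on each shell of a dyadic peeling in Hellinger distance. For $j\geq 0$ set $\tau_j\triangleq 2^j\tau$ and define the shells
\[
    \mathcal{S}_j\triangleq\{P\in\mathcal{P}:\tau_j\leq h(P,P^\star)<\tau_{j+1}\}.
\]
Since $\sup_{P\in\mathcal P}h(P,P^\star)\leq 2t_0$, only $j\leq J\triangleq\lceil\log_2(2t_0/\tau)\rceil$ shells are nonempty, and the last one can be absorbed by capping the radius at $t_0$. A union bound reduces \eqref{eq:lr_maximal_bound} to showing
\[
    P^{\star\otimes n}\Bigl(\sup_{P\in\mathcal S_j}\prod_i p(Y_i)/p^\star(Y_i)\geq 1\Bigr)\leq C\exp\{-c n\tau_j^2\}
\]
for each $j$. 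Because $\tau_j^2=4^j\tau^2$, these bounds sum to a geometric series dominated by $C'\exp\{-cn\tau^2\}$ once $n\tau^2\geq B$.

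For a single shell, the plan is to verify the entropy-integral condition of Wong--Shen at radius $t=\tau_j$. Their Theorem~1 states the following. Suppose that for some constants $c_1,c_2$,
\[
    \int_{t^2/2^8}^{\sqrt2\, t} H_{[]}^{1/2}\bigl(u/c_3,\mathcal F_t,L^2\bigr)\,du\leq c_4\sqrt n\, t^2,
\]
where $\mathcal F_t$ is the class of square-root densities within Hellinger radius $2t$. Then
\[
    P^{\star\otimes n}\Bigl(\sup_{h\geq t}\prod_i p/p^\star\geq e^{-c_1 n t^2}\Bigr)\leq 4\exp\{-c_2nt^2\}.
\]
Our threshold is $1\geq e^{-c_1nt^2}$, so the event we need to control is contained in theirs, and their bound applies directly.

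Under \eqref{eq:lr_maximal_entropy_condition}, the integrand over the relevant range $u\in[c_0t^2,C_0t]$ is at most $\sqrt{D\log(1+At/u)}$. Substituting $u=tv$ gives
\[
    t\int_{ct}^{C}\sqrt{D\log(1+A/v)}\,dv\leq C_D\,t,
\]
because $\sqrt{\log(1/v)}$ is integrable near $0$. The condition $C_Dt\leq c_4\sqrt n\,t^2$ therefore reduces to $\sqrt n\,t\geq C_D/c_4$, that is, $nt^2\geq B$. Since $\tau_j\geq\tau$, this holds on every shell. The constants $c_0$ and $C_0$ in the hypothesis are chosen precisely so that the Wong--Shen integration range lies inside $[c_0t^2,C_0t]$. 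One also has to account for the rescaling $u/c_3$, which only changes $A$.

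The main obstacle is bookkeeping. Wong--Shen state their result with respect to the true measure $P^\star$ and with brackets in $L^2(\nu)$ for a dominating measure. Here brackets are measured in $L^2(P_0)$, where $P_0$ is a common dominating probability measure. I would take $\nu=P_0$ and note that their proof uses only the brackets of square-root densities in the $L^2$ norm of the dominating measure, so the transfer is immediate. The remaining points are to check that the event with supremum over $h\geq t$ can be restricted to the shell, which only shrinks the event, and that the constants are uniform in $j$. Both are routine, because the entropy bound holds with the same $D$ and $A$ for every $t\leq t_0$.
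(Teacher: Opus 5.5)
Your proposal is correct and follows essentially the same route as the paper. It uses dyadic peeling into the Hellinger shells $\mathcal{S}_j$, verifies the Wong--Shen entropy integral at scale $2^j\tau$ from the logarithmic entropy bound (giving a bound of order $t$ and hence the requirement $nt^2\geq B$), treats the alternatives with $h\geq t_0$ at the capped scale $t_0$, contains the event via $e^{-c_1nt^2}\leq 1$, and sums the resulting geometric series. One point should be phrased more carefully: Wong--Shen's Theorem~1 is stated with the bracketing entropy of the whole class it is applied to, so the localization comes from applying it to the class $\mathcal{P}^\star(2t)$ (or to the shell itself), which your restriction-to-the-shell step effectively does.
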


\begin{proof}
For a Hellinger scale $t$, the entropy bound~\eqref{eq:lr_maximal_entropy_condition} implies
\begin{align}
    \int_{t^2/2^8}^{\sqrt{2}t} H_{[]}^{1/2} \left( \frac{u}{c_3}, \left\{\sqrt p:P\in\mathcal{P}^\star(2t)\right\}, L^2(P_0) \right) du \leq Ct,
\end{align}
where $c_3$ is the constant in Wong and Shen~\cite[Theorem~1]{wong1995probability}. Hence their entropy-integral condition is satisfied whenever $nt^2\geq B$, for a sufficiently large constant $B$.

Set $\mathcal{S}_j \triangleq \left\{ P\in\mathcal{P}: 2^j\tau\leq h(P,P^\star)<2^{j+1}\tau \right\}$. For every $j$ such that $2^j\tau\leq t_0$, applying~\cite[Theorem~1]{wong1995probability} at scale $2^j\tau$ gives
\begin{align}
    P^{\star\otimes n} \left( \sup_{P\in\mathcal{S}_j} \prod_{i=1}^n \frac{p(Y_i)}{p^\star(Y_i)} \geq \exp \{-c'n4^j\tau^2\} \right) \leq C\exp \{-cn4^j\tau^2\}.
\end{align}
Since $\exp\{-c'n4^j\tau^2\}\leq1$,
\begin{align}
    \left\{ \sup_{P\in\mathcal{S}_j} \prod_{i=1}^n \frac{p(Y_i)}{p^\star(Y_i)} \geq1 \right\} \subseteq \left\{ \sup_{P\in\mathcal{S}_j} \prod_{i=1}^n \frac{p(Y_i)}{p^\star(Y_i)} \geq \exp\{-c'n4^j\tau^2\} \right\},
\end{align}
and therefore
\begin{align}
    P^{\star\otimes n} \left( \sup_{P\in\mathcal{S}_j} \prod_{i=1}^n \frac{p(Y_i)}{p^\star(Y_i)} \geq1 \right) \leq C\exp \{-cn4^j\tau^2\}.
\end{align}
The remaining alternatives with $h(P,P^\star)\geq t_0$ are handled by the same argument at scale $t_0$. Summing the resulting bounds over the shells yields $\sum_{j\geq0} C\exp \{-cn4^j\tau^2\} \leq C\exp \{-cn\tau^2\}$, which proves~\eqref{eq:lr_maximal_bound}.
\end{proof}

Motivated by Lemma~\ref{lem:likelihood_ratio_maximal_inequality}, we impose the local Hellinger-complexity condition~\eqref{eq:lr_maximal_entropy_condition} as an explicit assumption. We will later verify that it holds for the finite-dimensional likelihood-moment representation considered in this work.

Let $r=r_{\mathrm{glob}}<\infty$ and let $K\subseteq\Theta_{\mathrm{gen}}$ be compact.
For $\tau>0$ and $\theta^\star\in K$, define the local square-root likelihood class
\begin{align}
    \mathcal{H}_{\beta,\theta^\star}(\tau) \triangleq \left\{ \sqrt{L_{\theta,\beta}}: h(P_{\theta,\beta},P_{\theta^\star,\beta})\leq\tau, \ \theta\in\Theta \right\}. \label{eq:local_hellinger_class}
\end{align}
The next condition controls the local size of the likelihood family, similar to~\eqref{eq:lr_maximal_entropy_condition}.

\begin{assum}[Controlled local Hellinger complexity]
\label{ass:local_complexity}
For every $b>0$, there exist constants $C,D,c_0,C_0,\tau_0,\beta_0>0$, such that uniformly over $\theta^\star\in K$, $0<\beta\leq\beta_0$, and $b\beta^r\leq\tau\leq\tau_0$,
\begin{align}
    H_{[]} \left( u, \mathcal{H}_{\beta,\theta^\star}(2\tau), L^2(P_0) \right) \leq D\log\left(1+\frac{C\tau}{u}\right),
    \qquad
    c_0\tau^2\leq u\leq C_0\tau.
    \label{eq:local_complexity}
\end{align}
\end{assum}

\subsection{General maximum-likelihood upper bound}

The two-point argument of Proposition~\ref{prop:recovery_implies_hellinger_separation_general} and Lemma~\ref{lem:minimax_implies_hellinger_separation} shows that Hellinger separation at scale $\beta^r$ is necessary for recovery with $n\asymp\beta^{-2r}$ samples. The next result shows that, together with controlled local likelihood complexity, this separation is also sufficient for maximum likelihood.

\begin{thm}[Rate-optimality of maximum likelihood]
\label{thm:mle_general}
Suppose Assumption~\ref{ass:local_complexity} holds. Assume moreover that, for every fixed $\varepsilon>0$, there exist $a_\varepsilon>0$ and $\beta_\varepsilon>0$ such that
\begin{align}
    d_{\mathrm{eq}}(\theta,\theta^\star)>\varepsilon
    \quad\Longrightarrow\quad h(P_{\theta,\beta},P_{\theta^\star,\beta}) \geq a_\varepsilon\beta^r, \label{eq:stable_hellinger_separation}
\end{align}
uniformly over $\theta^\star\in K$, $\theta\in\Theta$, and $0<\beta\leq\beta_\varepsilon$. Then, for every fixed $\varepsilon>0$, there exist constants $c_\varepsilon,C_\varepsilon,\beta_0(\varepsilon)>0$ such that, for every $0<\beta\leq\beta_0(\varepsilon)$ and $n\beta^{2r}\geq C_\varepsilon$,
\begin{align}
    \sup_{\theta^\star\in K} P_{\theta^\star,\beta}^{\otimes n} \left( d_{\mathrm{eq}} ( \widehat\theta_{\mathrm{MLE}}, \theta^\star ) > \varepsilon \right) \leq C_\varepsilon \exp\{-c_\varepsilon n\beta^{2r}\}.
    \label{eq:mle_exponential_bound}
\end{align}
Consequently, for fixed $\varepsilon>0$, $n\gtrsim \beta^{-2r}\log(1/\delta)$ observations suffice for quotient error at most $\varepsilon$ with probability at least $1-\delta$.
\end{thm}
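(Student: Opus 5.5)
The plan is to reduce the statement to Lemma~\ref{lem:likelihood_ratio_maximal_inequality}, applied with $\mathcal{P}=\{P_{\theta,\beta}:\theta\in\Theta\}$, $P^\star=P_{\theta^\star,\beta}$ and $\tau=\tau_\varepsilon\triangleq a_\varepsilon\beta^r$. First, the MLE error event has to be contained in a likelihood-ratio event. If $d_{\mathrm{eq}}(\widehat\theta_{\mathrm{MLE}},\theta^\star)>\varepsilon$, the maximizer $\widehat\theta_{\mathrm{MLE}}$ satisfies $\prod_i p_{\widehat\theta,\beta}(Y_i)/p_{\theta^\star,\beta}(Y_i)\geq 1$, because $\theta^\star$ itself is a competitor. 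By the separation hypothesis~\eqref{eq:stable_hellinger_separation} it also satisfies $h(P_{\widehat\theta,\beta},P_{\theta^\star,\beta})\geq a_\varepsilon\beta^r$. The error event is therefore contained in the event~\eqref{eq:mle_likelihood_ratio_event} with $\tau_\varepsilon=a_\varepsilon\beta^r$. Measurability of the supremum is handled in the standard way, using continuity of $\theta\mapsto L_{\theta,\beta}$ on the compact set $\Theta$ to restrict to a countable dense subset, or by working with outer probability as in Wong--Shen.

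Second, the hypotheses of Lemma~\ref{lem:likelihood_ratio_maximal_inequality} must be checked uniformly in $\theta^\star\in K$ and small $\beta$.
\begin{itemize}
\item \emph{Global diameter.} Assumption~\ref{ass:regular_observed_likelihood_ratio_expansion} gives $\sup_\theta\|L_{\theta,\beta}-1\|_{L^2(P_0)}=O(\beta)$. Together with $h^2\leq\chi^2$-type bounds, or directly $h(P,P_0)\le\|L-1\|_{L^2(P_0)}$, this yields $\sup_\theta h(P_{\theta,\beta},P_{\theta^\star,\beta})\leq C\beta$. We take $t_0\triangleq\min\{C\beta/2,\tau_0\}$, enlarging $C$ if needed. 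When $t_0=\tau_0$ the diameter condition holds trivially, since $h\le\sqrt2$.
\item \emph{Entropy condition.} We need~\eqref{eq:lr_maximal_entropy_condition} for $0<t\le t_0$. Assumption~\ref{ass:local_complexity} supplies it for $b\beta^r\le t\le\tau_0$, and we choose $b\le a_\varepsilon/2$.
\item \emph{Small scales.} The lemma needs the entropy bound only at scales $t$ that actually appear in the peeling. These are $t=2^j\tau_\varepsilon\ge a_\varepsilon\beta^r\ge b\beta^r$, so the range supplied by Assumption~\ref{ass:local_complexity} suffices. I would note this explicitly by re-running the peeling in the lemma's proof, starting from $j=0$.
\item \emph{Range of $\tau$.} We need $\tau_\varepsilon\le t_0/2$, i.e.\ $a_\varepsilon\beta^r\le C\beta/4$. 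This holds for small $\beta$ when $r\geq 2$. When $r=1$ we can shrink $a_\varepsilon$, since~\eqref{eq:stable_hellinger_separation} remains true with any smaller constant.
\end{itemize}
The constants $c,C,B$ produced by the lemma depend only on $D$, $A$, $c_0$, $C_0$ and the Wong--Shen constants, so they are uniform in $\theta^\star\in K$ and $\beta$.

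Third, we conclude. The condition $n\tau_\varepsilon^2\ge B$ reads $n\beta^{2r}\ge B/a_\varepsilon^2\triangleq C_\varepsilon$. Under it,~\eqref{eq:lr_maximal_bound} gives the bound $C\exp\{-c a_\varepsilon^2 n\beta^{2r}\}$, uniformly over $\theta^\star\in K$, which is~\eqref{eq:mle_exponential_bound}. Setting this bound equal to $\delta$ gives the sample-size statement $n\gtrsim\beta^{-2r}\log(1/\delta)$, with the maximum with $C_\varepsilon\beta^{-2r}$ absorbed for $\delta<1/2$.

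I expect the main obstacle to be the uniformity and scale bookkeeping in the second step. The entropy bound is only assumed on the window $\tau\in[b\beta^r,\tau_0]$, while Lemma~\ref{lem:likelihood_ratio_maximal_inequality} is phrased for all $0<t\le t_0$. I would therefore restate the lemma in the form its proof actually uses: entropy control for $t\in[\tau,t_0]$ suffices for the bound at level $\tau$. I would also take care that the outer shell $h\ge t_0$ is treated at scale $t_0$ with $nt_0^2\ge n\tau^2\ge B$.
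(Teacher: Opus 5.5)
Your proposal is correct and follows the paper's proof: set $\tau_\varepsilon=a_\varepsilon\beta^r$, use the MLE's defining property and the separation hypothesis~\eqref{eq:stable_hellinger_separation} to place the error event inside the likelihood-ratio event, then invoke Lemma~\ref{lem:likelihood_ratio_maximal_inequality} via Assumption~\ref{ass:local_complexity}; your checks of the diameter, of the entropy window $[a_\varepsilon\beta^r,t_0]$ actually used by the peeling, and of the range of $\tau$ are bookkeeping the paper leaves implicit. One small slip: the case $t_0=\tau_0$ is not settled by $h\le\sqrt2$, since that requires $\tau_0\ge 1/\sqrt2$, but the case never arises once $\beta_0(\varepsilon)$ is small enough that $C\beta/2<\tau_0$.
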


\begin{proof}
Fix $\varepsilon>0$ and $\theta^\star\in K$, and set $\tau_\varepsilon \triangleq a_\varepsilon\beta^r$. By Assumption~\ref{ass:local_complexity}, Lemma~\ref{lem:likelihood_ratio_maximal_inequality} gives, provided $n\tau_\varepsilon^2\geq C_\varepsilon$,
\begin{align}
    P_{\theta^\star,\beta}^{\otimes n} \left(\sup_{\substack{\theta\in\Theta:\\ h(P_{\theta,\beta},P_{\theta^\star,\beta})   \geq\tau_\varepsilon}} \prod_{i=1}^{n} \frac{ p_{\theta,\beta}(Y_i) }{   p_{\theta^\star,\beta}(Y_i) } \geq 1 \right) \leq C_\varepsilon \exp\{-c_\varepsilon n\tau_\varepsilon^2\}.    \label{eq:wong_shen_step2}
\end{align}

By the defining property of the MLE, $\prod_{i=1}^{n}  \left(p_{\widehat\theta_{\mathrm{MLE}},\beta}(Y_i) / p_{\theta^\star,\beta}(Y_i) \right)  \geq 1$. Together with~\eqref{eq:stable_hellinger_separation}, this implies
\begin{align}
    \left\{ d_{\mathrm{eq}} ( \widehat\theta_{\mathrm{MLE}}, \theta^\star ) > \varepsilon \right\} \subseteq \left\{ \sup_{\substack{\theta\in\Theta:\\ h(P_{\theta,\beta},P_{\theta^\star,\beta}) \geq\tau_\varepsilon}} \prod_{i=1}^{n} \frac{ p_{\theta,\beta}(Y_i) }{   p_{\theta^\star,\beta}(Y_i) } \geq 1 \right\}.
\end{align}
Since $\tau_\varepsilon=a_\varepsilon\beta^r$, \eqref{eq:wong_shen_step2} yields~\eqref{eq:mle_exponential_bound}. The sample-complexity statement follows immediately.
\end{proof}

\subsection{Finite-dimensional verification}

We now show that the Hellinger complexity of Assumption~\ref{ass:local_complexity} holds under the Assumptions~\ref{ass:finite_dimensional_likelihood_geometry}, \ref{ass:nondegenerate_cross_order_likelihood_geometry}, and~\ref{ass:uniform_likelihood_remainder_envelope}.

\begin{lem}[Finite-dimensional local Hellinger complexity]
\label{lem:finite_dimensional_local_hellinger_complexity}
Suppose Assumptions~\ref{ass:finite_dimensional_likelihood_geometry}, \ref{ass:nondegenerate_cross_order_likelihood_geometry}, and~\ref{ass:uniform_likelihood_remainder_envelope} hold.
Then, for every $b>0$, there exist constants $D,C,c_0,C_0,\tau_0,\beta_0>0$ such that, uniformly over $\theta^\star\in K$, $0<\beta\leq\beta_0$, and $b\beta^r\leq\tau\leq\tau_0$,
\begin{align}
    H_{[]} \left( u, \mathcal{H}_{\beta,\theta^\star}(2\tau), L^2(P_0) \right) \leq D\log\left(1+\frac{C\tau}{u}\right),
    \qquad
    c_0\tau^2\leq u\leq C_0\tau.    \label{eq:finite_dimensional_local_hellinger_entropy}
\end{align}
\end{lem}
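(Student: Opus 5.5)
The plan is to reduce the bracketing entropy of $\mathcal{H}_{\beta,\theta^\star}(2\tau)$ to a covering problem in a fixed finite-dimensional coordinate space, using Assumption~\ref{ass:finite_dimensional_likelihood_geometry}. Expanding to order $2r$, every element of the class has the form
\begin{align}
    L_{\theta,\beta} = 1+\sum_{\ell=1}^{2r}\bigl\langle a_\ell(\theta),q_\ell\bigr\rangle + R_{2r+1}(\cdot;\theta,\beta),
    \qquad
    a_\ell(\theta)\triangleq\beta^\ell\alpha_\ell(\theta)\in\mathbb{R}^{D_\ell}.
\end{align}
The vector $a(\theta)=(a_1(\theta),\ldots,a_{2r}(\theta))$ lives in $\mathbb{R}^{D_{\le 2r}}$, a space of fixed dimension. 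By Assumption~\ref{ass:uniform_likelihood_remainder_envelope}, the remainder is dominated by a single envelope $E_\beta\triangleq\sup_\theta|R_{2r+1}(\cdot;\theta,\beta)|$ with $\|E_\beta\|_{L^{4r+2}(P_0)}=O(\beta^{2r+1})$. Since $u\geq c_0\tau^2\geq c_0b^2\beta^{2r}$, this envelope is $o(u)$ as $\beta\to0$. Hence it can be absorbed into bracket widths by using brackets of the form $[f-E_\beta,\,f+E_\beta]$, where $f$ ranges over the polynomial part.

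\textbf{Step 1: localization of the coordinates.} The first step, and the one I expect to be the main obstacle, is to show that $h(P_{\theta,\beta},P_{\theta^\star,\beta})\le 2\tau$ confines $a(\theta)$ to a box around $a(\theta^\star)$ whose side lengths are of order $\tau$. This requires a reverse of Lemma~\ref{lem:hellinger_likelihood_l2_comparison}.
\begin{itemize}
    \item I would first show that on the event $\{L_{\theta,\beta}\le 2,\ L_{\theta^\star,\beta}\le 2\}$ one has $|L_\theta-L_{\theta^\star}|\le 4|\sqrt{L_\theta}-\sqrt{L_{\theta^\star}}|$.
    \item The complementary event has $P_0$-probability $O(\beta^M)$ for every $M$, since $\|L-1\|_{L^p(P_0)}=O(\beta)$ uniformly for every $p$. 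By Hölder, its contribution to $\|L_\theta-L_{\theta^\star}\|_{L^2(P_0)}$ is $O(\beta^M)$.
    \item Together these give $\|L_\theta-L_{\theta^\star}\|_{L^2(P_0)}\le C\tau+C_M\beta^M\le C'\tau$, using $\tau\ge b\beta^r$.
    \item Next I split $L_\theta-L_{\theta^\star}$ into its order-$\le r$ part $\langle \Delta a_{\le r},q_{\le r}\rangle$ and the rest. The rest consists of the orders $r+1,\ldots,2r$ together with the remainders. Its $L^2$ norm is $O(\beta^{r+1})$, because $\alpha_\ell$ is continuous on compact $\Theta$ and hence bounded. Since $\beta^{r+1}=o(\tau)$, this part is negligible.
    \item Assumption~\ref{ass:nondegenerate_cross_order_likelihood_geometry} then gives $\lambda_{\min}(\mathbf{G}_r)\|\Delta a_{\le r}\|_2^2\le\|\langle\Delta a_{\le r},q_{\le r}\rangle\|_{L^2}^2\lesssim\tau^2$, so $\|\Delta a_{\le r}\|_2\le C\tau$.
    \item For the blocks $\ell=r+1,\ldots,2r$, no localization is needed: they lie in balls of radius $C\beta^{\ell}\le C\beta^{r+1}\le C\tau$.
\end{itemize}
So the whole relevant coordinate set sits in a Euclidean ball of radius $C\tau$ in dimension $D_{\le 2r}$.

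\textbf{Step 2: covering and brackets.} I then cover this ball by a grid of mesh $\rho=u/C_1$, which uses at most $(1+C\tau/u)^{D_{\le 2r}}$ points. For a grid point $a$ with cell of radius $\rho$, I set
\begin{align}
    f_a\pm\Bigl(\rho\sum_{\ell\le 2r}\|q_\ell\|_2+E_\beta\Bigr)
\end{align}
as brackets for $L$. Their $L^2(P_0)$ width is at most $C\rho+O(\beta^{2r+1})\le u/2$, because the $q_\ell$ lie in $L^2(P_0)$, indeed in every $L^p(P_0)$ by Assumption~\ref{ass:regular_observed_likelihood_ratio_expansion}.

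\textbf{Step 3: passing to square roots.} Finally, I transfer these $L$-brackets $[l,\bar l]$ to $\sqrt{L}$-brackets $[\sqrt{l_+},\sqrt{\bar l_+}]$.
\begin{itemize}
    \item On $\{l\ge 1/2\}$, the width satisfies $\sqrt{\bar l}-\sqrt{l}\le(\bar l-l)/\sqrt2$.
    \item On the rare set $\{l<1/2\}$, I use $\sqrt{\bar l_+}-\sqrt{l_+}\le\sqrt{\bar l-l}$. This set has $P_0$-probability $O(\beta^M)$, since $l$ is within $O(\beta)$ in every $L^p$ of $1$ up to the bracket width. Hölder then gives a contribution of $O(\beta^{M})\le u/2$ to the $L^2$ width, after adjusting constants.
\end{itemize}
This yields $H_{[]}\le D_{\le 2r}\log(1+C\tau/u)$, uniformly in $\theta^\star\in K$ and in the stated range of $\tau$ and $u$. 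All constants depend only on $\lambda_{\min}(\mathbf{G}_r)$, the bounds on $\alpha_\ell$ and $\|q_\ell\|_{L^p(P_0)}$, and $b$.
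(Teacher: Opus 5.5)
Your proof is correct in substance, but it takes a different route from the paper. The paper never brackets $L_{\theta,\beta}$ itself. It substitutes the expansion of $U_{\theta,\beta}=L_{\theta,\beta}-1$ into the degree-$2r$ Taylor polynomial of $\sqrt{1+x}$, using $|\sqrt{1+x}-T_{2r}(x)|\le C_r|x|^{2r+1}$ for $x\ge-1$. This writes $\sqrt{L_{\theta,\beta}}$ directly as a polynomial part in a fixed finite-dimensional space (spanned by products of the $q_\ell$) plus a remainder with uniform $L^2(P_0)$ envelope $O(\beta^{2r+1})$. This is where the exponent $4r+2$ in Assumption~\ref{ass:uniform_likelihood_remainder_envelope} enters, since $\||U|^{2r+1}\|_{L^2}=\|U\|_{L^{4r+2}}^{2r+1}$. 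Because the Hellinger ball is exactly an $L^2(P_0)$ ball of square roots, localization is then just the triangle inequality in that function space. The paper therefore needs no Gram condition, no reverse Hellinger comparison, and no transfer of brackets through $\sqrt{\cdot}$.

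Your route keeps the original coordinates (dimension $D_{\le 2r}$) and uses only the $L^2$ size of the envelope. The price is three extra steps. You need the reverse comparison $\|L_{\theta,\beta}-L_{\theta^\star,\beta}\|_{L^2}\lesssim h+O(\beta^M)$. You use $\lambda_{\min}(\mathbf{G}_r)>0$ to localize the first $r$ coordinate blocks; you could avoid this by localizing the function $\langle\Delta a,q_{\le 2r}\rangle$ in $L^2$ and covering a ball in its span, as the paper does. And you need the square-root transfer in Step 3.

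That transfer needs one correction: $P_0(l<1/2)$ is not $O(\beta^M)$ uniformly. The half-width $\rho\asymp u$ can be a fixed multiple of $\tau_0$, which does not shrink with $\beta$. Even with grid centers taken within $\rho$ of actual points $a(\theta)$, which have norm $O(\beta)$, you only get $P_0(l<1/2)\lesssim_p(\beta+u)^p+P_0(E_\beta>1/4)$. The envelope term must be handled separately by Markov, because $E_\beta$ is not controlled in every $L^p$.

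What you actually need is that the rare-set contribution, bounded by $(u/2)^{1/2}P_0(l<1/2)^{1/4}$, is at most $u/2$, i.e.\ $P_0(l<1/2)\lesssim u^2$. This holds if you take $p>4r$ and split into the cases $u\ge\beta$ and $u<\beta$; in the latter case $u^2\ge c_0^2b^4\beta^{4r}$. The envelope term satisfies $P_0(E_\beta>1/4)=O(\beta^{4r+2})$, which is negligible. With this adjustment your argument goes through.
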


\begin{proof}
Write $\rho_{\theta,\beta}\triangleq \sqrt{L_{\theta,\beta}}$, and $U_{\theta,\beta}\triangleq L_{\theta,\beta}-1$. By the likelihood-ratio expansion,
\begin{align}
    U_{\theta,\beta} = \sum_{j=1}^{2r}\beta^j\phi_j(\cdot;\theta) + R_{2r+1}(\cdot;\theta,\beta),
\end{align}
where the coefficients through order $2r$ lie in fixed finite-dimensional subspaces. Moreover, Assumption~\ref{ass:uniform_likelihood_remainder_envelope} and compactness give
\begin{align}
    \left\| \sup_{\theta\in\Theta}|U_{\theta,\beta}| \right\|_{L^{4r+2}(P_0)} = O(\beta).
\label{eq:uniform_likelihood_increment_envelope}
\end{align}

Let $T_{2r}(x) \triangleq \sum_{k=0}^{2r} \binom{1/2}{k}x^k$ be the degree-$2r$ Taylor polynomial of $\sqrt{1+x}$ at zero. For $x\geq-1$,
\begin{align}
    \left| \sqrt{1+x}-T_{2r}(x) \right| \leq C_r|x|^{2r+1}.
    \label{eq:sqrt_scalar_remainder}
\end{align}
Applying this bound to $U_{\theta,\beta}$ and collecting powers of $\beta$ gives
\begin{align}
    \rho_{\theta,\beta} = 1+ \sum_{j=1}^{2r} \beta^j\widetilde{\phi}_j(\cdot;\theta) + \widetilde{R}_{2r+1}(\cdot;\theta,\beta),
    \label{eq:sqrt_expansion_review}
\end{align}
where the $\widetilde{\phi}_j$ lie in a fixed finite-dimensional subspace of $L^2(P_0)$, and
\begin{align}
    \left\| \sup_{\theta\in\Theta} |\widetilde{R}_{2r+1}(\cdot;\theta,\beta)| \right\|_{L^2(P_0)} = O(\beta^{2r+1}). \label{eq:sqrt_uniform_remainder_review}
\end{align}

If $\rho_{\theta,\beta}\in\mathcal{H}_{\beta,\theta^\star}(2\tau)$, then $\|\rho_{\theta,\beta}-\rho_{\theta^\star,\beta}\|_{L^2(P_0)} \leq 2\tau$.
Hence the finite-dimensional component of $\rho_{\theta,\beta}-\rho_{\theta^\star,\beta}$ lies in an $L^2(P_0)$ ball of radius $C\tau$. Since $\tau\geq b\beta^r$, $\beta^{2r+1}=o(\tau^2)$, so the remainder class is contained, for sufficiently small $\beta$, in a common bracket of $L^2(P_0)$ width $o(\tau^2)$.

A $D$-dimensional $L^2(P_0)$ ball of radius $C\tau$ admits a bracketing cover of cardinality at most $(1+C\tau/u)^D$. For $u\geq c_0\tau^2$, the remainder bracket can be absorbed into these brackets after adjusting constants. Therefore,
\begin{align}
    H_{[]} \left( u, \mathcal{H}_{\beta,\theta^\star}(2\tau), L^2(P_0) \right) \leq D\log\left(1+\frac{C\tau}{u}\right),
\end{align}
uniformly over the stated range, proving~\eqref{eq:finite_dimensional_local_hellinger_entropy}.
\end{proof}

\begin{lem}[Finite-dimensional likelihood-to-Hellinger comparison]
\label{lem:finite_dimensional_likelihood_hellinger_comparison}
Suppose Assumptions~\ref{ass:finite_dimensional_likelihood_geometry} and \ref{ass:nondegenerate_cross_order_likelihood_geometry} hold. For every fixed $r\geq 1$, there exist constants $c,C,\beta_0>0$ such that, for every $0<\beta\leq\beta_0$ and every $\theta,\theta'\in\Theta$,
\begin{align}
    h(P_{\theta,\beta},P_{\theta',\beta}) \geq c \norm{ L_{\theta,\beta}-L_{\theta',\beta} }_{L^2(P_0)} - C\beta^{r+1}. \label{eq:finite_dimensional_likelihood_hellinger_comparison}
\end{align}
\end{lem}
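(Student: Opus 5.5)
We compare $h$ with the $L^2(P_0)$ distance between the square-root likelihood ratios. By Lemma~\ref{lem:info_likelihood_ratio_representations}, $h(P_{\theta,\beta},P_{\theta',\beta})=\|\sqrt{L_{\theta,\beta}}-\sqrt{L_{\theta',\beta}}\|_{L^2(P_0)}$. The plan is to show that, up to an error of order $\beta^{r+1}$, the square-root difference is a bounded-below multiple of $L_{\theta,\beta}-L_{\theta',\beta}$ restricted to a high-probability event. Write $L_0=L_{\theta,\beta}$ and $L_1=L_{\theta',\beta}$. We use the identity
\begin{align}
    \sqrt{L_0}-\sqrt{L_1}=\frac{L_0-L_1}{\sqrt{L_0}+\sqrt{L_1}} .
\end{align}
We then introduce the good event $A\triangleq\{L_0\le 2,\ L_1\le 2\}$. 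On $A$ the denominator is at most $2\sqrt2$, so
\begin{align}
    h\geq \tfrac{1}{2\sqrt2}\,\|(L_0-L_1)\mathbf 1_A\|_{L^2(P_0)} .
\end{align}
It remains to show that $\|(L_0-L_1)\mathbf 1_{A^c}\|_{L^2(P_0)}$ is negligible, namely $O(\beta^{r+1})$ uniformly in $\theta,\theta'$.

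For this step we use Assumption~\ref{ass:regular_observed_likelihood_ratio_expansion}, which gives $\sup_\vartheta\|L_{\vartheta,\beta}-1\|_{L^p(P_0)}=O(\beta)$ for every finite $p$. Markov's inequality then yields $P_0(A^c)\le C_p\beta^p$. By Cauchy--Schwarz,
\begin{align}
    \|(L_0-L_1)\mathbf 1_{A^c}\|_{L^2}\le \|L_0-L_1\|_{L^4}\,P_0(A^c)^{1/4}=O(\beta)\cdot O(\beta^{p/4}).
\end{align}
Choosing $p\ge 4r$ makes this $O(\beta^{r+1})$. Combining with the triangle inequality gives
\begin{align}
    h\ge \tfrac1{2\sqrt2}\|L_0-L_1\|_{L^2(P_0)}-C\beta^{r+1},
\end{align}
which is \eqref{eq:finite_dimensional_likelihood_hellinger_comparison} with $c=1/(2\sqrt2)$.

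This argument does not actually use Assumptions~\ref{ass:finite_dimensional_likelihood_geometry} and~\ref{ass:nondegenerate_cross_order_likelihood_geometry}; those enter afterwards, when the comparison is combined with Lemma~\ref{lem:fixed_accuracy_likelihood_separation} to obtain \eqref{eq:stable_hellinger_separation}. There, the error $C\beta^{r+1}$ is absorbed into $c_\varepsilon\beta^r$ for small $\beta$.

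The main point needing care is uniformity over $\theta,\theta'\in\Theta$, including when the target is a generic compact set. That uniformity is exactly what the uniform-in-$\theta$ $L^p$ control in Assumption~\ref{ass:regular_observed_likelihood_ratio_expansion} supplies. A second point is making sure the tail event $A^c$ is controlled in $L^2$, not merely in probability; the higher-moment Hölder step above handles this, and it is the only place where arbitrarily large $p$ is needed.
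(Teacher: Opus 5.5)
Your argument is correct, but it takes a different route from the paper's.

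\textbf{The paper's route.} It expands $L_{\theta,\beta}-L_{\theta',\beta}$ to order $r$ and observes that the truncated part lies in the fixed finite-dimensional space $\mathcal{H}_r$ spanned by $\Phi_1,\dots,\Phi_r$. On $\mathcal{H}_r$ it uses equivalence of the $L^1(P_0)$ and $L^2(P_0)$ norms. This is the only place Assumption~\ref{ass:finite_dimensional_likelihood_geometry} enters; the Gram condition of Assumption~\ref{ass:nondegenerate_cross_order_likelihood_geometry} is not actually needed there either. The paper then absorbs the $O(\beta^{r+1})$ Taylor remainder and concludes via $h\geq \mathrm{TV}=\tfrac12\|L_{\theta,\beta}-L_{\theta',\beta}\|_{L^1(P_0)}$.

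\textbf{Your route.} You work directly with the square-root representation. You restrict to the event $\{L_0\le 2,\,L_1\le 2\}$ and control its complement with Markov and Cauchy--Schwarz, using the uniform high-$L^p$ bounds on $L_{\vartheta,\beta}-1$ from Assumption~\ref{ass:regular_observed_likelihood_ratio_expansion}. This is the mirror image of the paper's own argument for the reverse inequality in Lemma~\ref{lem:hellinger_likelihood_l2_comparison}.

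\textbf{What each buys.} Your route needs no finite-dimensional structure at all. It gives the explicit constant $c=1/(2\sqrt2)$, independent of $r$. Because $p$ is arbitrary, it also yields an error $C_M\beta^M$ for every $M$, not just $C\beta^{r+1}$. Combined with Lemma~\ref{lem:hellinger_likelihood_l2_comparison}, this shows that $h(P_{\theta,\beta},P_{\theta',\beta})$ and $\|L_{\theta,\beta}-L_{\theta',\beta}\|_{L^2(P_0)}$ are equivalent up to errors of arbitrarily high order in $\beta$.

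The paper's route proves something stronger in a different direction. It bounds the total variation distance, not just the Hellinger distance, from below by $c\|L_{\theta,\beta}-L_{\theta',\beta}\|_{L^2(P_0)}-C\beta^{r+1}$. It also only needs $L^2$ control of the order-$(r+1)$ remainder, rather than arbitrarily high moments of $L_{\vartheta,\beta}-1$. The price is a non-explicit constant $c$, inherited from the $L^1$--$L^2$ equivalence constant on $\mathcal{H}_r$, which depends on $r$.
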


\begin{proof}
By the uniform order-$r$ likelihood-ratio expansion,
\begin{align}
    L_{\theta,\beta}-L_{\theta',\beta}
    &=
    \sum_{\ell=1}^{r} \beta^\ell \left[ \Phi_\ell(\theta)-\Phi_\ell(\theta') \right] + R_{r+1}(\theta,\theta',\beta),
\end{align}
where $\sup_{\theta,\theta'\in\Theta} \norm{R_{r+1}(\theta,\theta',\beta)}_{L^2(P_0)} = O(\beta^{r+1})$.
The truncated sum belongs to the fixed finite-dimensional space $\mathcal{H}_r \triangleq \operatorname{span} \left\{ \Phi_\ell(\vartheta): \vartheta\in\Theta,\; 1\leq\ell\leq r \right\}$. Hence, by equivalence of norms on $\mathcal{H}_r$, there exists $c_0>0$ such that $\norm{v}_{L^1(P_0)} \geq c_0\norm{v}_{L^2(P_0)}$, for every $v\in\mathcal{H}_r$. Using the triangle inequality and
$\norm{R}_{L^1(P_0)}\leq\norm{R}_{L^2(P_0)}$,
\begin{align}
    \norm{ L_{\theta,\beta}-L_{\theta',\beta} }_{L^1(P_0)} \geq c_0 \norm{ L_{\theta,\beta}-L_{\theta',\beta} }_{L^2(P_0)} - C\beta^{r+1}.
\end{align}
Finally, by~\eqref{eq:info_tv_hellinger},
\begin{align}
    h(P_{\theta,\beta},P_{\theta',\beta}) \geq \mathrm{TV}(P_{\theta,\beta},P_{\theta',\beta}) = \frac{1}{2} \norm{ L_{\theta,\beta}-L_{\theta',\beta} }_{L^1(P_0)},
\end{align}
and the result follows after adjusting constants.
\end{proof}

\subsection{Proof of Theorem~\ref{thm:mle_upper_bound}}
\label{subsec:thm_mle_upper_bound}

Write $r=r_{\mathrm{glob}}$ and fix $\varepsilon>0$.
Lemma~\ref{lem:fixed_accuracy_likelihood_separation} gives the stable
likelihood separation
\begin{align}
    d_{\mathrm{eq}}(\theta,\theta^\star)>\varepsilon
    \quad\Longrightarrow\quad
    \norm{
        L_{\theta,\beta}
        -
        L_{\theta^\star,\beta}
    }_{L^2(P_0)}
    \geq
    c_\varepsilon\beta^r
    \label{eq:finite_dimensional_fixed_accuracy_l2_separation}
\end{align}
for all sufficiently small $\beta$, uniformly over $\theta^\star\in K$ and $\theta\in\Theta$. By Lemma~\ref{lem:finite_dimensional_likelihood_hellinger_comparison} and~\eqref{eq:finite_dimensional_fixed_accuracy_l2_separation}, for every fixed $\varepsilon>0$ and all sufficiently small $\beta$,
\begin{align}
    d_{\mathrm{eq}}(\theta,\theta^\star)>\varepsilon
    \quad\Longrightarrow\quad
    h(P_{\theta,\beta},P_{\theta^\star,\beta})
    \geq
    c'_\varepsilon\beta^r,
\end{align}
uniformly over $\theta^\star\in K$ and $\theta\in\Theta$.
Thus the Hellinger-separation condition in
Theorem~\ref{thm:mle_general} holds.
Lemma~\ref{lem:finite_dimensional_local_hellinger_complexity} verifies Assumption~\ref{ass:local_complexity}; thus, the result follows directly from Theorem~\ref{thm:mle_general}.

\section{Rate-optimal recovery via method-of-moments}
\label{sec:rate_optimal_recovery_MoM}

\subsection{Finite-dimensional likelihood geometry}

The following elementary finite-dimensional result  will be used repeatedly to transfer between Euclidean coordinate norms and $L^2(P_0)$ norms of the corresponding likelihood functions.

\begin{lem}[Finite-dimensional Gram geometry]
\label{lem:finite_dimensional_gram_geometry}
Let $q=(q_1,\ldots,q_D)^\top\in L^2(P_0)^D$, and define $\mathbf{G} \triangleq \mathbb{E}_{Y\sim P_0} \left[ q(Y)q(Y)^\top \right]$. Then:
\begin{enumerate}[label=(\roman*)]

    \item For every $a\in\mathbb{R}^D$,
    \begin{align}
        \norm{\langle a,q\rangle}_{L^2(P_0)}^2 = a^\top \mathbf{G}a. \label{eq:prelim_gram_identity}
    \end{align}
    Consequently, the functions $q_1,\ldots,q_D$ are linearly independent
    in $L^2(P_0)$ if and only if $\mathbf{G}\succ 0$.

    \item If $\mathbf{G}\succ 0$, then, for every $a\in\mathbb{R}^D$,
    \begin{align}
        \sqrt{\lambda_{\min}(\mathbf{G})}\,\norm{a}_2 \leq \norm{\langle a,q\rangle}_{L^2(P_0)} \leq \sqrt{\lambda_{\max}(\mathbf{G})}\,\norm{a}_2.   \label{eq:prelim_gram_norm_equivalence}
    \end{align}

    \item Let $\mathcal{H} \triangleq \operatorname{span}\{q_1,\ldots,q_D\}$. If $\mathbf{G}\succ 0$, then for every $f\in\mathcal{H}$,
    \begin{align}
        \norm{\mathbb{E}_{Y\sim P_0} \left[ q(Y)f(Y) \right]}_2 \geq \sqrt{\lambda_{\min}(\mathbf{G})} \norm{f}_{L^2(P_0)}.        \label{eq:prelim_coordinate_operator_lower_bound}
    \end{align}

\end{enumerate}
\end{lem}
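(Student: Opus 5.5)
The plan is to reduce all three parts to the quadratic-form identity $\|\langle a,q\rangle\|_{L^2(P_0)}^2=a^\top\mathbf{G}a$ and then use the spectral theorem for the symmetric positive semidefinite matrix $\mathbf{G}$.

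For (i), expand the square and use linearity of expectation:
\begin{align}
    \mathbb{E}_{Y\sim P_0}\bigl[\langle a,q(Y)\rangle^2\bigr]=\sum_{i,j}a_ia_j\,\mathbb{E}_{Y\sim P_0}[q_i(Y)q_j(Y)]=a^\top\mathbf{G}a.
\end{align}
Each entry of $\mathbf{G}$ is finite by Cauchy--Schwarz, since $q_i\in L^2(P_0)$, so the expansion is legitimate. The identity shows $\mathbf{G}\succeq 0$. Linear dependence means $\langle a,q\rangle=0$ in $L^2(P_0)$ for some $a\neq 0$, which by the identity is equivalent to $a^\top\mathbf{G}a=0$. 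For a positive semidefinite matrix, $a^\top\mathbf{G}a=0$ with $a\neq0$ is the same as $\mathbf{G}a=0$, i.e.\ $\mathbf{G}$ is singular. Hence the $q_i$ are linearly independent if and only if $\mathbf{G}\succ 0$.

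For (ii), apply the Rayleigh quotient bounds $\lambda_{\min}(\mathbf{G})\|a\|_2^2\le a^\top\mathbf{G}a\le\lambda_{\max}(\mathbf{G})\|a\|_2^2$ to the identity from (i), and take square roots.

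For (iii), since $f\in\mathcal{H}$, write $f=\langle a,q\rangle$ for some $a$; this $a$ is unique because $\mathbf{G}\succ0$. Then $\mathbb{E}_{Y\sim P_0}[q(Y)f(Y)]=\mathbf{G}a$. Diagonalize $\mathbf{G}$ in an orthonormal eigenbasis to get $\|\mathbf{G}a\|_2^2=a^\top\mathbf{G}^2a\ge\lambda_{\min}(\mathbf{G})\,a^\top\mathbf{G}a$. Using (i), the right-hand side equals $\lambda_{\min}(\mathbf{G})\|f\|_{L^2(P_0)}^2$, and taking square roots gives \eqref{eq:prelim_coordinate_operator_lower_bound}.

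None of these steps is hard. The only points needing any care are that $\mathbf{G}\succeq0$, which is needed to pass from $a^\top\mathbf{G}a=0$ to $\mathbf{G}a=0$ in (i), and the eigenvalue inequality $\mathbf{G}^2\succeq\lambda_{\min}(\mathbf{G})\,\mathbf{G}$ used in (iii).
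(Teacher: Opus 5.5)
Your proposal is correct and follows essentially the same route as the paper: the quadratic-form identity $\|\langle a,q\rangle\|_{L^2(P_0)}^2=a^\top\mathbf{G}a$, Rayleigh-quotient bounds for (ii), and $\|\mathbf{G}a\|_2^2=a^\top\mathbf{G}^2a\ge\lambda_{\min}(\mathbf{G})\,a^\top\mathbf{G}a$ for (iii). Your explicit use of $\mathbf{G}\succeq0$ to pass from $a^\top\mathbf{G}a=0$ to $\mathbf{G}a=0$ supplies a detail the paper's proof only summarizes as following immediately.
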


\begin{proof}
For $f=\langle a,q\rangle$,
\begin{align}
    \norm{f}_{L^2(P_0)}^2  = \mathbb{E}_{Y\sim P_0} \left[a^\top q(Y)q(Y)^\top a\right] = a^\top \mathbf{G}a,
\end{align}
which proves~\eqref{eq:prelim_gram_identity}. The equivalence between linear independence of $q_1,\ldots,q_D$ and $\mathbf{G}\succ0$ follows immediately, while~\eqref{eq:prelim_gram_norm_equivalence} follows from the extremal eigenvalue bounds for the quadratic form $a^\top\mathbf{G} a$. Finally, if $f\in\mathcal{H}$, write $f=\langle a,q\rangle$. Then $\mathbb{E}_{Y\sim P_0}[q(Y)f(Y)] = \mathbf{G} a, $ and, since $\mathbf{G}\succ0$,
\begin{align}
    \norm{\mathbf{G} a}_2^2 = a^\top\mathbf{G}^2a \geq \lambda_{\min}(\mathbf{G})\, a^\top\mathbf{G} a = \lambda_{\min}(\mathbf{G})\norm{f}_{L^2(P_0)}^2.
\end{align}
Taking square roots proves~\eqref{eq:prelim_coordinate_operator_lower_bound}.
\end{proof}

Under Assumption~\ref{ass:finite_dimensional_likelihood_geometry}, the likelihood moments admit the coordinate representation $\Phi_\ell(\theta) = \langle \alpha_\ell(\theta), q_\ell \rangle$. For every $k\geq 1$ and $\theta,\theta'\in\Theta$, define
\begin{align}
    \Delta\alpha_{\leq k}(\theta,\theta')
    &\triangleq \alpha_{\leq k}(\theta) - \alpha_{\leq k}(\theta'),
    \\
    D_{\beta,k}
    &\triangleq \operatorname{diag} \left(\beta I_{D_1}, \beta^2 I_{D_2}, \ldots, \beta^k I_{D_k} \right).
\end{align}
Then the likelihood-ratio difference truncated through order $k$ can be
written as
\begin{align}
    \sum_{\ell=1}^{k} \beta^\ell \left[ \Phi_\ell(\theta) - \Phi_\ell(\theta') \right] = \left\langle D_{\beta,k} \Delta\alpha_{\leq k}(\theta,\theta'), q_{\leq k} \right\rangle. \label{eq:truncated_likelihood_difference_coordinates}
\end{align}
Thus, $\Delta\alpha_{\leq k}$ describes the parameter-side separation, while $D_{\beta,k}$ accounts for the different weak-signal scales of the
successive likelihood orders.

The following elementary consequence of the finite-dimensional Gram geometry will be used below.

\begin{lem}[Gram bound for truncated likelihood differences]
\label{lem:truncated_likelihood_gram_bound}
For every $k\geq1$, let
\begin{align}
    \mathbf{G}_k \triangleq \mathbb{E}_{Y\sim P_0} \left[ q_{\leq k}(Y)q_{\leq k}(Y)^\top \right].
    \end{align}
Then
\begin{align}
    \left\| \sum_{\ell=1}^{k} \beta^\ell \left[ \Phi_\ell(\theta)-\Phi_\ell(\theta') \right] \right\|_{L^2(P_0)}^2 = \Delta\alpha_{\leq k}(\theta,\theta')^\top  D_{\beta,k} \mathbf{G}_k  D_{\beta,k} \Delta\alpha_{\leq k}(\theta,\theta').   \label{eq:truncated_likelihood_gram_identity}
\end{align}
Consequently, if $\mathbf{G}_k\succ0$, then, for every $0<\beta\leq1$,
\begin{align}
    \left\|
    \sum_{\ell=1}^{k} \beta^\ell \left[ \Phi_\ell(\theta)-\Phi_\ell(\theta') \right] \right\|_{L^2(P_0)}  \geq \sqrt{\lambda_{\min}(\mathbf{G}_k)} \beta^k \left\| \Delta\alpha_{\leq k}(\theta,\theta') \right\|_2.    \label{eq:truncated_likelihood_gram_lower}
\end{align}
\end{lem}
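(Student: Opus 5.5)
The identity \eqref{eq:truncated_likelihood_gram_identity} follows directly from the coordinate representation \eqref{eq:truncated_likelihood_difference_coordinates}, combined with part (i) of Lemma~\ref{lem:finite_dimensional_gram_geometry}. Set $a \triangleq D_{\beta,k}\Delta\alpha_{\leq k}(\theta,\theta') \in \mathbb{R}^{D_{\leq k}}$. By \eqref{eq:truncated_likelihood_difference_coordinates}, the truncated likelihood difference equals $\langle a, q_{\leq k}\rangle$. Applying \eqref{eq:prelim_gram_identity} with $q=q_{\leq k}$ and $\mathbf{G}=\mathbf{G}_k$ then gives
\begin{align}
    \left\| \langle a, q_{\leq k}\rangle \right\|_{L^2(P_0)}^2 = a^\top \mathbf{G}_k a = \Delta\alpha_{\leq k}^\top D_{\beta,k}\mathbf{G}_k D_{\beta,k}\Delta\alpha_{\leq k},
\end{align}
where the last step uses that $D_{\beta,k}$ is diagonal and therefore symmetric. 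The only point needing care is that the stacking order of $\alpha_{\leq k}$ must match that of $q_{\leq k}$ block by block. This holds because the $\ell$-th block of $D_{\beta,k}$ is $\beta^\ell I_{D_\ell}$, which multiplies $\alpha_\ell$ and pairs it with $q_\ell$.

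For the lower bound \eqref{eq:truncated_likelihood_gram_lower}, assume $\mathbf{G}_k\succ0$. Part (ii) of Lemma~\ref{lem:finite_dimensional_gram_geometry} gives
\begin{align}
    \|\langle a,q_{\leq k}\rangle\|_{L^2(P_0)}\geq\sqrt{\lambda_{\min}(\mathbf{G}_k)}\,\|a\|_2.
\end{align}
It then remains to bound $\|D_{\beta,k}\Delta\alpha_{\leq k}\|_2$ from below. Since $0<\beta\leq 1$, each diagonal entry $\beta^\ell$ with $1\le\ell\le k$ satisfies $\beta^\ell\geq\beta^k$. Hence $\|D_{\beta,k}v\|_2^2=\sum_\ell\beta^{2\ell}\|v_\ell\|_2^2\geq\beta^{2k}\|v\|_2^2$ for every $v$. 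Taking square roots and combining the two inequalities yields \eqref{eq:truncated_likelihood_gram_lower}.

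None of these steps is a genuine obstacle. The one place where the hypotheses actually matter is the use of $\beta\leq1$ to compare all the weights $\beta^\ell$ against the smallest one, $\beta^k$. This comparison is also where the order $\beta^k$ in the bound originates.
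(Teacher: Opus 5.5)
Your proof is correct and follows the paper's argument: the identity comes from the coordinate representation \eqref{eq:truncated_likelihood_difference_coordinates} together with part (i) of Lemma~\ref{lem:finite_dimensional_gram_geometry}, and the lower bound comes from part (ii) applied to $a=D_{\beta,k}\Delta\alpha_{\leq k}(\theta,\theta')$ combined with $\beta^\ell\geq\beta^k$ for $0<\beta\leq 1$. Your explicit check that $\|D_{\beta,k}v\|_2\geq\beta^k\|v\|_2$ spells out the one step the paper leaves implicit.
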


\begin{proof}
By~\eqref{eq:truncated_likelihood_difference_coordinates} and Lemma~\ref{lem:finite_dimensional_gram_geometry}, \eqref{eq:truncated_likelihood_gram_identity} follows immediately. Applying~\eqref{eq:prelim_gram_norm_equivalence} with $a=D_{\beta,k}\Delta\alpha_{\leq k}(\theta,\theta')$ and using $0<\beta\leq1$ gives~\eqref{eq:truncated_likelihood_gram_lower}.
\end{proof}

\subsection{Compactness and deterministic estimation}

The following compactness argument upgrades pointwise identifiability of a continuous finite-dimensional representation to uniform separation of parameters that remain a fixed positive distance apart.

\begin{lem}[Uniform separation on compact sets]
\label{lem:compact_uniform_separation}
Let $(\Theta,d)$ be a compact metric space, let $K\subseteq\Theta$ be compact, and let $F:\Theta\to\mathbb{R}^D$ be continuous.
Suppose that, for every $\theta^\star\in K$,
\begin{align}
    F(\theta)=F(\theta^\star)    \quad\Longrightarrow\quad    d(\theta,\theta^\star)=0.    \label{eq:prelim_compact_identifiability}
\end{align}
Then, for every $\varepsilon>0$,
\begin{align}
    \inf_{\substack{\theta^\star\in K,\;\theta\in\Theta:\\ d(\theta,\theta^\star)\geq\varepsilon}} \norm{F(\theta)-F(\theta^\star)}_2 >0.   \label{eq:prelim_compact_uniform_separation}
\end{align}
\end{lem}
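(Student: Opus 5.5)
The plan is a standard compactness argument by contradiction. Suppose that for some $\varepsilon>0$ the infimum in \eqref{eq:prelim_compact_uniform_separation} equals zero. Then there exist sequences $\theta_n^\star\in K$ and $\theta_n\in\Theta$ such that
\begin{align}
    d(\theta_n,\theta_n^\star)\geq\varepsilon
    \qquad\text{and}\qquad
    \norm{F(\theta_n)-F(\theta_n^\star)}_2\to 0 .
\end{align}
Since $K$ and $\Theta$ are compact metric spaces, hence sequentially compact, I will pass to a common subsequence along which $\theta_n^\star\to\bar\theta^\star\in K$ and $\theta_n\to\bar\theta\in\Theta$. Here it matters that $K$ is closed, so the limit stays in $K$ and the identifiability hypothesis \eqref{eq:prelim_compact_identifiability} can be applied at $\bar\theta^\star$.

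Next I pass both conditions to the limit. Continuity of $F$ gives $F(\bar\theta)=F(\bar\theta^\star)$, because the norm of the difference is the limit of a sequence tending to zero. Continuity of $d$ on $\Theta\times\Theta$, which is immediate from the triangle inequality (or Lipschitz continuity, if $d$ is a pseudo-metric such as $d_{\mathrm{eq}}$), gives $d(\bar\theta,\bar\theta^\star)=\lim d(\theta_n,\theta_n^\star)\geq\varepsilon>0$. This contradicts \eqref{eq:prelim_compact_identifiability}, which forces $d(\bar\theta,\bar\theta^\star)=0$. Hence the infimum is strictly positive.

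There is no substantive obstacle. The only point needing care is which topology is used. In the intended application $d=d_{\mathrm{eq}}$ is only a pseudo-metric on $\Theta$, while compactness refers to the Euclidean topology. I will therefore take subsequential limits in the topology of $\Theta$ and use only that $d$ and $F$ are continuous in that topology. For $d_{\mathrm{eq}}$ this holds because it is a minimum over the compact group $G$ of the continuous function $d_\Theta(\cdot,g\cdot\,\cdot)$. The infimum over an empty set of pairs is $+\infty$ by convention, so that case is trivial.
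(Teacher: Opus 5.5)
Your proof is correct and is essentially the paper's argument: the paper observes that the constraint set $\{(\theta^\star,\theta)\in K\times\Theta:\ d(\theta,\theta^\star)\geq\varepsilon\}$ is compact and that the continuous objective $\|F(\theta)-F(\theta^\star)\|_2$ is strictly positive on it, so it attains a positive minimum. Your sequential-compactness contradiction proves the same fact, and your remarks on continuity of the pseudo-metric $d_{\mathrm{eq}}$ and on the empty-set case make explicit points the paper leaves implicit.
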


\begin{proof}
The constraint set in \eqref{eq:prelim_compact_uniform_separation} is compact, and the objective is continuous and strictly positive on that set by \eqref{eq:prelim_compact_identifiability}. Hence it attains a strictly positive minimum.
\end{proof}

The following elementary reduction isolates the deterministic step underlying the method-of-moments analysis: once the population moment map separates parameters by a fixed amount, an estimation error can occur only if the empirical moment vector deviates sufficiently far from its population value.

\begin{lem}[Minimum-distance reduction]
\label{lem:minimum_distance_reduction}
Let $\widehat{\Gamma}$ be a random vector in $\mathbb{R}^D$, let $\Gamma:\Theta\to\mathbb{R}^D$, and let $\theta^\star\in\Theta$. Define
\begin{align}
    \widehat\theta \in \argmin_{\theta\in\Theta} \norm{\widehat\Gamma-\Gamma(\theta)}_2. \label{eq:prelim_minimum_distance_estimator}
\end{align}
Suppose that, for some $\varepsilon,\Delta>0$,
\begin{align}
    \inf_{\substack{ \theta\in\Theta:\\ d(\theta,\theta^\star)>\varepsilon }} \norm{\Gamma(\theta)-\Gamma(\theta^\star)}_2 \geq \Delta. \label{eq:prelim_minimum_distance_population_separation}
\end{align}
Then an estimation error larger than $\varepsilon$ can occur only if the empirical moment vector deviates from its population value by at least $\Delta/2$. In particular,
\begin{align}
    \mathbb{P}\left( d(\widehat\theta,\theta^\star)>\varepsilon \right) \leq \mathbb{P}\left( \norm{\widehat\Gamma-\Gamma(\theta^\star)}_2 \geq \frac{\Delta}{2} \right). \label{eq:prelim_minimum_distance_probability_reduction}
\end{align}
\end{lem}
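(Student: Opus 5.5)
The argument is purely deterministic and runs by contrapositive. Suppose the estimate satisfies $d(\widehat\theta,\theta^\star)>\varepsilon$. By the population separation~\eqref{eq:prelim_minimum_distance_population_separation}, this gives $\norm{\Gamma(\widehat\theta)-\Gamma(\theta^\star)}_2\geq\Delta$. The minimizing property~\eqref{eq:prelim_minimum_distance_estimator}, with $\theta=\theta^\star$ as the competitor, gives
\begin{align}
    \norm{\widehat\Gamma-\Gamma(\widehat\theta)}_2\leq\norm{\widehat\Gamma-\Gamma(\theta^\star)}_2.
\end{align}

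Combining these two facts with the triangle inequality yields
\begin{align}
    \Delta\leq\norm{\Gamma(\widehat\theta)-\Gamma(\theta^\star)}_2\leq\norm{\Gamma(\widehat\theta)-\widehat\Gamma}_2+\norm{\widehat\Gamma-\Gamma(\theta^\star)}_2\leq 2\norm{\widehat\Gamma-\Gamma(\theta^\star)}_2 .
\end{align}
Hence $\norm{\widehat\Gamma-\Gamma(\theta^\star)}_2\geq\Delta/2$. This gives the event inclusion
\begin{align}
    \left\{d(\widehat\theta,\theta^\star)>\varepsilon\right\}\subseteq\left\{\norm{\widehat\Gamma-\Gamma(\theta^\star)}_2\geq\Delta/2\right\},
\end{align}
and monotonicity of probability then gives~\eqref{eq:prelim_minimum_distance_probability_reduction}.

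The only point needing care, which is bookkeeping rather than a real obstacle, is measurability of $\widehat\theta$. This can be handled in one of two ways. One option is to assume a measurable selection of the argmin, which exists by a standard measurable-selection theorem when $\Theta$ is compact and $\Gamma$ is continuous. The other is to state the inclusion for every choice of minimizer and read the left-hand probability as an outer probability. Either way, the inclusion holds pointwise on the sample space, so the bound is unaffected.
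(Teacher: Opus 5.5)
Your proof is correct and follows essentially the same route as the paper: you use optimality of $\widehat\theta$ against the competitor $\theta^\star$, then the triangle inequality to get $\Delta\leq 2\|\widehat\Gamma-\Gamma(\theta^\star)\|_2$ on the error event, and conclude by event inclusion and monotonicity of probability. Your measurability remark is a harmless addition that the paper leaves implicit.
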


\begin{proof}
By optimality of $\widehat\theta$, $\|{\widehat\Gamma-\Gamma(\widehat\theta)\|}_2 \leq  \|{\widehat\Gamma-\Gamma(\theta^\star)\|}_2$. Hence, by the triangle inequality,
\begin{align}
    \|{\Gamma(\widehat\theta)-\Gamma(\theta^\star)\|}_2\leq \|{\Gamma(\widehat\theta)-\widehat\Gamma\|}_2 + \|{\widehat\Gamma-\Gamma(\theta^\star)\|}_2 \leq 2\|{\widehat\Gamma-\Gamma(\theta^\star)\|}_2.    \label{eq:prelim_minimum_distance_basic_bound}
\end{align}
Therefore, if
$d(\widehat\theta,\theta^\star)>\varepsilon$, then \eqref{eq:prelim_minimum_distance_population_separation} implies
\begin{align}
    \Delta \leq \|{\Gamma(\widehat\theta)-\Gamma(\theta^\star)\|}_2 \leq 2\|{\widehat\Gamma-\Gamma(\theta^\star)\|}_2.
\end{align}
Thus, $d(\widehat\theta,\theta^\star)>\varepsilon$ implies $\|{\widehat\Gamma-\Gamma(\theta^\star)\|}_2 \geq \Delta/2$. Taking probabilities yields~\eqref{eq:prelim_minimum_distance_probability_reduction}.
\end{proof}

\subsection{Proof of Lemma~\ref{lem:fixed_accuracy_likelihood_separation}}
\label{subsec:lem_fixed_accuracy_likelihood_separation}

Write $r=r_{\mathrm{glob}}$ and fix $\varepsilon>0$. For every $\theta^\star\in K\subseteq\Theta_{\mathrm{gen}}$, generic identifiability and Lemma~\ref{lem:coordinate_faithfulness} imply that if  $\alpha_{\leq r}(\theta) = \alpha_{\leq r}(\theta^\star)$, then $d_{\mathrm{eq}}(\theta,\theta^\star)=0$. Since $\alpha_{\leq r}$ is continuous, Lemma~\ref{lem:compact_uniform_separation} gives $\gamma_{K,\varepsilon}>0$ such that
\begin{align}
    d_{\mathrm{eq}}(\theta,\theta^\star)\geq\varepsilon
    \quad\Longrightarrow\quad
    \norm{\alpha_{\leq r}(\theta) - \alpha_{\leq r}(\theta^\star)}_2 \geq \gamma_{K,\varepsilon},    \label{eq:uniform_fixed_accuracy_coordinate_separation}
\end{align}
uniformly over $\theta^\star\in K$ and $\theta\in\Theta$.

By Assumption~\ref{ass:nondegenerate_cross_order_likelihood_geometry}, $\mathbf{G}_r\succ0$.
Hence Lemma~\ref{lem:truncated_likelihood_gram_bound} and \eqref{eq:uniform_fixed_accuracy_coordinate_separation} give for $0<\beta\leq 1$,
\begin{align}
    d_{\mathrm{eq}}(\theta,\theta^\star)\geq\varepsilon
    \quad\Longrightarrow\quad
    \left\|\sum_{\ell=1}^{r} \beta^\ell \left[ \Phi_\ell(\theta) - \Phi_\ell(\theta^\star) \right] \right\|_{L^2(P_0)} \geq c_{K,\varepsilon}\beta^r   \label{eq:uniform_generic_truncated_separation}
\end{align}

Finally, the uniform order-$r$ likelihood-ratio expansion gives
\begin{align}
    L_{\theta,\beta} - L_{\theta^\star,\beta} = \sum_{\ell=1}^{r} \beta^\ell \left[ \Phi_\ell(\theta) - \Phi_\ell(\theta^\star) \right] + O_{L^2(P_0)}(\beta^{r+1}),
\end{align}
uniformly over $\theta^\star\in K$ and $\theta\in\Theta$. Since the remainder is lower order than \eqref{eq:uniform_generic_truncated_separation}, for all sufficiently small $\beta$,
\begin{align}
    d_{\mathrm{eq}}(\theta,\theta^\star)\geq\varepsilon
    \quad\Longrightarrow\quad
    \norm{L_{\theta,\beta} - L_{\theta^\star,\beta}}_{L^2(P_0)} \geq c_\varepsilon\beta^{r_{\mathrm{glob}}},
\end{align}
uniformly over $\theta^\star\in K$ and $\theta\in\Theta$.

\subsection{Proof of Lemma~\ref{lem:bounded_second_moment_likelihood_coordinates}}
\label{app:proof_of_bounded_second_moment_likelihood_coordinates}

For every fixed $k$, the coordinates of $q_{\leq k}$ belong to $L^4(P_0)$.
Indeed, each basis function $q_{\ell,j}$ belongs to $\operatorname{span}\{\Phi_\ell(\theta):\theta\in\Theta\}$, while Assumption~\ref{ass:regular_observed_likelihood_ratio_expansion} implies $\Phi_\ell(\theta)\in L^p(P_0)$ for every finite $p$. Hence $\|q_{\leq k}\|_{L^4(P_0)}<\infty$.

Moreover, the same assumption implies $\sup_{\theta\in\Theta} \sup_{0<\beta\leq\beta_0} \|L_{\theta,\beta}\|_{L^2(P_0)} < \infty$ for sufficiently small $\beta_0$. Therefore, by Hölder's inequality,
\begin{align}
    \mathbb{E}_{Y\sim P_{\theta,\beta}} \left[ \|q_{\leq k}(Y)\|_2^2 \right] &=
    \mathbb{E}_{Y\sim P_0} \left[ L_{\theta,\beta}(Y) \|q_{\leq k}(Y)\|_2^2 \right]
    \notag\\
    &\leq \|L_{\theta,\beta}\|_{L^2(P_0)} \|q_{\leq k}\|_{L^4(P_0)}^2 \leq C_k .
\end{align}
Finally,
\begin{align}
    \mathbb{E}_{Y\sim P_{\theta,\beta}} \left[ \left\| q_{\leq k}(Y) - \Gamma_{k,\beta}(\theta) \right\|_2^2 \right] \leq \mathbb{E}_{Y\sim P_{\theta,\beta}} \left[ \|q_{\leq k}(Y)\|_2^2 \right],
\end{align}
which proves the claim.

\subsection{Proof of Theorem~\ref{thm:mom_upper_bound}}
\label{subsec:proof_thm_mom_upper_bound}

Write $r=r_{\mathrm{glob}}$ and fix $\varepsilon>0$. Recall that $\Gamma_{r,\beta}(\theta) = \mathbb{E}_{Y \sim P_0} \left[ q_{\leq r}(Y)L_{\theta,\beta}(Y) \right]$. Using the order-$r$ likelihood expansion, $L_{\theta,\beta} = 1+ \sum_{\ell=1}^r \beta^\ell \langle \alpha_\ell(\theta), q_\ell \rangle + R_{r+1}(\theta,\beta)$,
we obtain
\begin{align}
    \Gamma_{r,\beta}(\theta) -
    \Gamma_{r,\beta}(\theta^\star)
    &=
    \mathbf{G}_r D_{\beta,r} \left[ \alpha_{\leq r}(\theta) - \alpha_{\leq r}(\theta^\star) \right] + \rho_{r+1}(\theta,\theta^\star,\beta),
    \label{eq:population_moment_expansion}
\end{align}
where, by the uniform likelihood-remainder bound and Cauchy--Schwarz,
\begin{align}
    \sup_{\theta,\theta^\star\in\Theta} \|\rho_{r+1}(\theta,\theta^\star,\beta) \|_2
    \leq C_r\beta^{r+1}.
    \label{eq:population_moment_remainder}
\end{align}
Since Assumption~\ref{ass:nondegenerate_cross_order_likelihood_geometry} gives $\mathbf{G}_r\succ0$, for $0<\beta\leq1$,
\begin{align}
    \left\| \mathbf{G}_r D_{\beta,r} \left[ \alpha_{\leq r}(\theta) - \alpha_{\leq r}(\theta^\star) \right] \right\|_2
    &\geq
    \lambda_{\min}(\mathbf{G}_r) \left\| D_{\beta,r} \left[\alpha_{\leq r}(\theta) - \alpha_{\leq r}(\theta^\star) \right] \right\|_2
    \\
    &\geq
    \lambda_{\min}(\mathbf{G}_r) \beta^r \| \alpha_{\leq r}(\theta) - \alpha_{\leq r}(\theta^\star) \|_2. \label{eq:population_moment_leading_lower_bound}
\end{align}
By~\eqref{eq:uniform_fixed_accuracy_coordinate_separation}, there exists $\gamma_\varepsilon>0$ such that
\begin{align}
    d_{\mathrm{eq}}(\theta,\theta^\star)>\varepsilon
    \quad\Longrightarrow\quad
    \| \alpha_{\leq r}(\theta) - \alpha_{\leq r}(\theta^\star) \|_2 \geq \gamma_\varepsilon,
\end{align}
uniformly over $\theta^\star\in K$ and $\theta\in\Theta$.
Combining this bound with
\eqref{eq:population_moment_expansion}--\eqref{eq:population_moment_remainder}, we obtain
\begin{align}
    \| \Gamma_{r,\beta}(\theta) - \Gamma_{r,\beta}(\theta^\star) \|_2 \geq \lambda_{\min}(\mathbf{G}_r) \gamma_\varepsilon\beta^r - C_r\beta^{r+1}.
\end{align}
Hence, for all sufficiently small $\beta$,
\begin{align}
    d_{\mathrm{eq}}(\theta,\theta^\star)>\varepsilon
    \quad\Longrightarrow\quad
    \| \Gamma_{r,\beta}(\theta) - \Gamma_{r,\beta}(\theta^\star) \|_2 \geq b_\varepsilon\beta^r, \label{eq:population_moment_epsilon_separation}
\end{align}
for some $b_\varepsilon>0$, uniformly over
$\theta^\star\in K$ and $\theta\in\Theta$.

Set $X_i=q_{\leq r}(Y_i)$, and let $\widehat{\Gamma}_{r,\delta}$ denote the robust mean estimator of $\Gamma_{r,\beta}(\theta^\star)=\mathbb{E}[X_i]$ defined above. Since~\eqref{eq:population_moment_epsilon_separation} holds, Lemma~\ref{lem:minimum_distance_reduction} applies with $\Delta=b_\varepsilon\beta^r$, yielding
\begin{align}
    P_{\theta^\star,\beta}^{\otimes n} \left( d_{\mathrm{eq}} \bigl( \widehat{\theta}_{\mathrm{MoM}}, \theta^\star \bigr) > \varepsilon \right) \leq P_{\theta^\star,\beta}^{\otimes n} \left( \left\| \widehat{\Gamma}_{r,\delta}  - \Gamma_{r,\beta}(\theta^\star) \right\|_2  \geq \frac{b_\varepsilon\beta^r}{2}
    \right).
    \label{eqn:app_E30}
\end{align}
By Lemma~\ref{lem:bounded_second_moment_likelihood_coordinates} and the robust concentration bound~\eqref{eq:robust_likelihood_coordinate_concentration},
\begin{align}
    \sup_{\theta^\star\in K}
    P_{\theta^\star,\beta}^{\otimes n} \left( \left\| \widehat{\Gamma}_{r,\delta} - \Gamma_{r,\beta}(\theta^\star) \right\|_2 > C_r \sqrt{\frac{V_r\log(1/\delta)}{n}} \right) \leq  \delta .
    \label{eq:robust_mom_concentration_proof}
\end{align}
Therefore, if
\begin{align}
    C_r \sqrt{\frac{V_r\log(1/\delta)}{n}} \leq \frac{b_\varepsilon\beta^r}{2},
\end{align}
then the minimum-distance reduction above implies
\begin{align}
    \sup_{\theta^\star\in K} P_{\theta^\star,\beta}^{\otimes n}  \left( d_{\mathrm{eq}} \bigl( \widehat{\theta}_{\mathrm{MoM}}, \theta^\star \bigr) > \varepsilon \right) \leq \delta .
\end{align}
Equivalently, after absorbing $V_r$, $C_r$, and $b_\varepsilon$ into a constant $C_\varepsilon$, it is sufficient that $n \geq C_\varepsilon \beta^{-2r} \log (1/{\delta})$. Since $r=r_{\mathrm{glob}}$, this proves the fixed-accuracy upper bound $n \lesssim \beta^{-2r_{\mathrm{glob}}} \log(1/\delta)$.

\section{Algebraic preliminaries}
\label{sec:smooth_quotient_models}

This appendix collects the geometric preliminaries underlying the local and global-to-local analysis in the main text. We first formalize the local smooth structure of the observational quotient, which allows us to separate tangent directions along an equivalence class from transverse, locally identifiable directions. We then show that this local quotient structure holds generically under the compact semi-algebraic parameter spaces and compact group actions assumed in the main text. These results provide the geometric justification for the normal-space formulation of local identifiability and the local rank conditions used throughout the shrinking-accuracy analysis.

\subsection{Local quotient geometry}
\label{subsec:local_quotient_geometry}

Here, we isolate the geometric assumption used in the local analysis. It requires that, near the target equivalence class, the observational quotient has the structure of a smooth finite-dimensional manifold, and that the quotient metric is compatible with this smooth structure. For background on smooth and quotient manifolds, see, e.g.,~\cite{absil2008optimization,lee2003smooth}.

\begin{assum}[Local quotient manifold structure]
\label{ass:equiv_class_submanifold}
Fix $\theta^\star\in\Theta$.
Assume that in a neighborhood of the target $\theta^\star$, the parameter space $\Theta$ is a $C^2$ embedded submanifold of $\mathbb{R}^m$.
Assume further that, in a neighborhood of the target class $[\theta^\star]$, the quotient $\Theta/{\sim}$ is a $C^2$ embedded submanifold, and the canonical projection
\begin{align}
    \pi:\Theta\to\Theta/{\sim}, \qquad
    \pi(\theta)=[\theta],
\end{align}
is a $C^2$ submersion near $\theta^\star$.
Finally, assume that the quotient loss $d_{\mathrm{eq}}$ is locally compatible with this smooth structure, in the sense that in any local chart around $[\theta^\star]$, it is locally equivalent to Euclidean distance.
\end{assum}

Under Assumption~\ref{ass:equiv_class_submanifold}, the equivalence class $[\theta^\star]$ is locally a $C^2$ embedded submanifold of $\Theta$. Its tangent directions are precisely the infinitesimal changes of representative that do not change the observed law, while transverse directions provide local coordinates for the identifiable quotient.

\begin{definition}[Tangent and normal spaces to the equivalence class]
\label{def:tangent_normal_quotient}
Under Assumption~\ref{ass:equiv_class_submanifold}, let $D\pi(\theta^\star)$ denote the differential (Jacobian) of the canonical projection $\pi:\Theta\to\Theta/{\sim}$ at $\theta^\star$. The tangent space to the local equivalence class at $\theta^\star$ is
\begin{align}
    T_{\theta^\star}[\theta^\star] \triangleq \ker D\pi(\theta^\star) \subseteq T_{\theta^\star}\Theta .
\end{align}
We define the normal space to the equivalence class inside the parameter space by
\begin{align}
    W_{\theta^\star} \triangleq \bigl(T_{\theta^\star}[\theta^\star]\bigr)^\perp \cap T_{\theta^\star}\Theta ,
\end{align}
where the orthogonal complement is taken with respect to the Euclidean inner product on $\mathbb{R}^m$.
\end{definition}

The space $T_{\theta^\star}[\theta^\star]$ contains the locally unidentifiable directions. By contrast, $W_{\theta^\star}$ is a local linear model for the identifiable quotient near $[\theta^\star]$. Thus, a local rank condition for the truncated likelihood-moment map on $W_{\theta^\star}$ is the differential form of local identifiability on the quotient.

\subsection{Generic validity under semi-algebraic group actions}
\label{subsec:generic_quotient_structure}

Assumption~\ref{ass:equiv_class_submanifold} is the local condition needed for the analysis.
The stronger structural assumptions used in the main text---namely, a compact semi-algebraic parameter space and a compact Lie group inducing the observational equivalence relation---provide a convenient and broadly applicable sufficient condition for this local quotient structure to hold generically.

\begin{lem}[Generic local quotient structure]
\label{lem:generic_local_quotient_structure}
Suppose that $\Theta\subseteq\mathbb{R}^m$ is compact and semi-algebraic, and that the observational equivalence relation is induced by a compact Lie group $G$ whose action on $\Theta$ extends smoothly to a neighborhood of $\Theta$ (Assumption~\ref{ass:group_induced_equivalence}). Then Assumption~\ref{ass:equiv_class_submanifold} holds for every $\theta^\star$ in a generic regular subset of $\Theta$.
\end{lem}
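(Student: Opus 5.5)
The plan is to reduce the claim to standard facts about compact Lie group actions (slice theorem, orbit-type stratification) combined with semi-algebraic stratification of $\Theta$. Extend the $G$-action smoothly to an open neighborhood $U\supset\Theta$ in $\mathbb{R}^m$. We may assume, after averaging a Riemannian metric, that the action is by isometries near $\Theta$. Invariance of $d_\Theta$ is already given in Assumption~\ref{ass:group_induced_equivalence}.

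\emph{Step 1 (regular points of $\Theta$).} Since $\Theta$ is compact semi-algebraic, it admits a finite Whitney stratification into semi-algebraic $C^\infty$ submanifolds. The union of top-dimensional strata, intersected with the relative interior of $\Theta$, is open and dense with complement of lower dimension, hence of measure zero (measure relative to $\Theta$'s dimension). Near such points, $\Theta$ is a $C^2$ embedded submanifold. One subtlety needs checking: $G$ maps $\Theta$ to itself, so it permutes the strata of the canonical stratification. Hence the top-dimensional smooth locus $\Theta_{\mathrm{sm}}$ is $G$-invariant, and $G$ acts smoothly on the manifold $\Theta_{\mathrm{sm}}$.

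\emph{Step 2 (principal orbit type).} Apply the principal orbit type theorem to the compact group $G$ acting smoothly on each connected component of $\Theta_{\mathrm{sm}}$. The set of points with principal isotropy type, i.e. conjugate to a minimal stabilizer $H$, is open and dense. Its complement is a finite union of lower-dimensional submanifolds by the slice theorem, and is also semi-algebraic if the action is algebraic. Call the resulting set $\Theta_{\mathrm{reg}}'$.

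\emph{Step 3 (quotient chart).} On $\Theta_{\mathrm{reg}}'$ the slice theorem gives a $G$-invariant tube $G\times_H S\cong G/H\times S$ around the orbit of $\theta^\star$, with $S$ a slice through $\theta^\star$ on which $H$ acts trivially. Then $\pi$ restricted to the tube is identified with the projection onto $S$. So $\Theta/{\sim}$ is locally a smooth manifold modeled on $S$, and $\pi$ is a smooth submersion with $\ker D\pi(\theta^\star)=T_{\theta^\star}(G\cdot\theta^\star)$. To realize the quotient as an \emph{embedded} $C^2$ submanifold, embed it via finitely many $G$-invariant polynomials, or take the slice $S$ itself, chosen as an exponential normal slice, as the chart.

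\emph{Step 4 (metric compatibility).} Finally, $d_{\mathrm{eq}}(\theta,\theta')=\min_g d_\Theta(\theta,g\theta')$ must be locally bi-Lipschitz to Euclidean distance in the slice chart. For $s,s'\in S$ near $\theta^\star$, the upper bound is $d_{\mathrm{eq}}\le d_\Theta(s,s')\lesssim\|s-s'\|$. For the lower bound, use compactness of $G$: the minimizing $g$ must lie near $H$, otherwise $d_\Theta(s,gs')$ is bounded below by the orbit separation. Near $H$, the normal-slice orthogonality gives $\|s-gs'\|\gtrsim\|s-s'\|$ to first order.

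I expect Step 4, together with the invariance of the smooth locus in Step 1, to be the main obstacle. It requires controlling the minimizing group element uniformly and using local equivalence of $d_\Theta$ with Euclidean distance. I would first handle it using an invariant metric and the transversality of $S$ to the orbit.
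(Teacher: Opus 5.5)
Your proposal is correct and follows essentially the same route as the paper's proof. Both arguments use semi-algebraic stratification to isolate a generic smooth locus of $\Theta$, then the slice and principal-orbit-type theorems for the compact (hence proper) $G$-action to get a smooth local quotient with $\pi$ a submersion, and finally the slice representation together with $G$-invariance and local Euclidean equivalence of $d_\Theta$ for metric compatibility. You are more explicit than the paper in two places: the $G$-invariance of the smooth locus, and the bi-Lipschitz bound in Step 4, where the local tube diffeomorphism $(\xi,s')\mapsto\exp(\xi)\cdot s'$, with $\xi$ in a complement of the Lie algebra of $H$, makes your first-order heuristic rigorous. Two small points. Drop the phrase ``intersected with the relative interior of $\Theta$'', which is unnecessary and is empty for, e.g., a sphere if read relative to the affine hull. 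Rely on the slice chart rather than invariant polynomials, since the action is only assumed smooth, not algebraic.
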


\begin{proof}
By semi-algebraic stratification, $\Theta$ admits a finite smooth stratification whose top-dimensional strata form a generic subset of $\Theta$; the complement has lower dimension and hence measure zero \cite[Chapter~9]{bochnak1998real}. On each such smooth stratum, the action of the Lie group $G$ is proper. The slice theorem and the principal orbit theorem therefore imply that, outside a further lower-dimensional exceptional set, the orbits have principal type, the local quotient is a smooth manifold, and the quotient map is a smooth submersion \cite[Sections~2.7--2.8, Theorem~2.8.5]{duistermaat2000lie}. The local slice representation, together with the $G$-invariance and local Euclidean equivalence of $d_\Theta$, gives the corresponding local compatibility of $d_{\mathrm{eq}}$ with the quotient manifold structure. Hence Assumption~\ref{ass:equiv_class_submanifold} holds generically.
\end{proof}

Thus, the smooth quotient assumption used in the local analysis is not an additional global modeling restriction. It is the generic local structure induced by the compact semi-algebraic and compact group-action assumptions imposed in the main text.

\subsection{Locally Lipschitz analytic tail}

The local analysis considers the joint regime $\beta\to 0$ and $\varepsilon\to 0$, and therefore requires remainder control that remains valid as the parameter separation also vanishes. The leading truncated likelihood difference between nearby parameters is of order $\beta^s d_{\mathrm{eq}}(\theta,\theta^\star)$, whereas a uniform $O(\beta^{s+1})$ remainder would be negligible only under an additional relation between $\beta$ and $d_{\mathrm{eq}}(\theta,\theta^\star)$. The next lemma shows that local likelihood regularity yields the sharper bound $O(\beta^{s+1}d_{\mathrm{eq}}(\theta,\theta^\star))$. Hence the analytic tail is smaller than the leading local term by a factor $O(\beta)$, uniformly in the joint limit $\beta\to 0$ and $d_{\mathrm{eq}}(\theta,\theta^\star)\to 0$.

\begin{lem}[Locally Lipschitz analytic tail]
\label{lem:locally_lipschitz_analytic_tail}
Let $K\subseteq\Theta_{\mathrm{reg}}$ be the compact regular parameter class, and suppose Assumption~\ref{ass:local_likelihood_regularity} holds. Fix $s\geq 1$. Then, for every finite $p\geq 1$, there exist constants $C_{K,p}<\infty$, $\eta_K>0$, and $\beta_K>0$ such that, uniformly over $\theta^\star\in K$,
\begin{align}
    \Bigg\| \left[L_{\theta,\beta}-L_{\theta^\star,\beta}\right] -\sum_{\ell=1}^{s} \beta^\ell \left[ \Phi_\ell(\theta)-\Phi_\ell(\theta^\star) \right] \Bigg\|_{L^p(P_0)} \leq C_{K,p}\, \beta^{s+1} d_{\mathrm{eq}}(\theta,\theta^\star)    \label{eq:locally_lipschitz_analytic_tail}
\end{align}
whenever $d_{\mathrm{eq}}(\theta,\theta^\star)\leq\eta_K$, and $0<\beta\leq\beta_K$. 
\end{lem}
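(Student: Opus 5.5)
Define the $s$-th order remainder as a function of $\theta$,
\begin{align}
    R_{s+1}(\theta,\beta) \triangleq L_{\theta,\beta}-1-\sum_{\ell=1}^{s}\beta^\ell\Phi_\ell(\theta),
\end{align}
so that the quantity to be bounded in \eqref{eq:locally_lipschitz_analytic_tail} is exactly $R_{s+1}(\theta,\beta)-R_{s+1}(\theta^\star,\beta)$. The plan is to bound this difference by a mean-value (fundamental theorem of calculus) argument along a path from $\theta^\star$ to $\theta$. On that path we use the differentiated expansion \eqref{eq:differentiated_likelihood_expansion}, which by Assumption~\ref{ass:local_likelihood_regularity} says that $D_\theta R_{s+1}(\vartheta,\beta)=R^{(1)}_{s+1}(\vartheta,\beta)$ has $L^p(P_0)$ norm $O(\beta^{s+1})$ uniformly over $\vartheta$. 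The path length will then be controlled by the quotient distance.

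\textbf{Step 1: reduce to a nearby representative.} The left-hand side of \eqref{eq:locally_lipschitz_analytic_tail} depends on $\theta$ only through its equivalence class. Indeed, for $g\in G$ we have $L_{g\cdot\theta,\beta}=L_{\theta,\beta}$ for small $\beta$, hence $\Phi_\ell(g\cdot\theta)=\Phi_\ell(\theta)$ for all $\ell$. We may therefore replace $\theta$ by a representative $\theta'=g\cdot\theta$ attaining the minimum in \eqref{eq:orbit_distance_special_case}. This gives $d_\Theta(\theta',\theta^\star)=d_{\mathrm{eq}}(\theta,\theta^\star)\le\eta_K$.

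Choosing $\eta_K$ small, both $\theta'$ and $\theta^\star$ lie in the neighborhood $U_\star$ where $d_\Theta$ is equivalent to the Euclidean norm. Hence $\|\theta'-\theta^\star\|_2\le c_0^{-1}d_{\mathrm{eq}}(\theta,\theta^\star)$. The constants $c_0$, and the radius of the neighborhood, must be made uniform over $\theta^\star\in K$. This follows by compactness of $K$, covering it with finitely many such neighborhoods and taking a Lebesgue-number-type $\eta_K$.

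\textbf{Step 2: integrate the derivative.} By Assumption~\ref{ass:local_likelihood_regularity}, the map $\vartheta\mapsto L_{\vartheta,\beta}$ extends $C^1$ to an open $U\supset\Theta$ as an $L^p(P_0)$-valued map, and so does each $\Phi_\ell$. Shrinking $\eta_K$ so that the segment $\vartheta_t=\theta^\star+t(\theta'-\theta^\star)$, $t\in[0,1]$, lies in $U$ (again uniformly, since $\mathrm{dist}(K,U^c)>0$), the Bochner-valued fundamental theorem of calculus gives
\begin{align}
    R_{s+1}(\theta',\beta)-R_{s+1}(\theta^\star,\beta)=\int_0^1 R^{(1)}_{s+1}(\vartheta_t,\beta)[\theta'-\theta^\star]\,dt .
\end{align}
Taking $L^p(P_0)$ norms and using the uniform bound $\sup_\vartheta\|R^{(1)}_{s+1}(\vartheta,\beta)\|=O(\beta^{s+1})$ yields the estimate $C\beta^{s+1}\|\theta'-\theta^\star\|_2\le C_{K,p}\beta^{s+1}d_{\mathrm{eq}}(\theta,\theta^\star)$.

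\textbf{Main obstacle.} The delicate point is that the uniform derivative remainder bound must hold along the segment, which may leave $\Theta$ and enter $U\setminus\Theta$. The statement of Assumption~\ref{ass:local_likelihood_regularity} asserts uniformity only over $\Theta$. I would handle this by reading the assumption as asserting the expansion uniformly on a compact neighborhood $\overline{U'}\subset U$ of $\Theta$, which is the natural meaning of a $C^1$ extension with uniform expansion.

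Alternatively, near a regular $\theta^\star$, $\Theta$ is a $C^2$ submanifold by Assumption~\ref{ass:equiv_class_submanifold} and Lemma~\ref{lem:generic_local_quotient_structure}. Then one can integrate along a path inside $\Theta$ of length at most $C\|\theta'-\theta^\star\|_2$, for example a geodesic in a local chart, whose uniform constants come from compactness of $K$. Either route gives \eqref{eq:locally_lipschitz_analytic_tail}.
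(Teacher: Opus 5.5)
Your argument is correct, and its skeleton is the paper's: write the target as $R_{s+1}(\theta,\beta)-R_{s+1}(\theta^\star,\beta)$, bound $D_\theta R_{s+1}$ by $C_p\beta^{s+1}$ in operator norm, and integrate along a path whose length is $O(d_{\mathrm{eq}}(\theta,\theta^\star))$. The difference lies in how you obtain the short path, which makes your route more elementary and more general.

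\emph{The paper's path.} It uses the uniform local slices available because $K\subseteq\Theta_{\mathrm{reg}}$. It moves $\theta$ into the slice through $\theta^\star$ and bounds the length of a path in that slice by $C_K d_{\mathrm{eq}}(\theta,\theta^\star)$. That bound implicitly relies on the compatibility of $d_{\mathrm{eq}}$ with slice charts in Assumption~\ref{ass:equiv_class_submanifold}.

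\emph{Your path.} You take the representative attaining the minimum in the orbit distance. It exists because $G$ is compact, and $G$-invariance of $d_\Theta$ lets you move the group element onto $\theta$. This gives $d_\Theta(\theta',\theta^\star)=d_{\mathrm{eq}}(\theta,\theta^\star)$ exactly, and the local Euclidean equivalence of $d_\Theta$ plus a straight segment finish the job. Your first route never uses quotient regularity, so it proves the bound on any compact $K\subseteq\Theta$, provided the derivative bound holds on a neighborhood of $K$.

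\emph{The obstacle you flag.} The paper's proof adopts exactly your reading. It takes $\sup_{\vartheta\in U}$ of the derivative remainder over a neighborhood $U$ of $K$ and lets its path take values in $U$. This also tacitly requires $\Phi_\ell$ to be $C^1$ off $\Theta$, which is automatic when $\alpha_\ell$ is polynomial. So your first route is on the same footing as the paper. Your second route, a chart path inside $\Theta$, is the one that works under the literal ``uniformly over $\Theta$'' wording. Its cost is needing the $C^2$ submanifold structure near $K$, and that is where $\Theta_{\mathrm{reg}}$ is genuinely used.

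\emph{A small bookkeeping point.} Definition~\ref{def:general_observational_equivalence} gives $L_{g\cdot\theta,\beta}=L_{\theta,\beta}$ only below a possibly $\theta$-dependent threshold. The uniform analyticity in Assumption~\ref{ass:regular_observed_likelihood_ratio_expansion} and the identity theorem extend this to the whole interval $0<\beta\le\beta_K$, which is what your Step~1 needs.
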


\begin{proof}
Define the order-$s$ likelihood remainder
\begin{align}
    R_{s+1}(\theta,\beta) \triangleq L_{\theta,\beta} - 1 - \sum_{\ell=1}^{s}   \beta^\ell\Phi_\ell(\theta).    \label{eq:local_tail_remainder_definition}
\end{align}
By Assumption~\ref{ass:local_likelihood_regularity}, the likelihood expansion may be differentiated with respect to $\theta$, and hence
\begin{align}
    \sup_{\vartheta\in U} \norm{D_\theta R_{s+1}(\vartheta,\beta)  }_{\mathrm{op},\,L^p(P_0)} \leq C_p\beta^{s+1}    \label{eq:local_tail_derivative_bound}
\end{align}
for all sufficiently small $\beta$, where $U$ is a fixed neighborhood of $K$ and the norm is the operator norm from the parameter tangent space into $L^p(P_0)$.

Since $K\subseteq\Theta_{\mathrm{reg}}$, the quotient admits uniform local slices around $K$. After replacing $\theta$ by an observationally equivalent representative, which does not change $L_{\theta,\beta}$ or any of the likelihood moments $\Phi_\ell(\theta)$, we may assume that $\theta$ and $\theta^\star$ lie in the same local slice. For $\eta_K$ sufficiently small, they can then be joined in that slice by a $C^1$ path
$\gamma:[0,1]\to U$, such that $\gamma(0)=\theta^\star$, and $\gamma(1)=\theta$, whose length satisfies
\begin{align}
    \int_0^1\norm{\dot\gamma(t)}_2\,dt \leq C_K d_{\mathrm{eq}}(\theta,\theta^\star),    \label{eq:local_slice_path_length}
\end{align}
uniformly over $\theta^\star\in K$.

Since $R_{s+1}(\cdot,\beta)$ is continuously differentiable as an $L^p(P_0)$-valued map, the fundamental theorem of calculus gives
\begin{align}
    R_{s+1}(\theta,\beta) - R_{s+1}(\theta^\star,\beta) = \int_0^1 D_\theta R_{s+1}(\gamma(t),\beta) [\dot\gamma(t)] \,dt .
\end{align}
Therefore, by~\eqref{eq:local_tail_derivative_bound} and \eqref{eq:local_slice_path_length},
\begin{align}
    \norm{R_{s+1}(\theta,\beta) - R_{s+1}(\theta^\star,\beta) }_{L^p(P_0)}
    &\leq C_p\beta^{s+1} \int_0^1\norm{\dot\gamma(t)}_2\,dt
    \notag\\
    &\leq
    C_{K,p} \beta^{s+1} d_{\mathrm{eq}}(\theta,\theta^\star).
\end{align}
Using~\eqref{eq:local_tail_remainder_definition}, the left-hand side is exactly the quantity in \eqref{eq:locally_lipschitz_analytic_tail}, which proves the claim.
\end{proof}

\section{Sample complexity under algebraic likelihood geometry}
\label{sec:sample_complexity_algebric_likelihood_geometry}

\subsection{Proof of Proposition~\ref{prop:generic_rloc_rglob}}
\label{subsec:proof_prop_generic_rloc_rglob}

Let $r=r_{\mathrm{glob}}$ and consider the polynomial coordinate map $\alpha_{\leq r}:\Theta\to\mathbb{R}^{D_{\leq r}}$. Since $\alpha_{\leq r}$ is constant on observational equivalence classes, it induces a map on the quotient. On the generic regular stratum of $\Theta/\!\sim$, let $t=\dim(\Theta/\!\sim)$ denote the quotient dimension. By the definition of $r_{\mathrm{glob}}$, the induced map is generically injective, and hence its image has dimension $t$. By semi-algebraic stratification, applied to the polynomial map $\alpha_{\leq r}$, there exists a finite stratification such that its restriction to each stratum is smooth and has constant rank \cite[Proposition~8]{bolte2011generic}. Since the image has dimension $t$, this rank equals $t$ on a generic subset of the regular quotient stratum. Equivalently, for generic $\theta^\star$, $D\alpha_{\leq r}(\theta^\star) \big|_{W_{\theta^\star}}$ is injective. By the definition of the local identifiability order in Definition~\ref{def:rloc_likelihood}, this implies $r_{\mathrm{loc}}(\theta^\star)\leq r=r_{\mathrm{glob}}$. The exceptional strata are lower dimensional and therefore have measure zero, completing the proof.

\subsection{Proof of Proposition~\ref{prop:population_stability_local_global}}
\label{subsec:proof_prop_population_stability_local_global}

Let $s=r_{\mathrm{loc}}$, and $r=r_{\mathrm{glob}}$. We first prove the local statement. By the definition of $s=r_{\mathrm{loc}}$ in~\eqref{eq:rloc_equivalent_differential_injectivity}, for every $\theta^\star\in K$, $\ker D\alpha_{\leq s}(\theta^\star) \cap W_{\theta^\star} = \{0\}$. Thus, applying the constant-rank theorem on a local quotient slice transverse to the equivalence class, the map $\alpha_{\leq s}$ admits a locally Lipschitz inverse on the identifiable directions. By compactness of $K$, the corresponding constants can be chosen uniformly, so there exist $\eta>0$ and $C_{\mathrm{inv}}<\infty$ such that
\begin{align}
    d_{\mathrm{eq}}(\theta,\theta^\star)  \leq C_{\mathrm{inv}} \norm{\alpha_{\leq s}(\theta) - \alpha_{\leq s}(\theta^\star)}_2
\end{align}
whenever
$d_{\mathrm{eq}}(\theta,\theta^\star)\leq\eta$.
This proves~\eqref{eq:population_local_stability_alpha}.

For the global statement, since $K\subseteq\Theta_{\mathrm{gen}}$, generic identifiability and Lemma~\ref{lem:coordinate_faithfulness} imply
\begin{align}
    \alpha_{\leq r}(\theta)  = \alpha_{\leq r}(\theta^\star)    \quad\Longrightarrow\quad  d_{\mathrm{eq}}(\theta,\theta^\star)=0.
\end{align}
Since $\alpha_{\leq r}$ is continuous, Lemma~\ref{lem:compact_uniform_separation}, applied with the fixed radius $\eta>0$ above, gives $\gamma_\eta>0$ such that
\begin{align}
    d_{\mathrm{eq}}(\theta,\theta^\star)\geq\eta \quad\Longrightarrow\quad
    \norm{ \alpha_{\leq r}(\theta) -  \alpha_{\leq r}(\theta^\star) }_2 \geq  \gamma_\eta,
\end{align}
uniformly over $\theta^\star\in K$ and $\theta\in\Theta$.
This proves~\eqref{eq:population_global_separation_alpha}.

\subsection{Proof of Proposition~\ref{prop:likelihood_separation_local_global}}
\label{subsec:proof_prop_likelihood_separation_local_global}

Let $s=r_{\mathrm{loc}}$ and $r=r_{\mathrm{glob}}$, and let $\eta>0$ be given by Proposition~\ref{prop:population_stability_local_global}.
By Assumption~\ref{ass:nondegenerate_cross_order_likelihood_geometry}, $\mathbf{G}_r\succ0$. Since $\mathbf{G}_k$ is a principal submatrix of $\mathbf{G}_r$ for $k\leq r$, $\mathbf{G}_r\succ 0$ implies $\mathbf{G}_k\succ0$ for every $k\leq r$. By Lemma~\ref{lem:truncated_likelihood_gram_bound}, for every $k\leq r$ and $0<\beta\leq 1$,
\begin{align}
    \left\|\sum_{\ell=1}^k \beta^\ell \left[ \Phi_\ell(\theta)-\Phi_\ell(\theta^\star) \right] \right\|_{L^2(P_0)} \geq c_k\beta^k \left\| \alpha_{\leq k}(\theta)-\alpha_{\leq k}(\theta^\star) \right\|_2. \label{eq:gram_weighted_likelihood_lower}
\end{align}

For the local bound, Proposition~\ref{prop:population_stability_local_global} gives $\left\| \alpha_{\leq s}(\theta)-\alpha_{\leq s}(\theta^\star) \right\|_2 \geq C_{\mathrm{inv}}^{-1} d_{\mathrm{eq}}(\theta,\theta^\star)$ whenever $d_{\mathrm{eq}}(\theta,\theta^\star)\leq\eta$.
Hence~\eqref{eq:gram_weighted_likelihood_lower}, with $k=s$, implies
\begin{align}
    \left\| \sum_{\ell=1}^s \beta^\ell \left[ \Phi_\ell(\theta)-\Phi_\ell(\theta^\star) \right] \right\|_{L^2(P_0)} \geq c\beta^s d_{\mathrm{eq}}(\theta,\theta^\star).
    \label{eqn:app_G5}
\end{align}
By Lemma~\ref{lem:locally_lipschitz_analytic_tail}, after decreasing $\eta$ if necessary,
\begin{align}
    L_{\theta,\beta}-L_{\theta^\star,\beta} = \sum_{\ell=1}^s \beta^\ell \left[\Phi_\ell(\theta)-\Phi_\ell(\theta^\star) \right] + O_{L^2(P_0)}\!\left(\beta^{s+1}d_{\mathrm{eq}}(\theta,\theta^\star) \right).
    \label{eqn:app_G6}
\end{align}
The remainder is lower order, so substituting~\eqref{eqn:app_G5} into~\eqref{eqn:app_G6}, and for all sufficiently small $\beta$,
\begin{align}
    \left\| L_{\theta,\beta}-L_{\theta^\star,\beta} \right\|_{L^2(P_0)} \geq c_{\mathrm{loc}} \beta^{r_{\mathrm{loc}}} d_{\mathrm{eq}}(\theta,\theta^\star),
\end{align}
which proves~\eqref{eq:local_likelihood_separation_r_loc}.

For the global bound, Proposition~\ref{prop:population_stability_local_global} gives that if $d_{\mathrm{eq}}(\theta,\theta^\star)\geq\eta$ then $\left\| \alpha_{\leq r}(\theta)-\alpha_{\leq r}(\theta^\star) \right\|_2 \geq \gamma_\eta$. Thus~\eqref{eq:gram_weighted_likelihood_lower}, with $k=r$, yields
\begin{align}
    \left\| \sum_{\ell=1}^r \beta^\ell \left[ \Phi_\ell(\theta)-\Phi_\ell(\theta^\star) \right] \right\|_{L^2(P_0)} \geq c\beta^r.
\end{align}
The uniform order-$r$ likelihood-ratio expansion then gives, for all sufficiently small $\beta$,
\begin{align}
    \left\| L_{\theta,\beta}-L_{\theta^\star,\beta} \right\|_{L^2(P_0)} \geq c_{\mathrm{glob}}\beta^{r_{\mathrm{glob}}}
\end{align}
whenever $d_{\mathrm{eq}}(\theta,\theta^\star)\geq\eta$. This proves~\eqref{eq:global_likelihood_separation_r_glob}.

\subsection{Proof of Theorem~\ref{thm:global_to_local_upper_bound}}
\label{subsec:two_stage_upper_bound}

Let $r=r_{\mathrm{glob}}$ and $s=r_{\mathrm{loc}}$, and let $\eta>0$ be the uniform local radius from Proposition~\ref{prop:population_stability_local_global}, decreased if necessary so that Lemma~\ref{lem:locally_lipschitz_analytic_tail} holds on the same neighborhood.

We split the sample into two independent subsamples of sizes $n_1$ and $n_2$. By Theorem~\ref{thm:mom_upper_bound}, the first-stage estimator $\widetilde\theta$ satisfies $P_{\theta^\star,\beta}^{\otimes n_1} \left( d_{\mathrm{eq}}(\widetilde\theta,\theta^\star) > \eta/4 \right) \leq \delta/2$ provided $n_1 \gtrsim \beta^{-2r}\log(1/\delta)$.
Let
\begin{align}
    \mathcal{E}_1 \triangleq \left\{ d_{\mathrm{eq}}(\widetilde\theta,\theta^\star) \leq \eta/4 \right\}.
\end{align}
On $\mathcal{E}_1$, the local search set contains $\theta^\star$, and every $\theta\in\Theta_{\mathrm{loc}}(\widetilde\theta)$ satisfies $d_{\mathrm{eq}}(\theta,\theta^\star) \leq 3\eta/4$.

It remains to establish the population separation for the second-stage moment map.
For every such $\theta$,
\begin{align}
    \Gamma_{s,\beta}(\theta) - \Gamma_{s,\beta}(\theta^\star)
    &=
    \mathbb{E}_{Y \sim P_0}\!\left[ q_{\leq s}(Y) \bigl( L_{\theta,\beta}(Y)-L_{\theta^\star,\beta}(Y) \bigr) \right]
    \notag\\
    &=
    \mathbf{G}_s
    \begin{pmatrix}
        \beta\,[\alpha_1(\theta)-\alpha_1(\theta^\star)]\\
        \vdots\\
        \beta^s[\alpha_s(\theta)-\alpha_s(\theta^\star)]
    \end{pmatrix}
    +
    O\!\left( \beta^{s+1}d_{\mathrm{eq}}(\theta,\theta^\star) \right),    \label{eq:local_population_moment_expansion}
\end{align}
where the remainder bound follows from Lemma~\ref{lem:locally_lipschitz_analytic_tail} and Cauchy--Schwarz.
Since $\mathbf{G}_s\succ0$, Proposition~\ref{prop:population_stability_local_global} and $0<\beta\leq 1$ imply
\begin{align}
    \left\| \Gamma_{s,\beta}(\theta) - \Gamma_{s,\beta}(\theta^\star) \right\|_2 \geq b_K\beta^s d_{\mathrm{eq}}(\theta,\theta^\star)    \label{eq:local_population_moment_separation_s}
\end{align}
for all sufficiently small $\beta$, uniformly over the local search set and $\theta^\star\in K$.

Conditional on the first subsample and on $\mathcal{E}_1$, the second-stage search set is fixed and contains $\theta^\star$. By~\eqref{eq:local_population_moment_separation_s}, every $\theta\in\Theta_{\mathrm{loc}}(\widetilde\theta)$ satisfying $d_{\mathrm{eq}}(\theta,\theta^\star)>\varepsilon$ has population moment separation at least $\Delta = b_K\beta^s\varepsilon$. Hence, by the same minimum-distance argument as in Lemma~\ref{lem:minimum_distance_reduction},
\begin{align}
    P_{\theta^\star,\beta}^{\otimes n_2} \left( d_{\mathrm{eq}}(\widehat\theta_{\mathrm{loc}},\theta^\star)>\varepsilon \,\middle| \, \mathcal{E}_1 \right) \leq P_{\theta^\star,\beta}^{\otimes n_2} \left( \left\| \widehat{\Gamma}_{s,\delta/2} - \Gamma_{s,\beta}(\theta^\star) \right\|_2 \geq \frac{b_K\beta^s\varepsilon}{2} \right). \label{eq:local_mom_minimum_distance_reduction}
\end{align}
By independence of the two subsamples and the robust concentration bound~\eqref{eq:robust_likelihood_coordinate_concentration},
\begin{align}
    P_{\theta^\star,\beta}^{\otimes n_2} \left(\left\| \widehat{\Gamma}_{s,\delta/2} - \Gamma_{s,\beta}(\theta^\star) \right\|_2 > C_s \sqrt{\frac{V_s\log(2/\delta)}{n_2} } \right) \leq \frac{\delta}{2}.
\end{align}
Therefore, the conditional second-stage failure probability is at most $\delta/2$ whenever
\begin{align}
    C_s \sqrt{\frac{V_s\log(2/\delta)}{n_2} } \leq \frac{b_K\beta^s\varepsilon}{2},
\end{align}
or equivalently, $n_2 \gtrsim \beta^{-2s}\varepsilon^{-2}\log(1/\delta)$. Since the first-stage failure probability is also at most $\delta/2$, a union bound gives
$\sup_{\theta^\star\in K} P_{\theta^\star,\beta}^{\otimes n} ( d_{\mathrm{eq}}(\widehat\theta_{\mathrm{loc}},\theta^\star) > \varepsilon) \leq \delta$. Choosing $n_1$ and $n_2$ of the corresponding orders therefore gives
\begin{align}
    n  \gtrsim \left( \beta^{-2r_{\mathrm{glob}}} + \beta^{-2r_{\mathrm{loc}}}\varepsilon^{-2} \right) \log(1/\delta) \asymp \max\left\{ \beta^{-2r_{\mathrm{glob}}}, \beta^{-2r_{\mathrm{loc}}}\varepsilon^{-2}  \right\} \log(1/\delta),
\end{align}
which proves~\eqref{eq:global_to_local_upper_sample_complexity}.

\subsection{Proof of Lemma~\ref{lem:local_hard_alternatives}}
\label{subsec:proof_lem_local_hard_alternatives}

Let $s=r_{\mathrm{loc}}$ and fix $\theta_0\in\Theta_{\mathrm{reg}}$ as in the statement. Work in a local slice through $\theta_0$ representing the identifiable quotient near $[\theta_0]$. By the definition of $s$, there exists a nonzero tangent direction $h\in W_{\theta_0}$ such that $D\Phi_{\leq s-1}(\theta_0)[h]=0$, and $D\Phi_s(\theta_0)[h]\neq 0$. Since the rank of $\Phi_{\leq s-1}$ is locally constant on $\Theta_{\mathrm{reg}}$, the constant-rank theorem implies that its local level set through $\theta_0$ is a smooth submanifold whose tangent space contains $h$. Hence there exists a $C^1$ curve $t\mapsto\theta(t)$ in this level set such that $\theta(0)=\theta_0$, and $\dot\theta(0)=h$. After decreasing $t_0$ if necessary, the curve lies in $K$ for $|t|\leq t_0$.

By construction, $\Phi_{\leq s-1}(\theta(t)) = \Phi_{\leq s-1}(\theta_0)$. Moreover,
\begin{align}
    \Phi_s(\theta(t))-\Phi_s(\theta_0) = tD\Phi_s(\theta_0)[h]+o(|t|),
\end{align}
and therefore, since $D\Phi_s(\theta_0)[h]\neq0$, $ \norm{ \Phi_s(\theta(t))-\Phi_s(\theta_0) }_{L^2(P_0)} \asymp |t|$. Finally, because the curve lies in a local quotient slice and $\dot\theta(0)=h\neq0$, $d_{\mathrm{eq}}(\theta(t),\theta_0) \asymp |t|$. Taking common constants after decreasing $t_0$ proves \eqref{eq:local_hard_curve_distance}--\eqref{eq:local_hard_curve_informative_order}.

\subsection{Proof of Theorem~\ref{thm:global_to_local_lower_bound}}
\label{subsec:proof_thm_global_to_local_lower_bound}

Let $r=r_{\mathrm{glob}}$, and $s=r_{\mathrm{loc}}$. The global term is exactly the fixed-accuracy lower bound of Theorem~\ref{thm:lecam_lower_bound}, applied to the pair in the statement. Thus, for all sufficiently small fixed $\varepsilon>0$,
\begin{align}
    n^\star(\varepsilon,\delta;\beta) \geq c_{\mathrm{glob}} \beta^{-2r} \log(1/\delta).
    \label{eq:global_component_lower}
\end{align}

For the local term, let $\theta_0\in K$ and $t\mapsto\theta(t)$ be the curve from Lemma~\ref{lem:local_hard_alternatives}. Choosing $|t|\asymp\varepsilon$ gives, for all sufficiently small $\varepsilon$, $d_{\mathrm{eq}}(\theta(t),\theta_0)>2\varepsilon$, and $\Phi_{\leq s-1}(\theta(t)) = \Phi_{\leq s-1}(\theta_0)$, and $\norm{\Phi_s(\theta(t))-\Phi_s(\theta_0) }_{L^2(P_0)} \lesssim \varepsilon$. 
Finite-dimensional norm equivalence and
Lemma~\ref{lem:locally_lipschitz_analytic_tail} therefore give, for
every fixed finite $p$,
\begin{align}
    \norm{L_{\theta(t),\beta} - L_{\theta_0,\beta}}_{L^p(P_0)} \leq C_p\beta^s\varepsilon.
    \label{eq:local_pair_lr_upper}
\end{align}

Using the likelihood-ratio representation of $\chi^2$, H\"older's
inequality, and
Assumption~\ref{ass:reciprocal_likelihood_regularity},
\begin{align}
    D_{\mathrm{KL}}    \left(P_{\theta(t),\beta}  \,\middle\|\, P_{\theta_0,\beta} \right) \leq C\beta^{2s}\varepsilon^2.
\end{align}
Proposition~\ref{prop:lecam_two_point_general} applied to this pair yields $n^\star(\varepsilon,\delta;\beta) \geq c_{\mathrm{loc}} \beta^{-2s} \varepsilon^{-2} \log(1/\delta)$. Combining the global and local bounds proves $n^\star(\varepsilon,\delta;\beta) \geq c_0 \max \left\{ \beta^{-2r_{\mathrm{glob}}}, \beta^{-2r_{\mathrm{loc}}}\varepsilon^{-2} \right\} \log(1/\delta)$, which proves~\eqref{eq:global_to_local_lower_rate}.

\section{Applications to algebraic weak-signal perturbations}
\label{sec:applications_algebraic_perturbations}

\subsection{Proof of Proposition~\ref{prop:linear_perturbation_verification}}
\label{sec:proof_linear_perturbation_verification}

By Lemma~\ref{lem:linear_weak_input_likelihood_moments}, $\phi_k(y;\theta)=\langle Q_k(y),M_k(\theta)\rangle$. Hence each likelihood space $\operatorname{span}\{\Phi_k(\theta):\theta\in\Theta\}$ is contained in the finite-dimensional span of the components of $Q_k$, so Assumption~\ref{ass:finite_dimensional_likelihood_geometry} holds. If $q_{\leq r}$ is any basis of the corresponding likelihood-coordinate space, then $q_{\leq r}=B_rQ_{\leq r}$ for a fixed linear map $B_r$. Therefore, if $\widetilde{\Gamma}_{r,\beta}$ denotes the population map associated with $q_{\leq r}$,
\begin{align}
    \left\| \Gamma_{r,\beta}(\theta) - \Gamma_{r,\beta}(\theta^\star) \right\|_2 \geq \frac{ \| \widetilde{\Gamma}_{r,\beta}(\theta) - \widetilde{\Gamma}_{r,\beta}(\theta^\star) \|_2
    }{\|B_r\|_{\mathrm{op}}}.
\label{eq:channel_coordinate_population_separation_transfer}
\end{align}
Thus the population separation used in Theorem~\ref{thm:mom_upper_bound} transfers to the channel coordinates $Q_{\leq r}$.

It remains to verify the concentration condition.
By Assumption~\ref{ass:regular_linear_perturbation_model}, the fixed coordinates $Q_{\leq k}$ belong to $L^4(P_0)$ for every finite $k$, while the uniform bound on $s(z;\theta)$ implies that $\|L_{\theta,\beta}\|_{L^2(P_0)}$ is uniformly bounded over $\theta\in\Theta$ for all sufficiently small $\beta$. Hence, by H\"older's inequality, $\sup_{\theta\in\Theta} \sup_{0<\beta\leq\beta_0} \mathbb{E}_{P_{\theta,\beta}} \left[ \|Q_{\leq k}(Y)\|_2^2  \right] < \infty$. The corresponding centered second moments are therefore uniformly bounded, so the geometric median-of-means concentration bound applies. Together with~\eqref{eq:channel_coordinate_population_separation_transfer}, this proves the fixed-accuracy claim.

For the global-to-local result, since $s(z;\theta)$ is polynomial in $\theta$, each
$M_k(\theta)=\mathbb{E}[s(Z;\theta)^{\otimes k}]$ (and hence each induced likelihood-coordinate map $\alpha_k(\theta)$) is polynomial. Thus, Assumption~\ref{ass:algebraic_parameter_structure} holds. Moreover, Assumption~\ref{ass:regular_linear_perturbation_model} and the chain rule for $m_{\theta,\beta}(z)=m_0+\beta s(z;\theta)$ imply Assumption~\ref{ass:local_likelihood_regularity}. The same second-moment argument applies to $Q_{\leq r_{\mathrm{loc}}}$. Therefore all assumptions of Theorem~\ref{thm:global_to_local_upper_bound} hold, proving the second claim.

\subsection{Proof of Lemma~\ref{lem:dual_coordinates_signal_moments}}
\label{app:proof_dual_coordinates_signal_moments}

If~\eqref{eq:dual_coordinate_centering}-\eqref{eq:dual_coordinate_biorthogonality} hold, then changing measure to $P_0$ and using the likelihood expansion gives
\begin{align}
    \mathbb{E}_{Y \sim P_{\theta,\beta}}[\psi_k(Y)] = \mathbb{E}_{Y \sim P_0}[\psi_k(Y)] + \sum_{j\geq1}\beta^j \mathbb{E}_{Y \sim P_0} \left[ \psi_k(Y)\left\langle Q_j(Y),M_j(\theta)\right\rangle \right] = \beta^k M_k(\theta),
\end{align}
proving~\eqref{eq:direct_signal_moment_extraction}.

Conversely, suppose~\eqref{eq:direct_signal_moment_extraction} holds. Changing measure to $P_0$ gives $\mathbb{E}_{Y \sim P_0} \left[ \psi_k(Y)L_{\theta,\beta}(Y) \right] = \beta^k M_k(\theta)$. Setting $\beta=0$ proves~\eqref{eq:dual_coordinate_centering}, while comparing coefficients of $\beta^j$ yields
\begin{align}
    \mathbb{E}_{Y \sim P_0} \left[ \psi_k(Y)\left\langle Q_j(Y),M_j(\theta)\right\rangle \right]
    =
    \begin{cases}
        M_k(\theta), & j=k,\\
        0, & j\neq k.
    \end{cases}
\end{align}
By linearity, this extends to every $M\in \operatorname{span} \{M_j(\theta):\theta\in\Theta\}$, proving~\eqref{eq:dual_coordinate_biorthogonality}.

Finally, if
$\sum_{j=1}^r\langle Q_j(Y),M_j\rangle=0$,
multiplying by $\psi_k(Y)$ and applying~\eqref{eq:dual_coordinate_biorthogonality} gives $M_k=0$ for every $k$.
Hence the coordinates are jointly linearly independent, so the corresponding finite-dimensional cross-order Gram matrix is positive definite.

\subsection{Proof of Lemma~\ref{lem:mean_scaling_regular_likelihood_ratio_expansion}}
\label{app:proof_mean_scaling_regular_likelihood_ratio_expansion}

For fixed $z$ and $\theta$, the conditional likelihood ratio is
\begin{align}
    \frac{p_{\theta,\beta}(y\vert z)}{p_0(y)} = \exp \left\{ \left\langle \eta_{\theta,\beta}(z)-\eta_0, T(y) \right\rangle - A\bigl(\eta_{\theta,\beta}(z)\bigr) + A(\eta_0) \right\}.
\end{align}
By Assumption~\ref{assum:regular_exponential_family}, the natural parameters $\eta_{\theta,\beta}(z)$ remain in a fixed compact subset of $\operatorname{int}(\Omega)$ for all sufficiently small $\beta$. Hence the exponential-family likelihood ratio and its derivatives in $\beta$ have finite $L^p(P_0)$ norms, uniformly over $\theta\in\Theta$ and $z\in\operatorname{supp}(\mu)$, for every finite $p\ge 1$. Averaging over $Z$ preserves analyticity, so $\beta\mapsto L_{\theta,\beta}$ is real analytic as an $L^p(P_0)$-valued map, uniformly over $\theta\in\Theta$.

It remains to verify the reciprocal bound. Since $\eta_{\theta,\beta}(z)$ stays uniformly inside $\operatorname{int}(\Omega)$ and $\eta_{\theta,0}(z)=\eta_0$, there is $C<\infty$ such that $\left\| \eta_{\theta,\beta}(z)-\eta_0 \right\| \le C\beta$ uniformly over $\theta$ and $z$, for all sufficiently small $\beta$. Therefore, for every finite $p\ge 1$, the conditional likelihood ratios and their reciprocals are uniformly bounded in $L^p(P_0)$. Since
\begin{align}
    L_{\theta,\beta}(y) = \mathbb{E}_{Z \sim \mu} \left[ \frac{p_{\theta,\beta}(y\vert Z)}{p_0(y)} \right],
\end{align}
Jensen's inequality for the convex map $x\mapsto x^{-1}$ gives
\begin{align}
    L_{\theta,\beta}(y)^{-1} \le \mathbb{E}_{Z \sim \mu} \left[ \left( \frac{p_{\theta,\beta}(y\vert Z)}{p_0(y)} \right)^{-1} \right].
\end{align}
Thus, $\sup_{\theta\in\Theta} \| L_{\theta,\beta}^{-1} \|_{L^{2p}(P_0)} = O(1)$.
On the other hand, analyticity and $L_{\theta,0}=1$ imply
\begin{align}
    \sup_{\theta\in\Theta} \left\| L_{\theta,\beta}-1 \right\|_{L^{2p}(P_0)} = O(\beta).
\end{align}
Combining these two bounds with Hölder's inequality yields
\begin{align}
    \sup_{\theta\in\Theta} \left\| L_{\theta,\beta}^{-1}-1 \right\|_{L^p(P_0)}
    &=
    \sup_{\theta\in\Theta} \left\| L_{\theta,\beta}^{-1} \left( 1-L_{\theta,\beta} \right) \right\|_{L^p(P_0)}
    \notag \\
    &\le
    \sup_{\theta\in\Theta} \left\| L_{\theta,\beta}^{-1} \right\|_{L^{2p}(P_0)} \sup_{\theta\in\Theta} \left\| L_{\theta,\beta}-1 \right\|_{L^{2p}(P_0)} =  O(\beta).
\end{align}
This proves Assumption~\ref{ass:reciprocal_likelihood_regularity}.

\subsection{Linear mean-scaling latent-variable models}
\label{subsec:linear_mean_scaling_latent_variable_models}

An important specialization of Model~\ref{model:mean_scaling_exponential_family} is obtained when the latent signal contribution is linear in the unknown parameter. More precisely, suppose that $s(z;\theta) = {A}(z)\theta$, where ${A}(z):\mathbb{R}^m\to\mathbb{R}^d$ is a known linear operator. Then, the conditional mean parameter is $m_{\theta,\beta}(z) = m_0 + \beta {A}(z)\theta$. The corresponding order-$k$ signal-only moments take the form
\begin{align}
    M_k(\theta) = \mathbb{E}_{Z\sim\mu} \left[\bigl({A}(Z)\theta \bigr)^{\otimes k}\right].    \label{eq:linear_mean_scaling_signal_only_moments}
\end{align}

This specialization captures many of the principal latent-variable models studied in the weak-signal regime. The operator ${A}(Z)$ describes the latent transformation applied to the signal, whereas the exponential-family observation law determines the null-dependent statistical coordinates ${Q}_k(y)$. Thus, the latent identifiability problem is encoded by the tensors in~\eqref{eq:linear_mean_scaling_signal_only_moments}, independently of the specific observation family. Table~\ref{tab:linear_mean_scaling_models} lists representative examples.

\begin{table*}[t]
\centering
\begingroup
\setlength{\tabcolsep}{3.5pt}
\renewcommand{\arraystretch}{1.05}
\scriptsize

\caption{\textbf{Representative models covered by the linear mean-scaling latent-variable framework.}
Each row specifies the latent space $\mathcal{Z}$, the parameter, the linear map $A(\cdot)$, and the intrinsic observational equivalence relation. We use $\mathcal{T}_\ell$ for circular shifts, $J$ for reflection, $\Pi$ for projection, $P_\pi$ for permutation matrices, and $\rho(g)$ for a linear representation of a group element $g$. In particular, if $R\in\mathrm{SO}(n)$ or $R\in\mathrm{SO}(3)$, then $R$ is a rotation and $\rho(R)$ is its induced action on the parameter space. For cryo-EM with translations, $S_t$ denotes the in-plane shift operator. For patch-induced MRA, $E$ denotes zero-padding, $P_0$ and $P_1$ denote coordinate projections.}
\label{tab:linear_mean_scaling_models}

\begin{tabular}{|L{2.7cm}||L{1.80cm}|L{2.5cm}|L{2.65cm}|L{3.25cm}|L{2.35cm}|}
\hline
\textbf{Application} &
\textbf{Latent $Z$} &
\textbf{Parameter $\theta^\star$} &
\textbf{Linear map $A(Z)$} &
\textbf{Equivalence / symmetry ($[\theta^\star]$)} &
\textbf{Min.\ moments $r$ (typical)} \\
\hline\hline

\textbf{Spherical GMM} &
$\{1,\ldots,K\}$ &
$(\theta^\star_1,\ldots,\theta^\star_K)\in(\mathbb{R}^d)^K$ &
$A(z)\theta=\theta_z$ &
Label permutation $\theta\sim(\theta_{\pi(1)},\ldots,\theta_{\pi(K)})$ &
$K \geq 3:\; 3$ \cite{hsu2013learning,anandkumar2014tensor} $\qquad K = 2:\; 2$ \\
\hline

\textbf{Mixture of linear regressions $^\dagger$} &
$\{1,\ldots,K\}$ &
$(\theta^\star_1,\ldots,\theta^\star_K)\in(\mathbb{R}^d)^K$ &
$A(z;x)\theta=\langle x,\theta_z\rangle$  &
Label permutation $\theta\sim(\theta_{\pi(1)},\ldots,\theta_{\pi(K)})$ &
$K \geq 3:\; 3$ \cite{sedghi2016provable,chaganty2013spectral} $\qquad K = 2:\; 2$ \\
\hline

\textbf{Cyclic MRA} &
$\mathbb{Z}_d$ &
$\theta^\star\in\mathbb{R}^d$ &
$A(\ell)=\mathcal{T}_\ell$ &
$\mathbb{Z}_d\cdot\theta$ &
$3$ \cite{bendory2017bispectrum,bandeira2023estimation} \\
\hline

\textbf{Dihedral MRA} &
$D_{2d}$ &
$\theta^\star\in\mathbb{R}^d$ &
$A(g)\in\{\mathcal{T}_\ell,\mathcal{T}_\ell J\}$ &
$D_{2d}\cdot\theta$ &
$3$ \cite{bendory2022dihedral,edidin2025orbit} \\
\hline

\textbf{Projected MRA} &
$\mathbb{Z}_d$ &
$\theta^\star\in\mathbb{R}^d$, $d$ odd &
$A(\ell)=\Pi\mathcal{T}_\ell$ &
$D_{2d}\cdot\theta$ &
$3$ \cite{bandeira2023estimation,balanov2026projected}  
\\
\hline

\textbf{Patch-induced MRA} &
$\mathbb{Z}_{2L}\times\mathbb{Z}_{2L}$ &
$\theta^\star\in\mathbb{R}^L$ &
$A(g)=P_0\mathcal{T}_{g_1}E+P_1\mathcal{T}_{g_2}E$ &
Generically trivial &
Unknown \cite{abraham2026sample} \\
\hline

\textbf{Coordinate sign-flip model} &
$\{\pm1\}^d$ &
$\theta^\star\in\mathbb{R}^d$ &
$A(s)=\mathrm{diag}(s)$ &
$\theta\sim s\odot\theta$ &
$2$ \\
\hline

\textbf{Permutation group} &
$S_d$ &
$\theta^\star\in\mathbb{R}^d$ &
$A(\pi)=P_\pi$ &
$\theta\sim P_\pi\theta$ &
$d$ \cite{bandeira2023estimation} \\
\hline

\textbf{Orbit recovery under $\mathrm{SO}(n)$} &
$\mathrm{SO}(n)$ &
bandlimited $n$-dimensional volume &
$A(R)=\rho(R)$ &
$\theta\sim\rho(R)\theta$ &
$3$ \cite{bandeira2023estimation,fan2024maximum,bendory2025orbit} \\
\hline

\textbf{$\mathbb{S}^2$ registration} &
$\mathrm{SO}(3)$ &
bandlimited $f^\star:\mathbb{S}^2\to\mathbb{R}$ &
$A(R)=\rho(R)$ &
$f\sim\rho(R)f$ &
$3$ \cite{bandeira2023estimation,fan2024maximum} \\
\hline

\textbf{Rigid motion} &
$\mathrm{SE}(n)$ &
bandlimited $n$-dimensional volume  &
$A(g) = g \cdot f$,  $g \in \mathrm{SE}(n)$ &
Global rigid motion &
$3$--$5$ \cite{balanov2025orbit} \\
\hline

\textbf{Cryo-EM without translations} &
$\mathrm{SO}(3)$ &
bandlimited 3D volume &
$A(R)=\Pi\rho(R)$ &
Global $\mathrm{O}(3)$ ambiguity &
$3$ \cite{bandeira2023estimation,fan2024maximum,balanov2026provable} \\
\hline

\textbf{Cryo-EM with translations} &
$\mathrm{SO}(3)\times\mathbb{R}^2$ &
bandlimited 3D volume &
$A(R,t)=S_t\Pi\rho(R)$ &
Global $\mathrm{E}(3)$ ambiguity &
Unknown \\
\hline

\textbf{Heterogeneous orbit recovery} &
$\{1,\ldots,K\}\times G$ &
$(\theta^\star_1,\ldots,\theta^\star_K)$ &
$A(c,g)\theta=\rho(g)\theta_c$ &
$\theta\sim(\rho(g)\theta_{\pi(1)},\ldots,\rho(g)\theta_{\pi(K)})$ &
Unknown \\
\hline
\end{tabular}

\vspace{2pt}
\parbox{\textwidth}{\scriptsize
$^\dagger$ The framework in~\eqref{eq:conditional_observation_density_intro} is written without observed covariates to simplify notation. Mixtures of linear regressions fit a straightforward covariate-dependent extension in which $A(z)$ is replaced by $A(z;x)$, under standard moment and nonsingularity assumptions on the covariate distribution.}

\endgroup
\end{table*}

\end{appendices}

\end{document}